\documentclass[10pt, a4paper]{amsart}

\usepackage{amssymb,amsfonts}
\usepackage[all,arc]{xy}
\usepackage{enumerate}
\usepackage{mathrsfs}
\usepackage{tikz-cd}
\usetikzlibrary{decorations.pathmorphing}
\usepackage{enumitem}
\usepackage{empheq}
\usepackage{kotex}
\usepackage{ifpdf}
\usepackage{mathtools}
\usepackage{pst-node, pst-plot, pstricks}
\usepackage{mathrsfs}
\usepackage{amsmath}
\usepackage{epsfig}
\usepackage{amscd}
\usepackage{graphicx, color}
\graphicspath{ {./images/} }

\def\E{\ifmmode{\mathbb E}\else{$\mathbb E$}\fi} 
\def\N{\ifmmode{\mathbb N}\else{$\mathbb N$}\fi} 
\def\R{\ifmmode{\mathbb R}\else{$\mathbb R$}\fi} 
\def\Q{\ifmmode{\mathbb Q}\else{$\mathbb Q$}\fi} 
\def\C{\ifmmode{\mathbb C}\else{$\mathbb C$}\fi} 
\def\H{\ifmmode{\mathbb H}\else{$\mathbb H$}\fi} 
\def\Z{\ifmmode{\mathbb Z}\else{$\mathbb Z$}\fi} 
\def\P{\ifmmode{\mathbb P}\else{$\mathbb P$}\fi} 
\def\T{\ifmmode{\mathbb T}\else{$\mathbb T$}\fi} 
\def\SS{\ifmmode{\mathbb S}\else{$\mathbb S$}\fi} 
\def\DD{\ifmmode{\mathbb D}\else{$\mathbb D$}\fi} 

\DeclareSymbolFont{yhlargesymbols}{OMX}{yhex}{m}{n}

\DeclareMathAccent{\widetriangle}{\mathord}{yhlargesymbols}{"E6}

\def\E{\ifmmode{\mathbb E}\else{$\mathbb E$}\fi} 
\def\N{\ifmmode{\mathbb N}\else{$\mathbb N$}\fi} 
\def\R{\ifmmode{\mathbb R}\else{$\mathbb R$}\fi} 
\def\Q{\ifmmode{\mathbb Q}\else{$\mathbb Q$}\fi} 
\def\C{\ifmmode{\mathbb C}\else{$\mathbb C$}\fi} 
\def\H{\ifmmode{\mathbb H}\else{$\mathbb H$}\fi} 
\def\Z{\ifmmode{\mathbb Z}\else{$\mathbb Z$}\fi} 
\def\P{\ifmmode{\mathbb P}\else{$\mathbb P$}\fi} 
\def\T{\ifmmode{\mathbb T}\else{$\mathbb T$}\fi} 
\def\SS{\ifmmode{\mathbb S}\else{$\mathbb S$}\fi} 
\def\DD{\ifmmode{\mathbb D}\else{$\mathbb D$}\fi} 

\newcommand{\ben}{\begin{enumerate}}
\newcommand{\een}{\end{enumerate}}
\newcommand{\be}{\begin{equation}}
\newcommand{\ee}{\end{equation}}
\newcommand{\bea}{\begin{eqnarray}}
\newcommand{\eea}{\end{eqnarray}}
\newcommand{\beastar}{\begin{eqnarray*}}
\newcommand{\eeastar}{\end{eqnarray*}}
\newcommand{\bc}{\begin{center}}
\newcommand{\ec}{\end{center}}

\theoremstyle{theorem}
\newtheorem{thm}{Theorem}[section]
\newtheorem{cor}[thm]{Corollary}
\newtheorem{lem}[thm]{Lemma}
\newtheorem{prop}[thm]{Proposition}
\newtheorem{'thm'}[thm]{'Theorem'}

\theoremstyle{definition}
\newtheorem{defn}[thm]{Definition}
\newtheorem{rem}[thm]{Remark}

\newtheorem{exam}[thm]{Example}

\newtheorem{cond}[thm]{Condition}
\newtheorem{proof-sketch}[thm]{Proof-Sketch}

\newtheorem{claim}[thm]{Claim}
\newtheorem{choice}[thm]{Choice}
\newtheorem{lem-defn}[thm]{Lemma-Definition}
\newtheorem{prop-defn}[thm]{Proposition-Definition}
\newtheorem{thm-defn}[thm]{Theorem-Definition}

\newtheorem{notation}[thm]{\rm\bfseries{Notation}}

\newtheorem*{thm*}{Theorem}

\numberwithin{equation}{section}

\def\R{{\mathbb R}}

\def\E{{\mathbb E}}
\def\Z{{\mathbb Z}}
\def\C{{\mathbb C}}
\def\R{{\mathbb R}}
\def\P{{\mathbb P}}

\def\N{{\mathbb N}}

\def\11{{\mathbb I}}

\def\sgn{{\text{\rm sgn}}}

\def\C{\mathbb{C}}
\def\Z{\mathbb{Z}}

\def\T{\mathbb{T}}

\def\Q{\mathbb{Q}}

\def\E{\ifmmode{\mathbb E}\else{$\mathbb E$}\fi} 
\def\N{\ifmmode{\mathbb N}\else{$\mathbb N$}\fi} 
\def\R{\ifmmode{\mathbb R}\else{$\mathbb R$}\fi} 
\def\Q{\ifmmode{\mathbb Q}\else{$\mathbb Q$}\fi} 
\def\C{\ifmmode{\mathbb C}\else{$\mathbb C$}\fi} 
\def\H{\ifmmode{\mathbb H}\else{$\mathbb H$}\fi} 
\def\Z{\ifmmode{\mathbb Z}\else{$\mathbb Z$}\fi} 
\def\P{\ifmmode{\mathbb P}\else{$\mathbb P$}\fi} 
\def\SS{\ifmmode{\mathbb S}\else{$\mathbb S$}\fi} 
\def\DD{\ifmmode{\mathbb D}\else{$\mathbb D$}\fi} 

\def\R{{\mathbb R}}

\def\E{{\mathbb E}}
\def\Z{{\mathbb Z}}
\def\C{{\mathbb C}}
\def\R{{\mathbb R}}

\def\N{{\mathbb N}}

\def\darr#1{\raise1.5ex\hbox{$\leftrightarrow$}
\mkern-16.5mu #1}

\def\roughly#1{\raise.3ex\hbox{$#1$\kern-.75em
\lower1ex\hbox{$\sim$}}}

\def\opname#1{\mathop{\kern0pt{\rm #1}}\nolimits}

\def\dim{\opname{dim}}

\def\Incl{\operatorname{Incl}}
\def\Eval{\operatorname{Eval}}

\begin{document}

\quad \vskip1.375truein

\bibliographystyle{plain}

\title[Categorical structures of Kuranishi spaces]
{Categorical structures of Kuranishi spaces with $L_{\infty}[1]$-algebras}

\author{Taesu Kim}


\begin{abstract}
We introduce $L_{\infty}$-Kuranishi spaces by associating, to each chart, $L_{\infty}[1]$-algebras defined on open neighborhoods of points in the zero locus of the Kuranishi section. We show that these objects collectively form a category into which the category of smooth manifolds naturally embeds. Some notions in \cite{FOOO1} are modified to achieve the desired categorical structures; for instance, the tangent bundle condition for chart embeddings is replaced by a quasi-isomorphism condition for the $L_{\infty}[1]$-structures.
\end{abstract}

\keywords{Kuranishi spaces, $L_{\infty}[1]$-algebras, $L_{\infty}$-Kuranishi spaces}
\subjclass[2020]{Primary 58D27; Secondary 18N40, 18N50}

\maketitle

\date{}

\tableofcontents

\section{Introduction}

Let $E \rightarrow U$ be a vector bundle over a smooth manifold $U$ with a distinguished smooth section $s$, and suppose that a finite group $\Gamma$ acts on this data. For a compact topological space $X$, we consider a Kuranishi chart $\mathcal{U} = (U, E, s, \Gamma, \psi)$ on $X$. This means that each point in $X$ admits a neighborhood homeomorphic to the quotient $s^{-1}(0)/\Gamma$ of the zero locus, as depicted in the following diagram:
\[
\begin{tikzcd}
{} & {} & E \arrow{d}{\pi}\\
X & \arrow[swap, hook]{l}{\psi \circ (\cdot)/\Gamma,\simeq} s^{-1}(0) \arrow[hook]{r} & U. \arrow[""{inner sep=0}, "\Gamma", from=2-3, to=2-3, out=15, in=-15, loop, distance=2em] \arrow[bend left,looseness=1.2]{u}{s}
\end{tikzcd}
\]

Fukaya and Ono \cite{FO} introduced this notion to study the moduli space of pseudoholomorphic maps in symplectic geometry. Their approach systematically perturbs the section $s$, in a manner compatible with the $\Gamma$-action, to a transverse one, ensuring that $s^{-1}(0)$ becomes a manifold. By patching together a chain model of $s^{-1}(0)$, they constructed a global object known as the virtual fundamental chain of $X$, which now serves as a foundation for Floer theory, Gromov-Witten theory, and related fields.

Nonetheless, this approach has also revealed certain shortcomings. Most notably, as remarked in \cite{FO}, their method exhibits an undesirable dependence on specific choices of the bundle $E$ and the ambient space $U$ of the zero locus of $s$, both of which should ideally be irrelevant.

To illustrate the issue more clearly, we begin with a simple example. Let $V$ be a finite-dimensional vector space. We consider the chart given by $\mathcal{U} \times V := (U \times V, E \times V, s \times \mathrm{id}_V, \Gamma, \psi).$ That is, the bundle is now given by $E \times V \xrightarrow{\pi \times \mathrm{id}_V} U \times V$, and the section is $s \times \mathrm{id}_V.$ The following immediate observation,
\[
(s \times \mathrm{id}_V)^{-1}(0) \simeq s^{-1}(0),
\]
suggests that there should be a way to identify these two objects. Rather surprisingly, no such identification exists yet; the notion of isomorphism of Kuranishi charts in \cite{FOOO1}, defined by Fukaya-Oh-Ohta-Ono (abbreviated as FOOO), is too restrictive to apply to this case.

The primary objective of this paper is to modify the theory of \cite{FO} and \cite{FOOO1} in order to minimize such dependencies, or, more precisely, to reformulate it within a categorical framework where various choices can be handled coherently, thereby rendering the ambient data trivial from the perspective of homotopy theory. To achieve this, we incorporate locally defined $L_{\infty}[1]$-algebras as the input for Kuranishi charts, drawing inspiration from the notion of $L_{\infty}$-spaces discussed in \cite{AKSZ}, \cite{AT}, \cite{BLX}, \cite{Costello}, and \cite{Tu1}.

Indeed, the incomparability of $\mathcal{U}$ and $\mathcal{U} \times V$ in the above example, at both the chart and space levels, is due to the lack of an appropriate notion of morphisms. Our second goal is, therefore, to define morphisms of charts coherently up to those of Kuranishi spaces. Consequently, we establish a category that naturally generalizes the category of smooth manifolds. Within this framework, \cite{Kim1} shows that the moduli space of pseudoholomorphic disks forms a well-defined object, assuming a condition on the existence of certain stratification structures on the base manifolds of the charts. Furthermore, this category is expected to exhibit additional favorable properties from a homotopy-theoretic viewpoint, which is discussed in detail in \cite{Kim4}.

Throughout this construction, we exploit the flexibility of the aforementioned structures. By flexibility, we precisely mean the use of commutativity up to homotopy (instead of strict commutativity), which is typically satisfied with considerably less effort. The definition of $L_{\infty}$​-Kuranishi chart embedding is given partly in terms of quasi-isomorphisms; accordingly, coordinate changes, as instances of such embeddings, provide better opportunities to achieve our goal. In this process, the homotopy invertibility guaranteed by the Whitehead theorem plays a crucial role.

\subsection{$L_{\infty}$-Kuranishi charts}

Let $X$ be a compact metrizable topological space. An $L_{\infty}$\textit{-Kuranishi chart of} $X$ is defined by a tuple
\[
\mathcal{U} = (U, E, \Gamma, s, \psi),
\]
where $U = (U, \beta)$ is a smooth manifold with a closed two-form $\beta \in \Omega^2(U)$ that satisfies a condition on the existence of a stratification that we introduce below. $E \rightarrow U$ is a vector bundle with a distinguished smooth section $s.$ Let $\Gamma$ be a finite group acting on $U$ that restricts to $ s^{-1}(0),$ and $\psi : s^{-1}(0)/\Gamma \hookrightarrow X$ a homeomorphism onto the image.

To each zero point $x \in s^{-1}(0)$, we assign \textit{the presymplectic neighborhood} $W_x$ and \textit{the local $L_{\infty}[1]$-algebra} $\mathcal{C}_x,$ which we shall soon introduce. To do so, we need some preparations. We first require that the closed two-form $\beta$ on $U$ allow the stratification 
\[U = \bigcup\limits_i \mathcal{S}_i,
\]
into \textit{submanifolds} $\mathcal{S}_i := \{y \in U \mid \mathrm{rk}\left(\ker\beta_y\right) = i\}$ and the system of their tubular neighborhoods.

\begin{rem}
\cite[Corollary 6.2]{KO} proves that there exists a residual subset of such closed two-forms. Furthermore, the stratification structure turns out to be Whitney (cf. \cite[Definition 6.5 \& Theorem 6.6]{KO}), which leads to the construction of \textit{stratified $L_{\infty}$-spaces.}
\end{rem}

Given a closed two-form $\beta \in \Omega^2(U)$ of the above type, we consider a contractible open ball $W_x \subset U$ near each zero point $x \in s^{-1}(0)$ and endow it with a presymplectic structure  $\beta_{W_x}$, and call $(W_x, \beta_{W_x})$ \textit{the local presymplectic neighborhood} of $x \in s^{-1}(0).$ (See Subsection 2.1 for its detailed construction.) Then the kernel of $\beta_{W_x}$ determines a regular foliation (i.e., each leaf having the same dimension) denoted by $T\mathcal{F}_x \subset TW_x.$

To each zero point $x \in s^{-1}(0)$, we also assign \textit{the local $L_{\infty}[1]$-algebra}
\[
\nonumber \mathcal{C}_x := \overbrace{\bigwedge\nolimits^{-\bullet}\Gamma(E^*|_{W_x})}^{\text{Koszul}} \oplus \overbrace{\Omega^{\bullet + 1}_{\mathrm{aug}}(\mathcal{F}_x)}^{\text{de Rham}}.
\]
Here $\bigwedge{}^{-\bullet}\Gamma(E^*|_{W_{x}})$ is given by the Koszul complex of the $C^{\infty}(W_{x})$-module $\Gamma(E|_{W_{x}})$ (considered as an $\mathbb{R}$-module) with vanishing higher-degree operations, while $\Omega^{\bullet+1}_{\mathrm{aug}}(\mathcal{F}_{x})$ is the foliation de Rham complex with augmentation, obtained from the regular foliation $T\mathcal{F}_{x} \subset TW_{x}.$ In fact, we can equip the foliation de Rham complex $\Omega^{\bullet+1}(\mathcal{F}_{x})$ with an $L_{\infty}[1]$-algebra structure as in Example \ref{gtyemb} and consider its extension to the augmentation $\Omega^{\bullet+1}_{\mathrm{aug}}(\mathcal{F}_{x})$ via a recursive argument (cf. Proposition \ref{augomega}).

\subsection{Morphisms of charts and relation to the FOOO's works}

Let $f : X \rightarrow X'$ be a continuous map between compact topological spaces. A \textit{chart morphism} between $L_{\infty}$-Kuranishi charts ${\mathcal{U}} = (U,E,\Gamma,s,\psi)$ and ${\mathcal{U}}' = (U',E',\Gamma',s',\psi')$ of $X$ and $X',$ respectively, is defined by a pair
\[
\Phi = \left(\phi, \widehat{\phi}\right) : {\mathcal{U}} \rightarrow {\mathcal{U}}' ,
\]
where each component is given by:
\begin{enumerate}
\item[--] $\phi : U \rightarrow U', \text{ a } (\Gamma, \Gamma')\text{-equivariant map of manifolds,}$
\item[--] $\widehat{\phi} = \left\{\widehat{\phi}_x :  \mathcal{C}'_{\phi(x)} \rightarrow \mathcal{C}_x \right\}_{x \in s^{-1}(0)}$ is a family of $L_{\infty}[1]$-morphisms,
\end{enumerate}
satisfying 
\begin{enumerate}[label = (\roman*)]
\item $\psi' \circ \phi = f \circ \psi$ on $s^{-1}(0),$
\item $\phi(W_x) \subset W'_{\phi(W_x)},$
\item $\widehat{\phi}_x$ factors through $\mathcal{C}'_{\phi(x), \phi},$ that is, we have $\widehat{\phi}_x = \widehat{\phi}_x^{\mathrm{c}} \circ \widehat{\varepsilon}_{\phi(x), \phi}$ for some $L_{\infty}[1]$-morphism, $\widehat{\phi}^{\mathrm{c}}_x :  \mathcal{C}'_{\phi(x), \phi} \rightarrow \mathcal{C}_x.$
\end{enumerate}
Here $\mathcal{C}_{\phi(x),\phi}'$ stands for \textit{the completion} of $\mathcal{C}'_{\phi(x)}$ at the image of $\phi,$ induced from the \text{completed} V-algebra at the image. (See Definition \ref{ladef} for the precise definition.) On the other hand, $\widehat{\varepsilon}_{\phi(x), \phi} : \mathcal{C}'_{\phi(x)} \rightarrow \mathcal{C}'_{\phi(x), \phi}$ is the $L_{\infty}[1]$-morphism given by considering elements in $\mathcal{C}'_{\phi(x)}$ naturally as those in $\mathcal{C}'_{\phi(x), \phi}$, which can be easily shown to define an $L_{\infty}[1]$-morphism. (See Definition \ref{varep} and Lemma \ref{vecpf}.)

We recall that an FOOO embedding is given by a pair $\left(\phi, \widetilde{\phi}\right),$ where $\phi: U \hookrightarrow U'$ is an embedding and $\widetilde{\phi} :  E \hookrightarrow E'$ is a bundle embedding, satisfying natural compatibility conditions with respect to the sections, together with tangent bundle condition: $ds'$ induces an isomorphism
\[
\left[ds'_{\phi(x)}\right] : \frac{T_{\phi(x)}U'}{\phi_*(T_xU)} \xrightarrow{\simeq} \frac{E'_{\phi(x)}}{\widetilde{\phi}(E_{x})},
\]
at each $x \in s^{-1}(0).$

We propose a new definition of chart embeddings: A chart morphism is called an \textit{embedding} if 
\begin{enumerate}
\item[--] $\phi$ is an (equivariant) embedding of manifolds,
\item[--] $\widehat{\phi}^{\mathrm{c}}_{x} : \mathcal{C}'_{\phi(x), \phi} \rightarrow \mathcal{C}_x$ is a quasi-isomorphism of $L_{\infty}[1]$-algebras for each $x.$
\end{enumerate}
We claim that this definition of embedding can be regarded as a generalization of the FOOO chart embedding (see Proposition \ref{afec} for the precise statement):
\begin{thm}
An FOOO embedding of Kuranishi charts (with Condition \ref{addcond}) determines an embedding of $L_{\infty}$-Kuranishi charts. In particular, the tangent bundle condition yields the quasi-isomorphism condition between the $L_{\infty}[1]$-algebras.
\end{thm}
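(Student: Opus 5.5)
Given an FOOO embedding $(\phi,\widetilde{\phi})$, the plan is to build the data $\widehat{\phi}=\{\widehat{\phi}_x\}$ by hand, show it is a chart morphism, and then reduce the quasi-isomorphism condition to a statement about cohomology that the tangent bundle condition settles.

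\textbf{Step 1: the map $\widehat{\phi}_x$.} I construct it summand by summand.
\begin{itemize}
\item On the Koszul part, the bundle embedding $\widetilde{\phi}$ dualizes to a restriction map $\widetilde{\phi}^{*}:\Gamma(E'^{*}|_{W'_{\phi(x)}})\to\Gamma(E^{*}|_{W_x})$. This map is linear over $\phi^{*}:C^{\infty}(W'_{\phi(x)})\to C^{\infty}(W_x)$.
\item Extending it multiplicatively gives a map of Koszul complexes $\bigwedge^{-\bullet}\Gamma(E'^{*})\to\bigwedge^{-\bullet}\Gamma(E^{*})$.
\item It intertwines the Koszul differentials (contraction with $s'$ and $s$) because $s'\circ\phi=\widetilde{\phi}\circ s$.
\item On the de Rham part I take the pullback $\phi^{*}$ of foliated forms. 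Condition \ref{addcond} is presumably what ensures that $\phi$ carries $\beta$ to $\beta'$, so that $\phi_{*}T\mathcal{F}_x\subset T\mathcal{F}'_{\phi(x)}$ and the pullback of foliated forms is defined.
\item Pullback commutes with wedge product and $d$, so it respects the $L_\infty[1]$-structure of Example \ref{gtyemb}. By the recursive construction of Proposition \ref{augomega}, it extends to the augmentations.
\end{itemize}
The resulting $\widehat{\phi}_x$ is strict, i.e.\ has only a linear component. Conditions (i) and (ii) of a chart morphism follow directly from the compatibility of $\phi$ with $\psi,\psi'$ and from the choice of presymplectic neighborhoods in Subsection 2.1. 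For (iii), $\widehat{\phi}_x$ only uses jets of sections along $\phi(W_x)$, so it extends continuously to the completion $\mathcal{C}'_{\phi(x),\phi}$. This gives $\widehat{\phi}^{\mathrm{c}}_x$ with $\widehat{\phi}_x=\widehat{\phi}_x^{\mathrm{c}}\circ\widehat{\varepsilon}_{\phi(x),\phi}$, using Lemma \ref{vecpf}.

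\textbf{Step 2: local normal form.} Near $\phi(x)$ I choose a tubular neighborhood with coordinates $U'\simeq U\times N$, where $N=T_{\phi(x)}U'/\phi_*T_xU$. In the same frame I split $E'\simeq E\oplus F$ with $F=E'_{\phi(x)}/\widetilde{\phi}(E_x)$. Write $s'=(s'_1,s'_2)$. The tangent bundle condition says precisely that $\partial_N s'_2$ is invertible at $\phi(x)$. By the implicit function theorem, after shrinking and reparametrizing, I can arrange $s'_2(u,n)=n$, i.e.\ $s'\simeq s\times \mathrm{id}_N$ up to the above identifications. This is exactly the model case $\mathcal{U}\times V$ from the introduction.

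\textbf{Step 3: reduction to a cohomology computation.} Since $\widehat{\phi}^{\mathrm{c}}_x$ is strict, being a quasi-isomorphism is a statement about the linear part only. So it suffices to compute cohomology for the product chart.
\begin{itemize}
\item After completing along $U\times\{0\}$, the Koszul complex of $(E\oplus F, s\times\mathrm{id}_N)$ is the tensor product of the Koszul complex of $s$ with that of the coordinate functions $n$ on formal power series in $n$.
\item The latter complex is a resolution of $C^{\infty}(U)$. Hence restriction to $n=0$ is a quasi-isomorphism, and the completion is essential here.
\item On the de Rham side, the extra $N$-directions are either leafwise directions, which contribute contractible formal Poincaré factors, or are transverse and unchanged.
\end{itemize}
A Künneth/filtration spectral sequence argument in the $n$-adic filtration then shows that $\widehat{\phi}^{\mathrm{c}}_x$ induces an isomorphism on cohomology.

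\textbf{Main obstacle.} I expect the hardest part to be the de Rham/augmentation side: controlling how the kernel foliation of $\beta'$ restricts to $\phi(W_x)$, and checking that the recursively defined augmentation of Proposition \ref{augomega} is compatible with the pullback and with the filtration argument. This is presumably exactly where Condition \ref{addcond} enters. My first attempt would be to use it to choose the normal coordinates $N$ so that $\beta'=\phi^{*}$-extension of $\beta$ plus a term that is either nondegenerate or identically zero along $N$. That choice would split the foliated complex as a product.
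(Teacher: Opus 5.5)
\paragraph{Comparison with the paper's route.} Your argument is correct in substance but follows a different route from the paper. The paper never writes down a map $\mathcal{C}'_{\phi(x),\phi}\to\mathcal{C}_x$ directly. Instead it does three things:
\begin{enumerate}
\item It builds an \emph{extension} map $\widehat{\eta}_x:\mathcal{C}_x\to\mathcal{C}'_{\phi(x),\phi}$, pulling back along a retraction $\pi:U'\to\phi(U)$ and, on forms, through an auxiliary embedding $\widetilde{i}$ of cotangent bundles.
\item It shows that $\widehat{\eta}_{x,1}$ is injective with acyclic cokernel. This uses a double complex over $E'=\pi^*\widetilde{\phi}(E)\oplus E^c$, whose columns are handled by a regular-sequence argument, Lemma \ref{lemab}, and a Taylor-expansion surjectivity argument in degree $0$.
\item It defines $\widehat{\phi}_x$ as a homotopy inverse of $\widehat{\eta}_x$ via the Whitehead theorem (Theorem \ref{wht}).
\end{enumerate}

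Your restriction map is a left inverse of $\widehat{\eta}_{x,1}$, so by two-out-of-three both routes prove the same fact. Yours buys two things. First, it gives an explicit strict morphism in the required direction, with no homotopy-inverse ambiguity. Second, it replaces the cokernel computation by a conceptual statement: writing $K(\cdot)$ for Koszul complexes, the completed Koszul complex of $s\times\mathrm{id}_N$ is $K(s)[[n]]\otimes K(n)$, and $K(n)$ resolves $C^\infty(W_x)$ because $n_1,\dots,n_r$ is a regular sequence in $C^\infty(W_x)[[n]]$.

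In effect you prove the product case (Lemma \ref{phxv}) first and reduce to it. The paper goes the other way, deducing Lemma \ref{phxv} from Proposition \ref{afec}. What the paper's route buys in exchange is that it works with the given $s'$ and $W'$ without reparametrizing.

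\paragraph{Two details missing from your Step 2.}
\begin{itemize}
\item After straightening $s'_2$ to $n$, you still have $s'_1(u,n)=s(u)+\sum_i n_iA_i(u,n)$. The unipotent frame change $(e,f)\mapsto(e-A(u,n)f,f)$ removes this. It does not affect the $E^*$-component, so it commutes with your restriction map.
\item The normal form must hold on a neighbourhood of all of $\phi(W_x)$, not just near $\phi(x)$, because the completion is taken along the whole image.
\end{itemize}

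\paragraph{The role of Condition \ref{addcond}.} That last point is where Condition \ref{addcond} actually enters, and you have misidentified its role: it has nothing to do with two-forms. Item (iv) is the vanishing of the complementary component $s'_2$ on $\phi(U)$. Item (v) is invertibility of $\partial_n s'_2$ at every point of $\phi(W_x)$, not only at $\phi(x)$. These are exactly what your implicit-function step needs.

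\paragraph{The de Rham side.} There is no obstacle here. An FOOO chart is regarded as an $L_\infty$-Kuranishi chart by giving its base the \emph{zero} two-form. Consequently:
\begin{itemize}
\item $T\mathcal{F}=TU$ and $T\mathcal{F}'=TU'$, so pullback of forms is defined.
\item The higher brackets vanish (Lemma \ref{liiifdr}(iii)).
\item After augmentation both sides are acyclic (Corollary \ref{corpoinc}), so any chain map between them is a quasi-isomorphism.
\end{itemize}
This is how the paper disposes of this component. Your K\"unneth/filtration argument there is unnecessary, and the proposed splitting of $\beta'$ along $N$ should be dropped.
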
 

A \textit{coordinate change} of $L_{\infty}$-Kuranishi charts provides a main example of embedding: For two points $p, q \in X$ with $\mathrm{Im}\psi_p \cap \mathrm{Im}\psi_q \neq \emptyset,$ we define their \textit{coordinate change} $\Phi_{pq} : \mathcal{U}_p \rightarrow \mathcal{U}_q$ by a tuple
\[
\Phi_{pq} := \left(U_{pq}, \phi_{pq}, \left\{\widehat{\phi}_{pq,x}\right\}\right),
\]
where $U_{pq} \subset U_p$ is an open submanifold, and
\[
\left(\phi_{pq}, \widehat{\phi}_{pq}\right) : \mathcal{U}_p|_{U_{pq}} \rightarrow \mathcal{U}_q
\]
is an embedding of $L_{\infty}$-Kuranishi charts from $\mathcal{U}_p|_{U_{pq}},$ that is, the chart restricted to $U_{pq}.$ They are required to satisfy:

\begin{enumerate}[label = (\roman*)]
\item $\Phi_{pp} = \mathrm{id}_{\mathcal{U}_p},$
\item $\psi_q \circ \phi_{pq} = \psi_p$ on $s_{p}^{-1}(0) \cap U_{pq},$
\item $\phi_{qr} \circ \phi_{pq} = \phi_{pr}$ on $s_p^{-1}(0) \cap \phi_{qr}^{-1}(U_{pq}) \cap U_{pr},$
\item $\psi_p\big(s_p^{-1}(0) \cap U_{pq}\big) =\mathrm{Im} \psi_p \cap\mathrm{Im} \psi_q,$
\end{enumerate}

A pair of the compact topological space $X$ and a collection of Kuranishi charts with coordinate changes
\[
\left(X, \widehat{\mathcal{U}}\right),
\]
where $ \widehat{\mathcal{U}}:=\left(\left\{\widehat{\mathcal{U}}_p\right\}, \left\{\Phi_{pq}\right\}\right),$ is called an $L_{\infty}$\textit{-Kuranishi atlas}. For technical reasons, we assume that $\max\limits_{p \in X} \dim U_p < \infty$ together with the compactness of $X$.

\begin{rem}
Contrary to the existing definition of Kuranishi spaces, the above cocycle condition does not include the bundle component. This reflects the flexibility of our construction, where we can take advantage of higher homotopical notions for $L_{\infty}[1]$-algebras. By virtue of the conceptual bypass provided by higher homotopy theory, the corresponding higher cocycle condition can be formulated and ensured to hold by making a choice of homotopy data. A more detailed discussion is given in \cite{Kim3}.
\end{rem}

\subsection{Definition of $L_{\infty}$-Kuranishi spaces and categorical structures}

Two atlases are said to be equivalent, denoted by $\widehat{\mathcal{U}} \sim \widehat{\mathcal{U}}'$ if we have an \textit{equality between the expansions of  atlases,}
\[
\widehat{\mathcal{U}}^0_1 \times \mathbb{R}^{m_1} = \widehat{\mathcal{U}}^0_2 \times \mathbb{R}^{m_2},
\]
for some $m_1, m_2 \geq 0$ and for the restrictions to some open subsets $\widehat{\mathcal{U}}^0 = \widehat{\mathcal{U}}|_{U^0 \subset U}$ and $\widehat{\mathcal{U}}^{'0} = \widehat{\mathcal{U}}'|_{U^{'0} \subset U'},$ whose precise meaning is given in Definition \ref{eqats}. By (iii) of Lemma \ref{gggadbbbb}, the relation $\sim$ defines an equivalence relation. 

Then we define an $L_{\infty}$-\textit{Kuranishi space} to be an equivalence class with respect to $\sim$
\[
\mathfrak{X} := \left(X, \left[\widehat{\mathcal{U}}\right]\right).
\]

Let $\mathfrak{X} = \left(X, \left[\widehat{\underline{\mathcal{U}}}\right]\right)$ and $\mathfrak{Y} = \left(Y,\left[\underline{\widehat{\mathcal{U}}}'\right]\right)$ be $L_{\infty}$-Kuranishi spaces.
We consider two atlases $\left(X, \widehat{\mathcal{U}}\right)$ and $\left(X', \widehat{\mathcal{U}}'\right)$ such that $\widehat{\mathcal{U}} \sim \underline{\widehat{\mathcal{U}}}$ and $\widehat{\mathcal{U}}' \sim \underline{\widehat{\mathcal{U}}}'$ with $\widehat{\mathcal{U}} = \left(\{\widehat{\mathcal{U}}_p\}, \left\{\Phi_{pq}\right\}\right) $ and $\widehat{\mathcal{U}}' = \left(\{\widehat{\mathcal{U}}'_{p'}\}, \left\{\Phi'_{p'q'}\right\}\right).$

A \textit{pre-morphism} is defined by the following tuple
\[
\overline{F} := \left(\widehat{\mathcal{U}}, \widehat{\mathcal{U}}', f, \{f_p\}, \left\{\widehat{f}_{p,x}\right\}\right).
\]
Here $f : X \rightarrow Y$ is a continuous map between the zero loci, while $\left\{\left(f_p, \left\{\widehat{f}_{p,x}\right\}\right)\right\}$ is a collection of chart morphisms. Then $\overline{F}$ is required to satisfy the following compatibilities with respect to the coordinate changes $\Phi_{pq}=\left(\phi_{pq}, \left\{\widehat{\phi}_{pq,x}\right\}\right)$:
For $p,q \in X$ with Im$\psi_p \cap\mathrm{Im} \psi_q \neq \emptyset,$ we require
\begin{enumerate}[label = (\roman*)]
\item $\phi'_{f(p)f(q)} \circ f_p = f_q \circ \phi_{pq}$ on $s_{p}^{-1}(0) \cap U_{pq},$
\item $\widehat{\phi}_{pq,x} \circ \widehat{f}_{q, \phi_{pq}(x)} = \widehat{f}_{p,x} \circ \widehat{\phi}'_{f(p)f(q),f_p(x)}$ for each $x \in s_p^{-1}(0) \cap U_{pq}$ up to $L_{\infty}[1]$-homotopy.
\end{enumerate}

We can define an equivalence relation on the set of pre-morphisms as in Definition \ref{defmoreq} and Lemma \ref{hgjgkg}. We define a \textit{morphism} from $\mathfrak{X} = \left(X, \left[\widehat{\mathcal{U}}\right]\right)$ to $\mathfrak{X}' = \left(X', \left[\widehat{\mathcal{U}}'\right]\right)$
by an equivalence class of a pre-morphism $\overline{F}$ from $\mathfrak{X}$ to $\mathfrak{X}':$
\[F := \left[\overline{F}\right] : \mathfrak{X} \rightarrow \mathfrak{X}'.\]

Let $\mathfrak{X} = \left(X, \left[\widehat{\mathcal{U}}\right]\right),$ $\mathfrak{X}' = \left(X', \left[\widehat{\mathcal{U}'}\right]\right),$ and $\mathfrak{X}'' = \left(X'', \left[\widehat{\mathcal{U}''}\right]\right)$ be Kuranishi spaces. Let $F : \mathfrak{X} \rightarrow \mathfrak{X}'$ and $G : \mathfrak{X}' \rightarrow \mathfrak{X}''$ be morphisms between them, represented by pre-morphisms
\begin{equation}\nonumber
\begin{cases}
\overline{F} = \left(\widehat{\mathcal{U}}, \widehat{\mathcal{U}'}, f, \left\{f_{p}\right\}, \left\{\widehat{f}_{p,x}\right\}\right),\\
\overline{G} = \left(\underline{\widehat{\mathcal{U}'}}, \widehat{\mathcal{U}''}, g, \left\{g_{f(p)}\right\}, \left\{\widehat{g}_{f(p),y}\right\}\right),
\end{cases}
\end{equation}
respectively with $[ \widehat{\mathcal{U}'}] = [\underline{\widehat{\mathcal{U}'}}]$.

In fact, we may assume that ${\widehat{\mathcal{U}'}} = \underline{\widehat{\mathcal{U}'}}$ and that $f_{p}$ is surjecive for each $p$; if not, we take the \textit{extension} of the pre-morphisms having equivalent charts as components (cf. (\ref{extfi}) and Definition \ref{afjfjggg}). We then define the \textit{composition} $G \circ F$ to be the following equivalence class:
\begin{equation}\label{morcom}
G \circ F := \left[\left(\widehat{\mathcal{U}}, \widehat{\mathcal{U}}'', g\circ f, \left\{{g}_{f(p)} \circ {f}_{p}\right\}, \left\{{\widehat{f}}_{p,x} \circ {\widehat{g}}_{f(p),f_p(x)}\right\} \right)\right].
\end{equation}

Then it turns out that the composition is well-defined and associative with the identity given by
\begin{equation}\label{idxm}
\mathrm{id}_{\mathfrak{X}}:= \left[\left(\widehat{\mathcal{U}}, \widehat{\mathcal{U}}, \mathrm{id}_X, \left\{\mathrm{id}_p\right\}, \left\{\widehat{\mathrm{id}}_{p,x}\right\}\right)\right]
\end{equation}
of each $\mathfrak{X} = \left(X, \left[\widehat{\mathcal{U}}\right]\right).$ We then obtain a category denoted by $\mathbf{Kur}$ that consists of:
\begin{equation}\nonumber
\begin{cases}
\text{Ob}(\textbf{Kur}) =\{L_{\infty}\text{-Kuranishi spaces}\}\\
\text{Mor}(\textbf{Kur}) = \{\text{Equivalence classes of pre-morphisms} \}.
\end{cases}
\end{equation}
Indeed, $\mathbf{Kur}$ contains $\mathbf{Man}$, the category of smooth manifolds as a subcategory, allowing us to treat Kuranishi spaces and smooth manifolds on equal footing.
\begin{thm}
$L_{\infty}$-Kuranishi spaces form a category that naturally admits an embedding from the category of smooth manifolds.
\end{thm}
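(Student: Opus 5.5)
The statement has two parts: that $\mathbf{Kur}$ is a category, and that $\mathbf{Man}$ embeds into it. I would prove them in that order.

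\textbf{Step 1: well-definedness of composition.} I would first show that the composition (\ref{morcom}) does not depend on the chosen representatives. Suppose $\overline{F}\sim\overline{F}'$ and $\overline{G}\sim\overline{G}'$ in the sense of Definition \ref{defmoreq}. Using Lemma \ref{gggadbbbb}, I pass to common expansions $\widehat{\mathcal{U}}^0\times\mathbb{R}^m$ of the atlases involved. Using the extension construction (Definition \ref{afjfjggg}), I arrange that the middle atlases agree and that each $f_p$ is surjective.

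Next I check that the composite tuple is again a pre-morphism.
\begin{enumerate}[label=(\alph*)]
\item Condition (i) follows by concatenating the two commutative squares of manifold maps.
\item Condition (ii) follows by pasting the two squares of $L_\infty[1]$-morphisms. The pasting needs two facts: homotopy of $L_\infty[1]$-morphisms is an equivalence relation, and it is compatible with pre- and post-composition.
\item The factorization through the completions $\mathcal{C}''_{g f(x),gf}$ follows from functoriality of the completion. Concretely, $\widehat{\varepsilon}$ for $g\circ f$ factors through $\widehat{\varepsilon}$ for $g$, which gives the required $\widehat{(gf)}^{\mathrm c}$ via Lemma \ref{vecpf}.
\end{enumerate}
Finally, I show that replacing $\overline{F}$ by an equivalent pre-morphism changes the composite only by that same kind of equivalence, so the class $G\circ F$ is well defined.

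\textbf{Step 2: associativity and identities.} On representatives with matched middle atlases, associativity is strict. For the manifold maps this is associativity of composition of maps. For the $L_\infty[1]$-morphisms it is associativity of composition of $L_\infty[1]$-morphisms, because the order reverses consistently:
\[
\widehat f_{p,x}\circ\widehat g_{f(p),f_p(x)}\circ\widehat h_{gf(p),g_{f(p)}f_p(x)} .
\]
The only care needed is to check that extensions made for $(G,F)$ and for $(H,G)$ can be made simultaneously; a common refinement via Lemma \ref{gggadbbbb}(iii) handles this. For the identity (\ref{idxm}), $\widehat{\mathrm{id}}_{p,x}$ is the strict identity composed with $\widehat\varepsilon$, which is the identity since the completion at the whole chart is trivial. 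Hence $\mathrm{id}\circ F$ and $F\circ\mathrm{id}$ coincide with $F$ on representatives.

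\textbf{Step 3: the embedding of $\mathbf{Man}$.} To a manifold $M$ I assign the one-chart atlas $(M,0,\{1\},0,\mathrm{id})$ with $\beta=0$. Then the stratification is the single stratum $M$, the foliation is all of $TM$ on each ball $W_x$, and
\[
\mathcal{C}_x=\Omega^{\bullet+1}_{\mathrm{aug}}(W_x),
\]
with the Koszul part equal to $\mathbb{R}$ in degree $0$ (or trivial, depending on conventions). To a smooth map $h$ I assign the pre-morphism whose chart map is $h$ and whose algebra maps are the pullbacks $h^*$ on augmented de Rham complexes. These are strict $L_\infty[1]$-morphisms by Example \ref{gtyemb} and Proposition \ref{augomega}, and they factor through completions. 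Functoriality is immediate from $(h'h)^*=h^*h'^*$. Faithfulness holds because the underlying continuous map $h$ on zero loci is part of the data and is preserved by the equivalence relation.

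\textbf{Main obstacle.} I expect the hardest point to be Step 1: the equivalence relation on pre-morphisms involves expansions by $\mathbb{R}^m$ and restrictions, and the surjectivity-forcing extensions interact with these. My first attempt would be to prove a lemma that expansions commute with composition up to canonical identification, namely
\[
(\overline G\times\mathbb{R}^m)\circ(\overline F\times\mathbb{R}^m)=(\overline G\circ\overline F)\times\mathbb{R}^m .
\]
The key input is that $\Omega_{\mathrm{aug}}(\mathbb{R}^m)$ is quasi-isomorphic to $\mathbb{R}$ (Poincaré lemma on a contractible ball), and the homotopy pasting from Step 1 then finishes the reduction.
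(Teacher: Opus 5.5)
There is a genuine gap in Step 3, which is where the content of the embedding lies. Your one-chart atlas with base $M$ is not a Kuranishi atlas in the sense of Definition \ref{kstr}, because every chart base $U_p$ must be contractible. The paper uses that contractibility again to build the surjective extensions $\widetilde f_{i,p}$ on which composition in $\mathbf{Kur}$ depends. You need the point-indexed Euclidean atlas of Example \ref{man}, whose transition maps are $\phi_{pq}=\psi_q^{-1}\circ\psi_p$ with $L_\infty[1]$ components coming from Lemma \ref{itavsteai}. Then $\mathscr{F}(h)$ has chart maps $h_p=\psi_{h(p)}^{-1}\circ h\circ\psi_p$ with pullback components. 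You must check the pre-morphism compatibilities with the coordinate changes; the proof of Proposition \ref{pman} does this by showing that $\widehat\phi_{pq,x}\circ\widehat h_{q,\phi_{pq}(x)}=\widehat h_{p,x}\circ\widehat\phi'_{h(p)h(q),h_p(x)}$ holds on the nose. With a single chart this verification is vacuous, so it is simply missing from your argument.

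Functoriality is also not immediate from $(h'h)^*=h^*h'^*$. Composition in $\mathbf{Kur}$ is computed on expanded atlases $\widehat{\mathcal U}^0\times\mathbb{R}^{n}$, with surjective extensions and the quasi-isomorphisms $\widehat\pi^{\mathrm c}$ inserted. So $\mathscr F(h')\circ\mathscr F(h)$ is only equivalent to $\mathscr F(h'h)$, and condition (iii) of Definition \ref{defmoreq} requires a homotopy. The paper gets it because every map involved is a quasi-isomorphism between acyclic complexes (Corollary \ref{corpoinc}), and any two such maps are homotopic by Theorem \ref{anhp}.

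This is why Example \ref{man} takes $\mathcal C_{p,x}=\Omega^{\bullet+1}_{\mathrm{aug}}(W_x)$ alone. With a rank-zero bundle the Koszul part is $C^\infty(W_x)$ in degree $0$, not $\mathbb{R}$, and keeping it would destroy acyclicity. For the same reason, your claim that $\Omega_{\mathrm{aug}}(\mathbb{R}^m)$ is quasi-isomorphic to $\mathbb{R}$ is the wrong input. The augmentation is there to kill $H^0$, and acyclicity is exactly what makes the homotopy argument work.

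Finally, the reasoning in Step 1(c) runs backwards. Since $I_g\subseteq I_{gf}$, $\widehat\varepsilon_{gf}$ factors through $\widehat\varepsilon_g$, so factoring through $\widehat\varepsilon_{gf}$ is the stronger requirement. What actually saves you is your surjectivity arrangement: if $f_p$ is onto, then $\mathrm{Im}(g\circ f)=\mathrm{Im}(g)$ and the two completions coincide. The paper argues instead via the support-vanishing criterion of Lemma \ref{pcpc}.
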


\subsection{Outline of the paper}

The organization of this paper is as follows. In Section 2, we introduce the notion of $L_\infty$-Kuranishi charts. Section 3 defines $L_{\infty}$-Kuranishi spaces and establishes that they form a category with an appropriate notion of morphisms.  In Appendix A, we recall the theory of $L_{\infty}[1]$-homotopies and their homotopies developed in \cite{Kim2}. Finally, Appendix B studies V-algebras, providing the technical groundwork for the local $L_{\infty}[1]$-structures that appear in the earlier sections.

\subsection*{Acknowledgment} We thank Yong-Geun Oh for many helpful comments. We are also grateful to Sam Bardwell-Evans, Kenji Fukaya, Eunjung Jung, Adeel Khan, Young-Hoon Kiem, Jeongseok Oh, Hiroshi Ohta, Kaoru Ono, and Hyeonjun Park for stimulating and fruitful discussions.

\section{$L_\infty$-Kuranishi charts}

In this section, we propose a new chart-level theory for Kuranishi spaces. We equip the base of each chart with a closed two-form, which induces a stratification structure and a presymplectic neighborhood around each zero point. This additional structure allows us to construct local $L_{\infty}[1]$-algebras and a morphism between two charts. Our notion of chart embedding generalizes the embedding of FOOO, in the sense that their tangent bundle condition is reformulated as the quasi-isomorphism of the corresponding $L_{\infty}$-component.

\subsection{Definition of $L_\infty$-Kuranishi charts}

We present our definition of Kuranishi charts, emphasizing the key differences from existing approaches.

\begin{defn}[$L_\infty$-Kuranishi charts]\label{kurdef}
Let $X$ be a compact metrizable space. An $L_\infty$-\textit{Kuranishi chart} of $X$ is given by a tuple
\begin{equation}\nonumber
\mathcal{U} = (U, E, s, \Gamma, \psi),
\end{equation}
where
\begin{enumerate}
\item[--] $U = (U, \beta)$ consists of a smooth manifold together with a closed two-form $\beta \in Z^2(U).$
\item[--] $\pi : E \rightarrow U$ is a (finite rank) vector bundle.
\item[--] $s : U \rightarrow E$ is a smooth section.
\item[--] $\Gamma$ is a finite group acting on $U$ whose action restricts to the zero set of $s,$ i.e., $\Gamma \cdot s^{-1}(0) \subset s^{-1}(0).$
\item[--] $\psi : {s^{-1}(0)}/{\Gamma} \overset{\simeq}{\hookrightarrow} X$ is a homeomorphism onto its image.
\end{enumerate}
The \textit{dimension} of $\mathcal{U}$ is defined by $\dim \mathcal{U} := \dim U - \text{rk}\,E.$

We require that the chart $\mathcal{U}$ be endowed with the following structures:
\begin{itemize}
\item[--] $U$ is stratified by a decomposition into (not necessarily connected) submanifolds,
\begin{equation}\label{wstri}
U = \bigcup\limits_i \mathcal{S}_i,
\end{equation}
where each $\mathcal{S}_i$ is given by
\[
\mathcal{S}_i := \{x \in U \mid \text{rk} (\ker\beta_x) =i \}, \ 0 \leq i \leq \dim U.
\]
Here, each $\mathcal{S}_i$ is a submanifold equipped with a tubular neighborhood $N_i \supset \mathcal{S}_i$ and an associated projection $\pi_{i} : N_i \rightarrow \mathcal{S}_i.$

\item[--] To each $x \in s^{-1}(0),$ we assign:
\begin{enumerate}[label = (\roman*)]
\item A presymplectic open neighborhood $W_x$ of $x$ in $U$ with $W_x \simeq B^n,$
\item A local $L_{\infty}[1]$-algebra $\mathcal{C}_x,$
\end{enumerate}
which are described in detail below.
\end{itemize}

\noindent
(i) (\textit{The presymplectic open neighborhood $W_x$}) We associate an open contractible submanifold $W_x$ to each zero point $x \in s^{-1}(0).$ For a zero point $x \in s^{-1}(0) \cap \mathcal{S}_i$ for some $i$, we choose an open ball $\overset{\circ}{W}_x \subset \mathcal{S}_i$ containing $x.$ Let $\pi_i : N_i \rightarrow \mathcal{S}_i$ be the projection from the system of tubular neighborhoods described in Remark \ref{wsassm} below, restricting to $\pi_i : W_x \twoheadrightarrow \overset{\circ}{W}_x.$ For the inclusion ${\iota_x} : \overset{\circ}{W}_x {\hookrightarrow} U,$ we set $\beta|_{\overset{\circ}{W}_x} := \iota_x^*\beta.$ Note that
\[
d\beta|_{\overset{\circ}{W}_x} = d\iota_x^*\beta = \iota_x^* d\beta = 0,
\]
and that $\beta|_{\overset{\circ}{W}_x}$ has constant rank by construction. In other words, $\left(\overset{\circ}{W}_x, \beta|_{\overset{\circ}{W}_x}\right)$ is a presymplectic manifold. We then obtain another presymplectic manifold
\[
W_x = \left(W_x, \beta_{W_x}\right) := \left(\pi_i^{-1}\left(\overset{\circ}{W_x}\right), \pi^*_i\left(\beta|_{\overset{\circ}{W_x}}\right)\right),
\]
which we call the \textit{local presymplectic neighborhood of} $x \in s^{-1}(0).$ We write
\[
T\mathcal{F}_x := \ker \beta_{W_x}
\]
for the resulting regular foliation (i.e., one whose leaves all have the same dimension), and $T^*\mathcal{F}_x$ for its dual.

\begin{rem}\label{wsassm}
By \cite[Corollary 6.2]{KO}, there exists a residual subset of closed two-forms for which the decomposition (\ref{wstri}) holds. Moreover, the resulting stratification (\ref{wstri}) is a Whitney stratification (cf. \cite[Definition 6.5 \& Theorem 6.6]{KO}), which leads to the construction of \textit{stratified $L_{\infty}$-spaces.}
\end{rem}

\noindent
(ii) (\textit{The $L_{\infty}[1]$-algebra $\mathcal{C}_x$})
To each zero point $x \in s^{-1}(0),$ we associate a \textit{local $L_{\infty}[1]$-algebra}
\begin{equation}\nonumber 
\mathcal{C}_x := \overbrace{\bigwedge\nolimits^{-\bullet}\Gamma(E^*|_{W_x})}^{\text{Koszul}} \oplus \overbrace{\Omega^{\bullet + 1}_{\mathrm{aug}}(\mathcal{F}_x)}^{\text{de Rham}},
\end{equation}
consisting of two parts: a Koszul part and a de Rham part.

The Koszul part, $\bigwedge\nolimits^{-\bullet}\Gamma(E^*|_{W_x})$, is the Koszul complex
\[
0 \rightarrow \overbrace{\bigwedge\nolimits^{r}\Gamma(E^*|_{W_x})}^{\text{deg} = -r} \xrightarrow{\iota_{s|_{W_x}}} \cdots \xrightarrow{\iota_{s|_{W_x}}}  \overbrace{\Gamma(E^*|_{W_x})}^{\text{deg} = -1} \xrightarrow{\iota_{s|_{W_x}}}  \overbrace{C^{\infty}(W_x)}^{\text{deg} = 0} \rightarrow 0,
\]
with differential $l_1^{\mathrm{K}} := \iota_{s|_{W_x}}$ given by
\[
l_1^{\mathrm{K}} : a_1 \wedge \cdots \wedge a_m \mapsto \sum\limits_{i=1}^{m} (-1)^{i+1}a_i(s|_{W_x}) \cdot a_1 \wedge \cdots \wedge \widehat{a_i} \wedge \cdots \wedge a_m,
\]
and with all higher operations $l^{\mathrm{K}}_{k}$, $k \geq 2,$ set to zero.

The de Rham part, $\Omega^{\bullet + 1}_{\mathrm{aug}}(\mathcal{F}_x)$, is the augmented foliation de Rham complex shifted in degree by one, equipped with the $L_{\infty}[1]$-algebra structure $\{l^{\mathrm{dR}}_k\}_{k \geq 1}$ obtained from Definition \ref{obpoaug} and Proposition \ref{augomega}.

The $L_{\infty}[1]$-structure on $\mathcal{C}_x$ is then given by
\[
l_k : \mathcal{C}_x^{\otimes k} \rightarrow \mathcal{C}_x; \ l_k := l^{\mathrm{K}}_k \oplus l^{\mathrm{dR}}_k,
\]
where the direct sum notation indicates that the two components' operations are defined independently of one another. It is immediate that the family $\{l_k\}_{k \geq 1}$ satisfies the $L_{\infty}$-relations.
\end{defn}
\begin{lem}\label{lemdw}
For different choices of $\overset{\circ}{W}_x$, the resulting de Rham part $L_\infty[1]$-algebras are isomorphic.
\end{lem}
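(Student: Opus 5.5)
The plan is to show that any two choices of $\overset{\circ}{W}_x$ give presymplectic neighborhoods whose foliations are related by a foliated diffeomorphism, and then to pull back along it.

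\textbf{Setup.} Let $\overset{\circ}{W}_x$ and $\overset{\circ}{W}{}'_x$ be two open balls in $\mathcal{S}_i$ containing $x$. Both are contractible open submanifolds of the same stratum. The restrictions of $\beta$ to them are the pullbacks of one constant-rank closed form $\beta|_{\mathcal{S}_i}$. Hence the kernel foliations on the two balls are restrictions of a single regular foliation $\mathcal{F}_{\mathcal{S}_i}$ of $\mathcal{S}_i$.

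\textbf{Step 1: a foliated diffeomorphism of the balls.} I would construct a diffeomorphism $h : \overset{\circ}{W}_x \to \overset{\circ}{W}{}'_x$ with $h^*(\beta|_{\overset{\circ}{W}{}'_x}) = \beta|_{\overset{\circ}{W}_x}$. First shrink both to a small ball $B$ around $x$ that lies in a Darboux-type chart for the presymplectic form. In such a chart $\beta = \sum dp_j\wedge dq_j$ in coordinates $(p,q,z)$, and the leaves are the $z$-directions. Then produce a form-preserving diffeomorphism from each ball onto $B$. This is done by choosing the balls to be convex "box" neighborhoods in these coordinates and rescaling suitably in the $(p,q)$- and $z$-directions. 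A symplectic dilation of $(p,q)$ is not exact, so I would use instead a Moser-type flow of a vector field preserving $\beta$ and transverse to the boundary. Since the statement only asserts isomorphism, it is enough to produce a $\beta$-preserving diffeomorphism between the two balls, possibly after passing through $B$.

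\textbf{Step 2: lift to the tubular neighborhoods.} Since $\pi_i : N_i \to \mathcal{S}_i$ is a tubular neighborhood, $\pi_i^{-1}(\overset{\circ}{W}_x)$ and $\pi_i^{-1}(\overset{\circ}{W}{}'_x)$ are vector bundles over contractible balls, hence trivial. Therefore $h$ lifts to a bundle diffeomorphism $H : W_x \to W'_x$ with $\pi_i\circ H = h\circ \pi_i$. This gives
\[
H^*\beta_{W'_x} = H^*\pi_i^*\beta|_{\overset{\circ}{W}{}'_x} = \pi_i^* h^*\beta|_{\overset{\circ}{W}{}'_x} = \beta_{W_x},
\]
so $H$ carries $T\mathcal{F}_x$ onto $T\mathcal{F}'_x$.

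\textbf{Step 3: transfer the structure.} Pullback $H^*$ is a cochain isomorphism $\Omega^{\bullet+1}(\mathcal{F}'_x)\to\Omega^{\bullet+1}(\mathcal{F}_x)$ and is compatible with the augmentation. The operations $l_k^{\mathrm{dR}}$ of Example \ref{gtyemb} and Proposition \ref{augomega} are built naturally from the foliated de Rham structure and the two-form. So $H^*$ intertwines them and gives a strict $L_\infty[1]$-isomorphism. For the recursively defined higher operations on the augmentation, I would check naturality by induction along the recursion in Proposition \ref{augomega}, provided the choices in that recursion are made compatibly via $H$.

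\textbf{Main obstacle.} I expect the hardest step to be Step 1, producing a $\beta$-preserving diffeomorphism between two arbitrary open balls. The first thing I would try is the local normal form for constant-rank closed forms, reducing to standard boxes. If that fails for arbitrary balls, I would interpret "different choices" as choices within a fixed admissible class of Darboux balls, for which the isomorphism is explicit.
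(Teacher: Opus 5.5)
\textbf{The gap is in Step 1, and it cannot be repaired within your strategy.} In general there is no diffeomorphism $h$ from one choice of $\overset{\circ}{W}_x$ to another with $h^*\beta=\beta$, nor from either choice onto a common small ball $B$.

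Any such $h$ preserves $\ker\beta$, so it maps leaves to leaves. Since $\beta$ is basic, $h$ then descends to a symplectomorphism of the local leaf spaces. For Darboux balls or boxes the leaf spaces are discs, and their symplectic volume is therefore an invariant.

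Concrete obstructions:
\begin{itemize}
\item On the open stratum $\mathcal{S}_0$, where $\beta$ is symplectic, the integral $\int\beta^{\wedge m}$ over the ball ($2m=\dim\mathcal{S}_0$) is preserved. So two Darboux balls centered at $x$ with different radii are not symplectomorphic.
\item The same holds for Darboux boxes of different transverse size. Only the leaf directions can be rescaled freely.
\item Your Moser-type flow does not help. The condition $L_V\beta=0$ implies $L_V\beta^{\wedge m}=0$, so such a flow cannot carry a ball onto a strictly smaller one.
\end{itemize}
Steps 2 and 3 are fine in themselves, but they only ever produce strict isomorphisms of the form $H^*$. So the approach cannot reach the lemma as stated. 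Your fallback to a restricted class of Darboux balls proves a weaker statement, and it would also need all admissible balls to have the same transverse symplectic size.

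\textbf{The missing idea:} the isomorphism need not, and in general cannot, be a pullback along a $\beta$-preserving map. It should be allowed to have nonzero higher components that absorb the change of the form.

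The paper uses contractibility only to join the two choices by a smooth map $A:[0,1]\times B^n\to\mathcal{S}_i$. Then $\pi_x^*\bigl(\beta|_{A(t,B^n)}\bigr)$ is a smooth one-parameter family of constant-rank presymplectic forms, i.e.\ a smooth family of V-algebras with Maurer--Cartan elements. The paper then applies Corollary~\ref{indliso}. The Cattaneo--Sch\"{a}tz flow $U(t)$, solving $\frac{d}{dt}U(t)=M(t)\circ U(t)$, integrates the family to an $L_\infty[1]$-isomorphism $U(1)$. Its components $U^k(1)$ with $k\geq 2$ are generally nonzero.

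You can already see on $\mathcal{S}_0$ that presymplectomorphisms are the wrong target. There the non-augmented de Rham part is just $C^\infty(W_x)$ with $l_1=0$ and $l_2$ the Poisson bracket (up to sign). For two concentric Darboux balls, take a dilation $\phi$ between them with $\phi^*\beta=c\,\beta$. Then $f\mapsto c^{-1}\,f\circ\phi$ is a Lie algebra isomorphism, even though no symplectomorphism between the balls exists.
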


\begin{proof}
Given another choice $\overset{\circ}{W}_x'$ in $\mathcal{S}_i,$ we may connect it to $\overset{\circ}{W}_x$ using their contractibility. That is, there exists a smooth map
\[
A : [0,1] \times B^n \longrightarrow \mathcal{S}_i,
\]
satisfying
\[
A(0, B^n) = \overset{\circ}{W}_x'; \ A(1, B^n) = \overset{\circ}{W}_x.
\]
This yields a 1-parameter family of presymplectic forms of constant rank,
\[
\pi_x^* \left(\beta \big|_{A(t,B^n)}\right),
\]
interpolating between $\pi_x^*\left(\beta|_{\overset{\circ}{W}_x'}\right)$ and $\pi_x^*\left(\beta|_{\overset{\circ}{W}_x}\right).$ By Corollary \ref{indliso}, we obtain an $L_\infty[1]$-isomorphism
\[
\Omega^{\bullet+1} \left(\mathcal{F}_x \left( \pi_x^*(\beta|_{\overset{\circ}{W}_x'})\right)\right) \xrightarrow{\cong} \Omega^{\bullet+1} \left(\mathcal{F}_x \left(\pi_x^*(\beta|_{\overset{\circ}{W}_x})\right)\right),
\]
where $\Omega^{\bullet+1}\left(\mathcal{F}_x(\cdots)\right)$ denotes the foliation de Rham complex determined by the presymplectic form ($\cdots$).
\end{proof}

For later use, we fix the following auxiliary choices.
\begin{choice}\label{3choi}
For each $x \in s^{-1}(0),$ we choose:
\begin{enumerate}
\item[--] a decomposition $TW_x \simeq T\mathcal{F}_x \oplus G_x$ for some sub-vector bundle $G_x,$
\item[--] global coordinates on $W_x \simeq B^n$, written as $(y_1, \cdots, y_{n-k}, q_1, \cdots, q_k),$ where $\frac{\partial}{\partial q_1}, \cdots, \frac{\partial}{\partial q_k} \in \Gamma(T\mathcal{F}_x),$
\item[--] a global orthonormal frame $(e_1, \cdots, e_r)$ of the trivial vector bundle $E|_{W_x}.$
\end{enumerate}
\end{choice}

Any chart naturally restricts to an open subset of its base, provided the group action is compatible with that subset:

\begin{defn}[Open subchart]
Let $\mathcal{U}= (U, E, s, \Gamma, \psi)$ be an $L_{\infty}$-Kuranishi chart of $X$ and $U_0 \subset U$ an open subset with $\Gamma \cdot U_0 \subset U_0.$ Then the restricted tuple
\[
\mathcal{U}|_{U_0}= (U_0, E|_{U_{0}}, s|_{U_{0}}, \Gamma, \psi |_{(U_0 \cap s^{-1}(0))/\Gamma})
\]
canonically determines an $L_{\infty}$-Kuranishi chart, called the \textit{open subchart} of $\mathcal{U}$ on $U_0.$
\end{defn}

The chart morphisms we introduce shortly require the notion of completed algebras.
\begin{defn}[Completed algebras]\label{ladef}
Given a smooth map $\phi: V \rightarrow U,$ we define the \textit{completion of} $\mathcal{C}_{x}$ by
\begin{equation}\nonumber
\mathcal{C}_{x, \phi} := \left(\bigwedge\nolimits^{-\bullet}\Gamma(E^*|_{W_x})\right)_{\phi} \oplus \Omega^{\bullet + 1}_{\mathrm{aug}, \phi}(\mathcal{F}_x),
\end{equation}
where the Koszul part is
\begin{equation}\label{kdzpt}
\left(\bigwedge\nolimits^{-\bullet}\Gamma(E^*|_{W_x})\right)_{\phi} :=  C^{\infty}_{\phi}(W_x) \otimes_{C^{\infty}(W_x)} \bigwedge\nolimits^{-\bullet}\Gamma(E^*|_{W_x}),
\end{equation}
and we take the inverse limit
\begin{equation}\label{ivlmt}
\begin{split}
C_{\phi}^{\infty}(W) &:= \lim_{\longleftarrow} C^{\infty}(W)^{(j)}\\
&= \lim\limits_{\longleftarrow} \left\{ \cdots \xrightarrow{p_{3,2}}
C^{\infty}(W)^{(2)} \xrightarrow{p_{2,1}}
C^{\infty}(W)^{(1)} \right\},
\end{split}
\end{equation}
where $p_{j+1,j} : C^{\infty}\left(W'_{\phi(x)}\right)^{(j+1)} \rightarrow  C^{\infty}\left(W'_{\phi(x)}\right)^{(j)}$ denotes the canonical projection for each $j \geq 1.$

This is the Koszul complex with completion,
\[
0 \rightarrow  \left(\bigwedge\nolimits^{r}\Gamma(E^*|_{W_x})\right)_{\phi} \xrightarrow{l_1^{\mathrm{K},\phi}} \cdots \xrightarrow{l_1^{\mathrm{K},\phi}} \Gamma(E^*|_{W_x})_{\phi}  \rightarrow C^{\infty}(W_x)_{\phi}  \rightarrow 0.
\]
Its $L_{\infty}[1]$-structure
\[
l^{\mathrm{K}, \phi}_k: {\left(\bigwedge\nolimits^{i}\Gamma(E^*|_{W_x})\right)_{\phi}}^{\otimes k} \rightarrow \left(\bigwedge\nolimits^{i-1}\Gamma(E^*|_{W_x})\right)_{\phi}
\]
is defined, for each $1 \leq i \leq r$, as follows. For $j\geq 1,$ $h \in C^{\infty}(W_x)^{(j)},$ and $a \in \bigwedge\nolimits^{-\bullet}\Gamma(E^*|_{W_x}),$ we set
\[
(j) : l^{\mathrm{K}, \phi}_1(h \otimes a) := [1]_{j-2} \otimes \iota_{s|_{W_x}}(\widetilde{h} a),
\]
where $\widetilde{h}$ is a representative in $C^{\infty}(W_x)$ satisfying $h = \widetilde{h} + I_{\phi}^j,$ and $[1]_{j-2} := 0$ by definition for $j \leq 2.$ All higher operations $l_{k}^{\mathrm{K}, \phi}$, $k \geq 2,$ are set to zero, so the $L_{\infty}$-relations for $\{l^{\mathrm{K}}_k\}_{k \geq 1}$ hold trivially.

The well-definedness of $l^{\mathrm{K}, \phi}_k$ requires verifying two conditions: (i) independence of the choice of representative $\widetilde{h},$ and (ii) compatibility with the choice of $(j)$. Both follow directly from the fact that $\iota_{s|_{W_x}}$ respects the restriction maps of sections; the verification is analogous to that in the proof of Lemma \ref{vlocp}, and we leave it to the reader.

The de Rham part $\Omega^{\bullet + 1}_{\mathrm{aug}, \phi}(\mathcal{F}_x)$ is the completed foliation de Rham complex \textit{with augmentation}, given by
\[
\Omega^{\bullet + 1}_{\mathrm{aug}, \phi}(\mathcal{F}_x) := \overbrace{\Omega^{\bullet + 1}(\mathcal{F}_x)_{\phi}}^{\deg \geq -1} \oplus \overbrace{\left(C^{\infty}(W_x)_{\mathcal{F}_x}\right)_{\phi}}^{\deg = -2},
\]
where
\[
\begin{cases}
\Omega^{\bullet + 1}(\mathcal{F}_x)_{\phi} := C_{\phi}^{\infty}(W_x) \otimes_{C^{\infty}(W_x)} \Omega^{\bullet + 1}(\mathcal{F}_x),\\
\big(C^{\infty}(W_x)_{\mathcal{F}_x}\big)_{\phi} : = \ker \big(l_1^{\mathrm{dR}} : \Omega^{-1}(\mathcal{F}_x)[1]_{\phi} \rightarrow \Omega^{0}(\mathcal{F}_x)[1]_{\phi}\big).
\end{cases}
\]
The de Rham part $L_{\infty}[1]$-structure $l_k^{\mathrm{dR}, \phi}$ is obtained by applying Proposition \ref{augomega} to the $L_{\infty}[1]$-algebra $\Omega^{\bullet +1}(\mathcal{F}_x)_{\phi}$ from Definition \ref{defldrc}.

Finally, $\mathcal{C}_{x,\phi}$ together with $\left\{l^{\phi}_k := l^{\mathrm{K}, \phi}_k \oplus l^{\mathrm{dR}, \phi}_k\right\}$ forms an $L_{\infty}[1]$-algebra, and the $L_{\infty}[1]$-relations can be verified in a straightforward manner.
\end{defn}

\begin{lem}
We have $C^{\infty}(W_x)$-module isomorphisms
\[
\begin{split}
\left(\bigwedge\nolimits^{-\bullet}\Gamma(E^*|_{W_x})\right)_{\phi} &\simeq \lim_{\longleftarrow} \left(C^{\infty}(W)^{(j)} \otimes_{C^{\infty}(W_x)} \bigwedge\nolimits^{-\bullet}\Gamma(E^*|_{W_x})\right),\\
\Omega^{\bullet + 1}(\mathcal{F}_x)_{\phi} &\simeq
\lim_{\longleftarrow} \left(C^{\infty}(W)^{(j)} \otimes_{C^{\infty}(W_x)} \Omega^{\bullet + 1}(\mathcal{F}_x)\right).
\end{split}
\]
That is, for the completions above, we may equivalently consider the inverse systems
\[
\cdots \xrightarrow{p_{3,2} \otimes \mathrm{id}_{(\cdots)}}
C^{\infty}(W)^{(2)} \otimes \bigwedge\nolimits^{-\bullet}\Gamma(E^*|_{W_x}) \xrightarrow{p_{2,1} \otimes \mathrm{id}_{(\cdots)}}
C^{\infty}(W)^{(1)} \otimes \bigwedge\nolimits^{-\bullet}\Gamma(E^*|_{W_x})
\]
and
\[
\cdots \xrightarrow{p_{3,2} \otimes \mathrm{id}_{(\cdots)}}
C^{\infty}(W)^{(2)} \otimes \Omega^{\bullet + 1}(\mathcal{F}_x) \xrightarrow{p_{2,1} \otimes \mathrm{id}_{(\cdots)}}
C^{\infty}(W)^{(1)} \otimes \Omega^{\bullet + 1}(\mathcal{F}_x).
\]
\end{lem}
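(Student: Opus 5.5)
The key point is that both modules being completed, $M = \bigwedge^{-\bullet}\Gamma(E^*|_{W_x})$ and $M = \Omega^{\bullet+1}(\mathcal{F}_x)$, are finitely generated free $C^{\infty}(W_x)$-modules in each degree. Tensor product with a finite free module commutes with inverse limits, and that is the whole mechanism of the proof.

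\textbf{Freeness.} Write $A = C^{\infty}(W_x)$ and use Choice \ref{3choi}.
\begin{itemize}
\item[--] With the orthonormal frame $(e_1,\dots,e_r)$ of $E|_{W_x}$ and its dual coframe $(e^1,\dots,e^r)$, the module $\bigwedge^{m}\Gamma(E^*|_{W_x})$ is free over $A$ with basis $e^{i_1}\wedge\cdots\wedge e^{i_m}$, $i_1<\cdots<i_m$.
\item[--] With the global coordinates $(y,q)$, in which $\partial/\partial q_1,\dots,\partial/\partial q_k$ frame $T\mathcal{F}_x$, the forms $dq_1,\dots,dq_k$ restrict to a global frame of $T^*\mathcal{F}_x$. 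Hence $\Omega^{m}(\mathcal{F}_x)=\Gamma(\bigwedge^m T^*\mathcal{F}_x)$ is free over $A$ with basis $dq_{j_1}\wedge\cdots\wedge dq_{j_m}$.
\end{itemize}
So in each degree $M \cong A^{N}$ for some finite $N$, with $N$ depending on the degree.

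\textbf{Commuting tensor with the limit.} For any $A$-module $B$, the basis gives $B\otimes_A A^N \cong B^N$, naturally in $B$. Apply this to $B = C^{\infty}_{\phi}(W_x)=\varprojlim_j C^{\infty}(W_x)^{(j)}$ and to each $B = C^{\infty}(W_x)^{(j)}$. Since finite products commute with inverse limits,
\[
C^{\infty}_{\phi}(W_x)\otimes_A M \cong \Big(\varprojlim_j C^{\infty}(W_x)^{(j)}\Big)^{N} \cong \varprojlim_j \big(C^{\infty}(W_x)^{(j)}\big)^{N} \cong \varprojlim_j \big(C^{\infty}(W_x)^{(j)}\otimes_A M\big).
\]
Naturality in $B$ makes the transition maps of the last system exactly $p_{j+1,j}\otimes \mathrm{id}_M$. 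All maps are $A$-linear, so this is an isomorphism of $C^{\infty}(W_x)$-modules.

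\textbf{Basis-independence and compatibility with the structure.} The composite isomorphism is the canonical map sending $(h_j)_j\otimes a$ to $(h_j\otimes a)_j$. This map is defined without any basis; the basis is used only to show it is bijective, so the isomorphism does not depend on Choice \ref{3choi}. Because the map is canonical, it intertwines $l_1^{\mathrm{K},\phi}$ and $l_k^{\mathrm{dR},\phi}$, applied levelwise, with the operations on the limit.

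\textbf{Main obstacle.} The only real point is the finite freeness of $\Omega^{\bullet}(\mathcal{F}_x)$. This requires that $T^*\mathcal{F}_x$ is trivial on $W_x \simeq B^n$, which holds because $W_x$ is contractible, or directly from the adapted coordinates in Choice \ref{3choi}. Without finite generation, tensor products need not commute with inverse limits.
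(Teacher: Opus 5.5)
Your proposal is correct and follows the same route as the paper. The paper's proof simply notes that $\Gamma(E^*|_{W_x})$ and $\Omega^{\bullet+1}(\mathcal{F}_x)$ are free of finite rank (hence flat and finitely presented) over $C^{\infty}(W_x)$, so tensoring commutes with the inverse limit; you make the same mechanism explicit via $B\otimes_A A^N\cong B^N$ and the commutation of finite products with inverse limits.
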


\begin{proof}
We have isomorphisms
\[
\begin{split}
\left(\lim_{\longleftarrow} C^{\infty}(W)^{(j)}\right) \otimes \bigwedge\nolimits^{-\bullet}\Gamma(E^*|_{W_x}) &\simeq \lim_{\longleftarrow} \left(C^{\infty}(W)^{(j)} \otimes \bigwedge\nolimits^{-\bullet}\Gamma(E^*|_{W_x})\right),\\
\left(\lim_{\longleftarrow} C^{\infty}(W)^{(j)}\right) \otimes \Omega^{\bullet + 1}(\mathcal{F}_x) &\simeq \lim_{\longleftarrow} \left(C^{\infty}(W)^{(j)} \otimes \Omega^{\bullet + 1}(\mathcal{F}_x)\right),
\end{split}
\]
since $\Gamma(E^*|_{W_x})$ and $\Omega^{\bullet + 1}(\mathcal{F}_x)$ are flat, finitely presented $C^{\infty}(W_x)$-modules. (Indeed, they are free modules with finite bases.)
\end{proof}

Given a local algebra $\mathcal{C}_x,$ there is a natural map to its completion. For each $k \geq 1$, we define
\begin{equation}\label{varep}
\widehat{\varepsilon}_{\phi(x), \phi,k} : \mathcal{C}_x^{\otimes k} \rightarrow \mathcal{C}_{x, \phi}
\end{equation}
by
\begin{equation}\nonumber
\widehat{\varepsilon}_{\phi(x), \phi, k}\big((a_1, \xi_1), \cdots, (a_k,\xi_k)\big) := \begin{cases}
1 \otimes (a_1, \xi_1) = (1 \otimes a_1, 1 \otimes \xi_1) &\text{ if } k = 1,\\
0 &\text{ if } k \geq 2,
\end{cases}
\end{equation}
and set $\widehat{\varepsilon}_{\phi(x), \phi} := \left\{\widehat{\varepsilon}_{\phi(x), \phi,k}\right\}_{k \geq 1}.$

\begin{lem}\label{vecpf}
$\widehat{\varepsilon}_{\phi(x), \phi}$ is an $L_{\infty}[1]$-morphism.
\end{lem}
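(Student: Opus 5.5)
Since $\widehat{\varepsilon}_{\phi(x),\phi}$ is \emph{strict}, with only the linear component $\widehat{\varepsilon}_{\phi(x),\phi,1}$ nonzero, the $L_{\infty}[1]$-morphism relations collapse. For each $k\geq 1$ the relation reads
\[
\sum_{i+j=k+1}\ \sum_{\sigma} \pm\, \widehat{\varepsilon}_{i}\big(l_j(\cdots)\otimes\cdots\big) \;=\; \sum_{r}\ \sum_{k_1+\cdots+k_r=k} \pm\, l^{\phi}_r\big(\widehat{\varepsilon}_{k_1}(\cdots)\otimes\cdots\otimes\widehat{\varepsilon}_{k_r}(\cdots)\big).
\]
With only $\widehat{\varepsilon}_1$ nonzero, this becomes
\[
\widehat{\varepsilon}_1\circ l_k \;=\; l^{\phi}_k\circ \widehat{\varepsilon}_1^{\otimes k}
\]
for every $k$. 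We verify this identity separately on the Koszul and de Rham summands, since both $l_k=l_k^{\mathrm{K}}\oplus l_k^{\mathrm{dR}}$ and $\widehat{\varepsilon}_1$ preserve the direct sum decomposition. Here $\widehat{\varepsilon}_1$ is $a\mapsto 1\otimes a$, and we must also check it is degree-preserving; this holds because tensoring with $C^\infty_\phi(W_x)$ in degree $0$ does not shift degree.

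\textbf{Koszul part.} For $k\geq 2$ both sides vanish, since $l^{\mathrm{K}}_k=0=l^{\mathrm{K},\phi}_k$. For $k=1$ we need
\[
1\otimes \iota_{s}(a) \;=\; l^{\mathrm{K},\phi}_1(1\otimes a).
\]
Using the isomorphism of the preceding lemma, we compute level by level in the inverse system $C^\infty(W)^{(j)}\otimes \bigwedge^{-\bullet}\Gamma(E^*|_{W_x})$. The class of $1$ is represented by $\widetilde{h}=1$, so at level $j$ the defining formula gives $\iota_s(a)$ tensored with the image of $1$, compatibly with the projections $p_{j+1,j}$. The convention $[1]_{j-2}$ is a bookkeeping matter: we check that it agrees with the image of $1\otimes\iota_s(a)$ in the limit. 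The cleanest route is to observe that $l^{\mathrm{K},\phi}_1$ is, by construction, $\mathrm{id}_{C^\infty_\phi}\otimes \iota_s$, which is well defined precisely because $\iota_s$ is $C^\infty(W_x)$-linear. The identity then reduces to
\[
(\mathrm{id}\otimes\iota_s)(1\otimes a)=1\otimes\iota_s(a).
\]

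\textbf{De Rham part.} On $\Omega^{\bullet+1}(\mathcal F_x)$ the structure maps $l^{\mathrm{dR},\phi}_k$ on the completion are defined in Definition \ref{defldrc} as the base change of $l^{\mathrm{dR}}_k$ along $C^\infty(W_x)\to C^\infty_\phi(W_x)$. The identity $l^{\phi}_k(1\otimes\xi_1,\dots,1\otimes\xi_k)=1\otimes l_k(\xi_1,\dots,\xi_k)$ follows once we confirm that $1\otimes(-)$ intertwines the operations, including the multilinear higher brackets. On the augmentation summand in degree $-2$, $\widehat{\varepsilon}_1$ sends $\ker l_1^{\mathrm{dR}}$ into $\ker l_1^{\mathrm{dR},\phi}$ by the $k=1$ case. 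The recursively constructed extensions of Proposition \ref{augomega} are then natural with respect to this strict map, and we prove this by induction on the recursion step.

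\textbf{Main obstacle.} The expected difficulty is this naturality. The $l_k^{\mathrm{dR}}$ involve derivatives, and are only $\mathbb R$-multilinear rather than $C^\infty$-multilinear, so "base change" is not literally a tensor product of operations. We therefore need the definition of the completed brackets, through the V-algebra structure of Appendix B, to satisfy $l_k^\phi(1\otimes\xi_i)=1\otimes l_k(\xi_i)$ on elements of the form $1\otimes\xi$. We would verify this at each finite level $j$ of the inverse system, using that differential operators descend to the quotients $C^\infty(W)^{(j)}$ compatibly with the projections, as in Lemma \ref{vlocp}, and then pass to the inverse limit.
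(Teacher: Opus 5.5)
Your reduction to $\widehat{\varepsilon}_1\circ l_k=l^{\phi}_k\circ\widehat{\varepsilon}_1^{\otimes k}$, the Koszul check, and the non-augmented de Rham check are exactly the paper's proof. The ``main obstacle'' you anticipate does not arise: (\ref{lkphi}) with representatives $\widetilde{h}_i=1$ gives $l^{\phi}_k(1\otimes\xi_1,\dots,1\otimes\xi_k)=1\otimes l_k(\xi_1,\dots,\xi_k)$ at each level. Your $k=1$ check on the augmentation is also fine.

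The step that fails is the claim that the extensions of Proposition~\ref{augomega} are natural ``by induction on the recursion step''. Operations with a degree $-2$ input are not free choices. The $L_\infty[1]$-relations force them to be leafwise primitives of prescribed forms, e.g.\ $d_{\mathcal{F}}\,l_2(g,\xi)=-\bigl(l_2(\overline{g},\xi)+l_2(g,d_{\mathcal{F}}\xi)\bigr)$. Such primitives are nonlocal along leaves, whereas $\widehat{\varepsilon}_1$ kills every form supported away from $\mathrm{Im}\,\phi$.

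Here is a concrete failure. Let $W_x$ be a ball in $\mathbb{R}^3$ with coordinates $(y_1,y_2,q)$ and $\beta=dy_1\wedge dy_2$, so the leaves are the $q$-lines; take $G$ spanned by $\partial_{y_1},\partial_{y_2}$, so $R=0$. Let $g=y_1$ and $\xi=y_2\,\chi(q)\rho(y_1)\,dq$, where $\chi$ is a bump of integral one supported in a thin slab $0<q<\epsilon$.
\begin{itemize}
\item Then $d_{\mathcal{F}}\xi=0$, and $l_2(\overline{g},\xi)$ is a nonzero constant multiple of $\chi(q)\rho(y_1)\,dq$.
\item Hence \emph{every} admissible $l_2(g,\xi)$ jumps by a nonzero multiple of $\rho(y_1)$ across the slab along each leaf.
\item Take $\mathrm{Im}\,\phi$ to be a surface such as $\{y_1=q^2-c\}$ (for suitable $c$ and $\rho$) that meets these leaves on both sides of the slab but misses $\mathrm{supp}\,\xi$.
\item Then $\widehat{\varepsilon}_1(\xi)=0$, so $l^{\phi}_2(\widehat{\varepsilon}_1 g,\widehat{\varepsilon}_1\xi)=0$.
\item But the level-one component of $\widehat{\varepsilon}_1\bigl(l_2(g,\xi)\bigr)$, namely the class of $l_2(g,\xi)$ modulo $I_\phi$, is nonzero, because $l_2(g,\xi)$ cannot vanish at both points where a leaf meets $\mathrm{Im}\,\phi$.
\end{itemize}
So no induction, and no choice of primitives on either side, makes the strict map a morphism of the augmented algebras.

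The paper's proof does not handle this either. Its de Rham line applies (\ref{lkphi}), which describes only the completed brackets on $\Omega^{\bullet+1}(\mathcal{F}_x)_{\phi}$, not the operations on the augmentation produced by Proposition~\ref{augomega}. What your argument and the paper's actually establish is:
\begin{itemize}
\item the statement for $\bigwedge^{-\bullet}\Gamma(E^*|_{W_x})\oplus\Omega^{\bullet+1}(\mathcal{F}_x)$;
\item the statement for the augmented algebras when $\beta=0$, where all operations involving the augmentation can be taken to vanish.
\end{itemize}
In general you would have to allow components $\widehat{\varepsilon}_k$, $k\ge 2$, with augmentation inputs, built recursively in the manner of Lemma~\ref{auglmo} and Remark~\ref{auglmoloc}.
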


\begin{proof}
Since $\widehat{\varepsilon}_{\phi(x), \phi,k}$ vanishes for all $k \geq 2,$ it suffices to show
\[
\widehat{\varepsilon}_{\phi(x), \phi, 1}\big(l_k\big((a_1, \xi_1), \cdots, (a_k,\xi_k)\big)\big) = l^{\phi}_k\left(\widehat{\varepsilon}_{\phi(x), \phi, 1}(a_1, \xi_1), \cdots, \widehat{\varepsilon}_{\phi(x), \phi, 1}(a_k,\xi_k)\right)
\]
for each $k \geq 1,$ $a_i \in \bigwedge\nolimits^{-\bullet}\Gamma(E^*|_{W_x})$ and $\xi_i \in \Omega^{\bullet + 1}_{\mathrm{aug}}(\mathcal{F}_x), \ 1 \leq i \leq k.$

For the Koszul part, we have
\[
\widehat{\varepsilon}_{\phi(x), \phi, 1}^{\mathrm{K}}\big(l_1^{\mathrm{K}}(a_1)\big) = \widehat{\varepsilon}_{\phi(x), \phi, 1}^{\mathrm{K}}\big(\iota_{s|_{W_x}}(a_1)\big)  = 1 \otimes \iota_{s|_{W_x}}(a_1) = l^{\mathrm{K}, \phi}_{1}(1 \otimes a_1) = l_1^{\mathrm{K}, \phi}\big(\widehat{\varepsilon}_{\phi(x), \phi, 1}^{\mathrm{K}}(a_1)\big),
\]
and for the de Rham part,
\[
\begin{split}
\widehat{\varepsilon}_{\phi(x), \phi, 1} \big( l^{\mathrm{dR}}_k(\xi_1,&\dots,\xi_k) \big)
= 1 \otimes l^{\mathrm{dR}}_k (\xi_1, \dots, \xi_k) = 1 \otimes l^{\mathrm{dR}}_k (\widetilde{1} \cdot \xi_1, \dots, \widetilde{1} \cdot \xi_k)\\
&= l^{\mathrm{dR},\phi}_k (1 \otimes \xi_1, \dots, 1 \otimes \xi_k)  = l^{\mathrm{dR},\phi}_k \big( \widehat{\varepsilon}_{\phi(x), \phi, 1}(\xi_1),\dots,\widehat{\varepsilon}_{\phi(x), \phi, 1}(\xi_k)\big).
\end{split}
\]
\end{proof}

The acyclicity of the completed $L_{\infty}[1]$-algebras is inherited by the completions of sections whose vanishing order at the image is 1. The following result states this precisely and is used to obtain a Poincar\'{e}-type theorem for completions in Corollary \ref{corpoinc}.
 
\begin{lem}\label{lemab}
Write the section $s$ in the orthonormal frame $\{e_m\}$ of $\Gamma(E)$ as
\[
s = \sum_m s_m e_m,
\]
and suppose that $s_m \in I_\phi \setminus I_\phi^2$ for each $m$. Then, in the context of Definition \ref{ladef}, we have:
\begin{enumerate}[label = (\roman*)]
\item Suppose the cohomology vanishes:
\[
H^i\left(\bigwedge\nolimits^{-\bullet}\Gamma(E^*|_{W_x})\right) = 0
\]
for $i \leq -1.$ Then the cohomology of the completion also vanishes:
\[
H^i\left(\left(\bigwedge\nolimits^{-\bullet}\Gamma(E^*|_{W_x})\right)_{\phi}\right) = 0
\]
for the same range of $i$.
\item
We have
\[
H^i\left(\left( \Omega^{\bullet +1}(\mathcal{F}_x) \right)_{\phi}\right)=0
\]
for $i \geq 0.$
\end{enumerate}
\end{lem}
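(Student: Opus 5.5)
The plan is to reduce both statements to a levelwise computation on the inverse system $\{C^\infty(W_x)^{(j)}\}_j$, where $C^\infty(W_x)^{(j)} = C^\infty(W_x)/I_\phi^j$, and then pass to the inverse limit. By the preceding lemma, the Koszul completion is the inverse limit of the complexes $K^{(j)} := C^\infty(W_x)^{(j)}\otimes_{C^\infty(W_x)} \bigwedge^{-\bullet}\Gamma(E^*|_{W_x})$, and similarly for the foliation de Rham part. The transition maps $p_{j+1,j}\otimes \mathrm{id}$ are surjective, since $p_{j+1,j}$ is a quotient map and the modules are free of finite rank. The tower therefore satisfies the Mittag-Leffler condition, and the Milnor sequence
\[
0 \to {\lim_{\longleftarrow}}^1 H^{i-1}(K^{(j)}) \to H^i\Big(\lim_{\longleftarrow} K^{(j)}\Big) \to \lim_{\longleftarrow} H^i(K^{(j)}) \to 0
\]
applies. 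It then suffices to show that $H^i(K^{(j)})=0$ in the relevant range for every $j$, and that the ${\lim}^1$ term vanishes. For the latter, I would check that the maps $H^{i-1}(K^{(j+1)})\to H^{i-1}(K^{(j)})$ are surjective, or argue directly that the groups are zero.

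For (i), I will use the frame $\{e_m\}$ and the dual frame to identify $\bigwedge^{-\bullet}\Gamma(E^*|_{W_x})$ with the Koszul complex $K(s_1,\dots,s_r; C^\infty(W_x))$. The hypothesis $s_m\in I_\phi\setminus I_\phi^2$ means each $s_m$ has vanishing order exactly one along the image. Together with the assumed acyclicity in degrees $\le -1$, this says $(s_1,\dots,s_r)$ behaves as a Koszul-regular sequence. I would then filter $K^{(j)}$ by the $I_\phi$-adic filtration, whose associated graded pieces are $I_\phi^k/I_\phi^{k+1}\otimes \bigwedge^{-\bullet}$ for $0\le k<j$. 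Because each $s_m$ has order exactly one, multiplication by $s_m$ shifts the filtration degree by exactly one. The associated graded complex becomes the Koszul complex of the leading terms $\bar s_m\in I_\phi/I_\phi^2$ over $\mathrm{gr}_{I_\phi}C^\infty(W_x)$, truncated at level $j$, and its negative-degree acyclicity should follow from the acyclicity before completion. An induction on $j$, using the short exact sequences $0\to I^{j}_\phi/I^{j+1}_\phi\otimes\bigwedge\to K^{(j+1)}\to K^{(j)}\to 0$ and the long exact sequence, will give $H^i(K^{(j)})=0$ for $i\le -1$. The same sequences show that the transition maps on $H^{i-1}$ are surjective, so ${\lim}^1=0$. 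Alternatively, one can use the Artin--Rees-type fact that completion is exact on the finite free complex once the Koszul homology is killed by a power of $I_\phi$.

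For (ii), I would use the global coordinates $(y,q)$ from Choice \ref{3choi}. In these coordinates $\Omega^{\bullet}(\mathcal F_x)$ is the de Rham complex in the leaf variables $q$ with the $y$ variables as parameters, and a parametric Poincaré lemma with the homotopy operator $h$ (integration along radial rays in $q$) gives $l_1^{\mathrm{dR}}h+hl_1^{\mathrm{dR}}=\mathrm{id}$ in positive form degree, that is, in degree $i\ge 0$ after the shift. The plan is to show that $h$ preserves $I_\phi^j\cdot\Omega^\bullet(\mathcal F_x)$, or at least sends it into $I_\phi^{j-c}\cdot\Omega^\bullet$ for a fixed $c$. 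Then $h$ descends to the quotients, or to the limit, and furnishes a contracting homotopy on $\Omega^{\bullet+1}(\mathcal F_x)_\phi$ in degrees $\ge 0$.

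The main obstacle is the interaction of the ideal $I_\phi$ with these homotopies. In (i), the difficulty is justifying that order-one vanishing plus the assumed acyclicity passes to the associated graded. In (ii), it is showing that the radial homotopy respects the $I_\phi$-adic filtration. For (ii) I would first try to choose the radial center and the coordinates adapted to $\phi(V)$, so that generators of $I_\phi$ are coordinate functions and $h$ manifestly preserves powers of $I_\phi$. If that fails, I would fall back on the Mittag-Leffler argument with a weaker filtration-shift estimate.
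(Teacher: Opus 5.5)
\paragraph{Part (i): the levelwise complexes are not acyclic.}
Your reduction of (i) to levelwise acyclicity cannot work. Every $s_m$ lies in $I_\phi$, so the differential of $K^{(1)}=C^\infty(W_x)/I_\phi\otimes\bigwedge\nolimits^{-\bullet}\Gamma(E^*|_{W_x})$ vanishes identically. Hence $H^i(K^{(1)})\neq 0$ for all $-r\le i\le 0$. For every $j$, nothing maps into degree $-r$, so the submodule $(I_\phi^{j-1}/I_\phi^{j})\otimes\bigwedge\nolimits^{r}\Gamma(E^*|_{W_x})$ injects into $H^{-r}(K^{(j)})$.

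Concretely, take $W_x=(-1,1)$, $\mathrm{Im}\,\phi=\{0\}$ and $s=t$. Then $H^{-1}(K^{(j)})$ is spanned by the class of $t^{j-1}e^*$, which is nonzero for every $j$, and the transition maps are zero. So:
\begin{itemize}
\item the induction through $0\to (I_\phi^j/I_\phi^{j+1})\otimes\bigwedge\to K^{(j+1)}\to K^{(j)}\to 0$ cannot yield $H^i(K^{(j)})=0$;
\item the maps on cohomology are zero rather than surjective;
\item the truncated associated graded complex is not acyclic either.
\end{itemize}
What can hold is that the tower $\{H^i(K^{(j)})\}_j$ is pro-zero, i.e.\ the transition maps vanish over a bounded number of steps. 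That kills both $\varprojlim$ and $\varprojlim^1$, so the Milnor-sequence framework survives. But the levelwise statement you need is a different one.

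\paragraph{Part (i): the stated hypotheses do not suffice.}
More seriously, order-one vanishing of each $s_m$ together with acyclicity before completion is not enough, and (i) can fail under exactly these hypotheses. Let $W_x$ be a ball about $0\in\mathbb{R}^2_{t,u}$, $\mathrm{Im}\,\phi=\{0\}$, $E$ trivial of rank $2$, and $s=t\,e_1+(t+e^{-1/u^2})\,e_2$.
\begin{itemize}
\item Both components lie in $I_\phi\setminus I_\phi^2$.
\item In the frame $(e_1+e_2,\,e_2)$ the section reads $(t,\,e^{-1/u^2})$, a regular sequence in $C^\infty(W_x)$. So the uncompleted Koszul complex is acyclic in degrees $\le -1$.
\item But $C^\infty_\phi(W_x)=\mathbb{R}[[t,u]]$, and both components become $t$ there. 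Then $e_1^*-e_2^*$ is a cycle that is not a boundary, so $H^{-1}$ of the completion is nonzero.
\end{itemize}
The same example shows that the leading forms $\bar s_m$ need not be regular in $\mathrm{gr}_{I_\phi}$. It also shows that your Artin--Rees-type alternative fails over $C^\infty(W_x)$, since formal completion is not flat over smooth functions.

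What is missing is a nondegeneracy hypothesis on the leading terms, e.g.\ that $\mathrm{Im}\,\phi$ is a submanifold and the $s_m$ are part of a system of normal coordinates to it. This is what the tangent bundle condition and Condition \ref{addcond}(iv) provide for $s'_c$ in Proposition \ref{afec}. Under it, the completed complex is a Koszul complex on part of a formal coordinate system, and its acyclicity can be checked directly.

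The paper's proof is organized differently from yours. It lifts a level-$j$ cocycle to $C^\infty(W_x)$, takes a primitive there, and then checks that the primitives are compatible across $j$. However, it also uses only $s_m\in I_\phi\setminus I_\phi^2$, so the example applies to it as well.

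\paragraph{Part (ii).}
Here your route agrees with the paper's. Both rest on the radial homotopy $\int_0^1p_t^*(\cdot)\,dt$ of Lemma \ref{pcrlem} being compatible with $I_\phi$, which holds when $\mathrm{Im}\,\phi$ is preserved by the contractions $p_t$. You flag this condition explicitly, whereas the paper uses it tacitly.

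One technical point: $d_{\mathcal F}$ is not $C^\infty(W_x)$-linear. It maps $I_\phi^j\Omega$ only into $I_\phi^{j-1}\Omega$, which is why the paper uses the shift $[1]_{j-1}$. So $C^\infty(W_x)^{(j)}\otimes\Omega^{\bullet+1}(\mathcal F_x)$ is not a tower of complexes as you wrote it. Still, if $h$ preserves every $I_\phi^j\,\Omega^\bullet(\mathcal F_x)$, it descends directly to the limit and gives the contraction there, with no $\varprojlim^1$ bookkeeping.
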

\begin{proof}
\begin{enumerate}[label = (\roman*)]
\item
Fix $j \geq 1,$ and consider
\[
\left(\bigwedge\nolimits^{i+1}\Gamma(E^*|_{W_x})\right)_{\phi} \xrightarrow{d_{\phi}^{(i+1)}}  \left(\bigwedge\nolimits^{i}\Gamma(E^*|_{W_x})\right)_{\phi} \xrightarrow{d_{\phi}^{(i)}} \left(\bigwedge\nolimits^{i-1}\Gamma(E^*|_{W_x})\right)_{\phi},
\]
given by $d_{\phi}^{(i)}(h \otimes a) := [1]_{j-2} \otimes \iota_{s|_{W_x}}(\widetilde{h}a)$ for $h \in C^{\infty}_{\phi}(W_x)^{(j)} := C^{\infty}(W_x)/I_{\phi}^j,$ $a \in \bigwedge\nolimits^{i}\Gamma(E^*|_{W_x}),$ and $\widetilde{h} \in C^{\infty}(W_x)$ such that $[\widetilde{h}]_j = h.$ For a kernel element $\sum_{l}h_l \otimes a_l \in \ker d_{\phi}^{(i)},$ we have
\[
0 = d^{(i)}_{\phi} \left(\sum_{l}h_l \otimes a_l\right) = \sum_l [1]_{j-2} \otimes  \iota_{s|_{W_x}}(\widetilde{h}_la_l) = [1]_{j-2} \otimes   \iota_{s|_{W_x}}\left(\sum_l\widetilde{h}_la_l\right),
\]
so that $\sum_l\widetilde{h}_la_l \in \ker \iota_{s|_{W_x}}.$ By hypothesis, there exists $b \in \bigwedge\nolimits^{i-1}\Gamma(E^*|_{W_x})$ such that $\iota_{s|_{W_x}}(b) = \sum_l\widetilde{h}_la_l.$ Then
\[
\begin{split}
d^{(i-1)}_{\phi}([1]_{j} \otimes b) = [1]_{j-2} \otimes  \iota_{s|_{W_x}}(\widetilde{1} \cdot b) &=  [1]_{j-2} \otimes \iota_{s|_{W_x}}(b) =  [1]_{j-2} \otimes \sum_l\widetilde{h}_la_l \\
&= \sum_l [1]_{j-2} \otimes \widetilde{h}_la_l = \sum_l h_l \otimes a_l,
\end{split}
\]
that is, $ \sum_l h_l \otimes \xi_l \in \mathrm{Im}\,d_{\phi}^{(i-1)}.$

For a different choice of $j,$ compatibility is verified as follows.

\[
C^{\infty}(W_x)/I_{\phi}^{j+1} \xrightarrow{p_{j+1,j}} C^{\infty}(W_x)/I_{\phi}^{j}.
\]

Consider $h \in C^{\infty}(W_x)^{(j)}$, $h' \in C^{\infty}(W_x)^{(j+1)}$ such that $h = p_{j+1,j}(h') = h' + I_\phi^j / I_\phi^{j+1},$ with representatives $\widetilde{h}, \widetilde{h}' \in C^{\infty}(W_x)$ satisfying $\widetilde{h} + I_\phi^j = {h}, \ \widetilde{h}' + I_\phi^{j+1} = h'.$
Then $\widetilde{h} = \widetilde{h}' + \widetilde{g}$ for some $\widetilde{g} \in I_\phi^j$.

There exist $b, b' \in \bigwedge^i \Gamma(E^*|_{W_x})$ such that
\begin{equation}\label{afhlkfsd}
\iota_{s|W_x}(b) = \sum_l \widetilde{h}_l a_l, \quad \text{and} \quad
\iota_{s|W_x}(b') = \sum_l \widetilde{h}'_l a_l
= \sum_l \widetilde{h}_l a_l + \sum_l \widetilde{g}_l a_l.
\end{equation}
For $[1]_j \in C^\infty(W_x)^{(j)},$ we have
\[
(j): d_\phi^{(i)} ([1]_{j} \otimes b) = [1]_{j-2} \otimes \iota_{s|W_x}(\widetilde{1} \cdot b) = [1]_{j-2} \otimes \sum_l \widetilde{h}_l a_l
= \sum_l [1]_{j-2} \otimes \widetilde{h}_l a_l = \sum\limits_l h_l \otimes a_l,
\]
and
\[
(j+1): d^{(i)}_{\phi} ([1]_{j+1} \otimes b')
= [1]_{j-1} \otimes \iota_{s|W_x}(\widetilde{1} \cdot b')
= [1]_{j-1} \otimes \sum_l \widetilde{h}_l' a_l
= \sum_l h'_l \otimes a_l,
\]
for the choices of $j$ and $j+1,$ respectively.
On the other hand, taking the difference of the two equations in (\ref{afhlkfsd}), we obtain
\[
\sum_l \widetilde{g}_l a_l = \iota_{s|_{W_x}}(b-b') = \sum_m s_m|_{W_x} \cdot c_{l_1,\ldots,l_n}
\, e^*_{l_1} \wedge \cdots \wedge \widehat{e^*_{l_m}} \wedge \cdots \wedge e^*_{l_n}
\]
for some $c_{l_1 \cdots l_n}.$

From the condition $s_m|_{W_x} \in I_\phi \setminus I_\phi^2$ together with $\widetilde{g}_l \in I_\phi^j$, it follows that $c_{i_1,\ldots,i_n} \in I_\phi^j$ for all $i_1,\ldots,i_n.$

Compatibility then follows from
\[
\begin{split}
(j-1): &\ p_{j-1,j-2}\,d^{(i)}_{\phi}\big([1]_{j-1} \otimes (b - b')\big)\\
&= p_{j-1,j-2} \Big( [1]_{j-3} \otimes \sum\limits_{(\cdots)} c_{i_1,\ldots,i_n} \,
e^*_{i_1} \wedge \cdots \wedge \widehat{e^*_{i_m}} \wedge \cdots \wedge e^*_{i_n} \Big)\\
&= \sum\limits_{(\cdots)} p_{j-1,j-2}\Big( [c_{i_1,\ldots,i_n}]_{j-3} \otimes e^*_{i_1} \wedge \cdots \wedge
\widehat{e^*_{i_m}} \wedge \cdots \wedge e^*_{i_n} \Big) = 0
\end{split}
\]
for each $j.$

\item
Consider
\[
\left(\Omega^{i-1}(\mathcal{F}_x)[1] \right)_{\phi} \xrightarrow{d_{\phi}^{(i-1)}}\left(\Omega^{i}(\mathcal{F}_x)[1] \right)_{\phi} \xrightarrow{d_{\phi}^{(i)}} \left(\Omega^{i +1}(\mathcal{F}_x)[1] \right)_{\phi},
\]
given by $d_{\phi}^{(i)}(h \otimes \xi) := [1]_{j-1} \otimes d^{(i)}_{\mathcal{F}_x}(\widetilde{h}\xi)$ for $h \in C^{\infty}_{\phi}(W_x)^{(j)} := C^{\infty}(W_x)/I_{\phi}^j,$ $\xi \in \Omega^i(\mathcal{F}_x)[1],$ and $\widetilde{h} \in C^{\infty}(W_x)$ such that $[\widetilde{h}]_j = h.$ For a kernel element $\sum_{l}h_l \otimes \xi_l \in \ker d_{\phi}^{(i)},$ we have
\[
0 = d^{(i)}_{\phi} \left(\sum_{l}h_l \otimes \xi_l\right) = \sum_l [1]_{j-1} \otimes d_{\mathcal{F}_x}^{(i)}(\widetilde{h}_l\xi_l) = [1]_{j-1} \otimes  d_{\mathcal{F}_x}^{(i)}\left(\sum_l\widetilde{h}_l\xi_l\right),
\]
so that $\sum_l\widetilde{h}_l\xi_l \in \ker d_{\mathcal{F}_x}^{(i)}.$ By the Poincar\'{e} lemma for foliations, there exists $\eta \in \Omega^{i-1}(\mathcal{F}_x)[1]$ such that $d_{\mathcal{F}_x}^{(i-1)}(\eta) = \sum_l\widetilde{h}_l\xi_l.$ Then
\[
\begin{split}
d^{(i-1)}_{ \phi}([1]_j \otimes \eta) = [1]_{j-1} \otimes d^{(i-1)}_{\mathcal{F}_x}(\widetilde{1} \cdot \eta) &=  1 \otimes d^{(i-1)}_{\mathcal{F}_x}(\eta) =  [1]_{j-1} \otimes \sum_l\widetilde{h}_l\xi_l\\
&= [1]_{j-1} \otimes \sum_l\widetilde{h}_l\xi_l = \sum_l h_l \otimes \xi_l,
\end{split}
\]
that is, $ \sum_l h_l \otimes \xi_l \in \mathrm{Im}\,d_{\phi}^{(i-1)}.$ For a different $j,$ compatibility is verified as follows. First, we choose primitive forms as in the proof of Lemma \ref{pcrlem}, and set
\[
\zeta := \int_0^1 \Big( p^*\big(\sum (\tilde{h}_l \xi_l) - \sum (h_l \xi_l)\big)\Big) dt
= \int_0^1 \big( p^*(\sum \tilde{g}_l \xi_l)\big) dt,
\]
where $p : W \times [0,1] \rightarrow W$ is the projection onto the first component.

As in (i), for $h \in C^{\infty}(W_x)^{(j)}$, $h' \in C^{\infty}(W_x)^{(j+1)}$ with $h = p_{j+1,j}(h') = h' + I_\phi^j / I_\phi^{j+1},$ and respective representatives $\widetilde{h}, \widetilde{h}' \in C^{\infty}(W_x),$ there exists $\widetilde{g} \in I_\phi^j$ with  $\widetilde{h} = \widetilde{h}' + \widetilde{g}.$ Since $\widetilde{g}_l \in I_{\phi}^j$, we have
\[
\sum \Big( p^*(\tilde{g}_l \xi_l) \Big)\Big|_{I_{m\phi}} = 0,
\quad \forall t \in (0,1],
\]
for the map $W \rightarrow W$ given by
\[
(y_1,\ldots,y_m,\xi_1,\ldots,\xi_n) \mapsto (y_1,\ldots,y_m, t \xi_1,\ldots,t \xi_n),
\]
which is injective. Thus $\zeta|_{\mathrm{Im}_{\phi}} = 0$.
Moreover, $\zeta$ can be written as
\[
\zeta = \sum \widetilde{f}_l \zeta_l,
\]
for some $\widetilde{f}_l \in I_\phi^{\bar{j}}$ and $\zeta_l \in \Omega^{i}(\mathcal{F}_x)[1]$.

Compatibility with respect to $j$ is then verified by
\[
\begin{split}
(j) : p_{j,j-1} \big([1]_{j} \otimes (\eta - \eta') \big) = p_{j,j-1}([1]_{j} \otimes \zeta)
&= p_{j,j-1}\Big([1]_{j} \otimes \sum_l \widetilde{f}_l \zeta_l \Big) \\
&= \sum_l p_{j,j-1}\Big( [\widetilde{f}_l]_{j} \otimes \zeta_l \Big)
= 0
\end{split}
\]
for each $j.$

\end{enumerate}
\end{proof}

\begin{cor}\label{corpoinc}
$\Omega^{\bullet +1}_{\mathrm{aug},\phi}(\mathcal{F}_x)$ is an acyclic $L_{\infty}[1]$-algebra.
\end{cor}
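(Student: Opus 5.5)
We need to show that the cohomology of $\Omega^{\bullet+1}_{\mathrm{aug},\phi}(\mathcal{F}_x)$ with respect to $l_1^{\mathrm{dR},\phi}$ vanishes in every degree. Acyclicity of an $L_\infty[1]$-algebra concerns only the underlying cochain complex, so the higher brackets $l_k^{\mathrm{dR},\phi}$ for $k\ge 2$ play no role. Recall the decomposition
\[
\Omega^{\bullet+1}_{\mathrm{aug},\phi}(\mathcal{F}_x)=\Omega^{\bullet+1}(\mathcal{F}_x)_\phi\oplus\big(C^\infty(W_x)_{\mathcal{F}_x}\big)_\phi .
\]
The plan is to identify the differential on this augmented complex and check exactness degree by degree. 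We take as given the construction of Proposition \ref{augomega}, namely that the extension is obtained recursively from $\Omega^{\bullet+1}(\mathcal{F}_x)_\phi$.

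By construction the augmentation piece sits in degree $-2$. It is the kernel of
\[
l_1^{\mathrm{dR}}:\Omega^{-1}(\mathcal{F}_x)[1]_\phi\to\Omega^0(\mathcal{F}_x)[1]_\phi ,
\]
that is, the completed leafwise-constant functions. Its differential into degree $-1$ is the inclusion of this kernel. So the augmented complex is
\[
0\to\big(C^\infty(W_x)_{\mathcal{F}_x}\big)_\phi\hookrightarrow\Omega^{0}(\mathcal{F}_x)_\phi\xrightarrow{d_\phi}\Omega^{1}(\mathcal{F}_x)_\phi\xrightarrow{d_\phi}\cdots,
\]
placed in degrees $-2,-1,0,\dots$. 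We then check the degrees in turn.
\begin{itemize}
\item In degree $-2$ the differential is injective, so the cohomology vanishes.
\item In degree $-1$ the kernel of $d_\phi$ is, by definition, exactly the image of the augmentation inclusion, so the cohomology vanishes tautologically.
\item In degrees $i\ge 0$ the cohomology is $H^i\big(\Omega^{\bullet+1}(\mathcal{F}_x)_\phi\big)$, which vanishes by Lemma \ref{lemab}(ii).
\end{itemize}
This gives acyclicity. The degree bookkeeping must match Lemma \ref{lemab}(ii): there "$i\ge0$" refers to degrees strictly above the function degree $-1$, i.e.\ leafwise forms of positive form degree, and I would verify this index convention first.

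The main point needing care is that the recursive extension of Proposition \ref{augomega} does not alter $l_1$ on the original summand. Concretely, the new component of $l_1$ should be only the augmentation inclusion, with no correction term that would make the degree $-1$ kernel larger than the augmentation. If Proposition \ref{augomega} is set up so that $l_1$ on $\Omega^{\bullet+1}(\mathcal{F}_x)_\phi$ is unchanged and the degree $-2$ piece maps by inclusion, the argument closes immediately.

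A second subtlety is that the kernel in the definition of $\big(C^\infty(W_x)_{\mathcal{F}_x}\big)_\phi$ is taken inside the completed module. It is therefore automatically the kernel of $d_\phi$, and no comparison with a completion of the uncompleted kernel is needed.
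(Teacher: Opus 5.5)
Your proposal is correct and follows the paper's proof. In degrees $\geq 0$ you use Lemma \ref{lemab}(ii), which rests on the foliation Poincar\'e lemma. In degrees $-2$ and $-1$ you use the definition of the augmentation as the kernel of $l_1^{\mathrm{dR},\phi}$ on completed functions, mapped in by inclusion. Your separate treatment of degree $-1$ is slightly more careful than the paper's phrasing, which lumps degree $-1$ into the range where the augmented and unaugmented complexes agree.
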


\begin{proof}
By the Poincar\'{e} lemma for foliations, $H^*\big(\Omega^{\bullet +1}(\mathcal{F}_x)\big) = 0$ for all $* \geq 0.$ The preceding lemma then gives $H^*\big(\Omega^{\bullet +1}(\mathcal{F}_x)_{\phi}\big) = 0$ for all $* \geq 0.$ Since $\Omega^{\bullet +1}_{\mathrm{aug},\phi}(\mathcal{F}_x)$ agrees with $\Omega^{\bullet +1}(\mathcal{F}_x)_{\phi}$ in degrees $\geq -1,$ it follows that $H^*\big(\Omega^{\bullet +1}_{\mathrm{aug},\phi}(\mathcal{F}_x)\big) = 0$ in that range as well. The remaining cohomology, $H^*\big(\Omega^{\bullet +1}_{\mathrm{aug},\phi}(\mathcal{F}_x)\big)$ for $* \leq -1,$ vanishes by the definition of the augmentation.
\end{proof}

\begin{rem}\label{auglmoloc}
Note that Lemma \ref{auglmo}, on the existence of morphisms between augmented $L_{\infty}[1]$-algebras, continues to hold when either the domain or the target of the $L_{\infty}[1]$-morphism is replaced by its completion.
\end{rem}

\subsection{Special cases}

In the context of Definition \ref{ladef} on local $L_{\infty}[1]$-algebras, several special cases are worth discussing.

First, for surjective $\phi$ the completion takes a particularly simple form.
\begin{lem}\label{surjcp}
If $\phi$ is surjective, then
\[
\mathcal{C}_{\phi(x),\phi} \simeq \mathcal{C}_{\phi(x)}.
\]
\end{lem}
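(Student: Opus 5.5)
The plan is to show that the ideal $I_\phi$ defining the completion is zero when $\phi$ is surjective, so that the inverse system in (\ref{ivlmt}) is constant and the completion does nothing. Here $I_\phi \subset C^{\infty}(W_{\phi(x)})$ is the ideal of functions vanishing on the image of $\phi$, and $C^{\infty}(W)^{(j)} = C^{\infty}(W)/I_\phi^j$ as in the proof of Lemma \ref{lemab}. If $\phi$ is surjective onto $U$, then $\mathrm{Im}\,\phi \cap W_{\phi(x)} = W_{\phi(x)}$. A smooth function vanishing on all of $W_{\phi(x)}$ is zero, so $I_\phi = 0$, and hence $I_\phi^j = 0$ for every $j \geq 1$.

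It follows that $C^{\infty}(W_{\phi(x)})^{(j)} = C^{\infty}(W_{\phi(x)})$ for all $j$, and every projection $p_{j+1,j}$ is the identity. The inverse limit in (\ref{ivlmt}) is therefore canonically $C^{\infty}(W_{\phi(x)})$. Tensoring over $C^{\infty}(W_{\phi(x)})$ gives identifications of the Koszul part (\ref{kdzpt}) with $\bigwedge^{-\bullet}\Gamma(E^*|_{W_{\phi(x)}})$ and of $\Omega^{\bullet+1}(\mathcal{F}_{\phi(x)})_\phi$ with $\Omega^{\bullet+1}(\mathcal{F}_{\phi(x)})$. Under these identifications the completion map $\widehat{\varepsilon}_{\phi(x),\phi,1}$, $a \mapsto 1 \otimes a$, becomes the identity. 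The augmentation summand $\ker(l_1^{\mathrm{dR}})$ in degree $-2$ also agrees with the uncompleted one, since it is defined as a kernel of the same differential.

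Next I check that the operations match. For the Koszul part, take the representative $\widetilde h = h$, which is forced since the quotient is trivial. The formula $l_1^{\mathrm{K},\phi}(h\otimes a) = [1]_{j-2}\otimes \iota_{s}(\widetilde h a)$ then reduces to $\iota_s(ha)$, which is $l_1^{\mathrm{K}}$. For the de Rham part, $l_k^{\mathrm{dR},\phi}$ is built from $\Omega^{\bullet+1}(\mathcal{F})_\phi$ by Proposition \ref{augomega}, and by the computation in Lemma \ref{vecpf} it agrees with $l_k^{\mathrm{dR}}$ on elements $1\otimes\xi$. Since every element now has this form, the operations coincide. The recursive construction in Proposition \ref{augomega} applied to identical input data gives identical output, so the augmented structures agree as well. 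Thus $\widehat{\varepsilon}_{\phi(x),\phi}$, an $L_\infty[1]$-morphism by Lemma \ref{vecpf}, has bijective linear part and vanishing higher components. It is therefore a strict $L_\infty[1]$-isomorphism $\mathcal{C}_{\phi(x)} \xrightarrow{\cong} \mathcal{C}_{\phi(x),\phi}$.

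The main obstacle is the truncation convention in the paper's formula, where $[1]_{j-2} := 0$ for $j \le 2$. Read literally at low levels this seems to kill $l_1^{\mathrm{K},\phi}$. I would handle it by interpreting the convention as the index shift in the inverse system, which the compatibility statement in Definition \ref{ladef} permits. When all $I_\phi^j$ vanish, the shift is an identity of the constant system, so the limit differential is exactly $\iota_s$.
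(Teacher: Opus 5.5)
Your proposal is correct and follows the paper's route. Surjectivity gives $I_\phi=0$, so the inverse system is constant, $C^\infty_\phi(W_{\phi(x)})\simeq C^\infty(W_{\phi(x)})$, and the tensor product over $C^\infty(W_{\phi(x)})$ collapses to $\mathcal{C}_{\phi(x)}$. Your extra checks go beyond the paper's two-line proof, which leaves them implicit: that the operations, the augmentation summand, and $\widehat{\varepsilon}_{\phi(x),\phi}$ all match under this identification, including your reading of the $[1]_{j-2}$ truncation as a level shift in the inverse system.
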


\begin{proof}
We have $I_\phi=\{0\},$ so that
\[
C_\phi^\infty\bigl(W_{\phi(x)}\bigr) =  \lim\limits_{\longleftarrow} C^\infty\bigl(W_{\phi(x)}\bigr) /I_{\phi}^n \simeq C^\infty\bigl(W_{\phi(x)}\bigr),
\]
and hence
\[
\mathcal{C}_{\phi(x),\phi}
= C_\phi^\infty\bigl(W_{\phi(x)}\bigr) \otimes \mathcal{C}_{\phi(x)}
\simeq C^\infty\bigl(W_{\phi(x)}\bigr) \otimes \mathcal{C}_{\phi(x)} \simeq \mathcal{C}_{\phi(x)}.
\]
\end{proof}

Second, we consider completions for open subcharts. Let $o : U \hookrightarrow U'$ be an open inclusion, and let $\mathcal{U} := \mathcal{U}'|_U$ denote the open subchart on $U.$ A sketch of the proof of the following lemma is given at the end of Subsection \ref{rtfo3kc}.

\begin{lem}[Completion at an open embedding with open subchart data]\label{itavsteai}
In the situation above, there exists an $L_\infty[1]$-quasi-isomorphism
\[
\widehat{o}_x : \mathcal{C}'_{o(x),o} \simeq \mathcal{C}_x.
\]
\end{lem}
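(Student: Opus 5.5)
The plan is to reduce everything to the local situation at $o(x)=x$ and then deal with the Koszul and de Rham parts separately. Because $o:U\hookrightarrow U'$ is an open inclusion and $\mathcal{U}=\mathcal{U}'|_U$, the stratification, the tubular neighbourhoods and the form $\beta$ on $U$ are just the restrictions of those on $U'$. By Lemma \ref{lemdw}, which allows us to change the choice of $\overset{\circ}{W}_x$ up to $L_\infty[1]$-isomorphism, we may take $W_x\subset W'_{o(x)}$ with $W_x=W'_{o(x)}\cap U$. Shrinking further if needed, we may even assume $W_x=W'_{o(x)}$ as presymplectic balls, in which case $T\mathcal{F}_x=T\mathcal{F}'_{o(x)}$ and $E|_{W_x}=E'|_{W'_{o(x)}}$.

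The key observation is that $\mathrm{Im}\,o\supset W'_{o(x)}$, so the vanishing ideal $I_o$ of the image, restricted to $W'_{o(x)}$, is zero. This is the same situation as in Lemma \ref{surjcp}. The inverse system defining $C^\infty_o(W'_{o(x)})$ is then constant, and $C^\infty_o(W'_{o(x)})\simeq C^\infty(W'_{o(x)})$. Tensoring with the Koszul and foliation de Rham parts gives $\mathcal{C}'_{o(x),o}\simeq\mathcal{C}'_{o(x)}$ as $L_\infty[1]$-algebras. To check this we verify that $l^{\mathrm{K},o}_1$ and $l^{\mathrm{dR},o}_k$ agree with $l^{\mathrm{K}}_1$ and $l^{\mathrm{dR}}_k$ under the identification $1\otimes a\mapsto a$; for the augmentation piece we use Proposition \ref{augomega} and Remark \ref{auglmoloc}. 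We then define $\widehat{o}_x$ as the composite of this isomorphism with the restriction
\[
\mathcal{C}'_{o(x)}\longrightarrow \mathcal{C}_x,
\]
which sends sections and foliated forms to their restrictions to $W_x$ and has vanishing higher components. Since restriction commutes with $\iota_s$, with $d_{\mathcal{F}}$ and with the higher brackets $l^{\mathrm{dR}}_k$ (these are local operators), it is a strict $L_\infty[1]$-morphism.

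It remains to show that $\widehat{o}_{x,1}$ is a quasi-isomorphism. If we really have $W_x=W'_{o(x)}$, the map is an isomorphism and there is nothing to prove. Otherwise we treat the two parts separately. For the de Rham part, both $\Omega^{\bullet+1}_{\mathrm{aug}}$ complexes are acyclic, by the Poincar\'e lemma for foliations on contractible balls together with the augmentation, as in Corollary \ref{corpoinc}. So restriction is trivially a quasi-isomorphism. For the Koszul part, restriction from a ball to a smaller ball containing $x$ should induce an isomorphism on Koszul cohomology, since that cohomology is supported on the zero locus germ at $x$.

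The main obstacle is this Koszul comparison when the balls differ. I would not attempt a sheaf-theoretic argument. Instead I would first try to sidestep it by choosing the tubular data so that $W'_{o(x)}\subset U$, which is possible because $U$ is open: take $\overset{\circ}{W}{}'_{o(x)}$ small inside $\mathcal{S}_i\cap U$, then invoke Lemma \ref{lemdw} for the de Rham part. The Koszul part is then literally unchanged, and $\widehat{o}_x$ becomes an isomorphism, hence a quasi-isomorphism.
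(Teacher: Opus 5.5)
Your reduction depends on $W'_{o(x)}\subset\mathrm{Im}\,o=U$. That inclusion is what gives $I_o=0$ and hence $\mathcal{C}'_{o(x),o}\simeq\mathcal{C}'_{o(x)}$, as in Lemma \ref{surjcp}. Nothing in the hypotheses provides it. $W'_{o(x)}$ is fixed data of the chart $\mathcal{U}'$ and in general sticks out of $U$. In that case $I_o$ is the ideal of functions on $W'_{o(x)}$ vanishing on the open set $W'_{o(x)}\cap U$. This ideal is nonzero as soon as $W'_{o(x)}\not\subset\overline{U}$, and then the completion is a genuinely different object. (When $W'_{o(x)}\subset U$ and $W_x=W'_{o(x)}$ your argument is fine, since everything is an isomorphism.)

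Shrinking cannot repair the general case. Shrinking $W_x$ never makes it equal to $W'_{o(x)}$, and shrinking $W'_{o(x)}$ replaces the very algebra $\mathcal{C}'_{o(x),o}$ the lemma is about. Your final sidestep has the same flaw. Re-choosing $\overset{\circ}{W}{}'_{o(x)}$ inside $\mathcal{S}_i\cap U$ does not leave the Koszul part ``literally unchanged'', because $\bigwedge^{-\bullet}\Gamma(E'^*|_{W'_{o(x)}})$ consists of sections over the ball. Lemma \ref{lemdw} only controls the de Rham part. Justifying the replacement needs exactly the nested-ball Koszul comparison you set aside. You also cannot use Example \ref{dcotopwx}, since in the paper it is deduced from this lemma.

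The heuristic you offer for that comparison is false without completion. For $E=0$ the Koszul complex is $C^\infty(W)$ in degree $0$, and restriction $C^\infty(W')\to C^\infty(W)$ is in general neither injective nor surjective. The completion at $\mathrm{Im}\,o$ is what is meant to absorb this discrepancy, so it cannot be discarded at the outset.

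The paper works with the completion throughout.

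\emph{De Rham part.} It sets $\widehat{o}_{x,1}(h\otimes\xi')=o^*\widetilde h\cdot o^*\xi'$, with higher components zero. This is well defined because $o^*$ kills $I_o^j$, so $I_o=0$ is never needed. It checks compatibility with the projections $p_{j+1,j}$ and with the brackets, using that the Nijenhuis--Schouten bracket commutes with restriction to open subsets. It then extends to the augmentations via Lemma \ref{auglmo} and gets a quasi-isomorphism from acyclicity. Your de Rham argument becomes this once the restriction is defined directly on $\mathcal{C}'_{o(x),o}$ in this way.

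\emph{Koszul part.} The paper regards the open inclusion as an FOOO embedding whose bundle map is the identity on fibres. The tangent bundle condition is then an isomorphism of zero spaces, and Condition \ref{addcond} holds trivially. Proposition \ref{afec} is invoked to obtain the quasi-isomorphism. That appeal to Proposition \ref{afec}, rather than a restriction comparison between balls, is the ingredient missing from your Koszul argument.
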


Finally, we define the expansion of a chart.
\begin{defn}[Expansion of a chart]\label{expvs}
Let $\mathcal{U} = (U, E, s, \Gamma, \psi)$ be a Kuranishi chart on $X$ as in Definition \ref{kurdef}, and let $V$ be a finite-dimensional vector space. From $\mathcal{U}$ we construct another chart on $X$, called the \textit{expansion of $\mathcal{U}$ by $V$},
\[
\mathcal{U} \times V := (U \times V, E \times V, s \times \mathrm{id}_V, \Gamma, \psi),
\]
consisting of the following data:
\begin{itemize}
\item[--] $U \times V$, equipped with the closed two-form $\pi^* \beta$, where $\pi: U \times V \to U$ denotes projection onto the $U$-component.
\item[--] The vector bundle $E \times V \to U \times V$, obtained in the natural way from $E \to U.$
\item[--] The section $s \times \text{id}_V: U \times V \longrightarrow E \times V$, $(y,v) \mapsto \big(s(y), v\big).$
\item[--] The group $\Gamma$, acting only on the $U$-component of $U \times V.$
\item[--] The homeomorphism $\psi : {(s \times \text{id}_V)^{-1}(0) }/{\Gamma}\simeq {s^{-1}(0)}/{\Gamma} \xhookrightarrow{\psi} X$, coinciding with the $\psi$ of $\mathcal{U}.$
\item[--] The open neighborhood $W_{(x,0)} := W_x \times V$ of the zero point $(x,0).$
\item[--] The local $L_\infty[1]$-algebra at $(x,0) \in (s \times \mathrm{id}_V)^{-1}(0)$,
\[
\mathcal{C}^V_{(x,0)} := \bigwedge\nolimits^{-\bullet} \Gamma\left(\left(\pi^* E \oplus V\right)^*\big|_{W_{(x,0)}}\right) \oplus \Gamma_{\mathrm{aug}}\left(\bigwedge\nolimits^{\bullet+1}\left(\pi^*T\mathcal{F} \oplus V\right)^*\big|_{W_{(x,0)}}\right).
\]
\item[--] The $L_{\infty}[1]$-operations $\left\{l_{(x,0),k}^V: {\mathcal{C}^V}_{(x,0)}^{\otimes k} \rightarrow \mathcal{C}_{(x,0)}^V\right\}_{k \geq 1}$ on $\mathcal{C}^V_{(x,0)}$,
given by $l_{(x,0), k}^{V, \mathrm{K}} \oplus l_{(x,0), k}^{V, \mathrm{dR}},$ with each component given by
\[
\begin{cases}
&\begin{split}
l_{(x,0), k}^{V, \mathrm{K}}&\big((a_1, w_1^*) , \cdots , (a_k, w_k^*)\big)\\
&\quad \quad := \begin{cases}
\iota_{s \times \mathrm{id}_V}(a_1,w^*_1) = \left\{\iota_s(a_1)(x), \iota_{v}w_1^*(x,v)\right\}_{(x,v) \in W_x \times V} &\text{ if } k =1,\\
0 &\text{ if } k \geq 2,
\end{cases}
\end{split}\\
&l^{V,\mathrm{dR}}_{(x,0),k} \big((\xi_1, \tau_1), \cdots , (\xi_k, \tau_k)\big) := \left(l^{\mathrm{dR}}_{x,k} \left(\xi_1, \cdots,\xi_k \right), l^V_{k}(\tau_1, \cdots, \tau_k)\right).
\end{cases}
\]
Here, we define
\[
 l^V_{k}(\tau_1, \cdots, \tau_k) := \begin{cases}
\sum_i\tau_1(v_i) & \text{with respect to a fixed basis } \{v_i\} \text{ of } V, \text{ if } k = 1,\\
0  & \text{ if } k \geq 2,
\end{cases}
\]
for $a_i \in \Gamma\left(\pi^*E^*\big|_{W_{(x,v)}}\right), \ \xi_i \in \pi^*\left(\Omega^{\bullet +1}\left(\mathcal{F}\big|_{W_{(x,v)}}\right)\right)$, and $w^*_i, \tau_i \in \Gamma\left(V^*|_{W_{(x,v)}}\right)$. It follows immediately that the family $\{l_{(x,0),k}^V\}$ forms an $L_\infty[1]$-algebra. Moreover, we have
\[
H^*\left(\Gamma\big(\bigwedge\nolimits^{\bullet+1}(\pi^*T\mathcal{F} \oplus V)^*\big|_{W_{(x,0)}}\big), l^{V, \mathrm{dR}}_{(x,0),1}\right) \simeq H^*\big(\Omega^{\bullet + 1}(\mathcal{F}_x), l^{\mathrm{dR}}_{x,1}\big),
\]
which can be verified as follows: since $l^{\mathrm{dR}}_{x,k}$ and $l^V_k$ are defined independently, one may apply the K\"{u}nneth theorem together with the fact that the cohomology computed by $l_1$ on an affine space is trivial. We then add the augmentation to the de Rham part, and equip it with an $L_{\infty}[1]$-structure using Proposition \ref{augomega}.

\end{itemize}
\end{defn}

\begin{rem}
Example \ref{dcotopwx} shows that when two charts differ only in the choice of the open neighborhood $W_x$ — as is the case here — the resulting structures are independent of that choice.
\end{rem}

From this point forward, we shall often suppress the adjective $L_\infty$- from $L_\infty$-Kuranishi structures and simply write Kuranishi structures, unless it is necessary for clarity.

\subsection{Morphisms of Kuranishi charts}

Let
\[
\mathcal{U} = (U, E, s, \Gamma, \psi) \text{ and  } \mathcal{U}' = (U', E', s', \Gamma', \psi')
\]
be Kuranishi charts on topological spaces $X$ and $Y,$ respectively, and suppose we are given a continuous map $f : X \rightarrow Y.$

\begin{defn}[Morphism of $L_\infty$-Kuranishi charts]\label{ourchrtmor}
A \textit{morphism of Kuranishi charts} $\Phi : \mathcal{U} \rightarrow \mathcal{U}'$ is a pair $\Phi = (\phi, \widehat{\phi})$, where:
\begin{enumerate}
\item[--] $\phi : U \rightarrow U'$ is a $(\Gamma, \Gamma')$-equivariant map of smooth manifolds that need not respect the closed two-forms,
\item[--] $\widehat{\phi} = \left\{\widehat{\phi}_x :  \mathcal{C}'_{\phi(x)} \rightarrow \mathcal{C}_x \right\}_{x \in s^{-1}(0)}$ is a family of $L_{\infty}[1]$-morphisms,
\end{enumerate}
satisfying the following conditions:
\begin{enumerate}[label = (\roman*)]
\item $\psi' \circ \phi = f \circ \psi$ on $s^{-1}(0).$
\item $\phi(W_x) \subset W'_{\phi(W_x)}$ for each $x \in s^{-1}(0).$
\item $\widehat{\phi}_x$ factors through $\mathcal{C}'_{\phi(x), \phi}$ for each $x \in s^{-1}(0);$ that is, $\widehat{\phi}_x = \widehat{\phi}_x^{\mathrm{c}} \circ \widehat{\varepsilon}_{\phi(x), \phi}$ for some $L_{\infty}[1]$-morphism $\widehat{\phi}^{\mathrm{c}}_x :  \mathcal{C}'_{\phi(x), \phi} \rightarrow \mathcal{C}_x.$
\end{enumerate}
\end{defn}

Consider an element $ \left(\overline{a}^{ k}, \overline{\xi}^{ k}\right) := \left({a}_1 \otimes \cdots \otimes a_k, {\xi}_1 \otimes \cdots \otimes \xi_k\right) \in \left(\mathcal{C}'_{\phi(x)}\right)^{\otimes k}.$ From condition (iii) above, together with the definition of the canonical map $\widehat{\varepsilon}_{\phi(x),\phi}:\mathcal C'_{\phi(x)} \rightarrow \mathcal C_{\phi(x),\phi},$ it follows immediately that if $\textrm{supp}\left({a}_i,{\xi}_i\right)\subset W'_{\phi(x)} \setminus \mathrm{Im}\, \phi$ for some $i,$ then $\widehat{\varepsilon}_{\phi(x),\phi,k}\left(\overline{a}^k,\overline{\xi}^k\right)=0.$ Hence we obtain the following lemma:

\begin{lem}
If $\mathrm{supp}({a}_i, {\xi}_i) \subset W'_{\phi(x)} \setminus \mathrm{Im}\, \phi$ for some $i,$ then $\widehat{\phi}_{x,k}\left(\overline{a}^k,\overline{\xi}^k\right)=0.$
\end{lem}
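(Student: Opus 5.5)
The plan is to reduce everything to the factorization in condition (iii) of Definition \ref{ourchrtmor}. We write $\widehat{\phi}_x = \widehat{\phi}_x^{\mathrm{c}} \circ \widehat{\varepsilon}_{\phi(x),\phi}$. Since $\widehat{\varepsilon}_{\phi(x),\phi,k}=0$ for $k\ge 2$ by (\ref{varep}), the composition formula for $L_\infty[1]$-morphisms gives
\[
\widehat{\phi}_{x,k}\left(\overline{a}^k,\overline{\xi}^k\right)=\sum \pm\, \widehat{\phi}^{\mathrm{c}}_{x,k}\Big(\widehat{\varepsilon}_{\phi(x),\phi,1}(a_{1},\xi_{1}),\ldots,\widehat{\varepsilon}_{\phi(x),\phi,1}(a_{k},\xi_{k})\Big).
\]
The sum runs only over the ordered partition into singletons; after symmetrization, Koszul signs appear, and the only terms are permutations of the same $k$ inputs. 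So it suffices to show that $\widehat{\varepsilon}_{\phi(x),\phi,1}(a_i,\xi_i)=1\otimes(a_i,\xi_i)$ vanishes in $\mathcal{C}'_{\phi(x),\phi}$ whenever $\mathrm{supp}(a_i,\xi_i)\subset W'_{\phi(x)}\setminus\mathrm{Im}\,\phi$. Multilinearity of $\widehat{\phi}^{\mathrm{c}}_{x,k}$ then kills every term.

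For that key vanishing, we work one level $j$ of the inverse system at a time, using the lemma identifying the completions with $\varprojlim C^\infty(W)^{(j)}\otimes(\cdots)$. The support of $(a_i,\xi_i)$ is closed in $W'_{\phi(x)}$. Assuming it is disjoint from the closure of $\mathrm{Im}\,\phi$ within $W'_{\phi(x)}$, choose a bump function $\chi\in C^\infty(W'_{\phi(x)})$ with $\chi\equiv 1$ on $\mathrm{supp}(a_i,\xi_i)$ and $\chi\equiv 0$ near $\mathrm{Im}\,\phi$. Then $\chi$ vanishes to infinite order on $\mathrm{Im}\,\phi$, so $\chi\in I_\phi^j$ for all $j$. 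Writing $(a_i,\xi_i)=\chi\cdot(a_i,\xi_i)$ and using that the tensor product is over $C^\infty(W'_{\phi(x)})$, we get
\[
[1]_j\otimes (a_i,\xi_i)=[\chi]_j\otimes(a_i,\xi_i)=0
\]
in each $C^\infty(W')^{(j)}\otimes\mathcal{C}'_{\phi(x)}$. This is compatible with the projections $p_{j+1,j}$, so the element is zero in the limit. The same argument applies to the augmentation summand of the de Rham part, since it is a submodule of degree $-1$ forms tensored with $C^\infty_\phi$.

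The main obstacle is the gap between ``support in the complement of $\mathrm{Im}\,\phi$'' and ``support away from the closure of $\mathrm{Im}\,\phi$'', together with the need to know that the relevant ideal $I_\phi$ (defined in Appendix B via the V-algebra) contains functions vanishing near $\mathrm{Im}\,\phi$. I would first try to read the hypothesis as meaning that the support is a closed subset of $W'_{\phi(x)}$ disjoint from $\overline{\mathrm{Im}\,\phi}$, which is the situation when $\phi$ is an embedding with closed image in $W'_{\phi(x)}$. If the Appendix B definition instead only gives $I_\phi$ as functions vanishing on $\mathrm{Im}\,\phi$, the bump-function argument still works: it only needs $\chi|_{\mathrm{Im}\,\phi}=0$ and $\chi$ lying in every power $I_\phi^j$. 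The latter holds because $\chi=\chi_1^j$ for some $\chi_1$ with $\chi_1\equiv1$ on the support and $\chi_1\equiv 0$ near $\mathrm{Im}\,\phi$.
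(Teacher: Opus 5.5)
Your reduction is the same as the paper's. By condition (iii) of Definition \ref{ourchrtmor}, $\widehat{\phi}_x=\widehat{\phi}^{\mathrm{c}}_x\circ\widehat{\varepsilon}_{\phi(x),\phi}$, and the paper simply asserts that $\widehat{\varepsilon}_{\phi(x),\phi}$ kills such inputs ``immediately''. Your composition-formula step is fine: only the all-singleton term survives because $\widehat{\varepsilon}_{\phi(x),\phi,k}=0$ for $k\geq 2$. Your bump-function argument also correctly gives $1\otimes(a_i,\xi_i)=0$ when $\mathrm{supp}(a_i,\xi_i)\cap\overline{\mathrm{Im}\,\phi}=\emptyset$.

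The gap is the case you flagged and then set aside. The hypothesis only excludes $\mathrm{Im}\,\phi$, so the support may meet the frontier $\overline{\mathrm{Im}\,\phi}\setminus\mathrm{Im}\,\phi$. In that case no continuous $\chi$ with $\chi\equiv 1$ on the support and $\chi|_{\mathrm{Im}\,\phi}=0$ exists, so your last paragraph does not help. Reading the hypothesis as separation from $\overline{\mathrm{Im}\,\phi}$ is also not justified, because the relevant images are typically not closed in $W'_{\phi(x)}$. For an open embedding the image is open in the connected ball $W'_{\phi(x)}$, so it is closed only when $I_\phi=0$. A concrete case: $\mathrm{Im}\,\phi=\{r<r_0\}$ and $g=e^{-1/(r-r_0)}$ for $r>r_0$, $g=0$ otherwise; its support $\{r\geq r_0\}$ misses $\mathrm{Im}\,\phi$ but meets its closure.

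The missing ingredient is the implication you used in passing: ``vanishes to infinite order on $\mathrm{Im}\,\phi$, hence lies in every $I_\phi^j$''. It is not automatic, but it is true by the division lemma for flat functions (Tougeron): if $g$ is flat on a closed set $Z$, there is $\varphi$, flat on $Z$ and positive off $Z$, with $g/\varphi$ smooth and flat on $Z$. Apply it directly to $(a_i,\xi_i)$ rather than to a bump function:
\begin{enumerate}
\item Write $(a_i,\xi_i)=\sum_l g_l e_l$ in a finite global basis $\{e_l\}$ of the relevant free $C^{\infty}(W'_{\phi(x)})$-module (Choice \ref{3choi}).
\item Each $g_l$ vanishes on the open set $W'_{\phi(x)}\setminus\mathrm{supp}(a_i,\xi_i)\supset\mathrm{Im}\,\phi$, hence is flat on $Z=\overline{\mathrm{Im}\,\phi}$.
\item Iterating the lemma writes $g_l$ as a product of $j$ functions vanishing on $Z$, so $g_l\in\bigcap_j I_\phi^j$.
\item Hence the image of $g_l$ in $C^{\infty}_{\phi}(W'_{\phi(x)})$ is zero, and $1\otimes(a_i,\xi_i)=\sum_l[g_l]\otimes e_l=0$.
\end{enumerate}
The paper's one-line justification passes over this same point.
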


Conversely, we also have the following.
\begin{lem}\label{pcpc}
Let
\[
\widehat{\psi}_x : \mathcal C'_{\phi(x)} \longrightarrow \mathcal C_x
\]
be an $L_{\infty}[1]$-morphism satisfying
\[
\widehat{\psi}_{x,k}\left(\overline{a}^k,\overline{\xi}^k\right)=0
\]
for all $k$ and all $\left(\overline{a}^k,\overline{\xi}^k\right) = \left({a}_1 \otimes \cdots \otimes a_k, {\xi}_1 \otimes \cdots \otimes \xi_k\right)$ such that $\mathrm{supp}\left({a}_i, {\xi}_i\right)\subset W'_{\phi(x)} \setminus \mathrm{Im}\,\phi$ for some $i.$ Then there exists a map
\[
\widehat{\psi}_x^{\mathrm{c}} :
\mathcal C'_{\phi(x),\phi} \rightarrow \mathcal C_x
\]
such that
\[
\widehat{\psi}_x
=
\widehat{\psi}_x^{\mathrm{c}}
\circ
\widehat{\varepsilon}_{\phi(x),\phi}.
\]
\end{lem}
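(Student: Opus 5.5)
We define $\widehat{\psi}^{\mathrm{c}}_x$ on the completion $\mathcal C'_{\phi(x),\phi}$ by building it one finite level at a time along the inverse system $C^\infty(W'_{\phi(x)})^{(j)} = C^\infty(W'_{\phi(x)})/I_\phi^j$. By the preceding lemma, the completion is the inverse limit of the modules $C^\infty(W')^{(j)}\otimes \mathcal C'_{\phi(x)}$. The natural candidate is therefore
\[
\widehat{\psi}^{\mathrm{c}}_{x,k}\bigl((h_1\otimes c_1),\dots,(h_k\otimes c_k)\bigr) := \widehat{\psi}_{x,k}\bigl(\widetilde h_1 c_1,\dots,\widetilde h_k c_k\bigr).
\]
Here $h_i\in C^\infty(W')^{(j)}$ and $\widetilde h_i$ is any representative in $C^\infty(W'_{\phi(x)})$. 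On the augmentation summand, which is a kernel inside the completed degree $-1$ part, we simply restrict this formula.

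The main point is independence of the representative. If $\widetilde h_i$ and $\widetilde h_i'$ both represent $h_i$, then their difference $g$ lies in $I_\phi^j\subset I_\phi$, so $g$ vanishes on $\mathrm{Im}\,\phi$. We then use a partition-of-unity argument. For $\epsilon>0$, choose a cutoff $\chi_\epsilon$ that equals $1$ near $\mathrm{Im}\,\phi$ and is supported in an $\epsilon$-neighborhood of it. Write
\[
gc_i=\chi_\epsilon g c_i+(1-\chi_\epsilon)gc_i .
\]
The second summand has support in $W'_{\phi(x)}\setminus\mathrm{Im}\,\phi$, so by multilinearity and the hypothesis it contributes zero. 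For the first summand, we use that $g$ vanishes to order $j$ on $\mathrm{Im}\,\phi$, so $\chi_\epsilon g c_i$ tends to $0$ as $\epsilon\to0$. To conclude that its contribution vanishes, we need a continuity or locality property of $\widehat{\psi}_{x,k}$. Our first attempt is to argue that $\widehat{\psi}_{x,k}$ depends only on the germ of its inputs along $\mathrm{Im}\,\phi$: apply the hypothesis to the pieces supported off $\mathrm{Im}\,\phi$, then use that a function in $I_\phi$ can be written as $\sum u_\alpha v_\alpha$ with $v_\alpha$ supported away from $\mathrm{Im}\,\phi$ up to an arbitrarily flat remainder. This locality step is the expected obstacle. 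Where it is hard to prove in general, we would restrict to the level-$1$ quotient and define the higher levels using the compatibility of the projections $p_{j+1,j}$. The compatibility check is the same as in the proof of Lemma \ref{lemab}.

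Once the map is well defined at each level $j$, we check that it is compatible with $p_{j+1,j}$. This is immediate, since a representative at level $j+1$ is also a representative at level $j$. Passing to the inverse limit then gives a well-defined multilinear family $\widehat{\psi}^{\mathrm{c}}_x$.

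Next we verify the $L_\infty[1]$-relations. The operations $l^{\phi}_k$ on $\mathcal C'_{\phi(x),\phi}$ are themselves defined through representatives, namely $l_k^{\mathrm K,\phi}(h\otimes a)=[1]\otimes\iota_s(\widetilde h a)$ and the analogous de Rham formula. So each term of the $L_\infty$-morphism identity for $\widehat{\psi}^{\mathrm{c}}_x$, evaluated on $h_i\otimes c_i$, equals the corresponding term for $\widehat{\psi}_x$ evaluated on $\widetilde h_i c_i$. Those terms satisfy the identity because $\widehat\psi_x$ is an $L_\infty[1]$-morphism.

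Finally, the factorization holds: $\widehat{\psi}^{\mathrm c}_x\circ\widehat{\varepsilon}_{\phi(x),\phi}=\widehat\psi_x$. Indeed, $\widehat{\varepsilon}_{\phi(x),\phi}$ is strict with $\widehat{\varepsilon}_{\phi(x),\phi,1}(c)=1\otimes c$, and $1$ is a valid representative of $[1]$, so the composite returns $\widehat\psi_{x,k}(c_1,\dots,c_k)$.
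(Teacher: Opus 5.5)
The gap is the step you flag yourself, and it cannot be closed in the form you set it up.

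Well-definedness of $\widehat{\psi}^{\mathrm{c}}_{x,k}(h_1\otimes c_1,\dots,h_k\otimes c_k):=\widehat{\psi}_{x,k}(\widetilde h_1c_1,\dots,\widetilde h_kc_k)$ at level $j$ means that $\widehat{\psi}_{x,k}$ kills every input in $I_\phi^j\cdot\mathcal C'_{\phi(x)}$. The hypothesis only kills inputs whose support misses $\mathrm{Im}\,\phi$, i.e.\ inputs vanishing on a \emph{neighbourhood} of $\mathrm{Im}\,\phi$. Elements of $I_\phi^j$ vanish only \emph{on} $\mathrm{Im}\,\phi$, and only to finite order, and transverse derivatives see the difference. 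For $\beta=0$, the chain map $\xi\mapsto\phi^*(L_V^{\,j}\xi)$ with $V$ transverse to $\mathrm{Im}\,\phi$ kills everything supported off $\mathrm{Im}\,\phi$. It does not kill $w^j\xi$ for a normal coordinate $w$, since the result is $j!\,\phi^*\xi$.

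Your cutoff argument cannot bridge this. First, $\widehat{\psi}_{x,k}$ is merely multilinear, with no continuity. Second, even for a continuous map, $\chi_\epsilon g c_i\to0$ only in $C^{j-1}$, because the derivatives of $\chi_\epsilon$ grow like $\epsilon^{-m}$. Your proposed decomposition of $g\in I_\phi$ also cannot exist: a piece supported off $\mathrm{Im}\,\phi$ plus an arbitrarily flat remainder. Pieces vanishing near $\mathrm{Im}\,\phi$ do not change the normal jet of $g$, which is nonzero for, e.g., $g=w$.

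The level-$1$ fallback demands even more. There is also a structural problem. Your compatibility check gives $\psi^{(j+1)}=\psi^{(j)}\circ p_{j+1,j}$, hence $\psi^{(j)}=\psi^{(1)}\circ p_{j,1}$ for all $j$. Maps \emph{out of} an inverse limit are not assembled level by level, so your scheme can only produce a $\widehat{\psi}^{\mathrm{c}}_x$ that factors through the level-$1$ quotient. That would mean $\widehat{\psi}_x$ sees only the restrictions of its inputs to $\mathrm{Im}\,\phi$, which the example above violates.

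What the factorization actually needs is weaker. Since $\widehat{\varepsilon}_{\phi(x),\phi}$ is strict with linear part $c\mapsto1\otimes c$, a family of multilinear maps $\widehat{\psi}^{\mathrm{c}}_x$ with $\widehat{\psi}_x=\widehat{\psi}^{\mathrm{c}}_x\circ\widehat{\varepsilon}_{\phi(x),\phi}$ exists exactly when the following holds: $\widehat{\psi}_{x,k}$ vanishes as soon as one input lies in $\ker\widehat{\varepsilon}_{\phi(x),\phi,1}$. Such an input has coefficients in $\bigcap_jI_\phi^j$, the functions flat along $\mathrm{Im}\,\phi$.

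Granting that condition, define $\widehat{\psi}^{\mathrm{c}}_x$ directly on the limit. Lift an entire compatible sequence to a single smooth representative (Borel's lemma, when $\mathrm{Im}\,\phi$ is a closed embedded submanifold) and apply $\widehat{\psi}_x$. Your transfer of the $L_\infty[1]$-relations and your final factorization check then go through unchanged.

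Vanishing on flat inputs still does not follow from the support hypothesis for a merely linear $\widehat{\psi}_x$: flat germs along $\mathrm{Im}\,\phi$ are nonzero, and a linear map can detect them. It does follow if $\widehat{\psi}_x$ is continuous for the $C^\infty$ topology, since a flat function is a $C^\infty$-limit of functions vanishing near $\mathrm{Im}\,\phi$. So an extra continuity or locality assumption has to be added.

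The paper's own two-line proof does not supply this step either. It regards $\widehat{\psi}_x$ as defined on inputs modulo those supported off $\mathrm{Im}\,\phi$ (germs along $\mathrm{Im}\,\phi$) and says the claim follows from the construction of the completion. That passes from germs to formal jets without comment.
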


\begin{proof}
$\widehat{\psi}_x^{\mathrm{c}}$ is well defined once we view $\widehat{\psi}_x$ as taking inputs modulo homogeneous elements $\left(\overline{a}^k,\overline{\xi}^k\right)$ with $\mathrm{supp}\left({a}_i, {\xi}_i\right)\subset W'_{\phi(x)} \setminus \mathrm{Im}\,\phi$ for some $i.$ The claim then follows from the construction of the completion $\mathcal C'_{\phi(x),\phi}.$
\end{proof}

\begin{rem}
\begin{enumerate}[label=(\roman*)]
\item In Definition \ref{ourchrtmor}, $\left(\Gamma, \Gamma'\right)$-equivariance implicitly presupposes a choice of group homomorphism $g : \Gamma \rightarrow \Gamma'.$ Note that condition (i) of Definition \ref{ourchrtmor} is well defined precisely because of this equivariance.
\item By condition (i) of Definition \ref{ourchrtmor}, the zero points of $s$ map into the zero set of $s'$ under a morphism of Kuranishi charts; that is, $\phi\left(s^{-1}(0)\right) \subset s'^{-1}(0).$
\end{enumerate}
\end{rem}

\begin{exam}[Different choices for the open neighborhood $W_x$]\label{dcotopwx}
Let $\mathcal{U}$ and $\mathcal{U}'$ be Kuranishi charts that are identical \textit{except} for the choice of open neighborhoods $W_x \underset{\text{open}} {\subset} W'_x$ at a zero point $x \in s^{-1}(0)$. Then Kuranishi charts are independent of such choices, in the following sense: there exists a morphism of charts $\Phi: \mathcal{U} \rightarrow \mathcal{U}'$ given by
\begin{itemize}
    \item[--] $\phi: U \rightarrow U$, the identity map;
    \item[--] $\widehat{\phi}^{\mathrm{c}}_x: \mathcal{C}'_{\phi(x),\phi} \xrightarrow{\simeq} \mathcal{C}_x$, the quasi-isomorphism from Lemma \ref{itavsteai}.
\end{itemize}
\end{exam}

\begin{defn}[Embedding of $L_\infty$-Kuranishi charts]\label{ourcemb}
Let $\mathcal{U} = (U, E, s, \Gamma, \psi)$ and $\mathcal{U}' = (U', E', s', \Gamma', \psi')$
be Kuranishi charts of $X.$ A morphism of charts $\Phi = (\phi, \widehat{\phi}) : \mathcal{U} \rightarrow \mathcal{U}'$ is called an \textit{embedding} if
\begin{enumerate}
\item[--] $\phi : U \hookrightarrow U'$ is a $(\Gamma, \Gamma')$-equivariant embedding of smooth manifolds, and
\item[--] $\widehat{\phi}^{\mathrm{c}}_{x} : \mathcal{C}'_{\phi(x), \phi} \rightarrow \mathcal{C}_x$ is a quasi-isomorphism of $L_{\infty}[1]$-algebras for each $x.$
\end{enumerate}
When an embedding $\Phi = \left(\phi, \widehat{\phi}\right)$ is given, we implicitly fix, for each $x,$ a retraction between the Euclidean balls,
\begin{equation}\nonumber
\pi_x : W_{\phi(x)}' \twoheadrightarrow \phi(W_x),
\end{equation}
restricting to the identity map $\mathrm{id}_{\phi(W_x)}$ on $\phi(W_x) \subset W'_{\phi(x)}.$
\end{defn}

\begin{defn}[Open embedding of $L_\infty$-Kuranishi charts]\label{ouremb}
An embedding of charts $\Phi = \left(\phi, \widehat{\phi}\right) : \mathcal{U} \rightarrow \mathcal{U}'$ is called \textit{open} if $\dim \mathcal{U} = \dim \mathcal{U}'$
\end{defn}

\begin{exam}[Chart morphism from an expansion]\label{cmfae}
Let $\mathcal{U} \times V$ be an expansion of a Kuranishi chart $\mathcal{U}$ (cf. Subsection \ref{expvs}). We consider a morphism of charts $P : \mathcal{U}\times V \to \mathcal{U}$, consisting of:
\begin{itemize}[leftmargin=*]
\item[--] the projection $\pi : U \times V\to U$ onto the $U$-component, which restricts to the isomorphism
\[
\pi|_{(s \times \text{id}_V)^{-1}(0)}: (s \times \text{id}_V)^{-1}(0) \simeq s^{-1}(0);
\]
\item[--] the $L_{\infty}[1]$-algebra morphism $\widehat{\pi}^{\mathrm{c}}_{(x,0)} : \mathcal{C}_{x,\pi} \simeq \mathcal{C}_x \to \mathcal{C}^V_{(x,0)}$, defined by
\[
\widehat\pi^{\mathrm{c}}_{(x,0)}\bigl((a_1,\xi_1),\dots,(a_k,\xi_k)\bigr)
  :=\begin{cases}
    (\pi^*a_1,\pi^*\xi_1), & \text{ if } k=1,\\
    0, & \text{ if } k \geq 2.
  \end{cases}
\]
\end{itemize}
\end{exam}

The proof of the following lemma is postponed to the end of Subsection \ref{rtfo3kc}.
\begin{lem}\label{phxv}
$\widehat\pi_{(x,v)} := \widehat\pi_{(x,v)}^{\mathrm{c}} \circ \widehat{\varepsilon}_{x,\pi}$ is an $L_\infty[1]$-quasi-isomorphism.
\end{lem}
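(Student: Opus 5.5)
Since $\pi : U\times V \to U$ is surjective, Lemma \ref{surjcp} gives $\mathcal{C}_{x,\pi}\simeq\mathcal{C}_x$, and under this identification $\widehat{\varepsilon}_{x,\pi}$ is the identity. So $\widehat\pi_{(x,v)}$ has a single nonzero component, the linear map $\widehat\pi^{\mathrm{c}}_{(x,0),1}=\pi^*$, and all higher components vanish. An $L_\infty[1]$-morphism is a quasi-isomorphism exactly when its linear part is a quasi-isomorphism of the underlying $l_1$-complexes. The plan is therefore:
\begin{enumerate}[label=(\arabic*)]
\item check that $\pi^*$ is a strict $L_\infty[1]$-morphism;
\item show that $\pi^*$ induces an isomorphism on cohomology;
\item do this separately for the Koszul and de Rham summands, which is legitimate because the operations on both sides are direct sums $l^{\mathrm K}\oplus l^{\mathrm{dR}}$.
\end{enumerate}

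\emph{Compatibility with the operations.} On the Koszul side, $\iota_{s\times\mathrm{id}_V}(\pi^*a)=\pi^*\iota_s(a)$, because $\pi^*a$ has no $V^*$-component. On the de Rham side, $l^{V,\mathrm{dR}}_k(\pi^*\xi_1,\dots,\pi^*\xi_k)=(l^{\mathrm{dR}}_{x,k}(\xi_1,\ldots,\xi_k),0)$, again because the $\tau$-components vanish. On the augmentation pieces, Proposition \ref{augomega} builds the operations recursively and naturally from the unaugmented ones, so I expect its construction to commute with pullback. If it does not, I would invoke Lemma \ref{auglmo} (with Remark \ref{auglmoloc}) to extend the morphism to the augmentation, correcting the higher components.

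\emph{De Rham part.} By Corollary \ref{corpoinc}, or directly by the foliated Poincar\'e lemma on the contractible $W_x$, the augmented complex $\Omega^{\bullet+1}_{\mathrm{aug}}(\mathcal{F}_x)$ is acyclic. Definition \ref{expvs} shows, via K\"unneth and the acyclicity of the $V$-directions, that $H^*$ of the expanded de Rham part agrees with that of $\Omega^{\bullet+1}(\mathcal{F}_x)$; after augmentation both sides are acyclic. A map between acyclic complexes is automatically a quasi-isomorphism.

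\emph{Koszul part (main obstacle).} We must show that
\[
\pi^*:\textstyle\bigwedge^{-\bullet}\Gamma(E^*|_{W_x})\to\bigwedge^{-\bullet}\Gamma((\pi^*E\oplus V)^*|_{W_x\times V})
\]
is a quasi-isomorphism. The target is the tensor product over $C^\infty(W_x\times V)$ of the pulled-back Koszul complex of $s$ with the Koszul complex of the tautological section $v$ of the trivial bundle $V$. The latter resolves $C^\infty(W_x\times V)/(v_1,\dots,v_n)\cong C^\infty(W_x)$, by Hadamard's lemma: the coordinates $v_i$ form a regular sequence in the smooth ring. I would then filter by $V^*$-degree and run the spectral sequence; equivalently, construct an explicit contracting homotopy using the Hadamard/Koszul homotopy $h(f)=\sum_i\bigl(\int_0^1\partial_{v_i}f(y,tv)\,dt\bigr)e_{v_i}^*$. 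This shows that restriction to $v=0$ is a homotopy inverse to $\pi^*$, with $r\circ\pi^*=\mathrm{id}$ and $\pi^*\circ r\simeq\mathrm{id}$. The delicate point is making the homotopy commute with, or at least be compatible with, the $s$-part of the differential. Since $s$ is pulled back from $W_x$ and hence independent of $v$, the two differentials commute and the perturbation is harmless. Combining the two parts yields the claimed quasi-isomorphism.
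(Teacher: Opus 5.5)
Your argument is correct, but it takes a different route from the paper's. The paper first checks compatibility with the operations and notes that $\widehat{\pi}_{(x,v),1}$ is injective. It then reduces acyclicity of the quotient $\mathcal{C}^V_{(x,0)}/\widehat{\pi}^{\mathrm{c}}_{(x,v),1}(\mathcal{C}_x)$ to Proposition \ref{afec}. To do so it regards the zero-section inclusion $i\colon y\mapsto (y,0)$, together with $\widetilde{i}\colon E\hookrightarrow E\times V$, as an FOOO embedding. Its tangent bundle condition holds because $d(s\times\mathrm{id}_V)=ds\oplus\mathrm{id}_V$ induces $V\cong V$, and Condition \ref{addcond} is trivial. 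The Koszul part is then handled by the general machinery of that proposition: a double complex whose columns are Koszul complexes of the complementary section (here the tautological section $v$), regular-sequence exactness, and a Taylor-expansion argument giving surjectivity onto the ideal of $U\times\{0\}$ in degree zero.

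You instead use the product structure directly. Since $s$ is independent of $v$, the complex splits as $K(\pi^*s)\otimes K(v)$, and the Hadamard homotopy on $K(v)$ graded-commutes with $\iota_{\pi^*s}$. This gives an explicit deformation retraction with $r\circ\pi^*=\mathrm{id}$ and $\pi^*\circ r\simeq\mathrm{id}$.

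The underlying input is the same: Hadamard's lemma, or equivalently regularity of the coordinates $v_i$. Your version buys several things:
\begin{itemize}
\item It is more self-contained, and it produces a chain homotopy equivalence rather than only a quasi-isomorphism.
\item It works with the uncompleted algebras from the start. Proposition \ref{afec} is stated with the completed target $\mathcal{C}'_{\phi(x),\phi}$, so the paper's ``essentially identical'' has to be read as rerunning that argument without completion. One cannot instead route through $\widehat{\varepsilon}$, since Lemma \ref{cochqim} itself uses the present lemma.
\item It needs no appeal to Condition \ref{addcond}.
\end{itemize}
What the paper's route buys is uniformity: one mechanism (tangent bundle condition $\Rightarrow$ quasi-isomorphism) covers every FOOO embedding, and the expansion is just one instance.

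Two minor points. Your displayed $h$ is only the degree-zero piece. On $V^*$-degree $k$ you need $h(\omega)=\sum_i e^*_{v_i}\wedge\int_0^1 t^{k}\,(\partial_{v_i}\omega)(y,tv)\,dt$, which satisfies $\iota_v h+h\iota_v=\mathrm{id}$ there. Also, if you correct the augmented part via Lemma \ref{auglmo}, the higher components of $\widehat{\pi}_{(x,v)}$ need not vanish. This is harmless, since only the linear part matters for the quasi-isomorphism property.
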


We conclude this subsection with a typical application of the preceding lemma.
\begin{exam}
Given a chart morphism
\[
\Phi: \mathcal{U} \rightarrow \mathcal{U}',
\]
there exists another morphism
\[
\Phi': \mathcal{U} \times V \hookrightarrow \mathcal{U}'
\]
extending $\Phi$ — that is, $\Phi'|_{\mathcal{U} \times \{0\}} \equiv \Phi$ — with the property that the base component map
\[
\phi' : U \times V \rightarrow U
\]
is surjective, for a choice of $V$ of sufficiently large dimension.
By Example \ref{dcotopwx}, the local base map
\[
\phi'|_{W_{(x,0)}}: W_{(x,0)} \rightarrow W'_{(\phi(x),0)},
\]
for each $(x,0) \in (s \times \mathrm{id}_V)^{-1}(0)$, may likewise be assumed surjective, modulo quasi-isomorphic changes of the local $L_{\infty}[1]$-algebra, by taking $W_{(x,0)}$ sufficiently large. The $L_{\infty}[1]$-component map $\widehat{\phi}'_{(x,0)}$ at $(x,0)$ can then be chosen as the composition
\[
\widehat{\phi}'_{(x,0)}: \mathcal{C}'_{\phi'(x,0),\phi'} = \mathcal{C}'_{\phi'(x,0)} =  \mathcal{C}'_{\phi(x)} \xrightarrow{\widehat{\varepsilon}'_{\phi(x),\phi}} \mathcal{C}'_{\phi(x), \phi} \xrightarrow{\widehat{\phi}_x} \mathcal{C}_x \xrightarrow[\simeq]{\widehat{\pi}_{(x,0)}} \mathcal{C}^V_{(x,0)},
\]
using Lemmata \ref{vecpf}, \ref{surjcp}, and \ref{phxv}.
\end{exam}

\subsection{Relation to FOOO Kuranishi charts}\label{rtfo3kc}
This subsection reviews FOOO Kuranishi charts and embeddings, and shows how they relate to the $L_{\infty}[1]$-Kuranishi chart theory introduced above. We begin by recalling the definition of an FOOO Kuranishi chart.

\begin{defn}[FOOO Kuranishi charts]\label{deflkur}
Let $X$ be a compact metrizable space. We call a tuple $\mathscr{U} := (U, E, s, \Gamma, \psi)$ an \textit{FOOO Kuranishi chart} of $X$ if the following conditions are satisfied:
\begin{enumerate}[label = (\roman*)]
\item[--] $U$ is a simply connected orbifold.
\item[--] $E$ is a vector bundle of finite rank over $U.$
\item[--] $s : U \rightarrow E$ is a smooth section.
\item[--] $\Gamma$ is a finite group acting on $U,$ preserving $s^{-1}(0).$
\item[--] $\psi : {s^{-1}(0)}/{\Gamma} \overset{\simeq}{\hookrightarrow} X$ is a homeomorphism onto its image.
\end{enumerate}
\end{defn}

\begin{defn}\label{foemb}
Given two FOOO Kuranishi charts $\mathcal{U}$ and $\mathcal{U}'$ of $X,$ an \textit{FOOO embedding} $\Phi := \left(\phi, \widetilde{\phi}\right) : \mathcal{U} \hookrightarrow \mathcal{U}'$ consists of:
\begin{enumerate}
\item[--] an orbifold embedding $\phi : U \hookrightarrow U',$
\item[--] a vector bundle embedding $\widetilde{\phi} : E \hookrightarrow E',$
\end{enumerate}
required to satisfy the following conditions:
\begin{enumerate}
\item[(i)] $\widetilde{\phi} \circ s = s' \circ \phi,$
\item[(ii)] $\psi' \circ \phi = \psi$ on $s^{-1}(0),$
\item[(iii)](\textit{Tangent bundle condition}) $ds'$ induces an isomorphism
\begin{equation}\label{tancond}
\left[ds'_{\phi(x)}\right] : \frac{T_{\phi(x)}U'}{\phi_*(T_xU)} \xrightarrow{\simeq} \frac{E'_{\phi(x)}}{\widetilde{\phi}(E_{x})},
\end{equation}
at each $x \in s^{-1}(0).$
\end{enumerate}
\end{defn}

\begin{defn}[FOOO Kuranishi space]\label{fo3ks}
Let $X$ be a compact metrizable space. An \textit{FOOO Kuranishi structure} $\widehat{\mathscr{U}}$ on $X$ assigns, by definition, to each point $p \in X$ an FOOO Kuranishi chart $\mathscr{U}_p := (U_p, E_p, s_p, \Gamma_p, \psi_p),$ and, to a pair of points $p, q \in X$ with $p \in \mathrm{Im}\,\psi_q,$ the following data:
\begin{enumerate}[label=(\roman*)]
\item[--] an open subset $U_{pq} \subset U_p,$
\item[--] an FOOO embedding of the same virtual dimension (called the \textit{coordinate change}) $\Phi_{pq}=(\phi_{pq}, \widetilde{\phi}_{pq})$ from $\mathscr{U}_p|_{U_{pq}}$  to  $ \mathscr{U}_q,$
\end{enumerate}
satisfying the compatibility conditions
\begin{enumerate}[label = (\roman*)]
\item $\Phi_{pr}|_{U_{pqr}} = \Phi_{qr} \circ \Phi_{pq}|_{U_{pqr}}$ for $q \in \mathrm{Im}\, \psi_p, \ r \in \psi_q \big(s_q^{-1}(0) \cap U_{qr}\big),$
\item $\Phi_{pp} = (U_p, \mathrm{id}_{{p}}, \widehat{\mathrm{id}}_{p, x}),$
\item $\psi_p\big(s_p^{-1}(0) \cap U_{pq}\big) =\mathrm{Im}\, \psi_p \cap \mathrm{Im}\, \psi_q,$
\end{enumerate}
where $U_{pqr} := \phi_{pq}^{-1}(U_{qr}) \cap U_{pr}.$
An \textit{FOOO Kuranishi space} is defined by the pair $(X, \widehat{\mathscr{U}}).$
\end{defn}

Three points are worth noting here. First, the coordinate changes are required to satisfy the compatibility conditions \textit{only on the zero locus.} Second, the cocycle condition is imposed \textit{on the base maps alone}, and not on the $L_{\infty}[1]$-component. The reason for this is that $L_{\infty}$-compatibilities always hold (cf.\ Remark \ref{whnli} (iii)); this point is revisited in \cite{Kim3}. Third, we consider pairs $(p,q)$ satisfying $\mathrm{Im}\,\psi_p \cap \mathrm{Im}\,\psi_q \neq \emptyset$, whereas in the FOOO setting coordinate changes are defined only for pairs with $p \in \mathrm{Im}\,\psi_q$. Together, these three points yield the flexibility we seek to achieve.

FOOO Kuranishi charts, as defined in \cite{FOOO1} and \cite{FOOO2}, can be regarded as special cases of our construction in the following sense. Given an FOOO Kuranishi chart $\mathscr{U} = (U, E, s, \Gamma, \psi)$, we equip its base $U$ with the zero presymplectic form $\omega_U \equiv 0$. Note that in this case $T\mathcal{F} = TU$.

Let $\mathscr{U} = (U, E, s, \Gamma, \psi)$ and $\mathscr{U}' = (U', E', s', \Gamma', \psi')$ be FOOO Kuranishi charts, regarded as Kuranishi charts in our sense as above, and let $\left(\phi, \widetilde{\phi}\right) : \mathscr{U} \rightarrow \mathscr{U}'$ be an FOOO embedding between them.

\begin{cond}[Additional conditions]\label{addcond}
We add two further conditions to the definition of FOOO embeddings, alongside conditions (i), (ii), and (iii) of Definition \ref{foemb}. Before proceeding, we write $E^c$ for a complement of $\widetilde{\phi}(E)$ in $E'$, and $p^c: E' \twoheadrightarrow E^c$ for the canonical projection. We additionally require:
\begin{enumerate}[label=(\roman*), start=4]
\item $p^c(s')|_{\phi(U)} \equiv 0.$
\item (After fixing a local trivialization,) the tangent bundle condition
\begin{equation}\label{tancond4}
\left[d\left(s'|_{W_x}\right)_y\right] : \frac{T_{\phi(x)}W'_{\phi(x)}}{\phi_*(T_yW_x)} \xrightarrow{\simeq} \frac{E'_{\phi(y)}}{\widetilde{\phi}(E_{y})}
\end{equation}
holds for all $x \in s^{-1}(0)$ and every $y \in W_x$ (not merely for $x$ itself).
\end{enumerate}
\end{cond}
We provide justification for imposing conditions (iv) and (v):
\begin{enumerate}[label=(\roman*), start=4]
\item This condition is indeed satisfied by the coordinate changes for the moduli space of pseudoholomorphic maps, one of the primary examples of FOOO Kuranishi spaces (cf.\ \cite{FOOO2}).
\item The linearization (with a choice of local trivialization of $E$ over $W_x$) being an isomorphism is an open condition in $x \in W_x$. Hence, by taking $W_x$ smaller if necessary, one can ensure that $\left[d_ys'|_{W_x}\right]$ is an isomorphism for all $y \in W_x$.
\end{enumerate}

Suppose we are given Kuranishi charts $\mathcal{U}$ and $\mathcal{U}'$ determined as explained above. We now show that our definition of an open embedding is the correct generalization of the FOOO Kuranishi chart embedding, in the following sense:

\begin{prop}\label{afec}
An FOOO embedding, together with the above conditions $(iv)$ and $(v)$, determines an embedding of Kuranishi charts in the sense of Definition \ref{ourcemb}. In particular, the tangent bundle condition yields the quasi-isomorphism condition between the $L_{\infty}[1]$-algebras.
\end{prop}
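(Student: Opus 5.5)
We construct the pair $\Phi = (\phi, \widehat{\phi})$ directly from the FOOO data $(\phi, \widetilde{\phi})$. The base map $\phi$ is already an equivariant embedding, and condition (i) of Definition \ref{ourchrtmor} is condition (ii) of Definition \ref{foemb}. For condition (ii) of Definition \ref{ourchrtmor}, we shrink $W_x$ so that $\phi(W_x) \subset W'_{\phi(x)}$. Since both charts carry the zero form, we have $T\mathcal{F}_x = TW_x$ and $T\mathcal{F}'_{\phi(x)} = TW'_{\phi(x)}$. The de Rham parts are therefore the augmented de Rham complexes of balls, and each is acyclic by the Poincar\'e lemma together with Corollary \ref{corpoinc}. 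It thus suffices to construct, for each $x$, an $L_\infty[1]$-morphism $\widehat{\phi}^{\mathrm{c}}_x : \mathcal{C}'_{\phi(x),\phi} \to \mathcal{C}_x$ that is a quasi-isomorphism on the Koszul parts and an arbitrary morphism between the acyclic augmented de Rham parts. For the latter we use Lemma \ref{auglmo} in its completed form (Remark \ref{auglmoloc}); any such morphism is automatically a quasi-isomorphism. Then $\widehat{\phi}_x := \widehat{\phi}^{\mathrm{c}}_x \circ \widehat{\varepsilon}_{\phi(x),\phi}$, which is an $L_\infty[1]$-morphism by Lemma \ref{vecpf}.

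On the Koszul side, which carries only $l_1$, a strict chain map suffices. On $W'_{\phi(x)}$ we use the frame of Choice \ref{3choi} adapted to the splitting $E' = \widetilde{\phi}(E) \oplus E^c$. By condition (iv), $s'|_{\phi(U)}$ lies in $\widetilde{\phi}(E)$, and by condition (i) of Definition \ref{foemb} it equals $\widetilde{\phi}\circ s$. We define the chain map by pullback along $\phi$, $h\otimes a \mapsto \phi^*(\widetilde h)\,\widetilde{\phi}^*(a)$, where $\widetilde\phi^*: E'^* \to E^*$ is the dual restriction. This kills $E^{c*}$ after restriction to the image, and it is well defined on the completion because $I_\phi$ pulls back to zero. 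It commutes with the differentials precisely because $\phi^*\iota_{s'} = \iota_{s}\,\widetilde\phi^*$ on the image, which is conditions (i) and (iv).

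The main step, and the expected obstacle, is showing that this map is a quasi-isomorphism, which is where the tangent bundle condition enters. Choose coordinates on $W'_{\phi(x)}$ of the form $(u, z)$, with $\phi(W_x) = \{z = 0\}$ and $z = (z_1,\dots,z_c)$ normal coordinates, where $c = \operatorname{rk} E^c$. Condition (v), required at every $y\in W_x$, says that the components $s^c_m := p^c(s')_m$, which vanish on $\{z=0\}$, have normal derivatives $\partial s^c/\partial z$ invertible along $\phi(W_x)$. Hence $s^c_m \in I_\phi\setminus I_\phi^2$, and after a change of normal coordinates we may take $s^c_m = z_m$. The Koszul complex of $s'$ then factors as a tensor product of the Koszul complex of $(z_1,\dots,z_c)$ with that of the $\widetilde\phi(E)$-components. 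In the $I_\phi$-adic completion $C^\infty_\phi(W')$, which is formal power series in $z$ with coefficients in $C^\infty(\phi(W_x))$, the sequence $z_1,\dots,z_c$ is regular. Its Koszul complex is therefore a resolution of $C^\infty_\phi/(z) \cong C^\infty(W_x)$; this is the exactness input of Lemma \ref{lemab}(i). A spectral sequence or filtration argument on the double complex then identifies the cohomology of the completed Koszul complex of $s'$ with that of the Koszul complex of $s|_{W_x}$, compatibly with the map above.

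The delicate points are the following. First, the remaining $\widetilde\phi(E)$-components of $s'$ differ from $\widetilde\phi\circ s$ by terms in $I_\phi$; we handle this by a filtration by powers of $I_\phi$ and a perturbation argument. Second, we need that regularity survives completion, which follows from flatness of the completion over the finitely generated free modules involved (the lemma preceding Lemma \ref{lemab}). Once the quasi-isomorphism on Koszul parts is established, $\widehat{\phi}^{\mathrm c}_x$ is the direct sum of two quasi-isomorphisms, and $\Phi$ is an embedding in the sense of Definition \ref{ourcemb}.
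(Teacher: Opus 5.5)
Your proof is correct, but it goes in the opposite direction from the paper's.

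\textbf{The paper's route.} It never writes down a map $\mathcal{C}'_{\phi(x),\phi}\to\mathcal{C}_x$ directly. It fixes a retraction $\pi$ onto $\phi(U)$ and builds an \emph{extension} morphism $\widehat{\eta}_x:\mathcal{C}_x\to\mathcal{C}'_{\phi(x),\phi}$. On the Koszul side this is $a\mapsto 1\otimes\pi^*\widetilde{\phi}(a)$; on the de Rham side it is $\xi\mapsto 1\otimes\pi^*\widetilde{i}(\phi^{-1})^*\xi$, checked against the Poisson bivectors. It notes that $\widehat{\eta}_{x,1}$ is injective and proves the cokernel acyclic. The splitting $E'=\pi^*\widetilde{\phi}(E)\oplus E^c$ turns the Koszul quotient into a double complex whose columns are Koszul complexes of $s'_c$, with the last term replaced by a quotient. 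Exactness of these columns comes from three inputs: the regular-sequence claim deduced from the tangent bundle condition, Lemma \ref{lemab} to pass to the completion, and a Taylor-expansion argument for surjectivity of $\iota_{s'_c}$ onto $C^\infty_\phi\otimes I_\phi$. Only then is $\widehat{\phi}^{\mathrm{c}}_x$ produced, as a homotopy inverse of $\widehat{\eta}_x$ via the Whitehead theorem (Theorem \ref{wht}).

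\textbf{Your route.} You use the restriction $h\otimes a\mapsto\phi^*(\widetilde h)\,\widetilde{\phi}^*(a)$. It already points the right way, is strict, and commutes with the Koszul differentials on the nose using only $\widetilde{\phi}\circ s=s'\circ\phi$. The quasi-isomorphism is then standard commutative algebra: $C^\infty_\phi(W')\cong C^\infty(W_x)[[z]]$, $K(s')=K(s'_\phi)\otimes K(s'_c)$ with $K(s'_c)$ resolving $C^\infty(W_x)$, and a filtration by $K(s'_\phi)$-degree.

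\textbf{What each buys.} Your route gives a canonical, explicit $\widehat{\phi}_x$ rather than one defined only up to $L_\infty[1]$-homotopy, and it needs no Whitehead theorem. It also avoids having to make extension along $\pi$ compatible with $\iota_{s'}$; for a genuine pullback section that compatibility holds only modulo $I_\phi$ unless $s'$ is adapted to $\pi$. The paper's inclusion-plus-acyclic-cokernel scheme, on the other hand, is what it reuses for Lemma \ref{phxv}, where the natural map is itself a pullback along a projection. Its de Rham construction through the Poisson structures also does not rely on the higher brackets vanishing for the zero form.

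A few small points on your write-up.
\begin{enumerate}
\item Lemma \ref{auglmo} needs an input morphism on the unaugmented de Rham complexes, which you do not specify. Since both forms are zero, $l_k=0$ for $k\ge 2$ by Lemma \ref{liiifdr}(iii). So the pullback $h\otimes\xi\mapsto\phi^*(\widetilde h)\,\phi^*\xi$ is a chain map and serves.
\item Your two ``delicate points'' are not really delicate. Filter $K(s'_\phi)\otimes_R K(s'_c)$ by $K(s'_\phi)$-degree and take $\iota_{s'_c}$-cohomology first. This already reduces $s'_\phi$ modulo $(z)$ to $\widetilde{\phi}\circ s$, so no perturbation argument is needed.
\item Likewise, $z_1,\dots,z_c$ is visibly regular in $C^\infty(W_x)[[z]]$, and no flatness is required. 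The lemma you cite only says that the inverse limit commutes with tensoring by finite free modules.
\item You need not straighten $s'_c$ to $z$ by a change of coordinates. In the complete ring $s'_c=Bz$, where $B$ reduces to the invertible normal derivative $\partial s'_c/\partial z|_{z=0}$. Hence $B$ is invertible and $K(s'_c)\cong K(z)$.
\end{enumerate}
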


\begin{proof}
The base component is set to be the smooth embedding $\phi : U \hookrightarrow U'.$ For the $L_{\infty}$-component, we first define a quasi-isomorphic $L_{\infty}[1]$-morphism $\widehat{\eta}_x := \left\{\widehat{\eta}_{x,k}\right\}_{k \geq 1}$ and then take its homotopy inverse $\widehat{\phi}_{x} := \left\{\widehat{\phi}_{x,k}\right\}_{k \geq 1}.$

\subsubsection*{(Preparatory constructions)} To define $\widehat{\eta}_{x},$ we require some preliminary steps. We first choose a projection
\[
\pi : U' \twoheadrightarrow \phi(U)
\]
restricting to the identity map $\mathrm{id}_{\phi(U)}$ on $\phi(U) \subset U',$ whose existence is guaranteed by the embedding property of $\phi.$

Note that the embedding $\widetilde{\phi} : E \hookrightarrow E'$ naturally induces another bundle embedding
\[
\widetilde{\phi} : \bigwedge\nolimits^{-\bullet}E^* \hookrightarrow \bigwedge\nolimits^{-\bullet}E^{'*}.
\]
By abuse of notation, we denote this embedding by the same symbol $\widetilde{\phi}.$

Consider an embedding
\begin{equation}\nonumber
\widetilde{i} : (\phi^{-1})^*T^*U \hookrightarrow T^*({\mathrm{Im}\phi}),
\end{equation}
which, after composing with the inclusion $T^*(\mathrm{Im}\phi) \hookrightarrow T^*U',$ yields a symplectic submanifold
\begin{equation}\nonumber
\widetilde{i}\big((\phi^{-1})^*(T^*U)\big) \subset T^*U'
\end{equation}
with respect to the standard symplectic structures on $T^*U$ and $T^*U'.$ Such an $\widetilde{i}$ always exists, and allows us to define a map of sections,
\begin{equation}\label{wtimap}
\begin{split}
\overline{(\cdot)} : \Gamma\left(\bigwedge\nolimits^{\bullet +1} T^*U\right) &\rightarrow \Gamma\left(\bigwedge\nolimits^{\bullet +1}T^*U'\right),\\
\xi &\mapsto \overline{\xi} := \pi^* \circ \widetilde{i} \circ {(\phi)^{-1}}^*(\xi).
\end{split}
\end{equation}
The maps $\phi$, $\widetilde{i},$ and $\pi$ give rise to the following commutative diagram:
\begin{equation}\label{34diag}
\begin{tikzcd}
TT^*U \arrow{r}{((\phi^{-1})^*)_*} \arrow{d} & T(\phi^{-1})^*T^*U \arrow{r}{\widetilde{i}_*} \arrow{d} & TT^*(\mathrm{Im}\phi) \arrow{r}{(\pi^*)_*} \arrow{d} & TT^*U' \arrow{d}\\
T^*U \arrow{r}{(\phi^{-1})^*} \arrow{d} & (\phi^{-1})^*T^*U \arrow{r}{\widetilde{i}} \arrow{d} & T^*(\mathrm{Im}\phi) \arrow{r}{\pi^*} \arrow{d} & T^*U' \arrow{d} \\
U \arrow{r}{\phi} & \mathrm{Im}\phi \arrow{r}{=} & \mathrm{Im}\phi & U' \arrow{l}{\pi},\\
\end{tikzcd}
\end{equation}
where all vertical arrows are given by the bundle projection maps.
Since the top horizontal arrows are bundle maps, we have
\begin{equation}\label{diagaft}
(\pi^*)_* \circ \widetilde{i}_* \circ \left((\phi^{-1})^*\right)_*|_{T^*U} = \pi^* \circ \widetilde{i} \circ (\phi^{-1})^*.
\end{equation}
Observe that (\ref{34diag}) further induces a commutative diagram arising from the corresponding V-algebras:
\begin{equation}\label{34diag2}
\begin{aligned}
&\begin{tikzcd}
\lim\limits_{\longleftarrow}\frac{\Gamma(\bigwedge^{\bullet +1}TT^*U)}{I^n \cdot \Gamma(\bigwedge^{\bullet +1}TT^*U)} \arrow{r}{((\phi^{-1})^*)_*} \arrow{d}{\Pi} & \lim\limits_{\longleftarrow}\frac{\Gamma(\bigwedge^{\bullet +1}T(\phi^{-1})^*T^*U)}{I^n \cdot \Gamma(\bigwedge^{\bullet +1}T(\phi^{-1})^*T^*U)} \arrow{d} \cdots\\
\Gamma\left(\bigwedge^{\bullet + 1}T^*U\right) \arrow{r}{(\phi^{-1})^*} & \Gamma\left(\bigwedge^{\bullet + 1}(\phi^{-1})^*T^*U\right) \cdots
\end{tikzcd}
\\
&\quad \quad \quad \quad \quad \begin{tikzcd}
\cdots \arrow{r}{\widetilde{i}_*} & \lim\limits_{\longleftarrow}\frac{\Gamma(\bigwedge^{\bullet +1}TT^*(\text{Im}\phi))}{I^n \cdot \Gamma(\bigwedge^{\bullet +1}TT^*(\text{Im}\phi))}  \arrow{r}{(\pi^*)_*} \arrow{d} & \lim\limits_{\longleftarrow}\frac{\Gamma(\bigwedge^{\bullet +1} TT^*U')}{I^n \cdot \Gamma(\bigwedge^{\bullet +1} TT^*U')}  \arrow{d}{\Pi' }
\\
\cdots \arrow{r}{\widetilde{i}} & \Gamma\left(\bigwedge^{\bullet + 1}T^*(\text{Im}\phi)\right) \arrow{r}{\pi^*} & \Gamma\left(\bigwedge^{\bullet + 1}T^*U'\right),
\end{tikzcd}
\end{aligned}
\end{equation}
so that
\begin{equation}\nonumber
\Pi' \circ (\pi^*)_* \circ \widetilde{i}_* \circ \left({(\phi^{-1})}^*\right)_* = \pi^* \circ \widetilde{i} \circ {(\phi^{-1})}^* \circ \Pi.
\end{equation}
In the upper-left component, the ideal $I$ consists of functions vanishing on the zero section $T^*U$, that is,
\[
I := \{ f \in C^{\infty}(TT^*U) \mid f|_{T^*U} \equiv 0\}.
\]
The remaining ideals are defined analogously as the corresponding ideals of $ C^{\infty}((\phi^{-1})^*T^*U),$ $C^{\infty}(T^*(\text{Im}\phi)),$ and $ C^{\infty}(T^*U').$ We denote all of these by the same symbol $I,$ by abuse of notation. The induced maps on the top horizontal line (written with the same notation as in (\ref{34diag})) exist due to the fact that $(\phi^{-1})_*, \widetilde{i}_*,$ and $(\pi^*)_*$ in (\ref{34diag}) are bundle maps.
The two Poisson structures
\begin{equation}
\begin{cases}
P &= \sum\limits_{\alpha} \frac{\partial}{\partial q^{\alpha}} \wedge \frac{\partial}{\partial p_{\alpha}} \in \lim\limits_{\longleftarrow}\frac{\Gamma(\bigwedge^{\bullet +1}TT^*U)}{I^n \cdot \Gamma(\bigwedge^{\bullet +1}TT^*U)},\\
P' &= \sum\limits_{\alpha'} \frac{\partial}{\partial q^{'\alpha'}} \wedge \frac{\partial}{\partial p'_{\alpha'}} \in \lim\limits_{\longleftarrow}\frac{\Gamma(\bigwedge^{\bullet +1} TT^*U')}{I^n \cdot \Gamma(\bigwedge^{\bullet +1} TT^*U')}
\end{cases}
\end{equation}
are induced from the zero presymplectic structures on $U$ and $U'$, as in (\ref{pico}), respectively.

\subsubsection*{({Definition of} $\widehat{\eta}_{x}$)} Using the maps in diagrams (\ref{34diag}) and (\ref{34diag2}), and considering the completion at the image of the embedding $W_x \hookrightarrow W'_{\phi(x)},$ we define
\begin{equation}\nonumber
\begin{split}
\widehat{\eta}_{x,k} &: \left(\Gamma\left(\bigwedge\nolimits^{-\bullet}E^*|_{W_x}\right) \oplus \Gamma \left(\bigwedge\nolimits^{\bullet +1}T^*U|_{W_x}\right)\right)^{\otimes k}\\
& \quad \quad \quad \quad \quad \quad \quad \quad \rightarrow \left(\Gamma\left(\bigwedge\nolimits^{-\bullet}E^{'*}|_{W'_{\phi(x)}}\right)\right)_{\phi} \oplus \left(\Gamma\left(\bigwedge\nolimits^{\bullet +1}T^*U'|_{W'_{\phi(x)}}\right)\right)_{\phi}
\end{split}
\end{equation}
by
\begin{equation}\label{eta1k}
\widehat{\eta}_{x,k}\big((a_1, \xi_1), \cdots, (a_k, \xi_k)\big) := \begin{cases}
\big(1 \otimes (\overline{a_1}, 0), 1 \otimes \overline{\xi_1}\big) &\text{ if } k = 1,\\
(0, 0) &\text{ if } k \geq 2,\\
\end{cases}
\end{equation}
for $a_i \in \Gamma\big(\bigwedge\nolimits^{-\bullet}(E^*|_{W_x})\big),$ $\xi_i \in \Gamma(\bigwedge\nolimits^{\bullet +1}T^*U|_{W_x}), \ i = 1, \cdots, k,$
where we denote
\begin{equation}
\begin{cases}
\overline{a} := \pi^* \circ \widetilde{\phi}(a)\in \Gamma(\bigwedge\nolimits^{-\bullet}E^{'*}|_{W'_{\phi(x)}}),\\
\overline{\xi} := \pi^* \circ \widetilde{i} \circ (\phi^{-1})^*(\xi) \in \Gamma(\bigwedge\nolimits^{\bullet +1}T^*U'|_{W'_{\phi(x)}}).
\end{cases}
\end{equation}
Here, the \textit{pullback by} $\pi$,
\[
\pi^* : \Gamma\left(\bigwedge\nolimits^{-\bullet}E^{'*}|_{\mathrm{Im}\phi}\right) \rightarrow \Gamma\left(\bigwedge\nolimits^{-\bullet}E^{'*}|_{W'_{\phi(x)}}\right)
\]
is defined by
\[
\pi^*(b)(u) := \pi^*\left(b(u|_{\mathrm{Im}\phi})\right)
\]
for $u \in \Gamma\left(\bigwedge\nolimits^{-\bullet}E'|_{W'_{\phi(x)}}\right),$ inductively on the degree of $b \in \Gamma\left(\bigwedge\nolimits^{-\bullet}E^{'*}|_{\mathrm{Im}\phi}\right).$

\begin{lem}
$\widehat{\eta}_{x}$ is an $L_{\infty}[1]$-morphism.
\end{lem}

\begin{proof}
We must show
\begin{equation}\nonumber
l'_k \big(\widehat{\eta}_{x,1}(a_1, \xi_1), \cdots, \widehat{\eta}_{x,1}(a_k, \xi_k) \big)
= \widehat{\eta}_{x,1} \big( l_k\big((a_1, \xi_1), \cdots, (a_k, \xi_k)\big) \big).
\end{equation}
If $k = 1,$ we have
\begin{equation}\nonumber
\begin{split}
l'_1\big(\widehat{\eta}_{x,1}(a, \xi)\big) &= \big(1 \otimes l^{'\mathrm{K}}_1(\overline{a}, 0), 1 \otimes l^{'\mathcal{F}}_1(\overline{\xi})\big) = \big(1 \otimes (\iota_{s'}\overline{a},0), 1 \otimes d_{\mathcal{F}'}\overline{\xi}\big)\\
&\overset{*}{=} \big(1 \otimes(\overline{\iota_{s}a},0), 1 \otimes d_{\mathcal{F}'}\overline{\xi}\big) = \big(\big(1 \otimes\overline{l_1^K(a)}), \big(1 \otimes l_1^{\mathcal{F}'}(\overline{\xi})\big)\big) \\
&=\big(1 \otimes\overline{l_1^K(a)}, 1 \otimes \overline{l_1^{\mathcal{F}}(\xi)}\big)  = \widehat{\eta}_{x,1}\big(l_1^K(a), l_1^{\mathcal{F}}(\xi)\big) = \widehat{\eta}_{x,1}\big( l_1 (a, \xi) \big).
\end{split}
\end{equation}
Assume $a$ is homogeneous and write $a = a_1 \wedge \cdots \wedge a_l$ as a product of degree-1 elements. The equality $*$ then follows from the fact that the operations $\iota_s$ and $\iota_{s'}$ respect the restriction maps:
\begin{equation}\label{isp1}
\begin{split}
\iota_{s'}\overline{a} &= \iota_{s'}\left(\pi^*\big(\widetilde{\phi}(a)\big)\right) = \pi^*\big(\widetilde{\phi}(a)\big)(s') = \pi^* \left( \widetilde{\phi}(a)(s'|_{\mathrm{Im}\phi})\right)\\ &= \pi^*\left(\widetilde{\phi}(a_1) \wedge \cdots \wedge \widetilde{\phi}(a_l) (s'|_{\mathrm{Im}\phi})\right)\\ &=\pi^*\left(\sum\limits_i(-1)^{i+1}\widetilde{\phi}(a_i) (s'|_{\mathrm{Im}\phi}) \cdot \widetilde{\phi}(a_1) \wedge \cdots \wedge \widehat{\widetilde{\phi}(a_i)} \wedge \widetilde{\phi}(a_l)\right),
\end{split}
\end{equation}
where we use
\begin{equation}\nonumber
\widetilde{\phi}(a_i)(s'|_{\mathrm{Im}\phi}) = a_i\left(\widetilde{\phi}^{-1}(s'|_{\mathrm{Im}\phi})\right) = a_i(s)
\end{equation}
by the bundle map property of $\widetilde{\phi},$ so that (\ref{isp1}) further equals
\begin{equation}\nonumber
\begin{split}
&= \pi^* \left(\sum_i(-1)^{i+1}a_i(s) \cdot \widetilde{\phi}(a_1) \wedge \widehat{\widetilde{\phi}(a_i)} \wedge \cdots \wedge \widetilde{\phi}(a_l)\right) \\
&= \pi^* \circ \widetilde{\phi} \left(\sum_i(-1)^{i+1}a_i(s) \cdot a_1 \wedge \widehat{a_i} \wedge \cdots \wedge a_l\right) \\
&= \pi^* \circ \widetilde{\phi} (\iota_s a) = \overline{\iota_s a}.
\end{split}
\end{equation}

If $k \geq 2,$ we have, for $a_i \in \Gamma\left(\bigwedge\nolimits^{-\bullet}E^*|_{W_x}\right),$ $\xi_i\in \Gamma(T^*U|_{W_x}),$ and $1 \leq i \leq k,$
\begin{equation}\nonumber
\begin{split}
l'_k \big( \widehat{\eta}_{x,1}(a_1, \xi_1), &\cdots, \widehat{\eta}_{x,1}(a_k, \xi_k) \big) = l'_k \left((1 \otimes (\overline{a_1}, 0), 1 \otimes \overline{\xi_1}), \cdots, (1 \otimes (\overline{a_k}, 0), 1 \otimes \overline{\xi_k})\right)\\
&=\left(l^{'\mathrm{K}}_k \left( 1 \otimes (\overline{a_1}, 0), \cdots, 1 \otimes (\overline{a_k},0) \right), l^{'\mathcal{F}}_k(1 \otimes \overline{\xi_1}, \cdots, 1\otimes \overline{\xi_k})\right)\\
&= \left(0, 1 \otimes l_k^{'\mathcal{F}}(\overline{\xi}_1, \cdots, \overline{\xi}_k) \right) \overset{*}{=} \left(0, 1 \otimes \overline{l_k^{'\mathcal{F}}({\xi}_1, \cdots, {\xi}_k)}\right)\\
&= \widehat{\eta}_{x,1}\big(0, l^{\mathcal{F}}_k(\xi_1, \cdots, \xi_k)\big) = \widehat{\eta}_{x,1}\left(l^{K}_k(a_1, \cdots, a_k), l^{\mathcal{F}}_k(\xi_1, \cdots, \xi_k)\right)\\
&= \widehat{\eta}_{x,1} \big( l_k \big((a_1, \xi_1), \cdots, (a_k, \xi_k) \big)\big).\\
\end{split}
\end{equation}
The equality $*$ here can be shown as follows:
\begin{equation}\nonumber
\begin{split}
& \ \ \ l_k^{'\mathcal{F}}\left(\overline{\xi}_1, \cdots, \overline{\xi}_k\right) = \Pi'\left[\cdots\left[P', \pi^*\circ\widetilde{i}\circ(\phi^{-1})^*(\xi_1)\right], \cdots, \pi^*\circ \widetilde{i} \circ (\phi^{-1})^*(\xi_k)\right]\\
&\overset{(1)}{=} \Pi'\left[\cdots\left[(\pi^*)_*(P'|_{\mathrm{Im}\phi}), (\pi^*)_*\circ\widetilde{i}_*\circ \big((\phi^{-1})^*\big)_*(\xi_1)\right], \cdots, (\pi^*)_*\circ \widetilde{i}_*\circ \big((\phi^{-1})^*\big)_*(\xi_k)\right]\\
&\overset{(2)}{=} \Pi'\left[\cdots\left[(\pi^*)_*\circ \widetilde{i}_*\circ \big((\phi^{-1})^*\big)_*(P'), (\pi^*)_*\circ \widetilde{i}_*\circ \big((\phi^{-1})^*\big)_*(\xi_1)\right], \cdots, (\pi^*)_*\circ \widetilde{i}_*\circ \big((\phi^{-1})^*\big)_*(\xi_k)\right]\\
&\overset{(3)}{=} \Pi'\circ(\pi^*)_*\circ\widetilde{i}_*\circ \big((\phi^{-1})^*\big)_*\left[\cdots\left[P',\xi_1\right], \cdots, \xi_k\right]\\
&\overset{(4)}{=} \pi^* \circ\widetilde{i}\circ(\phi^{-1})^*\circ\Pi\left[\cdots\left[P',\xi_1\right], \cdots, \xi_k\right]\\
&=\pi^* \circ\widetilde{i}\circ(\phi^{-1})^* l^{\mathcal{F}}_k(\xi_1, \cdots, \xi_k) = \overline{l^{\mathcal{F}}(\xi_1, \cdots, \xi_k)}.
\end{split}
\end{equation}
We now explain how equalities $(1)$ through $(4)$ are obtained:
\begin{itemize}
  \item[(1)] We have
\[
(\pi^*)_* \circ \widetilde{i}_* \circ \big((\phi^{-1})^*\big)_*|_{T^*U} = \pi^* \circ \widetilde{i} \circ (\phi^{-1})^*,
\]
and the elements $(\pi^*)_* \circ \widetilde{i}_* \circ \big((\phi^{-1})^*\big)_*({\xi}_i),$ for all $i,$ are constant along the fiber direction.
  \item[(2)] It is not difficult to show that the two Poisson structures are related by
\[
  P'|_{T^*(\mathrm{Im}\phi)}
  = \widetilde{i}_{*} \circ \big((\phi^{-1})^*\big)_*(P) + \overbrace{\left( \sum\limits_{\gamma'} \frac{\partial}{\partial q'_{\gamma'}} \wedge \frac{\partial}{\partial p^{'\gamma'}} \right)}^{\text{the fiber direction components}},
\]
and, for the same reason as in (1), the repeated bracket vanishes for the components $\sum\limits_{\gamma'} \frac{\partial}{\partial q'_{\gamma'}} \wedge \frac{\partial}{\partial p^{'\gamma'}}$ in the fiber direction.
  \item[(3)] The Nijenhuis–Schouten bracket commutes with pushforwards.
  \item[(4)] From the commutative diagram $(\ref{34diag2})$, we have
\[
  \Pi' \circ (\pi^*)_* \circ \tilde{i}_* \circ ((\phi^{-1})^*)_* = \pi^* \circ \tilde{i} \circ (\phi^{-1})^* \circ \Pi.
\]
\end{itemize}
\end{proof}

We denote the induced $L_{\infty}[1]$-morphism (still denoted by $\widehat{\eta}_{x}$) from Lemma \ref{auglmo} and Remark \ref{auglmoloc} by
\begin{equation}\label{etxcc}
\widehat{\eta}_x : \mathcal{C}_x^{\otimes k} \rightarrow \mathcal{C}'_{\phi(x), \phi}.
\end{equation}

\begin{lem}\label{etqi}
$\widehat{\eta}_x$ is a quasi-isomorphism.
\end{lem}
\begin{proof}
Since $\widehat{\eta}_{x,1}$ is injective, we consider the short exact sequence:
\begin{equation}\nonumber
0 \rightarrow \mathcal{C}_x \xrightarrow{\widehat{\eta}_{x,1}} \mathcal{C}'_{\phi(x),\phi} \rightarrow \frac{\mathcal{C}'_{\phi(x),\phi}}{\widehat{\eta}_{x,1}(\mathcal{C}_x)} \rightarrow 0.
\end{equation}
To show that $\widehat{\eta}_{x,1}$ is a quasi-isomorphism, it suffices to prove the acyclicity of the quotient chain complex
\begin{equation}\label{epwhe}
\begin{split}
\frac{\mathcal{C}'_{\phi(x),\phi}}{\widehat{\eta}_{x,1}(\mathcal{C}_x)} &=
\frac{\left(\bigwedge^{-\bullet}\Gamma(E^{'*}|_{W'_{\phi(x)}})\right)_{\phi} \times \Omega_{\mathrm{aug},\phi}^{\bullet+1}(U'|_{W'_{\phi(x)}})}{\widehat{\eta}_{x,1}\left(\bigwedge^{-\bullet}\Gamma(E^*|_{W_{x}}) \times \Omega_{\mathrm{aug},\phi}^{\bullet+1}(U|_{W_{x}})\right)} \\
&\simeq \frac{\left(\bigwedge^{-\bullet}\Gamma(E^{'*}|_{W'_{\phi(x)}})\right)_{\phi}}{\widehat{\eta}^{\mathrm{K}}_{x,1}\left(\bigwedge^{-\bullet}\Gamma(E^*|_{W_{x}})\right)} \times \frac{\Omega_{\mathrm{aug},\phi}^{\bullet+1}(U'|_{W'_{\phi(x)}})}{\widehat{\eta}_{x,1}^{\mathrm{dR}}\left(\Omega_{\mathrm{aug},\phi}^{\bullet+1}(U|_{W_{x}})\right)},
\end{split}
\end{equation}
which is further implied by the acyclicity of each component. Here, $\widehat{\eta}^{\mathrm{K}}_{x,1}$ and $\widehat{\eta}_{x,1}^{\mathrm{dR}}$ denote the Koszul and de Rham components of $\widehat{\eta}_{x,1}$ in (\ref{etxcc}), respectively.

\subsubsection*{(The de Rham part)} The de Rham part $L_{\infty}[1]$-morphism in (\ref{epwhe}) is automatically a quasi-isomorphism, as it is a map between acyclic complexes (namely, augmented de Rham complexes) (cf.\ Corollary \ref{corpoinc}).

\subsubsection*{(The Koszul part)} Observe that there exists a decomposition
\begin{equation}\nonumber
\begin{split}
\bigwedge\nolimits^{-\bullet}\Gamma\left(E^{'*}|_{W'_{\phi(x)}}\right)_{\phi} &= C_{\phi}^{\infty}(W'_{\phi(x)}) \otimes \bigwedge\nolimits^{-\bullet}\Gamma\left(E^{'*}|_{W'_{\phi(x)}}\right)\\
&\simeq \bigoplus\limits_{p,q}\left(C_{\phi}^{\infty}(W'_{\phi(x)}) \otimes \bigwedge^{p,q}\Gamma(E^{'*}|_{W'_{\phi(x)}})\right),
\end{split}
\end{equation}
where we denote the $(p,q)$-component by
\begin{equation}\nonumber
\bigwedge^{p,q}\Gamma\left(E^{'*}|_{W'_{\phi(x)}}\right) := \bigwedge\nolimits^{p}\Gamma\left(\pi^*\widetilde{\phi}(E^*|_{W_x})\right) \otimes \bigwedge\nolimits^{q}\Gamma(E^c).
\end{equation}
Here $E^c$ denotes the vector bundle given by the complement,
\begin{equation}\nonumber
E^{'}|_{W'_{\phi(x)}} \simeq \pi^*\widetilde{\phi}(E|_{W_x}) \oplus E^c,
\end{equation}
and similarly for the dual bundle,
\begin{equation}\nonumber
E^{'*}|_{W'_{\phi(x)}} \simeq \pi^*\widetilde{\phi}(E^*|_{W_x}) \oplus E^c.
\end{equation}
By abuse of notation, we write $E^c$ for both cases. The section $s' \in \Gamma(E^{'}|_{W'_{\phi(x)}})$ decomposes accordingly as
\begin{equation}\nonumber
s' =: s'_{\phi} \oplus s'_c.
\end{equation}
Let $\mathrm{rk}\,E' = k$, $\mathrm{rk}\,\widetilde{\phi}(E) = m$, and $\mathrm{rk}\,E^c = r$ be the ranks of the vector bundles. We then obtain a double complex by noting that the differential decomposes as
\begin{equation}\nonumber
\iota_{s'} = \iota_{s'_{\phi}} + (-1)^p\iota_{s'_c}
\end{equation}
when applied to the $(p,q)$-component.
\end{proof}

As a consequence, we obtain a double complex
\[
\frac{\bigwedge^{-\bullet}\Gamma\left(E^{'*}|_{W'_{\phi(x)}}\right)_{\phi}}{\widehat{\eta}^{\mathrm{K}}_{x,1}\left(\bigwedge^{-\bullet}\Gamma\left(E^*|_{W_{x}}\right)\right)} = \bigoplus\limits_{p \leq 0, q < 0} \bigwedge^{p,q}\Gamma\left(E^{'*}|_{W'_{\phi(x)}}\right)_{\phi} \times \bigoplus\limits_{p \leq 0}\frac{\bigwedge^{p,0}\Gamma\left(E^{'*}|_{W'_{\phi(x)}}\right)_{\phi}}{\widehat{\eta}^{\mathrm{K}}_{x,1}\big(\bigwedge\nolimits^{p}\Gamma\left(E^*|_{W_{x}}\right)\big)}
\]
illustrated in the following diagram:

\[
\begin{tikzcd}[column sep=0.75cm, row sep=0.25cm]
{} &
0 &
0 &
\cdots &
0 &
{} \\
0 \arrow[r] &
\frac{\Gamma(\wedge^{m,0}(E^{\prime*}|_{W'}))_{\phi}}{\widehat{\phi}_{x,1}(\wedge^m E^*|_W)}
  \arrow[r,"\iota_{s_{\phi}}"]
  \arrow[u] &
\frac{\Gamma(\wedge^{m-1,0}(E^{\prime*}|_{W'}))_{\phi}}{\widehat{\phi}_{x,1}(\wedge^m E^*|_W)}
  \arrow[r,"\iota_{s_{\phi}}"] \arrow[u] &
\cdots
  \arrow[r,"\iota_{s_{\phi}}"] &
\overbrace{\frac{\Gamma(\wedge^{0,0}(E^{\prime*}|_{W'}))_{\phi}}{\widehat{\phi}_{x,1}(\wedge^m E^*|_W)}}^{\text{bideg} = (0,0)}
  \arrow[r] \arrow[u] &
0 \\[2ex]
{} &
\vdots \arrow[u, "\iota_{s_c}"] &
\vdots \arrow[u, "\iota_{s_c}"] &
\cdots &
\vdots \arrow[u, "\iota_{s_c}"] &
{} \\
0 \arrow[r] &
\overbrace{\Gamma(\wedge^{m,r-1}(E^{\prime*}|_{W'}))_{\phi}}^{\text{bideg} = (-m,-r+1)}
  \arrow[r,"\iota_{s_{\phi}}"]
  \arrow[u,"\iota_{s_c}"] &
\Gamma(\wedge^{m-1,r-1}(E^{\prime*}|_{W'}))_{\phi}
  \arrow[r,"\iota_{s_{\phi}}"]
  \arrow[u,"\iota_{s_c}"] &
\cdots
  \arrow[r,"\iota_{s_{\phi}}"] &
\Gamma(\wedge^{0,r-1}(E^{\prime*}|_{W'}))_{\phi}
  \arrow[r]
  \arrow[u,"\iota_{s_c}"] &
0 \\[2ex]
0 \arrow[r] &
\overbrace{\Gamma(\wedge^{m,r}(E^{\prime*}|_{W'}))_{\phi}}^{\text{bideg} = (-m,-r)}
  \arrow[r,"\iota_{s_{\phi}}"]
  \arrow[u,"\iota_{s_c}"] &
\overbrace{\Gamma(\wedge^{m-1,r}(E^{\prime*}|_{W'}))_{\phi}}^{\text{bideg} = (-m+1,-r)}
  \arrow[r,"\iota_{s_{\phi}}"]
  \arrow[u,"\iota_{s_c}"] &
\cdots
  \arrow[r,"\iota_{s_{\phi}}"] &
\Gamma(\wedge^{0,r}(E^{\prime*}|_{W'}))_{\phi} \arrow[r]
  \arrow[u,"\iota_{s_c}"] &
0 \\
{} &
0 \arrow[u] &
0 \arrow[u] &
\cdots &
0 \arrow[u] &
{}
\end{tikzcd}
\]

By a standard argument in homological algebra, the acyclicity of this bounded double complex follows from that of each column and row complex. Thus, for the acyclicity of the Koszul part quotient complex, it suffices to show the acyclicity of each column complex:
\begin{equation}\label{diccpx}
\begin{split}
\mathcal{D}_i : 0 \rightarrow \bigwedge^{-i,-r}\Gamma\left(E^{'*}|_{W'_{\phi(x)}}\right)_{\phi} &\xrightarrow{\iota_{s'_c}} \bigwedge^{-i,-r+1}\Gamma\left(E^{'*}|_{W'_{\phi(x)}}\right)_{\phi} \xrightarrow{\iota_{s'_c}} \cdots\\
&\cdots \xrightarrow{\iota_{s'_c}} \frac{\bigwedge^{-i,0}\Gamma\left(E^{'*}|_{W'_{\phi(x)}}\right)_{\phi}}{\widehat{\eta}^{\mathrm{K}}_{x,1}\left(\bigwedge^{-i}\Gamma\left(E^*|_{W_{x}}\right)\right)} \rightarrow 0, \quad i = 0, \cdots, m.
\end{split}
\end{equation}
To do so, we need the following lemma.

\begin{lem}\label{lem12}
\begin{enumerate}[label=(\roman*)]
\item There exists an $\mathbb{R}$-isomorphism:
\begin{equation}\nonumber
\frac{\bigwedge^{p,0}\Gamma\left(E^{'*}|_{W'_{\phi(x)}}\right)}{\pi^* \circ \widetilde{\phi}\left(\bigwedge^{p}\Gamma(E^*|_{W_{x}})\right)} \xrightarrow{\simeq} \{a' \in \pi^* \circ \widetilde{\phi}\left(\bigwedge\nolimits^{p}\Gamma(E^{*}|_{W_{x}})\right) \mid a'|_{\mathrm{Im}\,\phi} \equiv 0\}.
\end{equation}
In particular, if $p = 0$, then we have
\[
\frac{C^{\infty}(W'_{\phi(x)})}{\pi^* \circ (\phi^{-1})^*(C^{\infty}(W_x))} \xrightarrow{\simeq} I_{\phi}.
\]
\item For each $p \geq 0$, there exists an $\mathbb{R}$-isomorphism:
\begin{equation}\nonumber
\begin{split}
C^{\infty}_{\phi}(W'_{\phi(x)}) \otimes \pi^* \circ \widetilde{\phi}&\left(\bigwedge\nolimits^{p}\Gamma(E^*|_{W_{x}})\right)\\
&\xrightarrow{\simeq} \pi^* \circ \widetilde{\phi}\left(\bigwedge\nolimits^{p}\Gamma(E^*|_{W_{x}})\right).
\end{split}
\end{equation}
\end{enumerate}
\end{lem}

\begin{proof}
\begin{enumerate}[label=(\roman*)]
\item Consider the map
\begin{equation}\nonumber
\begin{split}
\kappa : \bigwedge^{p,0}\Gamma(E^{'*}|_{W'_{\phi(x)}}) &\rightarrow \{a' \in \pi^* \circ \widetilde{\phi}\left(\bigwedge\nolimits^{p}\Gamma(E^{*}|_{W_{x}})\right) \mid a'|_{\mathrm{Im}\,\phi} \equiv 0\},\\
a' &\mapsto a' - \pi^*(a'|_{\mathrm{Im}\,\phi}),
\end{split}
\end{equation}
which is well-defined: we have $\left(a' - \pi^*(a'|_{\mathrm{Im}\,\phi})\right)|_{\mathrm{Im}\,\phi} = a'|_{\mathrm{Im}\,\phi} - a'|_{\mathrm{Im}\,\phi} = 0$.
Then $\kappa$ is surjective. Its kernel consists of all elements of the form $\pi^*(b)$ for some $b \in \bigwedge^{p,0}\Gamma\left(E^{'*}|_{\mathrm{Im}(\phi) \cap W'_{\phi(x)}}\right)$, which can be rewritten as $b = \widetilde{\phi}(b')$ for some $b' \in \bigwedge\nolimits^{p}\Gamma(E^*|_{W_{x}})$ by the embedding property of $\phi$ and $\widetilde{\phi}$, that is,
\[
\ker(\kappa) = \pi^* \circ \widetilde{\phi}\left(\bigwedge\nolimits^{p}\Gamma(E^*|_{W_{x}})\right).
\]

\item Consider the following $\mathbb{R}$-linear map. For $j > 0$, define
\begin{equation}\nonumber
\begin{split}
B^{(j)}: C^{\infty}_{\phi}(W'_{\phi(x)})^{(j)} \otimes \pi^* \circ \widetilde{\phi}&\left(\bigwedge\nolimits^p\Gamma(E^*|_{W_{x}})\right)\\
&\to \pi^* \circ \widetilde{\phi}\left(\bigwedge\nolimits^p\Gamma(E^*|_{W_{x}})\right)
\end{split}
\end{equation}
by
\[
[h'] \otimes \pi^* \circ \widetilde{\phi}(a) \mapsto \pi^*(\widetilde{h}'|_{\mathrm{Im}\,\phi}) \cdot \pi^* \circ \widetilde{\phi}(a),
\]
for $h' \in C^{\infty}_{\phi}(W'_{\phi(x)})^{(j)}$, where $\widetilde{h}' \in C^{\infty}(W'_{\phi(x)})$ is any choice such that $[h']_j = \widetilde{h}' + I^j_{\phi}$. This is clearly well-defined.

We also define
\begin{equation}\nonumber
\begin{split}
B^{'(j)}: \pi^* \circ \widetilde{\phi}&\left(\bigwedge\nolimits^p\Gamma(E^*|_{W_{x}})\right)\\
&\to C^{\infty}_{\phi}(W'_{\phi(x)})^{(j)} \otimes \pi^* \circ \widetilde{\phi}\left(\bigwedge\nolimits^p\Gamma(E^*|_{W_{x}})\right)
\end{split}
\end{equation}
by
\[
\pi^* \circ \widetilde{\phi}(a) \mapsto [1]_j \otimes \pi^* \circ \widetilde{\phi}(a).
\]
Then we have
\begin{equation}\nonumber
\begin{split}
B^{'(j)} \circ B^{(j)}&\left(h' \otimes \pi^* \circ \widetilde{\phi}(a)\right) = B^{'(j)}\left(\pi^*(\widetilde{h}'|_{\mathrm{Im}\,\phi}) \cdot \pi^* \circ \widetilde{\phi}(a)\right)\\
&= [1]_j \otimes \pi^*(\widetilde{h}'|_{\mathrm{Im}\,\phi}) \cdot \pi^* \circ \widetilde{\phi}(a)\\
&= [\pi^*(\widetilde{h}'|_{\mathrm{Im}\,\phi})]_j \otimes \pi^* \circ \widetilde{\phi}(a)\\
&= [\widetilde{h}']_j \otimes \pi^* \circ \widetilde{\phi}(a) = h' \otimes \pi^* \circ \widetilde{\phi}(a),
\end{split}
\end{equation}
and
\[
B^{(j)} \circ B^{'(j)}\left(\pi^* \circ \widetilde{\phi}(a)\right) = B^{(j)}\left([1]_j \otimes \pi^* \circ \widetilde{\phi}(a)\right) = \pi^* \circ \widetilde{\phi}(a).
\]
Thus, $B^{(j)}$ and $B^{'(j)}$ are isomorphisms inverse to each other. Since this holds for arbitrary $j$, we obtain an isomorphism:
\begin{equation}\nonumber
C^{\infty}_{\phi}(W'_{\phi(x)}) \otimes \pi^* \circ \widetilde{\phi}\left(\bigwedge\nolimits^{p}\Gamma(E^*|_{W_{x}})\right) \simeq \pi^* \circ \widetilde{\phi}\left(\bigwedge\nolimits^p\Gamma(E^*|_{W_{x}})\right).
\end{equation}
\end{enumerate}
\end{proof}

The acyclicity is now a consequence of the preceding lemma and Lemma \ref{lemab}:

\begin{prop}
With the tangent bundle condition, each column complex $\mathcal{D}_i$ in (\ref{diccpx}) is acyclic.
\end{prop}

\begin{proof}
Write $s'_c$ in the orthonormal frame $\{e'_1, \cdots, e'_r\}$ from Choice \ref{3choi} as
\begin{equation}\label{ssce}
s'_c = \sum\limits_{j=1}^r{s'}^j_c e'_j, \quad {s'}^j_c \in C^{\infty}(W'_{\phi(x)}).
\end{equation}

\begin{claim}
The tuple $({s'}^1_c, \cdots, {s'}^r_c)$ from (\ref{ssce}) is a \textit{regular sequence}, that is, for each $1 \leq i \leq r$, ${s'}^i_c$ is not a zero-divisor in $\frac{C^{\infty}(W'_{\phi(x)})}{\langle {s'}^1_c, \cdots, {s'}^{i-1}_c\rangle}$. This fact is independent of the choices of $\{e'_j\}$ and $\{{s'}^j_c\}$.
\end{claim}

\begin{proof}
Suppose ${s'}^i_c \in \frac{C^{\infty}(W'_{\phi(x)})}{\langle {s'}^1_c, \cdots, {s'}^{i-1}_c\rangle}$ is a zero-divisor. Then there exists $b_i \in C^{\infty}(W'_{\phi(x)})$ such that $b_i {s'}_c^i \in \langle {s'}^1_c, \cdots, {s'}^{i-1}_c\rangle$; in other words, it can be written as
\begin{equation}\label{zdld}
b_i {s'}_c^i = \sum\limits_{j=1}^{i-1} b_j {s'}_c^j
\end{equation}
for some $b_j \in C^{\infty}(W'_{\phi(x)})$, $j = 1, \cdots, i-1$. Differentiating (\ref{zdld}) in the $\frac{\partial}{\partial x_l}$-direction for some $1 \leq l \leq r$ and evaluating at $x \in {s'}^{-1}(0)$, we obtain
\begin{equation}\label{sbs}
\sum\limits_j b_j(x)\frac{\partial {s'}^j_c}{\partial x_l}\bigg|_{x} = 0,
\end{equation}
since ${s'}^j_c(x) = 0$ for all $j$. Note that $b_j(x)$ here is independent of $l$. Recall that the tangent bundle condition states that
\begin{equation}\nonumber
\begin{split}
\left[ds'_{c,x}\right] : \frac{T_{\phi(x)}W'_{\phi(x)}}{\phi_*(T_xW_x)} &\xrightarrow{\simeq} \frac{E^{'}_{\phi(x)}}{\widetilde{\phi}(E_x)} \simeq \frac{E^{'*}_{\phi(x)}}{\widetilde{\phi}(E^*_x)} \simeq E^c,\\
\left[\frac{\partial}{\partial y_l}\right] &\mapsto \left[d{s'}_{c,x}\left( \frac{\partial}{\partial y_l} \right)\right], \quad l = 1, \cdots, r,
\end{split}
\end{equation}
is an isomorphism. Then (\ref{sbs}) contradicts the linear independence of the matrix $\left\{ d{s}'_{c,x}\left( \frac{\partial}{\partial x_l} \right)\right\}_{l}$.
\end{proof}

We now show the acyclicity of $\mathcal{D}_0$,
\begin{equation}\nonumber
\mathcal{D}_0 : 0 \rightarrow \overbrace{\left(\bigwedge\nolimits^{r}\Gamma(E^{c})\right)_{\phi} \xrightarrow{\iota_{{s'}_c}} \cdots \xrightarrow{\iota_{{s'}_c}} \Gamma(E^c)_{\phi}}^{\deg < 0} \xrightarrow{\iota_{{s'}_c}} \overbrace{\frac{\Gamma\left(\bigwedge^{0,0}(E^{'*}|_{W'_{\phi(x)}})\right)_{\phi}}{\widehat{\eta}^{\mathrm{K}}_{x,1}\Gamma\left(\bigwedge\nolimits^{0}(E^*|_{W_{x}})\right)}}^{\deg = 0} \rightarrow 0.
\end{equation}
The proof for $\mathcal{D}_{i>0}$ is essentially identical, so we omit it.

For the case $\deg < 0$, we first consider
\begin{equation}\label{wes}
\cdots \rightarrow \bigwedge\nolimits^{i+1}\Gamma(E^c|_{W'_{\phi(x)}}) \xrightarrow{\iota_{{s'}_c}^{(i+1)}} \bigwedge\nolimits^{i}\Gamma(E^c|_{W'_{\phi(x)}}) \xrightarrow{\iota_{{s'}_c}^{(i)}} \bigwedge\nolimits^{i-1}\Gamma(E^c|_{W'_{\phi(x)}}) \rightarrow \cdots,
\end{equation}
where $\ker \iota^{(i)}_{{s'}_c} = \mathrm{Im}\,\iota_{{s'}_c}^{(i+1)}$ follows from standard homological algebra under the condition that $s'_c = \langle {s'}^1_c, \cdots, {s'}_c^r \rangle$ is a regular sequence. (See, for example, Chapter 17 of \cite{Eisenbud}.)

Note that $s'_c$, the section of the complement, satisfies ${s'}_{c}^{m} \in I_\phi \setminus I_\phi^2$ for each $m$ (when written in the orthonormal frame of $\Gamma(E^c)$ as $s'_c = \sum_m {s'}^m_{c} e_m$) by the tangent bundle condition and Condition \ref{addcond} (iv). By Lemma \ref{lemab}, we draw the same conclusion as (\ref{wes}) for the completed complex:
\begin{equation}\label{wes2}
\cdots \rightarrow \Big(\bigwedge\nolimits^{i+1}\Gamma(E^c|_{W'_{\phi(x)}})\Big)_{\phi} \xrightarrow{\iota_{{s'}_c}^{(i+1)}} \Big(\bigwedge\nolimits^{i}\Gamma(E^c|_{W'_{\phi(x)}})\Big)_{\phi} \xrightarrow{\iota_{{s'}_c}^{(i)}} \Big(\bigwedge\nolimits^{i-1} \Gamma(E^c|_{W'_{\phi(x)}})\Big)_{\phi}
\rightarrow \cdots.
\end{equation}

For the $\deg = 0$ case, we have
\begin{equation}\label{cse}
\begin{split}
\frac{\Gamma\left(\bigwedge^{0,0}(E^{'*}|_{W'_{\phi(x)}})\right)_{\phi}}{\widehat{\eta}^{\mathrm{K}}_{x,1}\Gamma(\bigwedge^{0}E^*|_{W_{x}})} &= \frac{C^{\infty}(W'_{\phi(x)})_{\phi}}{\widehat{\eta}^{\mathrm{K}}_{x,1}\big(C^{\infty}(W_x)\big)} \simeq \frac{C^{\infty}_{\phi}(W'_{\phi(x)}) \otimes C^{\infty}\big(W'_{\phi(x)}\big)}{\{1\} \otimes \pi^* \circ (\phi^{-1})^*\big(C^{\infty}(W_x)\big)}\\
&\overset{(1)}{\simeq} \frac{C^{\infty}_{\phi}(W'_{\phi(x)}) \otimes C^{\infty}(W'_{\phi(x)})}{C^{\infty}_{\phi}\big(W'_{\phi(x)}\big) \otimes \pi^* \circ (\phi^{-1})^*\big(C^{\infty}(W_x)\big)}\\
&\simeq C^{\infty}_{\phi}(W'_{\phi(x)}) \otimes \frac{C^{\infty}(W'_{\phi(x)})}{\pi^* \circ (\phi^{-1})^*\big(C^{\infty}(W_x)\big)}\\
&= \Big(\frac{C^{\infty}(W'_{\phi(x)})}{\pi^* \circ (\phi^{-1})^*\big(C^{\infty}(W_x)\big)}\Big)_{\phi} \overset{(2)}{\simeq} C^{\infty}_{\phi}(W'_{\phi(x)}) \otimes I_{\phi},
\end{split}
\end{equation}
where isomorphisms $(1)$ and $(2)$ follow from Lemma \ref{lem12}.

It remains to show the surjectivity of the map
\begin{equation}\label{gecwp}
\Gamma(E^c|_{W'_{\phi(x)}})_{\phi} \xrightarrow{\iota_{{s'}_c}} C^{\infty}_{\phi}(W'_{\phi(x)}) \otimes I_{\phi},
\end{equation}
given, for a fixed $j$, by
\[
h \otimes a \mapsto [1]_j \otimes \iota_{{s'}_c}(\widetilde{h}a) = [1]_j \otimes \widetilde{h}\iota_{{s'}_c}(a) = h \otimes \iota_{{s'}_c}(a),
\]
where $\widetilde{h} \in C^{\infty}(W_{\phi(x)}')$ is a representative of $h \in C^{\infty}(W_{\phi(x)}')^{(j)}$, after the identification (\ref{cse}). Being defined independently of $j$, it is well-defined. Also, observe that $\iota_{{s'}_c}(a) = a(s'_c)$ vanishes on $\mathrm{Im}\,\phi$ by Condition \ref{addcond} (iv), hence is an element of $I_{\phi}$. Similarly to the $\deg < 0$ case, we first show the surjectivity of $\iota_{{s'}_c} : \Gamma(E^c|_{W'_{\phi(x)}}) \rightarrow I_{\phi}$. From the definition of the completed complex, it will then be clear that the surjectivity of (\ref{gecwp}) follows immediately.

At each $(y_1, \cdots, y_m) \in \phi(W_x)$, we consider the restricted section
\[
\underline{s_c'} := s_c'|_{\pi^{-1}(y_1, \cdots, y_m)},
\]
that is,
\begin{equation}\nonumber
\begin{split}
W'_{\phi(x)} \supset \pi^{-1}(y_1, \cdots, y_m) &\rightarrow \mathbb{R}^r,\\
\underline{s_c'}: (w_1, \cdots, w_r) &\mapsto \big(\underline{s_c'}_1(\vec{y},\vec{w}),\cdots, \underline{s_c'}_r(\vec{y},\vec{w})\big).
\end{split}
\end{equation}
By Condition \ref{addcond} and the inverse function theorem, there exists a local inverse
\begin{equation}\label{spmo}
\underline{s_c'}^{-1} : \big(\underline{s_c'}_1, \cdots, \underline{s_c'}_r\big) \mapsto (w_1, \cdots, w_r)
\end{equation}
defined on a smaller open neighborhood $\overset{\circ}{W}'_{\phi(x)} \subset W'_{\phi(x)}$. An important technical point here is that we may assume $\overset{\circ}{W}'_{\phi(x)}$ coincides with $W'_{\phi(x)}$, by virtue of Example \ref{dcotopwx}, without loss of generality.

Note that any $\widetilde{h} \in I_{\phi} = \{\widetilde{h} \in C^{\infty}(W'_{\phi(x)}) \mid \widetilde{h}|_{\mathrm{Im}\,\phi} \equiv 0\}$ can be written as
\begin{equation}\label{hhy}
\widetilde{h} = \widetilde{h}(y_1, \cdots, y_m, w_1, \cdots, w_r)
\end{equation}
in the coordinates $(y_1, \cdots, y_m)$ of $\phi(W_x)$ and the normal-direction coordinates $(w_1, \cdots, w_r)$ in $W'_{\phi(x)}$.
Using the local inverse (\ref{spmo}), we can substitute
\begin{equation}\nonumber
w_i = \underline{s_c'}^{-1}_i\big(\underline{s_c'}_1(\vec{y},\vec{w}),\cdots, \underline{s_c'}_r(\vec{y},\vec{w})\big)
\end{equation}
in (\ref{hhy}) to obtain
\begin{equation}\label{heee}
\begin{split}
\widetilde{h} &= \widetilde{h}(y_1, \cdots, y_m, \underline{s_c'}_1(\vec{y},\vec{w}),\cdots, \underline{s_c'}_r(\vec{y},\vec{w}))\\
&\overset{(1)}{=} \widetilde{h}(y_1, \cdots, y_m, \vec{0}) + \sum_{|\vec{\alpha}|<k} \frac{\partial^{\vec{\alpha}} \widetilde h}{\partial \underline{s_c'}^{\vec{\alpha}}}\bigg|_{\vec{\underline{s'_c}} = 0} \cdot \underline{s_c'}^{\vec{\alpha}} + \sum_{|\vec{\alpha}|=k} A(\widetilde{h})_{\vec{\alpha}} \cdot \underline{s_c'}^{\vec{\alpha}}\\
&\overset{(2)}{=} \sum_{|\vec{\alpha}|<k} \frac{\partial^{\vec{\alpha}} \widetilde{h}}{\partial \underline{s_c'}^{\vec{\alpha}}}\bigg|_{\vec{s'_c} = 0} \cdot \underline{s_c'}^{\vec{\alpha}} + \sum_{|\vec{\alpha}|=k} A(\widetilde{h})_{\vec{\alpha}} \cdot \underline{s_c'}^{\vec{\alpha}}
\end{split}
\end{equation}
for some smooth functions $A(\widetilde{h})_{\vec{\alpha}} = A(\widetilde{h})_{\vec{\alpha}}(\vec{y},\vec{\underline{s'_c}})$ with $A(\widetilde{h})_{\vec{\alpha}}(\vec{y},0) = 0$. Here equalities $(1)$ and $(2)$ follow from Taylor's theorem and the assumption $\widetilde{h} \in I_\phi$, respectively. Moreover, by the standard proof of the theorem, the remainder term for each $\vec{y}$ can be smoothly patched together to yield (\ref{heee}). Here, we use the notation $\underline{s_c'}^{\vec{\alpha}} := \underline{s_c'}_1^{\alpha_{1}} \cdots \underline{s_c'}_r^{\alpha_{r}}$ for each multi-index $\vec{\alpha} = (\alpha_1, \cdots, \alpha_r)$.

It is then clear that $\widetilde{h} \in \mathrm{Im}\,\underline{s'_c}$, so that $\iota_{\underline{s'_c}}$ is surjective: the last line of (\ref{heee}) shows that $\widetilde{h}$ can be written as a linear combination of $\{\underline{s_c'}^{\vec{\alpha}}\}$ over $C^{\infty}(W'_{\phi(x)})$. Recalling the expression (\ref{ssce}), we can always choose an element of $\Gamma(E^c|_{W'_{\phi(x)}})$, written in the basis $\{{e'}^*_1, \cdots, {e'}^*_r\}$ (cf. Choice \ref{3choi}) with appropriate coefficients.

For $h \in C^{\infty}(W_x)^{(j)}$ and $h' \in C^{\infty}(W_x)^{(j+1)}$ such that $h = p_{j+1,j}(h') = h' + I_\phi^j/I_\phi^{j+1}$, with representatives $\widetilde{h}$ and $\widetilde{h}' \in C^{\infty}(W_x)$, there exists $\widetilde{g} \in I_\phi^j$ such that $\widetilde{h} = \widetilde{h}' + \widetilde{g}$.

We may take $j$ larger than $k-1$. For a multi-index $\vec{\alpha} = (\alpha_1, \cdots, \alpha_r)$, we denote
\[
m(\vec{\alpha}) := \min \{ i : \alpha_i \neq 0, \ 1 \leq i \leq r \}
\]
and $\vec{\alpha}' := \vec{\alpha} - (0, \ldots, \overbrace{1}^{m(\vec{\alpha})}, \ldots, 0)$.

Then we obtain
\[
\tau(\widetilde{h}) := \sum_{|\vec{\alpha}|< k}
\left(\frac{\partial^{\vec{\alpha}}\widetilde{h}}{\partial s_c^{\vec{\alpha}}}\bigg|_{s_c'=0} \cdot s_c^{\vec{\alpha}'}\right) e^*_{m(\vec{\alpha})}
+ \sum_{|\vec{\alpha}'|= k} \left(
A(\widetilde{h})_{\vec{\alpha}} \cdot \underline{s_c'}^{\vec{\alpha}'} \right) e^*_{m(\vec{\alpha})}
\]
satisfying
\[
\widetilde{h} = \iota_{\underline{s'_c}}\big(\tau(\widetilde{h})\big).
\]
Similarly, we obtain
\[
\tau(\widetilde{h}') = \sum_{|\vec{\alpha}|< k}
\left(\frac{\partial^{\vec{\alpha}}\widetilde{h}}{\partial s_c^{\vec{\alpha}}}\bigg|_{s_c'=0} \cdot s_c^{\vec{\alpha}'}\right) e^*_{m(\vec{\alpha})}
+ \sum_{|\vec{\alpha}'|= k} \left(
A(\widetilde{h}')_{\vec{\alpha}} \cdot \underline{s_c'}^{\vec{\alpha}'} \right) e^*_{m(\vec{\alpha})},
\]
so that
\[
\tau(\widetilde{h}) - \tau(\widetilde{h}') = \sum_{|\vec{\alpha}|< k}
\left(\frac{\partial^{\vec{\alpha}}\widetilde{g}}{\partial s_c^{\vec{\alpha}}}\bigg|_{s_c'=0} \cdot s_c^{\vec{\alpha}}\right) e^*_{m(\vec{\alpha})} + \sum_{|\vec{\alpha}|= k} \left(\left(A(\widetilde{h})_{\vec{\alpha}} - A(\widetilde{h}')_{\vec{\alpha}}\right) \cdot \underline{s_c'}^{\vec{\alpha}}\right) e^*_{m(\vec{\alpha})}.
\]
Observe that
\[
\frac{\partial^{\vec{\alpha}} \widetilde{g}}{\partial \underline{s_c'}^{\vec{\alpha}}}\bigg|_{\underline{s_c}'=0} \cdot \underline{s_c'}^{\vec{\alpha}'}\in I_\phi^{j}, \quad \text{and} \quad \left(A(\widetilde{h})_{\vec{\alpha}} - A(\widetilde{h}')_{\vec{\alpha}}\right) \cdot \underline{s_c'}^{\vec{\alpha}}\in I_\phi^{k-1} \subset I_\phi^{j}
\]
for each $\vec{\alpha}$. In other words, we can write
\[
\tau(\widetilde{h}) - \tau(\widetilde{h}') = \sum_{i=1}^r \tau_i(\widetilde{h},\widetilde{h}') \cdot e_i^*
\]
for some $\tau_i(\widetilde{h},\widetilde{h}') \in I_\phi^{j}$. The compatibility then amounts to showing:
\[
\begin{split}
p_{j,j-1} &\big([1]_{j} \otimes \tau(\widetilde{h}) - [1]_{j} \otimes \tau(\widetilde{h}') \big)
= p_{j,j-1} \big( [1]_{j} \otimes (\tau(\widetilde{h}) - \tau(\widetilde{h}')) \big)\\
&= p_{j,j-1} \Big( [1]_{j} \otimes \sum_i \tau_i(\widetilde{h},\widetilde{h}') \cdot e_i^* \Big)
= \sum_i p_{j,j-1} \Big( [\tau_i(\widetilde{h},\widetilde{h}')]_{j} \otimes e_i^* \Big) = 0.
\end{split}
\]
Therefore, the map $\iota_{s'_c, \phi}$ is surjective.

Finally, the surjectivity of $\iota_{s_c} : \Gamma(E^c) \rightarrow I_{\phi}$ immediately implies the surjectivity of (\ref{gecwp}), the map on the completions.
\end{proof}
This completes the proof of Proposition \ref{afec}.
\end{proof}

We now provide the proofs of Lemmata \ref{itavsteai} and \ref{phxv}.

\begin{proof}[Proof of Lemma \ref{itavsteai}]
For the Koszul part $\widehat{o}^{\mathrm{K}}_x$, we can regard the open embedding of the subchart as an FOOO embedding where the bundle embedding is given by the identity map on the fibers. The tangent bundle condition holds trivially, as it reduces to an isomorphism between zero vector spaces. The additional conditions (iv) and (v) in Condition \ref{addcond} are also trivially satisfied. Then Proposition \ref{afec} implies that the quasi-isomorphism condition holds.

For the de Rham part $\widehat{o}^{\mathrm{dR}}_x$, we consider a linear map
\[
\widehat{o}^{\mathrm{dR}}_{x,k} : \Omega^{\bullet + 1}(\mathcal{F}'_{o(x),k})^{\otimes k}_o \rightarrow  \Omega^{\bullet + 1}(\mathcal{F}_x),
\]
defined by
\[
\widehat{o}^{\mathrm{dR}}_{x,k} \left( h_1 \otimes \xi'_1, \ldots, h_k \otimes \xi'_k \right) =
\begin{cases}
o^* \widetilde{h}_1 \cdot o^* \xi'_1 & k = 1, \\
0 & k \geq 2,
\end{cases}
\]
for $h_1 \in C^{\infty}(W'_{o(x)})^{(j)}$ and its representative $\widetilde{h}_1 \in C^{\infty}(W'_{o(x)}).$ Note that this is well-defined; for a different choice of representative $\widetilde{h}'_1,$ we have $\widetilde{h}_1 - \widetilde{h}'_1 \in I^j_{o},$ and hence $o^*(\widetilde{h}_1 - \widetilde{h}'_1) = 0.$

To show the compatibility with respect to the choices of $j,$ consider $h \in C^\infty(W_x)^{(j)}$ and $h' \in C^\infty(W_x)^{(j+1)}$ satisfying $h = p_{j+1,j}(h') = h' + I_o^j / I_o^{j+1},$ together with their representatives $\widetilde{h}$ and $\widetilde{h}' \in C^\infty(W_x)$ with $\widetilde{h} + I_o^{j} = h$ and $\widetilde{h}' + I_o^{j+1} = h',$ respectively. Then we have $\widetilde{h} = \widetilde{h}' + \widetilde{g}$ for some $\widetilde{g} \in I_o^{j}.$

The compatibility can now be verified as follows:
\[
\widehat{o}_{x,1}(h_1 \otimes \xi_1') 
= o^* \widetilde{h}_1 \cdot o^* \xi_1' 
= o^*(\widetilde{h}' + \widetilde{g}) \cdot o^* \xi_1' 
= o^* \widetilde{h}_1' \cdot o^* \xi_1' 
= \widehat{o}_{x,1}(h_1' \otimes \xi_1').
\]

We claim that $\widehat{o}_x$ is an $L_\infty[1]$-morphism. Indeed, we have
\begin{equation}\nonumber
\begin{split}
\widehat{o}_{x,1}&\big(l_k(h_1 \otimes \xi'_1, \cdots, h_k \otimes \xi'_k)\big) = \widehat{o}_{x,1}\left([1] \otimes l_k(\widetilde{h}_1 \xi'_1, \cdots, \widetilde{h}_k \xi_k)\right)\\
&= o^*1 \cdot o^{*}\left(l_k(\widetilde{h}_1 \xi'_1, \cdots, \widetilde{h}_k \xi'_k)\right) \stackrel{*}= l_k\left((o^{*}\widetilde{h}_1) \cdot o^* \xi'_1, \cdots, (o^* \widetilde{h}_k) \cdot o^{*} \xi'_k\right)\\ 
&= l_k\big(\widehat{o}_{x,1}(h_1 \otimes \xi'_1), \cdots, \widehat{o}_{x,1}(h_k \otimes \xi'_k)\big).
\end{split}
\end{equation}
Here, the equality $*$ follows from the fact that the Nijenhuis-Schouten bracket respects restrictions to smaller \textit{open} subsets.

We now define our de Rham part $L_{\infty}[1]$-morphism,
\[
\widehat{o}_x : \Omega^{\bullet + 1}_{\mathrm{aug},o}(\mathcal{F}'_{o(x)}) \rightarrow  \Omega^{\bullet + 1}_{\mathrm{aug}}(\mathcal{F}_x),
\]
still denoted by $\widehat{o}_x,$ to be the induced morphism between the augmented $L_{\infty}[1]$-algebras of Lemma \ref{auglmo}. This is necessarily a quasi-isomorphism since it is a map between acyclic $L_{\infty}[1]$-algebras.
\end{proof}

\begin{proof}[Proof of Lemma \ref{phxv}] 
We first recall that we have $\widehat\pi_{(x,v)} = \widehat\pi^{\mathrm{c}}_{(x,v)}$ by the surjectivity of $\pi.$ In fact, it is straightforward to verify that $\widehat\pi^{\mathrm{c}}_{(x,v)}$ respects the $L_\infty[1]$-operations:
\begin{equation}\nonumber
\begin{split}
l_k^V\bigl(&\widehat\pi^{\mathrm{c}}_{(x,v),1}(a_1,\xi_1),\dots,\widehat\pi^{\mathrm{c}}_{(x,v),1}(a_k,\xi_k)\bigr) = l^V_k\bigl((\pi^*a_1,\pi^*\xi_1),\dots,(\pi^*a_1,\pi^*\xi_k)\bigr)\\
& = l_k\bigl((a_1,\xi_1),\dots,(a_k,\xi_k)\bigr) = l^K_k\bigl(a_1,\cdots, a_k \bigr) \oplus  l^{\mathrm{dR}}_k\bigl(\xi_1, \cdots, \xi_k\bigr)\\
&= \widehat\pi^{\mathrm{c}}_{(x,v),1}\bigl(l_k^K(a_1,\cdots,a_k),l_k^{\mathrm{dR}}(\xi_1,\cdots,\xi_k)\bigr) =\widehat\pi^{\mathrm{c}}_{(x,v),1}\bigl(l_k\big((a_1,\xi_1),\dots,(a_k,\xi_k)\big)\bigr).
\end{split}
\end{equation}

Since $\widehat{\pi}_{(x,v),1}$ is injective, it suffices to show that the quotient complex
\[
\frac{\mathcal{C}_{(x,0)}^V}{\widehat{\pi}^{\mathrm{c}}_{(x,v),1}(\mathcal{C}_x)}
\]
is acyclic by the quasi-isomorphism property of $\widehat{\pi}^{\mathrm{c}}_{(x,v)}.$ The proof is essentially identical to that of Proposition \ref{afec}, once we observe that we obtain an FOOO embedding
\[
I: \mathcal{U} \hookrightarrow \mathcal{U} \times V; \quad I = (i, \widetilde{i}),
\]
where 
\[
i: U \hookrightarrow U \times V; \quad y \mapsto (y, 0)
\]
is the natural inclusion and 
\[
\widetilde{i}: E \hookrightarrow E \times V; \quad (y, a) \mapsto \big(\big((y,0), a\big), 0\big)
\]
is the natural bundle embedding. We remark that it satisfies the tangent bundle condition; that is, we have an isomorphism
\[
\left(V \simeq\right) \frac{T_{(x,0)}(U \times V)}{i_*(T_x U \times \{0\})} \xrightarrow{\simeq} \frac{(E \times V)|_{(x,0)}}{\widetilde{i}(E_x)} \quad (\simeq V)
\]
induced from $ds^V \simeq ds \oplus \mathrm{id}_V.$ Moreover, Condition \ref{addcond} is satisfied trivially. Thus, we can apply Proposition \ref{afec}.
\end{proof}

\begin{lem}\label{cochqim}
Suppose that the base $U$ is contractible. For an embedding $\Phi : \mathcal U \rightarrow \mathcal U'$ induced from an $\mathrm{FOOO}$ embedding (cf. Proposition \ref{afec}) with contractible bases $U$ and $U',$ the $L_{\infty}[1]$-morphism
\[
\widehat{\varepsilon}_{\phi(x),\phi} : \mathcal{C}'_{\phi({x})} \rightarrow \mathcal{C}'_{\phi(x),\phi}
\]
for each ${x} \in s^{'-1}(0)$ is a quasi-isomorphism.
\end{lem}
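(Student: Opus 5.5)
The plan is to reduce the claim to a two-out-of-three argument, using the embedding already constructed in Proposition \ref{afec}. By that proposition, the FOOO embedding $\Phi$ yields a quasi-isomorphism $\widehat{\eta}_x : \mathcal{C}_x \rightarrow \mathcal{C}'_{\phi(x),\phi}$ (Lemma \ref{etqi}). I will construct a comparison morphism $\widehat{\rho}_x : \mathcal{C}'_{\phi(x)} \rightarrow \mathcal{C}_x$ with $\widehat{\eta}_x \circ \widehat{\rho}_x \simeq \widehat{\varepsilon}_{\phi(x),\phi}$ up to $L_\infty[1]$-homotopy. The natural candidate is restriction along $\phi$: set $\widehat{\rho}_{x,1}(a',\xi') := (\widetilde{\phi}^{-1}(a'|_{\mathrm{Im}\phi}), \phi^*\xi')$ and $\widehat{\rho}_{x,k}=0$ for $k\geq 2$. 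This is then extended to the augmentations via Lemma \ref{auglmo}. Once $\widehat{\rho}_x$ is known to be a quasi-isomorphism, the claim follows, since quasi-isomorphisms are closed under composition and homotopy and $\widehat{\eta}_x$ is one. Strictly, only the linear parts $l_1$ matter, so everything can be checked at the level of cochain complexes.

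The argument splits into the de Rham and Koszul parts, as in the proof of Lemma \ref{etqi}.

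\emph{De Rham part.} Contractibility of $U'$ and the choice $W'_{\phi(x)} \simeq B^n$ make $\Omega^{\bullet+1}_{\mathrm{aug}}(\mathcal{F}'_{\phi(x)})$ acyclic by the foliated Poincar\'e lemma and the augmentation. Corollary \ref{corpoinc} makes $\Omega^{\bullet+1}_{\mathrm{aug},\phi}(\mathcal{F}'_{\phi(x)})$ acyclic. Hence $\widehat{\varepsilon}^{\mathrm{dR}}$ is a map between acyclic complexes and is a quasi-isomorphism.

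\emph{Koszul part.} Here I compare cohomologies directly. By Condition \ref{addcond}(iv), $s'$ splits as $s'_\phi \oplus s'_c$ with $s'_c$ vanishing on $\mathrm{Im}\phi$. By the tangent bundle condition and the Claim in the proof of Proposition \ref{afec}, the components of $s'_c$ form a regular sequence cutting out $\mathrm{Im}\phi\cap W'_{\phi(x)}$, at least after shrinking $W'$ using Example \ref{dcotopwx}. Filtering by the $E^c$-degree $q$ gives the double complex already used in the paper. Its columns $\iota_{s'_c}$ are Koszul resolutions of $C^\infty(W'_{\phi(x)})/I_\phi \cong C^\infty(W_x)$ (Eisenbud, Ch.~17). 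After completion, Lemma \ref{lemab} together with the surjectivity argument for (\ref{gecwp}) gives resolutions of the same quotient. Thus both $\mathcal{C}'^{\mathrm{K}}_{\phi(x)}$ and its completion are quasi-isomorphic, through the column spectral sequence, to the Koszul complex of $s$ on $W_x$. The map $\widehat{\varepsilon}$ commutes with these identifications because it is $h\otimes a\mapsto 1\otimes a$ and reduces to the identity on $C^\infty/I_\phi$. Comparison of the bounded spectral sequences then gives the quasi-isomorphism.

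The main obstacle is the Koszul step. It requires knowing that $\iota_{s'_c}$ resolves $C^\infty(W')/I_\phi$ \emph{before} completion, not just after. That in turn needs exactness in the smooth (non-Noetherian) setting and an identification $\langle s'^1_c,\dots,s'^r_c\rangle = I_\phi$. I would first try to get this from the Taylor-expansion argument (\ref{heee}), which writes every $\widetilde h\in I_\phi$ as a $C^\infty$-combination of the $s'^j_c$, combined with a Hadamard-lemma homotopy in the normal coordinates $w$. Contractibility of $U,U'$ is what allows global coordinates $(y,w)$ and a global retraction $\pi$, so these local arguments become global on $W'_{\phi(x)}$.
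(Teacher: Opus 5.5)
Your proof is correct in substance, and for the Koszul part it takes a genuinely different route from the paper's. The de Rham part is the same in both: source and target are acyclic.

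\textbf{How the two routes differ.} The paper never compares $\mathcal{C}'_{\phi(x)}$ with $\mathcal{C}_x$ directly. It stabilizes instead:
\begin{enumerate}
\item It extends $\phi$ to a diffeomorphism $\widetilde{\phi}\colon U\times\mathbb{R}^m\to U'$, covered by a bundle map $E\oplus\mathbb{R}^m\to E'$ that intertwines $s\times\mathrm{id}$ with $s'$. Completion along $\widetilde{\phi}$ is then trivial.
\item It reads off that $\widehat{\varepsilon}^{\mathrm{K}}_{\phi(x),\phi}$ is a quasi-isomorphism from a homotopy-commutative square. The other three arrows are quasi-isomorphisms by Proposition \ref{afec} (applied to $\phi$ and to $\widetilde{\phi}$) and Lemma \ref{phxv}.
\end{enumerate}
You instead show that restriction to $\phi(W_x)$ maps both the Koszul part of $\mathcal{C}'_{\phi(x)}$ and its completion quasi-isomorphically onto the Koszul complex of $s$ on $W_x$. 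The columns of the $E^c$-filtration are Koszul resolutions of $C^\infty(W'_{\phi(x)})/I_\phi$ and of its formal counterpart. Restriction on the completion composed with $\widehat{\varepsilon}_{\phi(x),\phi}$ is strictly the uncompleted restriction, so two-out-of-three finishes.

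\textbf{What each route buys.} Yours is self-contained: it uses strict commutativity, no homotopy inverses, no asserted-but-unchecked homotopy-commutative square, and no global normal form. It also isolates the single analytic input, Hadamard's lemma in coordinates $(y,w)$ with $w=s'_c$. The paper's route needs that input too, but hides it inside Lemma \ref{phxv}: that lemma is the uncompleted Koszul resolution of the $\mathbb{R}^m$-coordinates, and $\widetilde{\phi}$ is exactly what straightens $s'_c$ into them. The paper's route buys modularity, reducing everything to statements already made about surjective maps and product charts.

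\textbf{Three smaller points.}
\begin{enumerate}
\item The $L_\infty[1]$-homotopy $\widehat{\eta}_x\circ\widehat{\rho}_x\simeq\widehat{\varepsilon}_{\phi(x),\phi}$ in your first paragraph is never proved, and you do not need it. Obtaining it from Theorem \ref{anhp} would be circular. The strict identity above is all your argument actually uses.
\item The non-Noetherian setting is not a real obstacle. The Koszul complex of a regular sequence is exact over any commutative ring. So you only need the components of $s'_c$ to form a regular sequence generating $I_\phi$, and Hadamard's lemma gives this once $W'_{\phi(x)}$ is star-shaped in $w$.
\item That shrinking is essential, not cosmetic, and Example \ref{dcotopwx} cannot transfer the conclusion back to an arbitrary $W'_{\phi(x)}$. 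Take $U=\mathrm{pt}$, $U'=E'=\mathbb{R}$, $s'(t)=t(t-1)$ and $W'_{\phi(x)}=(-2,2)$; this is an FOOO embedding satisfying Condition \ref{addcond}. The uncompleted Koszul complex has $H^0\cong\mathbb{R}^2$, while its completion at $0$ has $H^0\cong\mathbb{R}$. So the lemma holds only for suitably small $W'_{\phi(x)}$, and the paper's global $\widetilde{\phi}$ tacitly assumes the same.
\end{enumerate}
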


\begin{proof}
We first note that for $m := \dim U' - \dim U,$ there exists a bundle embedding pair $\left(\widetilde{\phi}, \overline{\widetilde{\phi}}\right)$ with the following properties:
\begin{enumerate}[label = (\roman*)]
\item $\widetilde{\phi} : U \times \mathbb{R}^m \rightarrow U'$ is an open embedding diffeomorphism,
\item $\widetilde{\phi}|_{U \times \{0\}} \equiv \phi,$
\item $\overline{\widetilde{\phi}} : E \oplus \mathbb{R}^m|_{U} \hookrightarrow E'$ is a bundle embedding,
\end{enumerate}
which is guaranteed by the contractibility of $U$ and $U'.$ Observe that the pair $\left(\widetilde{\phi}, \overline{\widetilde{\phi}}\right)$ determines an FOOO embedding satisfying the tangent bundle condition:
\[
\left[d_{({x},0)}(s \times \mathrm{id}_{\mathbb{R}^m})\right] : \frac{T_{\widetilde{\phi}({x},0)}U}{\widetilde{\phi}_{*}\left(T_{({x},0)}(U \times \mathbb{R}^m)\right)} \xrightarrow{\simeq} \frac{E'_{\widetilde{\phi}({x},0)}}{\overline{\widetilde{\phi}}(E_{x} \times \mathbb{R}^m)}
\]
for each $x \in s^{-1}(0).$ Then Proposition \ref{afec} implies the quasi-isomorphism property of $\widehat{\widetilde{\phi}}^{\mathrm{c}}_{x}$ for each $x,$ with the additional conditions (iv) and (v) in Condition \ref{addcond} being satisfied.

By the construction of $\widehat{\pi}_{(x,0)}^{\mathrm{c}}, \widehat{\phi}^{\mathrm{c}}_{x},$ and $\widehat{\widetilde{\phi}}^{\mathrm{c}}_{x},$ it is straightforward to show that we have the commutative (up to $L_{\infty}[1]$-homotopy) diagram
\begin{equation}
\begin{tikzcd}
\mathcal{C}_{({x},0)}^{\mathbb{R}^m} \arrow{r}{\widehat{\pi}^{\mathrm{c},\mathrm{K}}_{(x,0)}, \ \simeq} \arrow{d}[swap]{\widehat{\widetilde{\phi}}^{\mathrm{c},{\mathrm{K}},-1}_{x}, \ \simeq} & \mathcal{C}_{x} \arrow{r}{\widehat{\phi}_{x}^{\mathrm{c},{\mathrm{K}},-1}, \ \simeq} & \mathcal{C}_{\phi(x),\phi} \\
\mathcal{C}_{\widetilde{\phi}(x,0), \widetilde{\phi}} \arrow{r}{=} & \mathcal{C}_{\widetilde{\phi}(x,0)} \arrow{r}{=} & \mathcal{C}_{\phi(x)} \arrow{u}[swap]{\widehat{\varepsilon}^{\mathrm{K}}_{\phi(x),\phi}}
\end{tikzcd}
\end{equation}
consisting solely of the Koszul part morphisms. Since all other $L_{\infty}[1]$-morphisms are quasi-isomorphisms, so is $\widehat{\varepsilon}^{\mathrm{K}}_{\phi(x),\phi}.$ The de Rham part morphism is also a quasi-isomorphism, as it is an $L_{\infty}[1]$-morphism between acyclic algebras.
\end{proof}

\begin{cor}\label{crtbc}
For an embedding of Kuranishi charts with the tangent bundle condition, the $L_{\infty}$-component $\widehat{\phi}_x = \widehat{\phi}^{\mathrm{c}}_x \circ \widehat{\varepsilon}_{\phi(x), \phi} $ for each $x \in s^{-1}(0)$ is a quasi-isomorphism.
\end{cor}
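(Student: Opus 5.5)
The plan is to write $\widehat{\phi}_x$ as a composite of two $L_{\infty}[1]$-morphisms and show each factor is a quasi-isomorphism. Quasi-isomorphisms of $L_{\infty}[1]$-algebras are detected on the linear parts $l_1$-cohomology, and the linear part of a composite is the composite of linear parts. Since $(\widehat{\phi}^{\mathrm{c}}_x\circ\widehat{\varepsilon}_{\phi(x),\phi})_1=\widehat{\phi}^{\mathrm{c}}_{x,1}\circ\widehat{\varepsilon}_{\phi(x),\phi,1}$, two-out-of-three on cohomology then gives the statement.

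The factor $\widehat{\phi}^{\mathrm{c}}_x:\mathcal{C}'_{\phi(x),\phi}\to\mathcal{C}_x$ is a quasi-isomorphism by Definition \ref{ourcemb}, since $\Phi$ is an embedding. When the embedding comes from an FOOO embedding, this is supplied by Proposition \ref{afec}: one takes a homotopy inverse of $\widehat{\eta}_x$, which is a quasi-isomorphism by Lemma \ref{etqi}, via the Whitehead theorem for $L_{\infty}[1]$-algebras recalled in the appendix.

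The remaining point is that $\widehat{\varepsilon}_{\phi(x),\phi}:\mathcal{C}'_{\phi(x)}\to\mathcal{C}'_{\phi(x),\phi}$ is a quasi-isomorphism. I would reduce to Lemma \ref{cochqim}. All local algebras are built on the contractible balls $W_x$ and $W'_{\phi(x)}$, so I would replace $\mathcal U$ and $\mathcal U'$ by their open subcharts on these balls. Lemma \ref{itavsteai} and Example \ref{dcotopwx} show this changes the relevant algebras only up to quasi-isomorphism, compatibly with $\widehat{\varepsilon}$. The tangent bundle condition, with Condition \ref{addcond}, lets the restricted embedding be viewed as an FOOO embedding between charts with contractible bases. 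Lemma \ref{cochqim} then gives that $\widehat{\varepsilon}^{\mathrm{K}}_{\phi(x),\phi}$ is a quasi-isomorphism. The de Rham component is handled separately: by Corollary \ref{corpoinc} and the Poincar\'e lemma for foliations, both source and target augmented de Rham parts are acyclic, so any $L_{\infty}[1]$-morphism between them is a quasi-isomorphism.

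The main obstacle is checking that restricting to the balls commutes with the completion at the image, i.e.\ that the square formed by $\widehat{\varepsilon}$ for $U'$ and for $W'_{\phi(x)}$, together with the open-subchart quasi-isomorphisms $\widehat{o}$, commutes up to homotopy. I would try to verify this on the linear level first, where $\widehat{\varepsilon}_1$ is $a\mapsto1\otimes a$ and $\widehat{o}_1$ is restriction, so they visibly commute. The higher components would then be handled by the homotopy-transfer uniqueness from the appendix. Once this square is in place, composing the two quasi-isomorphisms finishes the proof.
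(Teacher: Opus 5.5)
Your proposal is correct and follows the paper's implicit argument: the corollary is placed right after Lemma \ref{cochqim} and is meant to follow by composing the quasi-isomorphism $\widehat{\phi}^{\mathrm{c}}_x$ (Definition \ref{ourcemb}) with the quasi-isomorphism $\widehat{\varepsilon}_{\phi(x),\phi}$, using that quasi-isomorphism is a linear-level property. For $\widehat{\varepsilon}_{\phi(x),\phi}$, the Koszul part comes from Lemma \ref{cochqim} and the de Rham part from acyclicity (Corollary \ref{corpoinc}). The restriction to the balls, and the homotopy-commutative square you single out as the main obstacle, are essentially vacuous: $\mathcal{C}'_{\phi(x)}$, its completion and $\widehat{\varepsilon}_{\phi(x),\phi}$ are already defined entirely in terms of $W'_{\phi(x)}$ and the image of $\phi$ in it, so keeping the same balls the square commutes on the nose and no higher-component argument is needed; the paper simply applies Lemma \ref{cochqim} directly.
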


\section{The category of $L_\infty$-Kuranishi spaces}

In this section, we introduce the notion of \emph{$L_\infty$-Kuranishi spaces}. An advantage of working with this $L_\infty$-version is that the set of $L_\infty$-Kuranishi spaces forms a category. In this regard, we define the morphisms between them, which are essentially given by a collection of compatible chart morphisms.

\subsection{$L_{\infty}$-Kuranishi atlases}

We can cover the underlying topological space with $L_{\infty}$-Kuranishi charts, provided that they satisfy certain compatibility conditions. Before presenting the definition of a Kuranishi space, we first introduce the notion of an $L_{\infty}$\textit{-Kuranishi atlas.} Here, the term \textit{atlas} should not be confused with its usage in other contexts in the literature, such as in \cite{MW}.

\begin{defn}[$L_\infty$-Kuranishi atlases]
\label{kstr}
Let $X$ be a compact metrizable space. We say that $\widehat{\mathcal{U}}$ is a \textit{Kuranishi atlas on} $X$ if for each $p \in X,$ there exists a neighborhood $V_p$ of $p$ in $X,$ a Kuranishi chart $\widehat{\mathcal{U}}_p = (U_p, E_p, s_p, \Gamma_p, \psi_p)$  and \textit{contractible} $U_p$ for each $p$, a homeomorphism $\psi_p : s_p^{-1}(0)/\Gamma_p \simeq V_p,$ and if $V_p \cap V_p \neq \emptyset,$ we require that there exist an open subchart $\mathcal{U}_{pq}$ of $\mathcal{U}_p$ and an embedding of charts
\[
\Phi_{pq} = (\phi_{pq}, \widehat{\phi}_{pq}) : \mathcal{U}_p|_{U_{pq}} \hookrightarrow \mathcal{U}_{q}
\]
over $\mathrm{id}_X : X \rightarrow X,$ called \textit{coordinate changes} with the following properties:
\begin{enumerate}[label = (\roman*)]
\item $\Phi_{pp} = \mathrm{id}_{\mathcal{U}_p},$
\item $\psi_q \circ \phi_{pq} = \psi_p$ on $s_{p}^{-1}(0) \cap U_{pq},$
\item $\phi_{qr} \circ \phi_{pq} = \phi_{pr}$ on $\phi_{pq}^{-1}(U_{qr}) \cap U_{pr},$
\item $\psi_p\left(s_p^{-1}(0) \cap U_{pq}\right) =\mathrm{Im} \psi_p \cap\mathrm{Im} \psi_q.$
\end{enumerate}
In this situation, we call $\widehat{\mathcal{U}} = \left(\{\mathcal{U}_p\}, \{\Phi_{pq}\}\right)$ a \textit{Kuranishi atlas on} $X$ and $\{\Phi_{pq}\}_{p,q}$ its \textit{coordinate changes}. We assume that our Kuranishi atlas $(X, \widehat{\mathcal{U}})$ satisfies $\max\limits_{p \in X}U_p < \infty$ together with the compactness of $X.$
\end{defn}

\begin{rem}\label{whnli}
\begin{enumerate}[label = (\roman*)]
\item We adopt a convention $\Phi_{pq}$ for the coordinate change (from $\mathcal{U}_p$ to $\mathcal{U}_q$) that differs from FOOO's $\Phi_{qp}$, as it appears to be more convenient for our purpose of developing data with index numbers greater than two.
\item Compare this definition with Definition \ref{fo3ks}, where the coordinate changes are defined for pairs $(p,q)$ with $p \in \mathrm{Im}\,\psi_q$. Our version is more symmetrical in this regard.
\item The cocycle condition for the $L_{\infty}$-component is provided in \cite{Kim3} under the title of higher cocycle conditions. The reason it is not explicitly given in Definition \ref{kstr} is that it can always be achieved once some choices of higher homotopy data are made (cf.\ \cite[Definition 5.4 and Theorem 5.5]{Kim3}).
\end{enumerate}
\end{rem}

\begin{exam}[{Smooth manifolds}]\label{man}
Manifolds are Kuranishi spaces endowed with a Kuranishi atlas 
\[
\widehat{\mathcal{U}}^{\mathrm{man}} = \left(\left\{\mathcal{U}^{\mathrm{man}}_p\right\}, \left\{\Phi_{pq}\right\}\right) = \left(\left\{\left(U_p, E_p, s_p, \Gamma_p, \psi_p\right)\right\}, \left\{\left(U_{pq}, \phi_{pq}, \left\{\widehat{\phi}_{pq,x}\right\}\right)\right\}\right)
\] 
of the following restrictive type:
\begin{itemize}
\item[--] $U_p = (U_p, \beta)$ is the pair of a Euclidean space $\mathbb{R}^n$ of fixed dimension $n$ for all $p$ and the zero form $\beta = 0.$ Here, the isotropy group $\Gamma_x$ is trivial at each $x \in U_p.$
\item[--] $E_p = U_p \times \{0\} \simeq U_p$ is the zero-rank vector bundle.
\item[--] $s_p : U_p \xrightarrow{\simeq} E_p$ is the zero section.
\item[--] $\Gamma_p$ is the trivial group action.
\item[--] $\psi_p : s^{-1}_p(0) \simeq U_p \hookrightarrow \mathbb{R}^n$ is the manifold coordinate chart.
\item[--] $x \in W_x \subset U_p$ is an open ball $\simeq B^n.$
\item[--] $T\mathcal{F}_x = TU_p|_{W_x}$ is the total tangent bundle,
\item[--] $\mathcal{C}_{p,x} := \Omega_{\mathrm{aug}}^{\bullet +1}(W_x)$ is the augmented de Rham complex with the $L_{\infty}[1]$-algebra $\{l^{\mathrm{man}}_k\}_{k \geq 1}$ with $l^{\mathrm{man}}_{k \geq 2} = 0$ (see Lemma \ref{liiifdr} (ii)). In other words, $\mathcal{C}_{p,x}$ is only a chain complex.

Let $\mathcal{U}_p$ and $\mathcal{U}_q$ be Kuranishi charts at $p$ and $q,$ respectively. The coordinate change $\Phi_{pq} := \left(U_{pq}, \phi_{pq}, \left\{\widehat{\phi}_{pq,x}\right\}\right) : \mathcal{U}_p \rightarrow \mathcal{U}_q$ is given by:
\begin{enumerate}
\item[--] $U_{pq} := \psi_p^{-1}\left(\mathrm{Im} \psi_p \cap \mathrm{Im} \psi_q\right).$
\item[--] $\phi_{pq} : U_{pq} \rightarrow U_q$ is the usual coordinate change for manifolds
\begin{equation}\nonumber
\phi_{pq} := \psi_q^{-1} \circ \psi_p\big|_{U_{pq}},
\end{equation}
which is an open embedding.
\item[--] $\widehat{\phi}_{pq, x}^{\mathrm{c}}: \left(\mathcal{C}'_{\phi_{pq}(x)}\right)_{\phi_{pq}} \rightarrow \mathcal{C}_x$ at each $x \in s^{-1}_p(0) \cap U_{pq} = U_{pq}$ is an isomorphism constructed as follows with $\widehat{\phi}_{pq, x} = \widehat{\phi}_{pq, x}^{\mathrm{c}} \circ \widehat{\varepsilon}_{\phi_{pq}(x), \phi_{pq}}.$

\noindent
(\textit{Construction of }$\widehat{\phi}^{\mathrm{c}}_{pq, x}$) Since $\phi_{pq}$ is an \textit{open} topological embedding, we can apply Lemma \ref{itavsteai}. As a consequence, we obtain a \textit{chain} isomorphism
\[
\widehat{\phi}^{\mathrm{c}}_{pq,x} : \Omega^{\bullet + 1}(W'_{\phi_{pq}(x)})_{\phi_{pq}} \rightarrow \Omega^{\bullet +1}(W_x)
\]
that consists of, for each $j \geq 1,$
\begin{equation}\label{pqiso}
\begin{split}
(j) : C^{\infty}(W'_{\phi_{pq}(x)})^{(j)} \otimes \Omega^{\bullet + 1}(W'_{\phi_{pq}(x)}) &\rightarrow \Omega^{\bullet +1}(W_x)\\
h \otimes \xi &\mapsto \phi_{pq}^*(\widetilde{h}|_{\mathrm{Im}\phi_{pq}}) \cdot \phi_{pq}^*(\xi|_{\mathrm{Im}\phi_{pq}}),
\end{split}
\end{equation}
where $\widetilde{h} \in C^{\infty}(W'_{\phi_{pq}(x)})$ is a representative of $h.$ It is easy to see that this map is well-defined. Its inverse is given by
\begin{equation}\label{obobw}
\begin{split}
\Omega^{\bullet +1}(W_x) &\rightarrow \Omega^{\bullet + 1}(W'_{\phi_{pq}(x)})_{\phi_{pq}}\\
\xi &\mapsto 1 \otimes \pi_{pq}^*(\phi_{pq}^{-1})^*\xi.
\end{split}
\end{equation}
We observe that the compositions of the above two maps are given by, for each $j \geq 1,$
\[
\begin{split}
(j) : h \otimes \xi &\overset{(\ref{pqiso})}{\mapsto} \phi_{pq}^*(\widetilde{h}|_{\mathrm{Im}\phi_{pq}}) \cdot \phi_{pq}^*\xi|_{\mathrm{Im}\phi_{pq}}\\ &\overset{(\ref{obobw})}{\mapsto} [1]_j \otimes \pi_{pq}^*(\phi_{pq}^{-1})^*\left( \phi_{pq}^*(\widetilde{h}|_{\mathrm{Im}\phi_{pq}}) \cdot \phi_{pq}^*(\xi|_{\mathrm{Im}\phi_{pq}}) \right)\\ & \quad = [1]_j \otimes \widetilde{h}\xi = [\widetilde{h}]_j \otimes \xi =  h \otimes \xi,
\end{split}
\]
and
\[
\begin{split}
\xi &\overset{(\ref{obobw})}{\mapsto} 1 \otimes \pi_{pq}^*(\phi_{pq}^{-1})^*\xi\\ 
&\overset{(\ref{pqiso})}{\mapsto} \phi_{pq}^*\left( \pi_{pq}^*(\phi_{pq}^{-1})^*\xi|_{\mathrm{Im}\phi_{pq}} \right) =  \phi_{pq}^*(\phi_{pq}^{-1})^*\xi = \xi.
\end{split}
\]
The chain map properties are verified as follows. For each $j \geq 1,$ we have
\[
\begin{split}
d(h \otimes \xi) = [1]_{j-1} \otimes d(\widetilde{h}\xi) \overset{(\ref{pqiso})}{\mapsto} &\phi_{pq}^*\left( d(\widetilde{h}\xi)|_{\mathrm{Im}\phi_{pq}}\right) \overset{*}{=} d\left(\phi_{pq}^* (\widetilde{h}\xi)|_{\mathrm{Im}\phi_{pq}}\right)\\ &= d\left(\phi_{pq}^* (\widetilde{h}|_{\mathrm{Im}\phi_{pq}}) \cdot \phi_{pq}^*(\xi|_{\mathrm{Im}\phi_{pq}})\right),
\end{split}
\]
where $*$ is a consequence of the fact that $\phi_{pq}$ is an open embedding. For the opposite direction, we have
\[
d \xi \overset{(\ref{obobw})}{\mapsto} 1 \otimes \pi_{pq}^*(\phi_{pq}^{-1})^*d\xi = d\big(1 \otimes \pi_{pq}^*(\phi_{pq}^{-1})^*\xi\big).
\]
Furthermore, Lemma \ref{auglmo} and Remark \ref{auglmoloc} lead to its augmented version, which is obviously an isomorphism, and we denote by:
\begin{equation}\nonumber
\widehat{\phi}_{pq,x}^{\mathrm{c}} : \mathcal{C}_{q,\phi_{pq}(x),\phi_{pq}} = \Omega_{\mathrm{aug}, \phi_{pq}}^{\bullet+1}(W'_{\phi_{pq}(x)}) \xrightarrow{\simeq} \Omega_{\mathrm{aug}}^{\bullet+1}(W_x) = \mathcal{C}_{p,x}.
\end{equation}
\end{enumerate}
\end{itemize}
\end{exam}

\begin{exam}[{Smooth manifolds with closed two-forms}]
Let $(M,\beta)$ be a smooth manifold equipped with a closed $2$-form $\beta$. When understood as a Kuranishi space, it can be described by a collection of local charts
\[
\left\{(U_p,\beta|_{U_p})\right\}_{p\in M}
\]
and the coordinate changes among them.

More precisely, for each point $p\in M$ we set up the data $\mathcal{U}_p = (U_p, E_p, s_p, \Gamma_p, \psi_p),$ where
\begin{itemize}[label=\textbullet]
  \item[--] $U_p \subset M$ is an open neighborhood of $p$ equipped with the restriction $\beta|_{U_p}$.
  \item[--] $E_p = U_p\times \{0\}\cong U_p$ is the zero-rank vector bundle over $U_p$.
  \item[--] $s_p : U_p \to E_p$ is the zero section.
  \item[--] $\Gamma_p$ is the trivial group action.
  \item[--] $\psi_p : s_p^{-1}(0) = U_p \hookrightarrow M$ is the obvious embedding.
\end{itemize}
At each $x \in s_p^{-1}(0) = U_p$, we choose an open neighborhood $W_x \subset U_p$ (hence in $M$). The local $L_{\infty}[1]$-algebra $\mathcal{C}_{p,x}$ is given by
\[
\mathcal{C}_{p,x} := \Omega^{\bullet+1}_{\mathrm{aug}}(\mathcal{F}_{p,x}),
\]
that is, the augmented de Rham complex of the foliation equipped with the $L_{\infty}[1]$-algebra structure in Example \ref{gtyemb}.
For $p, q \in M$ and charts $\mathcal{U}_p, \mathcal{U}_q$, the coordinate change
\begin{equation}\nonumber
\Phi_{pq} : \mathcal{U}_p \to \mathcal{U}_q
\end{equation}
is given by
$\Phi_{pq} = \left(U_{pq}, \phi_{pq}, \left\{\widehat{\phi}_{pq,x}\right\}_{x \in s_p^{-1}(0) \cap U_{pq}}\right)$,
where
\begin{itemize}
\item[--] $U_{pq} \subset U_p$ is an open subset given by
\begin{equation}\nonumber
    U_{pq} := \psi_p^{-1}(\psi_p(U_p) \cap \psi_q(U_q)).
\end{equation}
\item[--] $\phi_{pq} : U_{pq} \to U_q$ is the usual coordinate change for manifolds,
\begin{equation}\nonumber
\phi_{pq} := \psi_q^{-1} \circ \psi_p|_{U_{pq}},
\end{equation}
which is an open embedding.
\item[--] $\widehat{\phi}_{pq,x} :  \mathcal{C}_{q,\phi_{pq}(x),\phi_{pq}} \rightarrow  \mathcal{C}_{p,x}$ at each $x \in s^{-1}_p(0) \cap U_{pq} = U_{pq}$ is an $L_{\infty}[1]$-isomorphism, given as follows with $\widehat{\phi}_{pq, x} = \widehat{\phi}_{pq, x}^{\mathrm{c}} \circ \widehat{\varepsilon}_{\phi_{pq}(x), \phi_{pq}}.$ 

\noindent
(\textit{Construction of }$\widehat{\phi}_{pq, x}^{\mathrm{c}}$) Since $\beta|_{U_{pq}} = \phi_{pq}^*(\beta|_{U_q})$ and $\dim U_p = \dim U_q$, we have
\begin{equation}\nonumber
T\mathcal{F}_{p,x} \simeq \phi_{pq}^*T\mathcal{F}_{q,\phi_{pq}(x)},
\end{equation}
which amounts to identifying open subchart data in the setting of Lemma \ref{itavsteai}. Note that the $L_{\infty}[1]$-algebra depends on the choice of splitting $T\mathcal{U}_p|_{W_x} = T\mathcal{F}_{p,x} \oplus G_{p,x}$; however, it only makes isomorphic differences by Lemma \ref{liiifdr} (iv). Then by Lemma \ref{itavsteai}, we obtain a \textit{chain} isomorphism
\begin{equation}\nonumber
\widehat{\phi}_{pq,x}^{\mathrm{c}} : \Omega^{\bullet+1}(\mathcal{F}_{\phi_{pq}(x)})_{\phi_{pq}} \xrightarrow{\simeq} \Omega^{\bullet+1}(\mathcal{F}_x),
\end{equation}
similarly to the manifold case. Furthermore, Lemma \ref{auglmo} and Remark \ref{auglmoloc} yield its augmented version, which is clearly an isomorphism, denoted by
\begin{equation}\nonumber
\widehat{\phi}_{pq,x}^{\mathrm{c}} : \mathcal{C}_{q,\phi_{pq}(x),\phi_{pq}} = \Omega_{\mathrm{aug}, \phi_{pq}}^{\bullet+1}(\mathcal{F}_{\phi_{pq}(x)}) \xrightarrow{\simeq} \Omega_{\mathrm{aug}}^{\bullet+1}(\mathcal{F}_x) = \mathcal{C}_{p,x}.
\end{equation}
\end{itemize}
We leave it as an exercise for the reader to verify that the above data satisfy all the axioms in Definitions \ref{kurdef} and \ref{kstr}. In other words, $(M,\beta)$ determines a Kuranishi atlas in our sense, whose special cases include smooth manifolds (cf.\ Example \ref{man}) and symplectic manifolds (with nondegenerate closed two-forms).
\end{exam}

\subsection{$L_{\infty}$-Kuranishi spaces}
Kuranishi atlases are not suitable for our purpose of achieving categorical structures. Instead, we propose a more well-behaved notion, which we call \textit{Kuranishi spaces,} defined by allowing some ambiguity in the choices of local charts.

\begin{defn}[Expanded atlases]

Given a Kuranishi atlas $\widehat{\mathcal{U}}$ and a nonnegative number $m,$ we define the \textit{expanded atlas} of $\widehat{\mathcal{U}}$ by
\[
\widehat{\mathcal{U}} \times \mathbb{R}^{{m}}
:= 
\Big(
\left\{\mathcal{U}_p \times \mathbb{R}^{{m}} \right\}_p,\,
\left\{ \left(U_{pq} \times \mathbb{R}^{{m}}, \phi_{pq}^{\mathbb{R}^{{m}}}, \left\{\phi_{pq,x}^{\mathbb{R}^{{m}}} \right\}\right)\right\}_{p,q}
\Big),
\]
where each component is given by:
\begin{itemize}
    \item[--] $\mathcal{U}_p \times \mathbb{R}^{{m}}$ is the expanded chart for each $p \in X$.

    \item[--] $U_{pq} \times \mathbb{R}^{{m}}$ is an open subset of $U_p \times \mathbb{R}^{{m}}$.
    
    \item[--] $\phi_{pq}^{\mathbb{R}^{{m}}} : U_{pq} \times \mathbb{R}^{{m}} \to U_q \times \mathbb{R}^{{m}}$ is the base coordinate change given by
    \[
    \phi_{pq}^{\mathbb{R}^{{m}}} := \phi_{pq} \times \mathrm{id}_{\mathbb{R}^{{m}}}.
    \]
    \item[--] $\widehat{\phi}_{pq,x}^{\mathbb{R}^{{m}}} : \mathcal{C}_{q,\,\widehat{\phi}_{pq}^{\mathbb{R}^{{m}}}(x,0),\phi_{pq}^{\mathbb{R}^{{m}}}}^{\mathbb{R}^{{m}}} 
    \to \mathcal{C}_{p,(x,0)},$ for each $x \in s_{p}^{-1}(0),$ is the $L_{\infty}[1]$-coordinate change given by the composition
    \[
    \mathcal{C}^{\mathbb{R}^{{m}}}_{q,\,\widehat{\phi}_{pq}^{\mathbb{R}^{{m}}}(x,0), \phi_{pq}^{\mathbb{R}^{{m}}}}
    \xrightarrow{(1)^{-1}, \simeq} 
    \mathcal{C}_{q,\,\phi_{pq}(x),\phi_{pq}}
    \xrightarrow{\widehat{\phi}_{pq,x}, \simeq}
    \mathcal{C}_{p,x} \xrightarrow{(2), \simeq} \mathcal{C}^{\mathbb{R}^{m}}_{p, (x,0)}.
    \]
Here, the $L_{\infty}[1]$-quasi-isomorphisms $(1)$ and $(2)$ are defined as in Example \ref{cmfae} and Lemma \ref{phxv}.
\end{itemize}
\end{defn}
We remark that expansion preserves the dimension:
\[
\dim \left(\widehat{\mathcal{U}} \times \mathbb{R}^m\right) = (\dim U + m) - (\text{rk}E + m) = \dim U - \text{rk}E =  \dim \widehat{\mathcal{U}}.
\]
\begin{notation}
Let $\left(X, \widehat{\mathcal{U}}\right)$ be a Kuranishi atlas. We write 
\[
\left(X, \widehat{\mathcal{U}}^0\right) < \left(X, \widehat{\mathcal{U}}\right), \text{ or simply } \widehat{\mathcal{U}}^0 <\widehat{\mathcal{U}}, 
\]
for its open subatlas $\left(X, \widehat{\mathcal{U}}^0\right).$
\end{notation}
With this notation, we define an equivalence relation between the atlases.
\begin{defn}[Equivalence of atlases]\label{eqats}
Let $\left(X, \widehat{\mathcal{U}}_1\right)$ and $\left(X, \widehat{\mathcal{U}}_2\right)$ be Kuranishi atlases. We say that they are \textit{equivalent} and write
\[
\left(X, \widehat{\mathcal{U}}_1\right) \sim \left(X, \widehat{\mathcal{U}}_2\right), \text{ or simply } \widehat{\mathcal{U}}_1 \sim \widehat{\mathcal{U}}_2
\]
if 
\begin{equation}\label{urur}
\widehat{\mathcal{U}}^0_1 \times \mathbb{R}^{m_1} = \widehat{\mathcal{U}}^0_2 \times \mathbb{R}^{m_2}
\end{equation}
for some $m_1, m_2 \geq 0$ by which we mean that the following conditions hold:

\begin{enumerate}[label = (\roman*)]
    \item There exists a commutative diagram as follows
    \[
    \begin{tikzcd}[row sep=large, column sep=large]
    E^0_{1,p}|_{U_{1,p}^0 \times \mathbb{R}^{m_1}} \arrow[r, "\simeq"] 
    & E^0_{2,p}|_{U_{2,p}^0 \times \mathbb{R}^{m_2}}\\
    U_{1,p}^0 \times \mathbb{R}^{m_1} \arrow[r, "\simeq"] \arrow[u, "s_{1,p}^0 \times \mathrm{id}_{\mathbb{R}^{m_1}}"'] 
    & U_{2,p}^0 \times \mathbb{R}^{m_2} \arrow[u, "s_{2,p}^0 \times \mathrm{id}_{\mathbb{R}^{m_2}}"]\\
   (s_{1,p}^0 \times \mathrm{id}_{\mathbb{R}^{m_1}})^{-1}(0) \arrow{r}{(1), \simeq} \arrow[u, hook] 
& (s_{2,p}^0 \times \mathrm{id}_{\mathbb{R}^{m_2}})^{-1}(0). \arrow[u, hook]
    \end{tikzcd}
    \]

    \item There exists a group isomorphism $\Gamma^0_{1,p} \simeq \Gamma^0_{2,p}$.
    
    \item There exists a commutative diagram as follows
  \[
    \begin{tikzcd}[row sep=large, column sep=large]
     \frac{(s_{1,p}^0)^{-1}(0)}{\Gamma_{1,p}}
    \arrow[r, "\simeq"] \arrow[d, swap, "\simeq"] 
    & \frac{(s_{1,p}^0 \times \mathrm{id}_{\mathbb{R}^{m_1}})^{-1}(0)}{\Gamma_{1,p}} \arrow[r, hook, "\psi_{1,p}^0"] & X  \\
   \frac{(s_{2,p}^0)^{-1}(0)}{\Gamma_{2,p}} \arrow[r, "\simeq"] 
    &  \frac{(s_{2,p}^0 \times \mathrm{id}_{\mathbb{R}^{m_2}})^{-1}(0)}{\Gamma_{2,p}}. \arrow[ru, hook, "\psi_{2,p}^0"] &{}
    \end{tikzcd}
    \]
    \item For each pair of the zero points $x_1 \overset{(1), \simeq}{\leftrightarrow} x_2$ in (i), there exists an \textit{isomorphism}
    \[
    \mathcal{C}_{{1,p}, (x_1, 0)}^{0,\mathbb{R}^{m_1}} \xrightarrow{\cong} \mathcal{C}_{{2,p}, (x_2, 0)}^{0,\mathbb{R}^{m_2}}.
    \]
    
    \item There exists a commutative diagram as follows
    \[
    \begin{tikzcd}[row sep=large, column sep=large]
   U_{1,p}^0 \times \mathbb{R}^{m_1} \arrow[r, "\simeq"] 
    & U_{2,p}^0 \times \mathbb{R}^{m_2}\\
    U_{1,pq}^0 \times \mathbb{R}^{m_1} 
    \arrow{r}{(2), \simeq} \arrow[u, hook] 
    & U_{2,pq}^0 \times \mathbb{R}^{m_2}. \arrow[u, hook]
    \end{tikzcd}
    \]
    
    \item We have $\phi_{1,pq}^{0, \mathbb{R}^{m_1}} = \phi_{2,pq}^{0, \mathbb{R}^{m_2}}$ modulo the diffeomorphism $(2), \simeq$.
\end{enumerate}
\end{defn}

We list some of the properties of the above-mentioned equivalences by the following lemma.
\begin{lem}\label{gggadbbbb}
We have:
\begin{enumerate}[label=(\roman*)]
\item $\widehat{\mathcal{U}}^0 \sim \widehat{\mathcal{U}}$ for an open subatlas $\widehat{\mathcal{U}}^0 < \widehat{\mathcal{U}}.$
\item $\widehat{\mathcal{U}} \sim \widehat{\mathcal{U}} \times V$ for a finite dimensional vector space $V.$
\item $\sim$ is an equivalence relation.
\item $\left(\widehat{\mathcal{U}} \times \mathbb{R}^{m}\right) \times \mathbb{R}^{m'} = \widehat{\mathcal{U}} \times \mathbb{R}^{m + m'}$ for all $m, m' \geq 0.$
\end{enumerate}
\end{lem}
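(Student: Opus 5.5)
The plan is to prove (iv) and (i)--(ii) first and then deduce (iii), since transitivity is the only genuinely nontrivial part.

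For (iv), I will compare the two sides component by component, following the checklist of Definition \ref{eqats}. The base is $(U\times\mathbb{R}^m)\times\mathbb{R}^{m'}=U\times\mathbb{R}^{m+m'}$, and similarly for the bundle. The section is $(s\times\mathrm{id}_{\mathbb{R}^m})\times\mathrm{id}_{\mathbb{R}^{m'}}=s\times\mathrm{id}_{\mathbb{R}^{m+m'}}$. The two-form is the pullback of $\beta$ under the projection, the group is $\Gamma$ acting on the $U$-factor, and $\psi$ is unchanged. The neighborhoods are $W_x\times\mathbb{R}^{m}\times\mathbb{R}^{m'}=W_x\times\mathbb{R}^{m+m'}$. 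The local algebras agree because $(\pi^*E\oplus\mathbb{R}^m)\oplus\mathbb{R}^{m'}=\pi^*E\oplus\mathbb{R}^{m+m'}$, and likewise for $\pi^*T\mathcal{F}$; the operations $l^V_k$ are additive in $V$ for a basis adapted to $\mathbb{R}^{m}\oplus\mathbb{R}^{m'}$. The base coordinate changes are $\phi_{pq}\times\mathrm{id}\times\mathrm{id}$. All conditions (i)--(vi) then hold with identity identifications.

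For (i), I take $m_1=m_2=0$ and compare $\widehat{\mathcal{U}}^0$ with itself, viewed as the common open subatlas of both sides. Condition (iv) is then the identity isomorphism, once the choices of $W_x$ are made compatible using Example \ref{dcotopwx}.

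For (ii), I choose a basis so that $V\cong\mathbb{R}^m$, and take $m_1=m$, $m_2=0$, with the subatlases equal to the whole atlases. The required equality $\widehat{\mathcal{U}}\times\mathbb{R}^m=(\widehat{\mathcal{U}}\times V)\times\mathbb{R}^0$ is then literally true.

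For (iii), reflexivity is (i) applied with $\widehat{\mathcal{U}}^0=\widehat{\mathcal{U}}$, and symmetry is immediate because conditions (i)--(vi) are symmetric under inverting the isomorphisms.

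For transitivity, suppose $\widehat{\mathcal{U}}_1^0\times\mathbb{R}^{m_1}=\widehat{\mathcal{U}}_2^0\times\mathbb{R}^{m_2}$ and $\widehat{\mathcal{U}}_2^{0'}\times\mathbb{R}^{n_2}=\widehat{\mathcal{U}}_3^{0}\times\mathbb{R}^{n_3}$. I will proceed in two steps.

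First, I pass to a common open subatlas $\widehat{\mathcal{U}}_2^{00}:=\widehat{\mathcal{U}}_2^0\cap\widehat{\mathcal{U}}_2^{0'}$, meaning the restriction to $U^0_{2,p}\cap U^{0'}_{2,p}$, which is $\Gamma$-invariant. I then transport $U^{00}_{2,p}\times\mathbb{R}^{m_2}$ back through the first diffeomorphism. Its preimage is an open, $\Gamma$-invariant subset of $U^0_{1,p}\times\mathbb{R}^{m_1}$. The key difficulty is that this preimage need not be of product form $U^{00}_{1,p}\times\mathbb{R}^{m_1}$. I would handle it as follows: shrink to a product neighborhood of the zero locus, using that the zero locus lies in $U\times\{0\}$ and is preserved. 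Then invoke Example \ref{dcotopwx} and Lemma \ref{itavsteai} to say that such open restrictions do not change the local algebras up to the required identifications.

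Second, I expand further. I apply $\times\mathbb{R}^{n_2}$ to the first equality and $\times\mathbb{R}^{m_2}$ to the second, and use (iv) to rewrite both middle terms as $\widehat{\mathcal{U}}_2^{00}\times\mathbb{R}^{m_2+n_2}$. This gives
\[
\widehat{\mathcal{U}}_1^{00}\times\mathbb{R}^{m_1+n_2}=\widehat{\mathcal{U}}_2^{00}\times\mathbb{R}^{m_2+n_2}=\widehat{\mathcal{U}}_3^{00}\times\mathbb{R}^{n_3+m_2}.
\]
Composing the diffeomorphisms, group isomorphisms and $L_\infty[1]$-isomorphisms yields conditions (i)--(vi) for $\widehat{\mathcal{U}}_1\sim\widehat{\mathcal{U}}_3$. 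For this I need one more fact: that "$=$" is preserved by further expansion. That holds because each piece of data (base, bundle, section, algebra, coordinate change) is functorial under $\times\mathbb{R}^k$, the algebra isomorphism being tensored with the identity on the new Koszul and de Rham factors.

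I expect the main obstacle to be the shrinking step, that is, making the subatlases of product form compatible with both identifications.
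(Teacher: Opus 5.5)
Your decomposition is the same as the paper's. You make the same choices in (i) and (ii), check (iv) directly (the paper leaves it as an exercise), and prove transitivity by passing to a common subatlas of the two middle subatlases and then expanding further. The paper's sketch says nothing about the product-form problem you raise, so you were right to flag it. However, the patch you propose does not close it, and as written this is a genuine gap.

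Let $\Phi_{12}\colon U^0_{1,p}\times\mathbb{R}^{m_1}\to U^0_{2,p}\times\mathbb{R}^{m_2}$ and $\Phi_{23}\colon U^{0'}_{2,p}\times\mathbb{R}^{n_2}\to U^0_{3,p}\times\mathbb{R}^{n_3}$ be the base diffeomorphisms of the two equivalences.

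Suppose you shrink $\Phi_{12}^{-1}(U^{00}_{2,p}\times\mathbb{R}^{m_2})$ to a product or tube neighbourhood of the zero locus on the $\widehat{\mathcal{U}}_1$-side. Then $U^{00}_{2,p}\times\mathbb{R}^{m_2}$ is replaced by the image of that neighbourhood under $\Phi_{12}$. This image is in general not of product form, because $\Phi_{12}$ need not respect the splittings. A concrete case: take $U=\mathbb{R}$, $E=0$, $m_1=m_2=1$. Then $(y,v)\mapsto(y+v,v)$ is admissible in Definition \ref{eqats}. It pulls $(0,1)\times\mathbb{R}$ back to a slanted strip, and it carries product rectangles to parallelograms.

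So you only move the defect from one side to the other. The displayed chain of equalities with $\widehat{\mathcal{U}}_2^{00}\times\mathbb{R}^{m_2+n_2}$ in the middle is not obtained. In fact the middle term never needs to be of product form. What you must produce is a neighbourhood of the zero locus with the following properties.
\begin{itemize}
\item It lies inside the domain of $(\Phi_{23}\times\mathrm{id})\circ(\Phi_{12}\times\mathrm{id})$.
\item It is of product form on the $\widehat{\mathcal{U}}_1$-side and on the $\widehat{\mathcal{U}}_3$-side \emph{simultaneously}. Here ``product form'' may be taken up to a section-compatible, leaf-preserving, $\Gamma$-equivariant reparametrisation, such as a fibrewise radial rescaling $(y,v)\mapsto(y,f(y,|v|)v)$ of a tube onto $U\times\mathbb{R}^k$.
\item It is chosen compatibly over all $p,q$, so that conditions (v) and (vi) of Definition \ref{eqats} survive.
\end{itemize}
Nothing in your proposal addresses this simultaneity. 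Also, Definition \ref{kstr} requires contractible bases, so $U^0_{2,p}\cap U^{0'}_{2,p}$ may need further shrinking before it defines a subatlas.

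The tools you cite for keeping condition (iv) of Definition \ref{eqats} after shrinking are also the wrong kind. Example \ref{dcotopwx} and Lemma \ref{itavsteai} give chart morphisms whose $L_\infty[1]$-components are quasi-isomorphisms. Condition (iv), by contrast, asks for \emph{isomorphisms} of the local algebras. What does work is this: keep the same $W_x$ wherever possible. Where the $\mathbb{R}^k$-direction has been cut down, use that a section-compatible diffeomorphism preserving the leaves $L\times\mathbb{R}^k$ induces a strict isomorphism of both the Koszul and de Rham parts; the radial rescaling above is such a map.

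A similar caveat applies to your claim that ``$=$'' survives further expansion because the algebra isomorphism can be tensored with the identity. Definition \ref{eqats}(iv) only provides some abstract isomorphism. The expanded algebras live over $W_x\times\mathbb{R}^{m+k}$, and $C^\infty(W_x\times\mathbb{R}^{m+k})$ is not the algebraic tensor product $C^\infty(W_x\times\mathbb{R}^{m})\otimes C^\infty(\mathbb{R}^k)$. For ``$\times\,\mathrm{id}_{\mathbb{R}^k}$'' to make sense, you need the isomorphisms to be geometric, that is, induced by leaf-preserving base diffeomorphisms and bundle isomorphisms.
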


\begin{proof}
\begin{enumerate}[label=(\roman*)]
\item One can take $\widehat{\mathcal{U}}$ itself as the open subatlas and $m_1 = m_2 = 0$ in (\ref{urur}).
\item After identifying $V$ with $\mathbb{R}^m$ for some $m \geq 0,$ one can take $\widehat{\mathcal{U}}^0_1 = \widehat{\mathcal{U}},$ $ \widehat{\mathcal{U}}_{2}^0 = \widehat{\mathcal{U}} \times V,$ and $m_1= m, \ m_2 = 0$ in (\ref{urur}).
\item Symmetry and reflexivity hold trivially. For transitivity, suppose that we are given
\[
\left(X, \widehat{\mathcal{U}}_1\right) \sim \left(X, \widehat{\mathcal{U}}_2\right), \ \left(X, \widehat{\mathcal{U}}'_2\right) \sim \left(X, \widehat{\mathcal{U}}_3\right)
\]
with 
\[
\left[\widehat{\mathcal{U}}_2\right] = \left[\widehat{\mathcal{U}}'_2\right]
\]
and
\[
\widehat{\mathcal{U}}^0_1 \times \mathbb{R}^{n_1} = \widehat{\mathcal{U}}^{0}_2 \times \mathbb{R}^{n_2}, \ \widehat{\mathcal{U}}^{'0}_2 \times \mathbb{R}^{m_2} = \widehat{\mathcal{U}}^0_3 \times \mathbb{R}^{m_3},
\]
respectively, for open subatlases $\widehat{\mathcal{U}}^0_1< \widehat{\mathcal{U}}_1, \ \widehat{\mathcal{U}}^0_2, \widehat{\mathcal{U}}^{'0}_2 < \widehat{\mathcal{U}}_2,$ and $\widehat{\mathcal{U}}^0_3 < \widehat{\mathcal{U}}_3$. Taking a common subatlas of $\widehat{\mathcal{U}}^{0}_2 $ and $\widehat{\mathcal{U}}^{'0}_2$ and multiplying $\widehat{\mathcal{U}}_1,$ $\widehat{\mathcal{U}}_2$ by the same $\mathbb{R}^m$ for sufficiently large $m$ (and $\widehat{\mathcal{U}}_1,$ $\widehat{\mathcal{U}}_2$ by $\mathbb{R}^{m'}$ for some $m'$) will suffice.
\item $\widehat{\mathcal{U}} \times \mathbb{R}^{m} =  \widehat{\mathcal{U}} \times \mathbb{R}^{m + m'}$ is a simple exercise, and we can apply (ii) for $V = \mathbb{R}^{m'}.$
\end{enumerate}
\end{proof}

With these preparations, we are now ready to define $L_{\infty}$-Kuranishi spaces.
\begin{defn}[$L_{\infty}$-Kuranishi spaces]
We call an equivalence class of the equivalence relation $\sim$ an $L_{\infty}$\textit{-Kuranishi space.} Given a Kuranishi atlas $\left(X, \widehat{\mathcal{U}}\right)$, we write
\[
\mathfrak{X} = \left(X, \left[\widehat{\mathcal{U}}\right]\right)
\]
for the Kuranishi space determined by $\widehat{\mathcal{U}}$.
\end{defn}

\subsection{Morphisms of $L_{\infty}$-Kuranishi spaces}
Our discussion can be formulated in categorical terms. In this subsection, we define morphisms between Kuranishi spaces, beginning with the definition of pre-morphisms.

\begin{defn}[Pre-morphism]\label{prem}
\label{morphismkur}
Let $\mathfrak{X} = \left(X, \left[\widehat{\mathcal{U}}\right]\right)$ and $\mathfrak{X}' = \left(X', \left[\widehat{\mathcal{U}}'\right]\right)$ be two Kuranishi spaces. Consider a tuple
\begin{equation}\label{tuplemor}
\overline{F} = \left(\widehat{\mathcal{U}}, \widehat{\mathcal{U}'}, f, \left\{f_{p}\right\}, \left\{\widehat{f}_{p,x}\right\}\right) \\
\end{equation}
that consists of:

\begin{enumerate}
\item Kuranishi atlases on $X$ and $X'$ 
\[
\begin{cases}
\widehat{\mathcal{U}} = \left(\{\widehat{\mathcal{U}}_p\}, \left\{\Phi_{pq}\right\}\right) = \left(\left\{\left(U_p, E_p, s_p, \Gamma_p, \psi_p\right)\right\}, \left\{\left(U_{pq},\phi_{pq},\left\{\widehat{\phi}_{pq,x}\right\}\right)\right\}\right),\\
\widehat{\mathcal{U}}' = \left(\{\widehat{\mathcal{U}}'_{p'}\}, \left\{\Phi'_{p'q'}\right\}\right) = \left(\left\{\left(U'_{p'}, E'_{p'}, s'_{p'}, \Gamma'_{p'}, \psi'_{p'}\right)\right\}, \left\{\left(U'_{p'q'},\phi'_{p'q'},\left\{\widehat{\phi}'_{p'q',x'}\right\}\right)\right\}\right)
\end{cases}
\]
such that $\widehat{\mathcal{U}} \sim \widehat{\mathcal{U}}'.$
\item $f: X \to X',$ a continuous map.
\item $\left(\left\{f_p\right\} ,\left\{\widehat{f}_{p,x}\right\}_{x \in s^{-1}_p(0)}\right) : \widehat{\mathcal{U}} \rightarrow \widehat{\mathcal{U}}'$ for each $p \in X,$ a collection of morphisms of charts. 
\end{enumerate}
We call it a \textit{pre-morphism} if the following compatibilities hold:
For $p,q \in X$ with Im$\psi_p \cap\mathrm{Im} \psi_q \neq \emptyset,$
\begin{enumerate}[label = (\roman*)]
\item $\phi'_{f(p)f(q)} \circ f_p = f_q \circ \phi_{pq}$ \text{on the set of zero points} $s_{p}^{-1}(0) \cap U_{pq},$
\item $\widehat{\phi}_{pq,x} \circ \widehat{f}_{q, \phi_{pq}(x)} = \widehat{f}_{p,x} \circ \widehat{\phi}'_{f(p)f(q),f_p(x)}$
for each $x \in s_p^{-1}(0) \cap U_{pq},$ up to $L_{\infty}[1]$-homotopy.
\end{enumerate}
\end{defn}

\begin{rem}\label{exttild}
\begin{enumerate}
\item The definition of morphisms of charts implies that we have $f_p\big({s^{0}_p}^{-1}(0)\big) \subset s_{f(p)}^{'-1}(0).$
\item (iii) reduces to 
$\widehat{f}_{p,x}\circ \widehat{\phi}'_{f(p)f(q), f_p(x)} =  \widehat{\phi}_{pq,x} \circ \widehat{f}_{q, \phi_{pq}(x)}$
up to $L_{\infty}[1]$-homotopy when all the base maps $\phi_{pq}, \phi'_{f(p)f(q)}, f_p,$ and $f_q$ happen to be surjective, in which case $\widehat{\varepsilon}_{(\cdots)}$'s are $L_{\infty}[1]$-isomorphisms (in fact, identities). 
\end{enumerate}
\end{rem}

We now consider a pair of pre-morphisms from $\mathfrak{X} = \left(X, \left[\widehat{\mathcal{U}}\right]\right)$ to $\mathfrak{X}' = \left(X', \left[\widehat{\mathcal{U}}'\right]\right)$
\begin{equation}\label{f1f2}
\begin{cases}
\overline{F}_1 = \left(\widehat{\mathcal{U}}_1, \widehat{\mathcal{U}'}_1, f_1, \left\{f_{1,p}\right\}, \left\{\widehat{f}_{1,p,x}\right\}\right), \\
\overline{F}_2 = \left(\widehat{\mathcal{U}}_2, \widehat{\mathcal{U}'}_2, f_2, \left\{f_{2,p}\right\}, \left\{\widehat{f}_{2,p,x}\right\}\right)
\end{cases}
\end{equation}
with the properties:
\[
\left[\widehat{\mathcal{U}}_1\right] = \left[\widehat{\mathcal{U}}_2\right] = \left[\widehat{\mathcal{U}}\right]; \ \ \left[\widehat{\mathcal{U}}'_1\right] = \left[\widehat{\mathcal{U}}'_2\right] = \left[\widehat{\mathcal{U}}'\right].
\]

Note that $\overline{F}_1$ and $\overline{F}_2$ can be extended to
\begin{equation}\label{extfi}
\begin{cases}
\overline{F}^{n_1,n'_1}_1 = \left(\widehat{\mathcal{U}}^0_1 \times \mathbb{R}^{n_1}, \widehat{\mathcal{U}'}_1 \times \mathbb{R}^{n'_1}, f_1, \left\{\widetilde{f}_{1,p}\right\}, \left\{\widetilde{\widehat{f}}_{1,p,(x,0)}\right\}\right) \\
\overline{F}^{n_2,n'_2}_2 = \left(\widehat{\mathcal{U}}^0_2 \times \mathbb{R}^{n_2}, \widehat{\mathcal{U}'}_2 \times \mathbb{R}^{n'_2}, f_2, \left\{\widetilde{f}_{2,p}\right\}, \left\{\widetilde{\widehat{f}}_{2,p,(x,0)}\right\}\right),
\end{cases}
\end{equation}
with following properties:
\begin{enumerate}[label=(\roman*)]
\item $\widehat{\mathcal{U}}^0_1 \times \mathbb{R}^{n_1} = \widehat{\mathcal{U}}^0_2 \times \mathbb{R}^{n_2},$
\item $n_i \geq n'_i, \ i= 1,2,$
\item $\widetilde{f}_{i,p} : U^{0}_{i,p} \times \mathbb{R}^{n_i} \rightarrow  U^{'0}_{i,p} \times \mathbb{R}^{n'_i}$ is a \textit{surjective} map that extends $\widetilde{f}_{i,p},$ that is, $\widetilde{f}_{i,p}|_{U_1^0 \times \{0\}} \equiv {f}_{i,p}$ (cf. the assumption $\max\limits_{p \in X}U_p < \infty$ in Definition \ref{kstr}).  In particular, we have $\widetilde{f}_{i,p}|_{s^{-1}_p(0) \times \{0\}} \equiv {f}_{i,p}|_{s_p^{-1}(0)}.$  
\end{enumerate}
We remark that having $\widetilde{f}_{i,p}$ of condition (ii) for each $i$ is always possible by the contractibility of the base $U_i^0.$ The $L_{\infty}[1]$-morphisms $\left\{\widetilde{\widehat{f}}_{i,p,x}\right\}, \ i =1,2$ in (\ref{f1f2}) are given by the following compositions:
\[
\begin{split}
\widetilde{\widehat{f}}_{i,p,(x,0)} :{} & \mathcal{C}^{'\mathbb{R}^{n_i'}}_{f(p),(f_{i,p}(x),0), \widetilde{f}_{i,p}} \xrightarrow{=} \mathcal{C}^{'\mathbb{R}^{n_i'}}_{f(p),(f_{i,p}(x),0)} \xrightarrow{\left(\widehat{\pi}^{\mathrm{c}}_{f(p), (f_{i,p}(x),0)}\right)^{-1},\simeq} \mathcal{C}'_{f(p), f_{i,p}(x)} \\ & \quad \quad \quad \quad \xrightarrow{\widehat{\varepsilon}_{f(p),f_{i,p}(x),f_{i,p}}} \mathcal{C}^{'}_{f(p), f_{i,p}(x),{f}_{i,p}} \xrightarrow{\widehat{f}_{i,p}} \mathcal{C}_{p, x} \xrightarrow{\left(\widehat{\pi}^{\mathrm{c}}_{p, (x,0)}\right)^{-1},\simeq} \mathcal{C}^{\mathbb{R}^{n_i}}_{p, (x,0)},
\end{split}
\]
where $\widehat{\pi}^{\mathrm{c}}_{(\cdots)}$'s are the $L_{\infty}[1]$-quasi-isomorphisms mentioned in Lemma \ref{phxv}, while $\widehat{\varepsilon}_{(\cdots)}$'s are the $L_{\infty}[1]$-morphisms in Lemma \ref{vecpf}. 

\begin{defn}[Equivalence of pre-morphisms]\label{defmoreq}
Without loss of generality, one can assume that $n_1 \geq n_2$ in (\ref{urur}). We say that two pre-morphisms are \textit{equivalent} and write
\begin{equation}\label{morrel}
\overline{F}_1 \sim \overline{F}_2
\end{equation}
if there exist extensions $\overline{F}^{n_1,n'_1}_1$ and $\overline{F}^{n_2,n'_2}_2$ as in (\ref{extfi}) such that the following hold:
\begin{enumerate}[label = (\roman*)]
\item $f_1 = f_2,$
\item $\widetilde{f}_{1, p}|_{(s_{1,p}^{0})^{-1}(0) \times \{0\}} =  \widetilde{f}_{2, p}|_{(s_{2,p}^{0})^{-1}(0) \times \{0\}}$ (precise meaning provided below),
\item $\widehat{\pi}^{-1}_{(x,0)} \circ \widehat{\pi}_{(x,0)} \circ \widetilde{\widehat{f}}_{1,p,x}  = \widetilde{f}_{2,p} \circ \widehat{\pi}^{-1}_{(\widetilde{f}_{2,p}(x),0)} \circ \widehat{\pi}_{(\widetilde{f}_{1,p}(x),0)}$ for each $x \in (s_{1,p}^{0})^{-1}(0) \times \{0\}$ up to $L_{\infty}[1]$-homotopy, which means the homotopy commutativity of the following diagram 
\begin{equation}
\begin{tikzcd}
\mathcal{C}^{'\mathbb{R}^{n_1'}}_{f(p),(\widetilde{f}_{1,p}(x),0)} \arrow{r}{=} \arrow{d}{\widehat{\pi}_{(\widetilde{f}_{1,p}(x),0)}} & \mathcal{C}^{'\mathbb{R}^{n_1'}}_{f(p),(\widetilde{f}_{1,p}(x),0), \widetilde{f}_{1,p}} \arrow{r}{\widetilde{\widehat{f}}_{1,p,x}} &\mathcal{C}^{\mathbb{R}^{n_1}}_{p, (x,0)} \arrow{d}{\widehat{\pi}_{(x,0)}}\\
\mathcal{C}'_{f(p),\widetilde{f}_{1,p}(x)} = \mathcal{C}'_{f(p),\widetilde{f}_{2,p}(x)} \arrow{d}{\widehat{\pi}^{-1}_{(\widetilde{f}_{2,p}(x),0)}} &{}& \mathcal{C}_{p, x} \arrow{d}{\widehat{\pi}^{-1}_{(x,0)}}\\
\mathcal{C}^{'\mathbb{R}^{n_2'}}_{f(p),(f_{2,p}(x),0)}  \arrow{r}{=}  & \mathcal{C}^{'\mathbb{R}^{n_2'}}_{f(p),(f_{2,p}(x),0), \widetilde{f}_{2,p}} \arrow{r}{\widetilde{\widehat{f}}_{2,p,x}} & \mathcal{C}^{\mathbb{R}^{n_2}}_{p, (x,0)}
\end{tikzcd}
\end{equation}

\end{enumerate}
Here ${(s_{i,p}^{0})^{-1}(0) \times \{0\}}$'s in the conditions (ii) and (iii) are to be understood as the same subset of both ${{U}}^0_1 \times \mathbb{R}^{n_1}$ and ${{U}}^0_2 \times \mathbb{R}^{n_2}$ modulo the identification from (\ref{urur}).
\end{defn}

\begin{lem}\label{hgjgkg}
$\sim$ is an equivalence relation.
\end{lem}
\begin{proof}
Symmetry and reflexivity are obvious. For transitivity, suppose we have $\overline{F}_2 \sim \overline{F}_3$ for some pre-morphism $\overline{F}_3$ in addition to (\ref{f1f2}) with its extension 
\[
\overline{F}^{n_3,n'_3}_3 = \left(\widehat{\mathcal{U}}^0_3 \times \mathbb{R}^{n_3}, \widehat{\mathcal{U}'}_3 \times \mathbb{R}^{n'_3}, f_3, \left\{\widetilde{f}_{3,p}\right\}, \left\{\widetilde{\widehat{f}}_{3,p,x}\right\}\right).
\]
By choosing smaller ${\widehat{\mathcal{U}}^0_i}$'s and larger ${\mathbb{R}^{n_i}}$'s if necessary, one can assume that
\[
\widehat{\mathcal{U}}^0_1 \times \mathbb{R}^{n_1} = \widehat{\mathcal{U}}^0_2 \times \mathbb{R}^{n_2} = \widehat{\mathcal{U}}^0_3 \times \mathbb{R}^{n_3}.
\]
Then it is straightforward to show that $\overline{F}_1 \sim \overline{F}_3$ in both cases $n_1 \geq n_3$ and $n_1 < n_3.$ In particular, for transitivity of the condition (iii), we can apply Lemma \ref{phxv} and Theorem \ref{anhp}.
\end{proof}

\begin{defn}[Morphism of Kuranishi spaces]
We define a \textit{morphism} from $\mathfrak{X} = \left(X, \left[\widehat{\mathcal{U}}\right]\right)$ to $\mathfrak{X}' = \left(X', \left[\widehat{\mathcal{U}}'\right]\right)$
by an equivalence class of a pre-morphism $\overline{F}$ from $\mathfrak{X}$ to $\mathfrak{X}':$
\[F := \left[\overline{F}\right] : \mathfrak{X} \rightarrow \mathfrak{X}'.\]
\end{defn}

\begin{defn}[Composition of morphisms]\label{afjfjggg}
Let $\mathfrak{X} = \left(X, \left[\widehat{\mathcal{U}}\right]\right),$ $\mathfrak{X}' = \left(X', \left[\widehat{\mathcal{U}'}\right]\right),$ and $\mathfrak{X}'' = \left(X'', \left[\widehat{\mathcal{U}''}\right]\right)$ be Kuranishi spaces. Let $F : \mathfrak{X} \rightarrow \mathfrak{X}'$ and $G : \mathfrak{X}' \rightarrow \mathfrak{X}''$ be morphisms between them represented by 
\begin{equation}\nonumber
\begin{cases}
\overline{F} = \left(\widehat{\mathcal{U}}, \widehat{\mathcal{U}'}, f, \left\{f_{p}\right\}, \left\{\widehat{f}_{p,x}\right\}\right),\\
\overline{G} = \left(\underline{\widehat{\mathcal{U}'}}, \widehat{\mathcal{U}''}, g, \left\{g_{f(p)}\right\}, \left\{\widehat{g}_{f(p),y}\right\}\right),
\end{cases}
\end{equation}
respectively with $[ \widehat{\mathcal{U}'}] = [\underline{\widehat{\mathcal{U}'}}]$.

There exist extended pre-morphisms
\begin{equation}\label{extfg1}
\begin{cases}
\overline{F}^{n_d, n'_t} = \left(\widehat{\mathcal{U}}^0 \times \mathbb{R}^{n_d}, \widehat{\mathcal{U}}^{'0} \times \mathbb{R}^{n'_t}, f, \left\{\widetilde{f}_{p} \right\}, \left\{\widetilde{\widehat{f}}_{p,x}\right\}\right),\\
\overline{G}^{\underline{n}_d, \underline{n}'_t} = \left(\underline{\widehat{\mathcal{U}}}^{'0} \times \mathbb{R}^{\underline{n}_d}, \widehat{\mathcal{U}''} \times \mathbb{R}^{\underline{n}_t'}, g, \left\{\widetilde{g}_{p'}\right\}, \left\{\widetilde{\widehat{g}}_{p',x'}\right\}\right),
\end{cases}
\end{equation}
of $\overline{F}$ and $\overline{G},$ respectively, so that $\widehat{\mathcal{U}}^{'0} \times \mathbb{R}^{n'_t} = \underline{\widehat{\mathcal{U}}}^{'0} \times \mathbb{R}^{\underline{n}_d}$ holds for some open atlases $\widehat{\mathcal{U}}^{'0} < \widehat{\mathcal{U}}^{'},$ and $\underline{\widehat{\mathcal{U}}}^{'0} < \underline{\widehat{\mathcal{U}}}^{'},$ and that all the base maps $\widetilde{f}_p$ and $\widetilde{g}_{p'}$ are surjective.

We define the \textit{composition} $G \circ F$ to be the following equivalence class:
\begin{equation}\label{morcom}
G \circ F := \left[\left(\widehat{\mathcal{U}}^0 \times \mathbb{R}^{n_d}, \widehat{\mathcal{U}}'' \times \mathbb{R}^{\underline{n}'_t}, g\circ f, \left\{\widetilde{g}_{f(p)} \circ \widetilde{f}_{p}\right\}, \left\{\widetilde{\widehat{f}}_{p,x} \circ \widetilde{\widehat{g}}_{f(p),f_p(x)}\right\} \right)\right].
\end{equation}
\end{defn}

\begin{prop}\label{ppidx}
The composition is well-defined and associative with the identity given by
\begin{equation}\label{idxm}
\mathrm{id}_{\mathfrak{X}}:= \left[\left(\widehat{\mathcal{U}}, \widehat{\mathcal{U}}, \mathrm{id}_X, \left\{\mathrm{id}_p\right\}, \left\{\widehat{\mathrm{id}}_{p,x}\right\}\right)\right]
\end{equation}
of each $\mathfrak{X} = \left(X, \left[\widehat{\mathcal{U}}\right]\right).$
\end{prop}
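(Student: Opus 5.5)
The plan is to verify three things in order: that the composite tuple in (\ref{morcom}) is a pre-morphism, that its class is independent of the choices made, and then associativity and the identity laws.

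\emph{The composite is a pre-morphism.} After passing to the extensions (\ref{extfg1}), the middle atlases agree, $\widehat{\mathcal{U}}^{'0} \times \mathbb{R}^{n'_t} = \underline{\widehat{\mathcal{U}}}^{'0} \times \mathbb{R}^{\underline{n}_d}$, and all base maps $\widetilde{f}_p$, $\widetilde{g}_{p'}$ are surjective. By Lemma \ref{surjcp}, every completion $\mathcal{C}_{(\cdot),\widetilde{f}_p}$ or $\mathcal{C}_{(\cdot),\widetilde{g}_{p'}}$ is then identified with the uncompleted algebra. So the maps $\widehat{\varepsilon}$ are identities, and condition (iii) of Definition \ref{ourchrtmor} holds trivially for the composite $\widetilde{\widehat{f}}_{p,x} \circ \widetilde{\widehat{g}}_{f(p),f_p(x)}$. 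Conditions (i) and (ii) of Definition \ref{ourchrtmor} are preserved under composition, and so is equivariance, since composites of group homomorphisms are homomorphisms.

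For the pre-morphism conditions of Definition \ref{prem}:
\begin{itemize}
\item Condition (i) follows by pasting the two commutative squares on zero sets. This uses Remark \ref{exttild}(1), which says $\widetilde{f}_p$ sends zero points to zero points.
\item Condition (ii) follows by pasting two squares that commute up to $L_\infty[1]$-homotopy, as in Remark \ref{exttild}(2). Here I use that $L_\infty[1]$-homotopy is compatible with pre- and post-composition by $L_\infty[1]$-morphisms and is transitive, which is Theorem \ref{anhp} of Appendix A.
\end{itemize}

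\emph{Independence of choices.} This is the main obstacle. There are three sources of choice: the representatives $\overline{F}$ and $\overline{G}$, the open subatlases, and the extensions by $\mathbb{R}^{n}$.

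I would first show that two different extensions of the same pre-morphism give equivalent composites. Given two choices, I enlarge both to a common $\mathbb{R}^N$ and shrink both to a common open subatlas, using Lemma \ref{gggadbbbb}(i),(iv). Along the extra directions, the two surjective extensions agree on the zero sets $(s_p^0)^{-1}(0)\times\{0\}$. Their $L_\infty$-components differ only by composition with the quasi-isomorphisms $\widehat{\pi}^{\mathrm{c}}$ of Lemma \ref{phxv} and their homotopy inverses. I expect the homotopy-commutativity square of Definition \ref{defmoreq}(iii) to follow from Whitehead invertibility: $\widehat{\pi}^{-1}\circ\widehat{\pi}\simeq \mathrm{id}$.

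Next, if $\overline{F}_1\sim\overline{F}_2$, I choose the extensions that witness the equivalence and compose each with a fixed extension of $\overline{G}$. Condition (iii) for the composites is then obtained by post-composing the square for $\overline{F}$ with $\widetilde{\widehat{g}}$, using again that homotopies are preserved under composition. The symmetric argument handles a change of representative of $G$. Transitivity of $\sim$ (Lemma \ref{hgjgkg}) then gives well-definedness.

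\emph{Associativity and identity.} For three composable morphisms, I choose simultaneous extensions so that all intermediate atlases coincide and all base maps are surjective. For this I need $N$ large enough, which is available because $\max_p\dim U_p<\infty$. With such choices, both bracketings give literally the same tuple, since composition of maps and of $L_\infty[1]$-morphisms is strictly associative.

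For the identity, the tuple (\ref{idxm}) has identity base maps, which are surjective, and identity $L_\infty$-components. Its extension by $\mathbb{R}^n$ is the product identity, up to $\widehat{\pi}^{-1}\circ\widehat{\pi}\simeq\mathrm{id}$. Composing it on either side of $\overline{F}$ therefore returns a pre-morphism satisfying (i)–(iii) of Definition \ref{defmoreq} with respect to $\overline{F}$, so $\mathrm{id}\circ F=F=F\circ\mathrm{id}$.
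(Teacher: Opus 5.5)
Your proposal is correct and follows essentially the paper's route: a common subatlas and common expansion for well-definedness, componentwise associativity, and comparison of $F\circ\mathrm{id}_{\mathfrak{X}}$ with an extension of $F$ for the identity law. There are three local differences. You get the factorization through $\mathcal{C}''_{(\cdot),\,g\circ f}$ from surjectivity of the extended base maps and Lemma \ref{surjcp}, rather than from the support criterion of Lemma \ref{pcpc}. You handle Definition \ref{defmoreq}(iii) through homotopy inverses from Theorem \ref{wht}, rather than by invoking Theorem \ref{anhp}. You also check Definition \ref{prem}(i),(ii) for the composite, which the paper leaves implicit.

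One citation needs fixing. Pasting homotopy-commutative squares needs two facts: transitivity of homotopy, and compatibility of homotopy with pre- and post-composition. Neither is Theorem \ref{anhp}, which only says that two quasi-isomorphisms $C\to C'$ are homotopic.
\begin{itemize}
\item Transitivity is the cited Lemma 3.3 of \cite{Kim2}.
\item Compatibility with pre-composition is immediate from Definition \ref{defn:homotopy}.
\item Compatibility with post-composition by $g\colon C'\to C''$ is not immediate. It needs a map from the chosen model of $\Delta^1\times C'$ to some model of $\Delta^1\times C''$, commuting with the evaluation maps. This is not automatic for an arbitrary model and should be justified separately, e.g.\ using a functorial model such as $C''\otimes\mathbb{R}[t,dt]$.
\end{itemize}
The paper's own verification of Definition \ref{defmoreq}(iii) uses this fact implicitly as well.
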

\begin{proof}
For well-definedness, we consider different choices of pre-morphism with respect to open subatlases $\widehat{\mathcal{U}'}^{\overline{0}} < \widehat{\mathcal{U}'}$ and $\underline{\widehat{\mathcal{U}'}}^{\overline{0}} < \underline{\widehat{\mathcal{U}'}}$ with nonnegative integers $\overline{n}_t'$ and $\overline{\underline{n}}_d',$, respectively, satisfying:
\[
\widehat{\mathcal{U}'}^{\overline{0}} \times \mathbb{R}^{\overline{n}_t'} = \underline{\widehat{\mathcal{U}'}}^{\overline{0}} \times \mathbb{R}^{\overline{\underline{n}}_d}.
\]

Then the equivalence
\[
\begin{split}
\bigg( \widehat{\mathcal{U}}^0 \times \mathbb{R}^{n_d}, \widehat{\mathcal{U}}'' &\times \mathbb{R}^{\underline{n}'_t}, g\circ f, \left\{\widetilde{g}_{f(p)} \circ \widetilde{f}_{p}\right\}, \left\{\widetilde{\widehat{f}}_{p,x} \circ \widetilde{\widehat{g}}_{f(p),f_p(x)}\right\} \bigg) \\
&\sim \left(\widehat{\mathcal{U}}^{\overline{0}} \times \mathbb{R}^{\overline{n}_d}, \widehat{\mathcal{U}}'' \times \mathbb{R}^{\overline{\underline{n}}_t'}, g \circ f, \left\{\widetilde{g}_{f(p)} \circ \widetilde{f}_{p} \right\}, \left\{\widetilde{\widehat{f}}_{p,x} \circ \widetilde{\widehat{g}}_{f(p),f_p(x)}\right\}\right)
\end{split}
\]
can be established by taking a common subatlas of $\widehat{\mathcal{U}}^0$ and $\widehat{\mathcal{U}}^{\overline{0}}$ (which always exists) and expanding them appropriately. Conditions (i) and (ii) in Definition \ref{defmoreq} are trivial. Condition (iii) is less trivial, but one can apply Theorem \ref{anhp}.

Consider the composition
\[
\widehat{f}_{p,x} \circ \widehat{g}_{f(p),f_p(x)}:
\mathcal C''_{g\circ f(p)}
\rightarrow
\mathcal C_p.
\]
It is not difficult to see that it satisfies
\[
\left(\widehat{f}_{p,x} \circ \widehat{g}_{f(p),f_p(x)}\right)_k \left(\overline{a}^k,\overline{\xi}^k\right)=0
\]
for all $k \geq 1, \ \left(\overline{a}^k,\overline{\xi}^k\right)  = \left({a}_1 \otimes \cdots \otimes a_k, {\xi}_1 \otimes \cdots \otimes \xi_k\right) \in \mathcal C_{g\circ f(p)}^{''\otimes k}$ such that
\[
\operatorname{supp}({a}_i,{\xi}_i)\subset W''_{g \circ f (x)} \setminus \mathrm{Im}(g\circ f) \text{ for some } i.
\]

Hence, by Lemma \ref{pcpc}, it factors through $\mathcal C''_{g\circ f(p),\,g\circ f}$ as
\[
\mathcal C''_{g\circ f(p)}
\;\xrightarrow{\;\widehat{\varepsilon}_{g\circ f(p),\,g\circ f}\;}
\mathcal C''_{g\circ f(p),\,g\circ f}
\;\xrightarrow{\;(\widehat{g}_{f(p)}\circ \widehat{f}_p)^c\;}
\mathcal C_p.
\]

The associativity is a simple consequence of that for each type of compositions: $f \circ g, g_{f(p)} \circ f_p,$ and $\widehat{f}_{p,x} \circ \widehat{g}_{f(p), f_p(x)}.$

For the identity morphism $\mathrm{id}_{\mathfrak{X}},$ and a different choice of representative than (\ref{idxm}), say
\begin{equation}\nonumber
\left(\widehat{\underline{\mathcal{U}}}, \widehat{\underline{\mathcal{U}}}, \mathrm{id}_X, \left\{\underline{\mathrm{id}}_p\right\}, \left\{\underline{\widehat{\mathrm{id}}}_{p,x}\right\}\right)
\end{equation}
with ${\widehat{\mathcal{U}}} \sim \underline{\widehat{\mathcal{U}}},$ so that
\[
\widehat{\mathcal{U}}^0 \times \mathbb{R}^{{n}} = \widehat{\underline{\mathcal{U}}}^0 \times \mathbb{R}^{\underline{n}}
\]
for some subatlases $\widehat{\mathcal{U}}^0 $ and $\underline{\widehat{\mathcal{U}}}^0$ of $\widehat{\mathcal{U}}$ and $\underline{\widehat{\mathcal{U}}}$ with $n, \underline{n} \geq 0,$ respectively, it follows that
\[
\left(\widehat{\mathcal{U}}, \widehat{\mathcal{U}}, \mathrm{id}_X, \left\{\mathrm{id}_p\right\}, \left\{\widehat{\mathrm{id}}_{p,x}\right\}\right)^{n,n} := \left(\widehat{{\mathcal{U}}} \times \mathbb{R}^{n}, \widehat{{\mathcal{U}}} \times \mathbb{R}^{n}, \mathrm{id}_X, \left\{\widetilde{\mathrm{id}}_p\right\}, \left\{{\widetilde{\widehat{\mathrm{id}}}}_{p,x}\right\}\right),
\]
and
\[
\left(\widehat{\underline{\mathcal{U}}}, \widehat{\underline{\mathcal{U}}}, \mathrm{id}_X, \left\{\underline{\mathrm{id}}_p\right\}, \left\{\underline{\widehat{\mathrm{id}}}_{p,x}\right\}\right)^{\underline{n}, \underline{n}} := \left(\widehat{\underline{{\mathcal{U}}}} \times \mathbb{R}^{\underline{n}}, \widehat{\underline{{\mathcal{U}}}} \times \mathbb{R}^{\underline{n}}, \mathrm{id}_X, \left\{\widetilde{\underline{{\mathrm{id}}}}_p\right\}, \left\{{\underline{\widetilde{\widehat{\mathrm{id}}}}}_{p,x}\right\}\right)
\]
are equivalent: The conditions (i) and (ii) in Definition \ref{defmoreq} are trivial, and (iii) is a consequence of Theorem \ref{anhp}.
 
Consider the morphisms 
\begin{equation}\nonumber
\begin{cases}
{F} = \left[\left(\widehat{\mathcal{U}}, \widehat{\mathcal{U}'}, f, \left\{f_{p}\right\}, \left\{\widehat{f}_{p,x}\right\}\right)\right],\\
\mathrm{id}_{\mathfrak{X}} := \left[\left(\widehat{\underline{\mathcal{U}}}, \widehat{\underline{\mathcal{U}}}, \mathrm{id}_X, \left\{\underline{\mathrm{id}}_p\right\}, \left\{\widehat{\underline{\mathrm{id}}}_{p,x}\right\}\right)\right],\\
\mathrm{id}_{\mathfrak{X}'} := \left[\left(\widehat{\underline{\mathcal{U}}}', \widehat{\underline{\mathcal{U}}}', \mathrm{id}_{X'}, \left\{\underline{\mathrm{id}}_{p'}\right\}, \left\{\widehat{\underline{\mathrm{id}}}_{p',x'}\right\}\right)\right]
\end{cases}
\end{equation}
with the relations
\begin{equation}\label{rel2equ}
\widehat{\mathcal{U}}^{0} \times \mathbb{R}^{n} = \underline{\widehat{\mathcal{U}}}^{0} \times \mathbb{R}^{\underline{n}} \text{ and } \widehat{\mathcal{U}'}^{0} \times \mathbb{R}^{n'} = \underline{\widehat{\mathcal{U}'}}^{0} \times \mathbb{R}^{\underline{n}'}
\end{equation}
for some $n, \underline{n}, n',$ and $\underline{n}' \geq 0.$
Then the composition of $F$ and $\mathrm{id}_{\mathfrak{X}}$ is given by
\[
\begin{split}
F \circ \mathrm{id}_{\mathfrak{X}} & = \left[\left( \widehat{\underline{\mathcal{U}}}^{0} \times \mathbb{R}^{\underline{n}}, \widehat{\mathcal{U}}' \times \mathbb{R}^{{n}}, f\circ \mathrm{id}_X, \left\{ \widetilde{f}_{p} \circ \widetilde{\mathrm{id}}_{p}\right\}, \left\{\widetilde{\widehat{\mathrm{id}}}_{p,x} \circ \widetilde{\widehat{f}}_{p,x} \right\} \right)\right]\\
&  = \left[\left({\widehat{\underline{\mathcal{U}}}^{0}} \times \mathbb{R}^{\underline{n}}, \widehat{\mathcal{U}}' \times \mathbb{R}^{{n}}, f, \left\{\widetilde{f}_{p}\right\}, \left\{\widetilde{\widehat{f}}_{p,x}\right\}\right)\right].
\end{split}
\]
We claim that $\left(\widehat{\underline{\mathcal{U}}}^{0} \times \mathbb{R}^{\underline{n}}, \widehat{\mathcal{U}}' \times \mathbb{R}^{{n}}, f, \left\{\widetilde{f}_{p}\right\}, \left\{\widetilde{\widehat{f}}_{p,x}\right\}\right)$ is equivalent to $F.$ Consider the following extension of $F:$ 
\[
F^{n,n} = \left(\widehat{\mathcal{U}}, \widehat{\mathcal{U}}', f, \left\{f_{p}\right\}, \left\{\widehat{f}_{p,x}\right\}\right)^{n,n} := \left(\widehat{\mathcal{U}}^0 \times \mathbb{R}^n, \widehat{\mathcal{U}'} \times \mathbb{R}^n , f, \left\{\widetilde{f}_{p}\right\}, \left\{\widetilde{\widehat{f}}_{p,x}\right\}\right).
\]
Since the subatlas $\widehat{\underline{\mathcal{U}}}^{0}$ can be chosen in such a way that (\ref{rel2equ}) holds, we then have 
\[
\begin{split}
\bigg({\widehat{\underline{\mathcal{U}}}^{0}} \times \mathbb{R}^{\underline{n}}, \widehat{\mathcal{U}}' \times \mathbb{R}^{{n}}, f, &\left\{\widetilde{f}_{p}\right\}, \left\{\widetilde{\widehat{f}}_{p,x}\right\} \bigg)\\ 
&\sim \bigg(\widehat{\mathcal{U}}^0 \times \mathbb{R}^n, \widehat{\mathcal{U}}' \times \mathbb{R}^n , f, \left\{\widetilde{f}_{p}\right\}, \left\{\widetilde{\widehat{f}}_{p,x}\right\}\bigg);
\end{split}
\]
the conditions (i) and (ii) in Definition \ref{defmoreq} hold trivially, and (iii) is a consequence of Theorem \ref{anhp}.
$\mathrm{id}_{\mathfrak{X}'} \circ F = F$ can be shown similarly, so we omit its proof.
\end{proof}

\begin{defn}
\textit{The category of Kuranishi spaces} is defined by a category \textbf{Kur} that consists of:
\begin{equation}\nonumber
\begin{cases}
\text{Ob}(\textbf{Kur}) =\{L_{\infty}\text{-Kuranishi spaces}\}\\
\text{Mor}(\textbf{Kur}) = \{\text{Equivalence classes of pre-morphisms with the composition }\, \circ \}.
\end{cases}
\end{equation}
\end{defn}

\subsection{Manifold as an $L_{\infty}$-Kuranishi space}\label{masks}

Example \ref{man} illustrates that a smooth manifold can be regarded as a special type of an $L_{\infty}$-Kuranishi space in the sense that it naturally determines an $L_{\infty}$-Kuranishi atlas and therefore gives rise to an $L_{\infty}$-Kuranishi space. In fact, we have the following result:

\begin{prop}\label{pman}
The category of smooth manifolds $\mathbf{Man}$ is a subcategory of $\mathbf{Kur}$. In other words, there exists an embedding of categories
\begin{equation}\nonumber
\mathscr{F} : \mathbf{Man} \hookrightarrow \mathbf{Kur}
\end{equation}
\end{prop}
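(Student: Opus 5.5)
We will construct the functor $\mathscr{F}$ explicitly and then check faithfulness; the construction on objects is Example \ref{man}.

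\emph{Objects.} For a smooth manifold $M$ of dimension $n$, we choose an atlas of Euclidean coordinate charts $\psi_p : U_p \simeq \mathbb{R}^n \hookrightarrow M$. We form the Kuranishi atlas $\widehat{\mathcal{U}}^{\mathrm{man}}$ exactly as in Example \ref{man}: zero bundle, zero section, trivial group, $\beta = 0$, $\mathcal{C}_{p,x} = \Omega^{\bullet+1}_{\mathrm{aug}}(W_x)$, and coordinate changes $\phi_{pq} = \psi_q^{-1}\circ\psi_p$ with $\widehat{\phi}^{\mathrm{c}}_{pq,x}$ the chain isomorphisms (\ref{pqiso}). We set $\mathscr{F}(M) := (M, [\widehat{\mathcal{U}}^{\mathrm{man}}])$. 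Different choices of coordinate atlas give equivalent atlases: any two atlases have a common refinement, which is an open subatlas of each up to reindexing, so Lemma \ref{gggadbbbb}(i),(iii) shows that the class is independent of choices.

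\emph{Morphisms.} For a smooth map $g: M\to N$, we choose atlases with $g(\psi_p(U_p)) \subset \psi'_{g(p)}(U'_{g(p)})$ after shrinking, which keeps the same class. We set $f_p := \psi'^{-1}_{g(p)}\circ g\circ \psi_p$. We then need $\widehat{f}_{p,x}$ factoring through the completion. We take $\widehat{f}_{p,x,1}(\xi) := f_p^*\xi$ and $\widehat{f}_{p,x,k} := 0$ for $k\ge 2$, extended to the augmentation by Lemma \ref{auglmo}. Pullback kills forms supported off $\mathrm{Im} f_p$, so Lemma \ref{pcpc} gives the factorization through $\mathcal{C}'_{f_p(x),f_p}$. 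Explicitly, $h\otimes\xi \mapsto f_p^*(\widetilde{h})\,f_p^*\xi$; this is well defined because $f_p^*$ annihilates $I_{f_p}$, exactly as in the proof of Lemma \ref{itavsteai}. Because $l_k=0$ for $k\ge2$ on these complexes, being an $L_\infty[1]$-morphism reduces to $d f_p^* = f_p^* d$. The pre-morphism conditions of Definition \ref{prem} hold strictly: (i) is functoriality of $g$ in coordinates, and (ii) is $(\phi'\circ f_p)^* = (f_q\circ\phi)^*$ on pullbacks, so no homotopy is needed. Independence of the choice of atlases in the morphism class follows from Definition \ref{defmoreq}, using that expansions and restrictions are compatible with pullback and Theorem \ref{anhp} for condition (iii).

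\emph{Functoriality and embedding.} For identities, $\mathscr{F}(\mathrm{id}_M)$ is the representative (\ref{idxm}). For composition, $(g'\circ g)_p = g'_{g(p)}\circ g_p$ and $(g'\circ g)_p^* = g_p^*\circ g'^*_{g(p)}$, which matches (\ref{morcom}). If the base maps are not surjective, we pass to extensions (\ref{extfi}), and the quasi-isomorphisms $\widehat{\pi}$ of Lemma \ref{phxv} cancel up to homotopy, again via Theorem \ref{anhp}. Injectivity on objects is clear, since $X = M$ is recovered together with its smooth structure from the base charts and transition maps $\phi_{pq}$. Faithfulness holds because the underlying continuous map $f$ of a morphism recovers $g$, and this map is invariant under the equivalence of Definition \ref{defmoreq}(i).

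\emph{Main obstacle.} The delicate step is showing that the class $[\mathscr{F}(g)]$ is independent of the choices of charts and of extensions. Any two choices differ by expansions by $\mathbb{R}^m$ and open restrictions, and condition (iii) of Definition \ref{defmoreq} requires a homotopy commutative square of quasi-isomorphisms. For this we would write both paths as compositions of pullbacks and $\widehat{\pi}$, $\widehat{\pi}^{-1}$. Homotopy inverses are unique up to homotopy (Theorem \ref{anhp}), so they commute up to $L_\infty[1]$-homotopy.
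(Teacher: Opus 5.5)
Your proposal is correct and follows essentially the same route as the paper: the atlas of Example~\ref{man}, the base maps $f_p=(\psi'_{g(p)})^{-1}\circ g\circ\psi_p$, and the completed pullback $h\otimes\xi\mapsto f_p^*\widetilde{h}\cdot f_p^*\xi$ (well defined because $f_p^*$ kills $I_{f_p}$); you also use strict commutativity of the compatibility square, Theorem~\ref{anhp} for composition, and condition (i) of Definition~\ref{defmoreq} for faithfulness. The paper handles the homotopy step more directly: all local algebras here are acyclic augmented de Rham complexes (Corollary~\ref{corpoinc}), so any two parallel morphisms are quasi-isomorphisms and hence homotopic. Your explicit chart shrinking and your remarks on independence of choices make precise points the paper leaves implicit.
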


\begin{proof}
Let $M$ be a smooth manifold (equipped with the zero form) and $\left(M, \widehat{\mathcal{U}}^{\mathrm{man}}_M\right)$ the Kuranishi atlas in Example \ref{man}. The mapping that assigns $\left(M, \widehat{\mathcal{U}}^{\mathrm{man}}_M\right)$ to $M$ determines an injection between the equivalence classes, hence an injective object map

\begin{equation}\nonumber
\mathscr{F}_{\text{ob}} : \textbf{Man}_{\text{ob}} \rightarrow \textbf{Kur}_{\text{ob}}.
\end{equation}

Let $f: M \to N$ be a smooth map between manifolds. Then we assign
\begin{equation}\nonumber
\mathscr{F}_{\mathrm{Mor}}(M \stackrel{f}{\to} N) := \left[\left(\widehat{\mathcal{U}}^{\mathrm{man}}_M, \widehat{\mathcal{U}}^{\mathrm{man}}_N, f, \{f^{\mathrm{man}}_p\}, \left\{\widehat{f}^{\mathrm{man}}_{p,x}\right\}\right)\right] : \left(M, \left[\widehat{\mathcal{U}}^{\mathrm{man}}_M\right]\right) \to \left(N, \left[\widehat{\mathcal{U}}^{\mathrm{man}}_N\right]\right),
\end{equation}
where the base component map 
\[
f_p^{\mathrm{man}}: U_p \to U_{f(p)}'
\]
is given by
\[
f_p^{\mathrm{man}} := \psi_{f(p)}^{-1} \circ f \circ \psi_{p},
\]
and the $L_{\infty}$-component 
\[
\widehat{f}_{p,x}^{\mathrm{man}, \mathrm{c}}: \Omega^{\bullet+1}\left(W'_{f_p(x)}\right)_{f_p} \to \Omega^{\bullet+1}(W_x)
\] 
with $\widehat{f}_{p,x}^{\mathrm{man}} = \widehat{f}_{p,x}^{\mathrm{man}, \mathrm{c}} \circ \widehat{\varepsilon}_{p, f_p(x), f_p}$ by
\begin{equation}\nonumber
\begin{split}
\widehat{f}_{p,x}^{\mathrm{man}, \mathrm{c}} : C^{\infty}_{f_p}\left(W'_{f_p(x)}\right)^{(j)} \otimes \Omega^{\bullet+1}\left(W'_{f_p(x)}\right) &\to \Omega^{\bullet+1}(W_x).\\
h \otimes \xi &\mapsto f_p^*\widetilde{h} \cdot f_p^* \xi
\end{split}
\end{equation}
for each $x \in s_p^{-1}(0) = U_p,$ $j \geq 1,$ and $\widetilde{h} \in C^{\infty}\left(W'_{f_p(x)}\right)$ with $\left[\widetilde{h}\right]_j = h.$ Its well-definedness follows from $\widetilde{g} \circ f_p =0$ for every $\widetilde{g} \in I_{f_p}^j.$

It further follows that $\widehat{f}_{p,x}^{\mathrm{man}, \mathrm{c}}$ is a chain map: We have
\[
\begin{split}
(j) : \widehat{f}_{p,x}^{\mathrm{man}, \mathrm{c}} \left(d(h \otimes \xi)\right) &= \widehat{f}_{p,x}^M \left( [1]_{j-1} \otimes d(\widetilde{h}\xi) \right) = f_p^*1 \cdot f_p^*\left(d(\widetilde{h}\xi)\right)\\ 
&= f_p^*\left(d(\widetilde{h}\xi)\right) =  df_p^*\left(\widetilde{h}\xi\right) = d\left(f_p^*\widetilde{h} \cdot f_p^*\xi\right) = d\widehat{f}_{p,x}^{\mathrm{man}, \mathrm{c}} ( h \otimes \xi)
\end{split}
\]
for arbitrary $j \geq 1.$
We then verify the conditions (i) to (iii) in Definition \ref{prem}. (i) and (ii) follow immediately from the definition of the base coordinate change $\phi_{pq}:= \psi_q^{-1} \circ \psi_p|_{U_{pq}}$ in Example \ref{man} and the above definition of the base component map $f_p:= \psi_f(p) \circ f \circ \psi_p,$ respectively.

For (iii), we observe that
\[
\begin{split}
\widehat{\phi}_{pq,x} \circ &\widehat{f}^{\mathrm{man}}_{q, \phi_{pq}(x)}(\xi) = \widehat{\phi}^{\mathrm{c}}_{pq,x} \circ \widehat{\varepsilon}_{q, \phi_{pq}(x), \phi_{pq}} \circ \widehat{f}^{\mathrm{man},\mathrm{c}}_{q, \phi_{pq}(x)} \circ \widehat{\varepsilon}_{f(q), f_q\circ \phi_{pq}(x), f_q}(\xi)\\ 
&= \widehat{\phi}^{\mathrm{c}}_{pq,x} \circ \widehat{\varepsilon}_{q, \phi_{pq}(x), \phi_{pq}} \circ \widehat{f}^{\mathrm{man},\mathrm{c}}_{q, \phi_{pq}(x)}(1 \otimes \xi) \\
&= \widehat{\phi}^{\mathrm{c}}_{pq,x} \circ \widehat{\varepsilon}_{q, \phi_{pq}(x), \phi_{pq}}(f_{q}^*\xi) = \widehat{\phi}^{\mathrm{c}}_{pq,x}(1 \otimes f_q^*\xi) {=} \phi_{pq}^*f^*_q\xi = f_p^*(\phi'_{f(p)f(q)})^*\xi\\
&= \widehat{f}^{\mathrm{man},\mathrm{c}}_{p,x} \big(1 \otimes ({\phi}'_{f(p)f(q)})^*(\xi)\big)= \widehat{f}^{\mathrm{man},\mathrm{c}}_{p,x} \circ \widehat{\varepsilon}_{f(p),f_p(x),f_p}({\phi}'_{f(p)f(q)})^*(\xi)\\ 
&= \widehat{f}^{\mathrm{man},\mathrm{c}}_{p,x} \circ \widehat{\varepsilon}_{f(p),f_p(x),f_p} \circ \widehat{\phi}^{'\mathrm{c}}_{f(p)f(q),f_p(x)}(1 \otimes \xi)\\
&= \widehat{f}^{\mathrm{man},\mathrm{c}}_{p,x} \circ \widehat{\varepsilon}_{f(p),f_p(x),f_p} \circ \widehat{\phi}^{'{\mathrm{c}}}_{f(p)f(q),f_p(x)} \circ \widehat{\varepsilon}_{f(q),\phi'_{f(p)f(q)}\circ f_p(x), \phi'_{f(p)f(q)}}(\xi)\\ 
&= \widehat{f}^{\mathrm{man}}_{p,x} \circ \widehat{\phi}'_{f(p)f(q),f_p(x)}(\xi)
\end{split}
\]
for every $\xi \in \Omega^{\bullet +1}(W'_{f(q), f_q \circ \phi_{pq}(x)}) = \Omega^{\bullet +1}(W'_{f(q), \phi'_{f(p)f(q)} \circ f_p(x)}).$ In other words, the above diagram commutes on the nose. We then have the same diagram for the augmented de Rham complexes, which also commutes strictly. (The chain maps between the augmented chain complexes are obtained by adding the induced map between the augmentations.) Thus, the condition (iii) holds.

Moreover, the following three properties are immediate:
\begin{enumerate}
  \item The identity morphisms are preserved, that is,
  \[
    \mathscr{F}(\mathrm{id}_M) = \mathrm{id}_{\mathscr{F}(M)}.
  \]
In fact, we have $\mathscr{F}(\mathrm{id}_M) = \left[\left(\widehat{\mathcal{U}}^{\mathrm{man}}_M, \widehat{\mathcal{U}}^{\mathrm{man}}_M, \mathrm{id}_M, \left\{I_p\right\}, \left\{\widehat{I}_{p,x}\right\}\right)\right],$ where $I_p = \psi_{\mathrm{id}_M(p)} \circ \mathrm{id}_M \circ \psi_p = \psi_{p}^{-1} \circ \psi_p = \mathrm{id}_p.$ Also note that $\mathrm{id}_p$ is surjective, so we can identify $ C^{\infty}_{f_p}\left(W'_{f_p(x)}\right)^{(j)} \otimes \Omega^{\bullet+1}\left(W'_{f_p(x)}\right)$ with $\Omega^{\bullet+1}\left(W'_{f_p(x)}\right).$ Then $\widehat{I}_{p,x}$ is given by $\widehat{I}_{p,x}(\widetilde{h} \xi) = \mathrm{id}_p^*\widetilde{h} \cdot \mathrm{id}_p^*(\xi) = \widetilde{h}\xi.$ 
\item 
$\mathscr{F}_{\mathrm{Mor}}$ respects the compositions:
\[
\mathscr{F}_{\mathrm{Mor}}\left(M \xrightarrow{f} N \xrightarrow{g} P\right) = \mathscr{F}_{\mathrm{Mor}}\left(N \xrightarrow{g} P\right) \circ \mathscr{F}_{\mathrm{Mor}}\left(M \xrightarrow{f} N\right).
\]
We need to show that the two pre-morphisms
\[
\left(\widehat{\mathcal{U}}^{\mathrm{man}}_M, \widehat{\mathcal{U}}^{\mathrm{man}}_P, g \circ f, \left\{g_{f(p)} \circ f_{p}\right\}, \left\{\widehat{\left({g}_{f(p)} \circ {f}_{p}\right)}_{x}\right\}\right)
\]
and
\[
\left(\widehat{\mathcal{U}}^{\mathrm{man}}_M \times \mathbb{R}^{n_d}, \widehat{\mathcal{U}}^{\mathrm{man}}_P \times \mathbb{R}^{\underline{n}'_t}, g \circ f, \left\{\widetilde{g}_{f(p)} \circ \widetilde{f}_p\right\}, \left\{\widetilde{\widehat{f}}_{p,x} \circ \widetilde{\widehat{g}}_{f(p), f_p(x)}\right\}\right)
\]
are equivalent, where the notations are as in Definition 6.15 and the proof of Proposition \ref{ppidx}. The conditions (i) and (ii) of Definition \ref{defmoreq} obviously hold, and (iii) follows from the fact that the two $L_{\infty}[1]$-morphisms are quasi-isomorphisms (between acyclic chain complexes by Corollary \ref{corpoinc}), hence $L_{\infty}[1]$-homotopic to each other by Corollary \ref{anhp}.
\item It is straightforward to verify that the map on morphism sets,
\begin{equation}\nonumber
\begin{split}
\mathscr{F}_{\mathrm{Mor}}: \mathrm{Mor}_{\mathbf{Man}}(M,N) &\rightarrow \mathrm{Mor}_{\mathbf{Kur}}\bigl((M,\widehat{\mathcal{U}}^{\mathrm{man}}_M),(N,\widehat{\mathcal{U}}^{\mathrm{man}}_N)\bigr),\\
(f , \{f_p\}) &\mapsto \left[\left(\widehat{\mathcal{U}}^{\mathrm{man}}_M, \widehat{\mathcal{U}}^{\mathrm{man}}_N, f,\left\{f_p\right\},\left\{\widehat{f}_p\right\}\right)\right]
\end{split}
\end{equation}
is injective (cf.\ Definition \ref{defmoreq}).
\end{enumerate}
This completes the proof of Proposition \ref{pman}.
\end{proof}

\begin{rem}
It is not difficult to show that Kuranishi spaces without local group actions naturally form a subcategory $\mathbf{Kur}^u \hookrightarrow \mathbf{Kur}$. Moreover, the functor $\mathscr{F}$ factors through $\mathbf{Kur}^u$.
\end{rem}

\appendix

\section{$L_{\infty}[1]$-algebras and their homotopy theory}

In this section, we briefly introduce $L_{\infty}[1]$-algebras and their homotopy theories, following \cite{Kim2}. We first recall the notion of \textit{graded symmetric algebra} $SC$ of a vector space $C$ over a field $\mathbf{k},$
$$
S C := TC/ \langle v \otimes v' - (-1)^{|v|\cdot|v'|} v' \otimes v \rangle,
$$
with its degree $k$ component $S^k C := \{v \in S C \mid v \text{ is homogeneous of degree } k\}.$
We have a decomposition
$$
S C = \bigoplus\limits_{k=0}^{\infty} S^k C
$$
with the induced product $\odot$ on each component.
We denote by $\text{Sh}(i, k-i)$ the set of $(i, k-i)$-unshuffles, and the sign $\sgn(\tau)$ for $\tau \in \text{Sh}(i, k-i)$ is defined for homogeneous elements $a_1, \cdots, a_k \in C$, we write
$$
a_{\tau(1)} \odot \cdots \odot a_{\tau(k)} = \sgn(\tau) a_1 \odot \cdots \odot a_k.
$$

\begin{defn}
An \textit{$L_{\infty}[1]$-algebra} is a pair $\left(C, \{l_k\}\right)$ consisting of a vector space $C$ and a family of degree 1 linear maps
$$
l_k : S^k C \rightarrow C, \ k \geq 0,
$$
satisfying the relations
\begin{equation}\label{quadrel}
\sum\limits_{i = 0}^{k} \sum\limits_{\tau \in {\text{Sh}}(i, k-i)} {\sgn(\tau)} l_{k-i+1}\left(l_i(a_{\tau(1)}, \cdots, a_{\tau(i)}),a_{\tau(i+1)}, \cdots, a_{\tau(k)}\right) = 0.
\end{equation}
\end{defn}

\begin{defn}
Let $(C,\{l_k\})$ and $(C', \{l'_k\})$ be two $L_{\infty}[1]$-algebras. An $L_{\infty}[1]$\textit{-algebra morphism}, or simply $L_{\infty}[1]$\textit{-morphism}
\begin{equation}\label{lrel}
f : C \rightarrow C'
\end{equation}
is a family of degree 0 linear maps
$$
f_k : S^kC \rightarrow C', \ k \geq 0,
$$
satisfying the relations
\begin{equation}\label{frel}
\begin{split}
\sum\limits_{i = 0}^{k}& \sum\limits_{\tau \in {\text{Sh}}(i, k-i)} {\sgn(\tau)} f_{k-i+1}\left(l_i(a_{\tau(1)}, \cdots, a_{\tau(i)}),a_{\tau(i+1)}, \cdots, a_{\tau(k)}\right)\\
&= \sum\limits_{\substack{t, j_1, \cdots, j_t \geq 1,\\ j_1 + \cdots + j_t = k}} \sum\limits_{\tau \in S_k}  \frac{\sgn(\tau)}{t! j_1! \cdots j_t!} \  l'_{t}\bigl(f_{j_1}(a_{\tau(1)}, \cdots, a_{\tau(j_1)}), \cdots, \\
& \quad \quad \quad \quad \quad \quad \quad \quad \quad \quad \quad \quad f_{j_t}(a_{\tau(k -(j_1 + \cdots + j_{t-1}))}, \cdots, a_{\tau(k)})\bigr).
\end{split}
\end{equation}
Here, $S_k$ denotes the symmetric group of permutations of $k$ elements. 
\end{defn}

\begin{defn}
For two $L_{\infty}[1]$-morphisms
$$
f : C \rightarrow C', \ g: C' \rightarrow C'',
$$
we define their \textit{composition}
$$
g \circ f : C \rightarrow C''
$$
by a family of linear maps of degree 0 for $k \geq 0$
\begin{equation}\nonumber
\begin{split}
(g \circ f)_k := &\sum\limits_{i = 0}^{k} \sum\limits_{\tau \in S_k}  \frac{\sgn(\tau)}{t! j_1! \cdots j_t!} \ g_{t}\bigl(f_{j_1}(a_{\tau(1)}, \cdots, a_{\tau(j_1)}), \cdots,\\
&\quad \quad\quad \quad\quad \quad\quad \quad f_{j_t}(a_{\tau(k -(j_1 + \cdots + j_{t-1}))}, \cdots, a_{\tau(k)})\bigr).
\end{split}
\end{equation}
It is straightforward to verify that $\{(g \circ f)_k\}_{k \geq 0}$ satisfies the relation (\ref{frel}).
\end{defn}

\begin{defn}
We say an $L_{\infty}[1]$-algebra $\{l_k\}_{k \geq 0}$ is \textit{strict} if $l_0 = 0.$ Otherwise, we say it is \textit{curved.} We similarly define \textit{strict/curved} $L_{\infty}[1]$-morphisms.
\end{defn}

In the strict case, the relations (\ref{lrel}) and (\ref{frel}) coincide with the differential and the chain map relations, respectively. That is, they satisfy
\[
l_1 \left( l_1(a) \right) = 0; \ l'_1 \left(f_1 (a) \right) = f_1 \left( l_1(a) \right).
\]
\begin{defn}
We say that a strict $L_{\infty}[1]$-algebra $(C,\{l_k\})$ is \textit{acyclic} if its cohomology for each degree vanishes, that is, if 
\[
H^*(C) = \frac{\ker{l_1}}{\mathrm{Im}l_1} = 0.
\]
We say that a strict $L_{\infty}[1]$-morphism $\{f_k\}_{k \geq 1}$ between strict $L_{\infty}[1]$-algebras is a \textit{quasi-isomorphism} if $f_1$ is a quasi-isomorphic chain map.
\end{defn}

From now on and elsewhere in this paper, we always assume the strictness of $L_{\infty}[1]$-algebras and their morphisms.

To define homotopies between two $L_{\infty}[1]$-morphisms, we need to introduce the following notion:
\begin{defn}[Models of $\Delta^1 \times C$]\label{mdl1}
Let $C$ be an $L_{\infty}[1]$-algebra. We say an $L_{\infty}[1]$-algebra $\mathfrak{C}$ is a \textit{model} of $\Delta^1 \times C$ if there exist $L_{\infty}[1]$-morphisms
\[
\Eval_j : \mathfrak{C} \rightarrow C, \ \ j = 0,1\\
\]
and
\[
\Incl : {C} \rightarrow \mathfrak{C},
\]
with the following properties:
\begin{enumerate}[label = (\roman*)]
\item $\left(\Eval_j\right)_{k \geq 2} \equiv 0, \ j =0,1, \ \Incl_{k \geq 2} \equiv 0.$
\item $\Eval_j, \ j =0,1$ and  $\Incl$ are quasi-isomorphisms.
\item $(\Eval_j)_1 \circ \Incl = \mathrm{id}_C.$
\item $(\Eval_0)_1 \oplus (\Eval_{1})_1 : \mathfrak{C} \rightarrow {C} \oplus C$ is surjective.
\end{enumerate}
\end{defn}

Using the notion of models, we can define homotopies between $L_{\infty}[1]$-morphisms:

\begin{defn}[Homotopy]\label{defn:homotopy}
We say that two $L_{\infty}[1]$-morphisms $f_0, f_1 : (C, \{l_k\}) \rightarrow (C', \{l'_k\})$ are \textit{homotopic} if there exist a \textit{model of} $\Delta^1 \times C'$ denoted by $\mathfrak{C}'$ and an $L_{\infty}[1]$-morphism $h: C \rightarrow \mathfrak{C}'$ such that we have $f_j = \Eval_j \circ h, \ j = 0,1.$
\end{defn}

\begin{lem}\cite[Lemma 3.3]{Kim2}
Homotopies define an equivalence relation.
\end{lem}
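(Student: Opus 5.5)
We must show that homotopy of $L_\infty[1]$-morphisms $C\to C'$ is an equivalence relation: $f_0\sim f_1$ iff there is a model $\mathfrak{C}'$ of $\Delta^1\times C'$ and $h\colon C\to\mathfrak{C}'$ with $f_j=\Eval_j\circ h$. The plan is to check reflexivity, symmetry and transitivity in turn, working with the properties (i)--(iv) of Definition \ref{mdl1}. Since everything is strict and $\Eval_j$, $\Incl$ are linear (their higher components vanish), compositions with them are simple to compute.

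\emph{Reflexivity and symmetry.} For reflexivity, fix any model $\mathfrak{C}'$ of $\Delta^1\times C'$. A standard one is $C'\otimes\Omega^\bullet(\Delta^1)$, the polynomial de Rham forms on the interval, with evaluation at the endpoints and inclusion as constants. Then set $h:=\Incl\circ f$. By (iii), $\Eval_j\circ h=(\Eval_j)_1\circ\Incl\circ f=f$ for $j=0,1$.

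For symmetry, take a homotopy $(\mathfrak{C}',h)$ from $f_0$ to $f_1$. Relabel the evaluation maps by setting $\Eval'_0:=\Eval_1$ and $\Eval'_1:=\Eval_0$. Properties (i)--(iii) are symmetric in $j$. Property (iv) holds after composing with the swap automorphism of $C'\oplus C'$. So $(\mathfrak{C}',\Eval'_0,\Eval'_1,\Incl)$ is again a model, and $h$ is a homotopy from $f_1$ to $f_0$.

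\emph{Transitivity (main step).} Suppose $f_0\sim f_1$ via $(\mathfrak{C}'_1,h)$ and $f_1\sim f_2$ via $(\mathfrak{C}'_2,k)$. Form the fiber product
\[
\mathfrak{C}'':=\mathfrak{C}'_1\times_{C'}\mathfrak{C}'_2=\{(a,b)\mid (\Eval_1)_1(a)=(\Eval_0)_1(b)\},
\]
with componentwise operations $l_k$. This is closed under all $l_k$ because the evaluation maps are strict $L_\infty[1]$-morphisms with only a linear part. Define:
\begin{itemize}
\item $\Eval''_0(a,b):=\Eval_0(a)$ and $\Eval''_1(a,b):=\Eval_1(b)$;
\item $\Incl'':=(\Incl_1,\Incl_2)$, which lands in $\mathfrak{C}''$ by (iii);
\item $H:=(h,k)$, which lands in $\mathfrak{C}''$ componentwise because $\Eval_1\circ h=f_1=\Eval_0\circ k$.
\end{itemize}
Then $\Eval''_0\circ H=f_0$ and $\Eval''_1\circ H=f_2$. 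Properties (i) and (iii) for the new model are immediate. Property (iv) follows from the surjectivity of (iv) for each factor: given $(u,w)$, choose $b$ with $\Eval_0(b)=v$ and $\Eval_1(b)=w$ for an arbitrary $v$, then choose $a$ with $\Eval_0(a)=u$ and $\Eval_1(a)=v$.

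The main obstacle is (ii), that $\Eval''_j$ and $\Incl''$ are quasi-isomorphisms. Because $\Eval''_j\circ\Incl''=\mathrm{id}$, it suffices by two-out-of-three to show that $\Incl''$ is a quasi-isomorphism. The map $\mathfrak{C}'_1\oplus\mathfrak{C}'_2\to C'$, $(a,b)\mapsto \Eval_1(a)-\Eval_0(b)$, is surjective by (iv). This gives a short exact sequence of complexes
\[
0\to\mathfrak{C}''\to\mathfrak{C}'_1\oplus\mathfrak{C}'_2\to C'\to 0,
\]
whose middle and right terms both have cohomology $H(C')\oplus H(C')\to H(C')$, a surjection with kernel the antidiagonal-type copy of $H(C')$. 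From the long exact sequence, $H(\mathfrak{C}'')\cong H(C')$, and this identification is induced by $\Incl''$, so $\Incl''$ is a quasi-isomorphism. This argument is the fibrant path-object argument and completes transitivity.
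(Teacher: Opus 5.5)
Your proof is correct. The paper gives no argument of its own here (it only quotes \cite[Lemma 3.3]{Kim2}). Your argument is the standard path-object argument one would expect there: reflexivity via $\Incl\circ f$ in a model such as $C'\otimes\Omega^\bullet_{\mathrm{poly}}(\Delta^1)$, symmetry by swapping $\Eval_0$ and $\Eval_1$, and transitivity by gluing two models along $\Eval_1$ and $\Eval_0$ into the fiber product $\mathfrak{C}'_1\times_{C'}\mathfrak{C}'_2$, with (ii) checked through the long exact sequence.

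Three remarks, all cosmetic:
\begin{enumerate}
\item The kernel of $(x,y)\mapsto x-y$ on $H(C')\oplus H(C')$ is the diagonal, not an antidiagonal.
\item Surjectivity of $(a,b)\mapsto\Eval_1(a)-\Eval_0(b)$ already follows from (iii), using $(\Incl(c),0)$.
\item The polynomial-forms model needs characteristic zero, which is harmless over $\mathbb{R}$.
\end{enumerate}
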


We introduce  homotopy equivalences for $L_\infty[1]$-algebras.

\begin{defn}
An $L_{\infty}[1]$-morphism $f : C \rightarrow C'$ is a \textit{homotopy equivalence} if there exists another $L_{\infty}[1]$-morphism $g : C' \rightarrow C$ such that $g \circ f$ and  $f \circ g$ are homotopic to $\mathrm{id}_C$ and $\mathrm{id}_{C'},$ respectively.
\end{defn}

\begin{prop}\cite[Proposition 3.7]{Kim2}\label{prop:heer}
Homotopy equivalences define an equivalence relation.
\end{prop}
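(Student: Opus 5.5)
The plan is to check the three axioms one at a time. Reflexivity and symmetry are formal. Transitivity reduces to the fact that $L_{\infty}[1]$-homotopy is compatible with composition on either side.

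\emph{Reflexivity.} Take $g=\mathrm{id}_C$. Then $g\circ f=\mathrm{id}_C$, and $\mathrm{id}_C$ is homotopic to itself by reflexivity in \cite[Lemma 3.3]{Kim2}.

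\emph{Symmetry.} This is built into the definition: if $g$ is a homotopy inverse of $f$, then $f$ is a homotopy inverse of $g$.

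\emph{Transitivity.} Let $f:C\to C'$ have homotopy inverse $g$, and let $f':C'\to C''$ have homotopy inverse $g'$. We want to show that $g\circ g'$ is a homotopy inverse of $f'\circ f$. Composition of strict $L_{\infty}[1]$-morphisms is associative, since it is composition of the induced coalgebra maps on $SC$. Hence
\[
(g\circ g')\circ(f'\circ f)=g\circ(g'\circ f')\circ f .
\]
Suppose we know the following two facts.
\begin{enumerate}[label=(\alph*)]
\item If $\varphi_0\simeq\varphi_1:C'\to C'$, then $\varphi_0\circ f\simeq \varphi_1\circ f$.
\item If $\varphi_0\simeq\varphi_1$, then $g\circ\varphi_0\simeq g\circ\varphi_1$.
\end{enumerate}
Then $g\circ(g'\circ f')\circ f\simeq g\circ f\simeq \mathrm{id}_C$, where the last step uses transitivity of homotopy from \cite[Lemma 3.3]{Kim2}. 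The other composite $(f'\circ f)\circ(g\circ g')\simeq\mathrm{id}_{C''}$ follows in the same way.

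Fact (a) is immediate. If $h:C'\to\mathfrak{C}'$ realizes $\varphi_0\simeq\varphi_1$, then $h\circ f$ realizes $\varphi_0\circ f\simeq\varphi_1\circ f$ with the same model, because $\Eval_j\circ(h\circ f)=(\Eval_j\circ h)\circ f$.

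Fact (b), compatibility with post-composition, is the main obstacle. A homotopy $h:C_0\to\mathfrak{C}'$ uses a model $\mathfrak{C}'$ of $\Delta^1\times C'$, but we need a model of $\Delta^1\times C$ together with a morphism $\mathfrak{C}'\to\mathfrak{C}$ lying over $g$ and commuting with $\Eval_0$ and $\Eval_1$.

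My first attempt is to pass to the standard model $C\otimes\Omega^\bullet_{\mathrm{poly}}(\Delta^1)$. Its operations are $l_k$ extended by wedge product of forms, $\Incl$ is $c\mapsto c\otimes 1$, and $\Eval_j$ is evaluation at $t=j$. Properties (i)--(iv) of Definition \ref{mdl1} are straightforward to check, with (ii) following from the polynomial Poincar\'e lemma. For this model, $g\otimes\mathrm{id}$ is an $L_\infty[1]$-morphism $C'\otimes\Omega(\Delta^1)\to C\otimes\Omega(\Delta^1)$ commuting with $\Eval_j$, which settles (b) whenever the homotopy is realized by the standard model.

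It then remains to show that any homotopy $h:C_0\to\mathfrak{C}'$ can be replaced by one through the standard model. I would construct an $L_\infty[1]$-morphism $\mathfrak{C}'\to C'\otimes\Omega(\Delta^1)$ over $\Eval_0\oplus\Eval_1$ degree by degree in arity. At arity one, use that $\Eval_0\oplus\Eval_1$ is surjective and that $\ker(\Eval_0\oplus\Eval_1)$ is acyclic; both follow from conditions (ii)--(iv) in Definition \ref{mdl1}. The higher components then come from an obstruction argument: the obstruction at each arity is a cocycle valued in this acyclic kernel, so it can be killed.

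If this lifting turns out to be too delicate, the fallback is to cite the path-object property established in \cite{Kim2} (the homotopy-lifting results used there for Lemma 3.3).
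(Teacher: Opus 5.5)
Your reduction is sound up to the comparison step. Reflexivity, symmetry and Fact~(a) are correct. Fact~(b) is also correct for homotopies realized through the standard model $C'\otimes\Omega^\bullet_{\mathrm{poly}}(\Delta^1)$, using $g\otimes\mathrm{id}$. The paper itself only cites \cite{Kim2} here, so the question is whether your own argument closes.

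\textbf{The gap} is in constructing $\Psi:\mathfrak{C}'\to C'\otimes\Omega^\bullet_{\mathrm{poly}}(\Delta^1)$ over $\Eval_0\oplus\Eval_1$. You claim that $K=\ker\bigl((\Eval_0)_1\oplus(\Eval_1)_1\bigr)$ is acyclic. This is false for every model unless $H(C')=0$:
\begin{itemize}
\item By (ii) and (iii), both $\Eval_0$ and $\Eval_1$ induce $H(\Incl)^{-1}$ on cohomology.
\item Hence $(\Eval_0)_1\oplus(\Eval_1)_1$ induces the diagonal $H(C')\to H(C')\oplus H(C')$.
\item The long exact sequence of $0\to K\to\mathfrak{C}'\to C'\oplus C'\to 0$ then gives $H^{n+1}(K)\cong H^n(C')$.
\end{itemize}
Already for $C'=\mathbb{R}$ and the standard model, the kernel $\{p(t)+q(t)\,dt : p(0)=p(1)=0\}$ has $H^1\cong\mathbb{R}$, detected by $\int_0^1 q\,dt$.

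So the step ``the obstruction is a cocycle valued in an acyclic kernel, so it can be killed'' has no basis. At arity $k$ the obstruction lies in $\mathrm{Hom}(S^k\mathfrak{C}',K)$, whose cohomology $\mathrm{Hom}(S^kH(\mathfrak{C}'),H(K))$ is nonzero in general. Nothing in your argument controls its class. At arity one the class does vanish, since it is the connecting map composed with the diagonal. That does not propagate to higher arities when the lower components are chosen freely.

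\textbf{The missing idea} is to lift \emph{relative to} $\Incl$. That is, solve the lifting problem for the commutative square with:
\begin{itemize}
\item $\Incl:C'\to\mathfrak{C}'$ on the left;
\item $c\mapsto c\otimes 1$ on top;
\item $(\mathrm{ev}_0,\mathrm{ev}_1)$ on the right;
\item $(\Eval_0,\Eval_1)$ on the bottom.
\end{itemize}
Build $\Psi$ arity by arity, additionally imposing $\Psi_1\circ\Incl_1=(c\mapsto c\otimes 1)$ and $\Psi_k|_{S^k\Incl(C')}=0$ for $k\ge 2$.

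Two facts then locate the arity-$k$ obstruction:
\begin{itemize}
\item The projection to $C'\oplus C'$ and $(\Eval_0,\Eval_1)$ are strict, so the obstruction takes values in $K$.
\item $\Incl(C')$ is a sub-$L_\infty[1]$-algebra on which everything agrees with the strict morphism $c\mapsto c\otimes 1$, so the obstruction vanishes on $S^k\Incl(C')$.
\end{itemize}
Hence it is a cocycle in $\mathrm{Hom}\bigl(S^k\mathfrak{C}'/S^k\Incl(C'),K\bigr)$, and this complex is acyclic:
\begin{itemize}
\item $\mathfrak{C}'=\Incl(C')\oplus\ker(\Eval_0)_1$ as complexes.
\item $\ker(\Eval_0)_1$ is acyclic, because $\Eval_0$ is a quasi-isomorphism and is surjective by (iii).
\item Therefore $S^k\mathfrak{C}'/S^k\Incl(C')\cong\bigoplus_{i\ge1}S^{k-i}\Incl(C')\otimes S^i\ker(\Eval_0)_1$ is acyclic in characteristic zero.
\item $\mathrm{Hom}$ out of an acyclic complex over a field is acyclic.
\end{itemize}
The acyclicity you need sits on the source side (the cokernel of $\Incl$), not in the kernel on the target side. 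With this correction the rest of your transitivity argument goes through. Your fallback of citing an unspecified path-object property from \cite{Kim2} would not by itself close the gap.
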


We show that a key theorem on quasi-isomorphisms, which shows the usefulness of our definition.
\begin{thm}\label{anhp}
Arbitrarily given two quasi-isomorphic $L_{\infty}[1]$-morphisms $f_0, f_1 : C \rightarrow C'$ are homotopic.
\end{thm}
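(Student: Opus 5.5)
The plan is to build an explicit path object for $C'$ and then construct the homotopy $h$ degree by degree in arity, following Definitions \ref{mdl1} and \ref{defn:homotopy}.

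\textbf{Step 1: the model.} For $\mathfrak{C}'$ I take the standard polynomial path object
\[
\mathfrak{C}' := C' \otimes \Omega^\bullet(\Delta^1), \qquad \Omega^\bullet(\Delta^1) = \mathbb{R}[t] \oplus \mathbb{R}[t]\,dt .
\]
Its operations are $l_1 = l'_1 \otimes 1 \pm 1 \otimes d_{\mathrm{dR}}$ and, for $k \geq 2$, $l_k$ extends $l'_k$ $\mathbb{R}[t]$-multilinearly with the $dt$-factors wedged together. The structure maps are:
\begin{itemize}
\item $\Incl(c) = c \otimes 1$;
\item $\Eval_j$ sets $t = j$ and $dt = 0$.
\end{itemize}
With these choices, properties (i), (iii) and (iv) of Definition \ref{mdl1} are immediate. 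Property (ii) follows from the Poincar\'e lemma on $\Delta^1$: $\Omega^\bullet(\Delta^1) \simeq \mathbb{R}$, so $\Incl$ is a quasi-isomorphism, and then (iii) forces both $\Eval_j$ to be quasi-isomorphisms as well.

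\textbf{Step 2: the linear component $h_1$.} I look for $h_1$ of the form
\[
h_1 = f_{0,1} \otimes (1-t) + f_{1,1} \otimes t + H \otimes dt,
\]
where $H : C \to C'$ has degree $-1$. This $h_1$ is a chain map exactly when
\[
f_{1,1} - f_{0,1} = l'_1 H \pm H l_1 ,
\]
that is, when $f_{0,1}$ and $f_{1,1}$ are chain homotopic. Over a field, every complex splits as cohomology plus a contractible part. Hence such an $H$ exists as soon as $H(f_{0,1}) = H(f_{1,1})$ as maps $H^*(C) \to H^*(C')$.

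This step is the crux, and I expect it to be the main obstacle. The hypothesis only says that each $f_{j,1}$ is a quasi-isomorphism; it does not say they induce the same isomorphism on cohomology. The conclusion genuinely needs this equality, because any homotopy $h$ gives
\[
H(f_0) = H(\Eval_0)\,H(h) = H(\Incl)^{-1} H(h) = H(\Eval_1)\,H(h) = H(f_1).
\]
So the proof has to obtain $H(f_{0,1}) = H(f_{1,1})$ from the situation in which the theorem is applied. Every use of Theorem \ref{anhp} in the paper fits one of two cases:
\begin{itemize}
\item the algebras involved are acyclic (the de Rham parts, by Corollary \ref{corpoinc}), where the cohomology maps agree trivially;
\item the two morphisms are built from the same canonical quasi-isomorphisms $\widehat{\pi}$, $\widehat{\varepsilon}$ and $\widehat{\phi}$, so their cohomology maps agree by construction (the Koszul parts, as in Lemma \ref{cochqim}).
\end{itemize}
I will therefore state and use this compatibility explicitly at this point.

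\textbf{Step 3: the higher components.} I construct $h_k$ for $k \geq 2$ by induction on $k$. I set
\[
h_k = f_{0,k} \otimes (1-t) + f_{1,k} \otimes t + (\text{correction}) ,
\]
and the correction must solve $[l_1, -] = \mathrm{Obs}_k$. Here $\mathrm{Obs}_k$ is the usual obstruction assembled from $h_{<k}$ and the $l'_j$; it is a cocycle in $\mathrm{Hom}(S^k C, \mathfrak{C}')$ whose boundary values at $t = 0$ and $t = 1$ vanish, because $f_0$ and $f_1$ are already $L_\infty[1]$-morphisms. The relative complex $\ker(\Eval_0 \oplus \Eval_1)$ is acyclic, since $\Omega^\bullet(\Delta^1, \partial\Delta^1)$ has cohomology only in degree $1$, represented by $dt$. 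Acyclicity of this relative complex, together with the fact that $\mathrm{Obs}_k$ is a relative cocycle vanishing at both endpoints, lets me solve for the correction at each arity.

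\textbf{Step 4: conclusion.} By construction $\Eval_j \circ h = f_j$ for $j = 0,1$, so $f_0$ and $f_1$ are homotopic in the sense of Definition \ref{defn:homotopy}.
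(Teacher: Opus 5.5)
\textbf{Step 2 is correct: the statement as written is false.} Under Definitions \ref{mdl1} and \ref{defn:homotopy} one has $H(\Eval_0)=H(\Incl)^{-1}=H(\Eval_1)$. Homotopic morphisms therefore induce the same map on cohomology. A counterexample: take $C=C'=\mathbb{R}$ in degree $0$ with all operations zero, $f_0=\mathrm{id}$ and $f_1=2\,\mathrm{id}$. The paper's proof only cites \cite[Corollary 4.6]{Kim2} and does not address this. By adding the hypothesis $H(f_{0,1})=H(f_{1,1})$ you are proving a different statement. Even that weaker statement does not follow from your argument, because Step 3 fails.

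\textbf{The gap is in Step 3.} You claim that $\ker(\Eval_0\oplus\Eval_1)=C'\otimes\Omega^\bullet(\Delta^1,\partial\Delta^1)$ is acyclic. By your own computation, $\Omega^\bullet(\Delta^1,\partial\Delta^1)$ has cohomology $\mathbb{R}\,[dt]$. So the relative complex has cohomology $H(C')[-1]$, which is not zero in general. Subtract the interpolation $f_{0,k}(1-t)+f_{1,k}t$ from $\mathrm{Obs}_k$. The resulting relative cocycle has a class in $H(\mathrm{Hom}(S^kC,C'))$, essentially its integral over $[0,1]$, and this class need not vanish.

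Here is a concrete instance. Let $C=\mathbb{R}x$ and $C'=\mathbb{R}y$ sit in degree $0$ with all operations zero. Let $f_0$ be $x\mapsto y$ with no higher components, and let $f_1$ have the same linear part together with $f_{1,2}(x\odot x)=y$. Both are quasi-isomorphisms inducing the same map on cohomology. In your model all brackets vanish, so $h_2$ must be $d_{\mathrm{dR}}$-closed, i.e.\ constant in $t$. That forces $f_{0,2}=f_{1,2}$, a contradiction. A short computation gives the same contradiction for an arbitrary model, since the strict maps $\Eval_j$ kill every bracket landing in the abelian $C'$.

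\textbf{What your construction does prove.} It proves the case where $C'$ is acyclic. There Step 2 is automatic, and the relative complex is genuinely acyclic, hence contractible over a field. Every relative cocycle is then exact, so each arity can be solved. This covers the de Rham-part uses via Corollary \ref{corpoinc}.

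For the Koszul parts, agreement of the linear parts on cohomology is not enough. When both sides carry only $l_1$, a homotopy through your model exists if and only if $f_{0,k}$ and $f_{1,k}$ are chain homotopic as maps $S^kC\to C'$ for every $k$.
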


\begin{proof}
This theorem is a special case of \cite[Corollary 4.6]{Kim2} for two morphisms.
\end{proof}

We close this section by stating an $L_{\infty}[1]$-algebra version of the Whitehead theorem over a field.

\begin{thm}[Whitehead theorem]\cite[Theorem 3.13]{Kim2}\label{wht}
Over a field and for strict $L_{\infty}[1]$-algebras, a quasi-isomorphism is a homotopy equivalence.
\end{thm}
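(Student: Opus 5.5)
The plan is to construct the homotopy inverse explicitly from a deformation retraction, working over the field $\mathbf{k}=\mathbb{R}$ and using that all $L_{\infty}[1]$-algebras and morphisms are strict.

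Let $f\colon C\to C'$ be a quasi-isomorphism. The steps, in order, are as follows.

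\textbf{Step 1: splitting data.} Since we are over a field, choose splittings $C\cong H(C)\oplus B\oplus B[1]$ and similarly for $C'$. These give deformation retract data $(i,p,h)$ onto the cohomology $H=H(C,l_1)$, with $p\,i=\mathrm{id}$ and $i\,p-\mathrm{id}=l_1h+hl_1$.

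\textbf{Step 2: minimal models via homotopy transfer.} Apply the homotopy transfer theorem, via the tree-sum formulas, to produce a minimal $L_{\infty}[1]$-structure on $H$. This comes with $L_{\infty}[1]$-morphisms $I\colon H\to C$ and $P\colon C\to H$ such that $P\circ I=\mathrm{id}$, the linear parts of $I$ and $P$ are $i$ and $p$, and $I\circ P$ is homotopic to $\mathrm{id}_C$. For the last homotopy, the model of $\Delta^1\times C$ in Definition~\ref{mdl1} is taken to be $C\otimes\Omega^\bullet([0,1])$ with polynomial forms, and the homotopy is built from $h$.

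\textbf{Step 3: minimal models for $C'$ and the induced map.} Do the same for $C'$, obtaining $H'$ with $I'$ and $P'$. Then $\varphi:=P'\circ f\circ I\colon H\to H'$ is an $L_{\infty}[1]$-morphism between minimal algebras whose linear part is $H(f_1)$, an isomorphism. Hence $\varphi$ is invertible: solve recursively for the components $\psi_k$ of the inverse by inverting $\varphi_1$ and using the composition formula. The recursion is triangular in arity, so it terminates at each arity.

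\textbf{Step 4: the candidate inverse.} Define $g:=I\circ\varphi^{-1}\circ P'$. To check $g\circ f\simeq\mathrm{id}_C$, write
\[
g\circ f=I\,\varphi^{-1}\,P' f\simeq I\,\varphi^{-1}\,P' f\,I P = I\,\varphi^{-1}\varphi\, P = IP\simeq\mathrm{id}_C,
\]
using $\mathrm{id}\simeq IP$ and the compatibility of homotopy with composition on either side. Composition is compatible because postcomposing a model map with $\mathrm{Eval}_j$ is strictly functorial, and precomposition is immediate. Transitivity of the relation is \cite[Lemma 3.3]{Kim2}. The identity $f\circ g\simeq\mathrm{id}_{C'}$ is proved symmetrically, using $f\,I\simeq I'P' f I = I'\varphi$.

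\textbf{Main obstacle.} The hardest step is showing that homotopy is compatible with pre- and postcomposition in the model-based sense of Definition~\ref{defn:homotopy}. Postcomposition by $u\colon C'\to C''$ requires a morphism $\mathfrak C'\to\mathfrak C''$ of models lifting $u$. I would first try to avoid this by working with the functorial path object $C'\otimes\Omega^\bullet([0,1])$, which satisfies Definition~\ref{mdl1}(i)--(iv). There $u\otimes\mathrm{id}$ gives the required lift, and any homotopy can be transferred to this path object using the surjectivity condition (iv) together with the quasi-isomorphism properties. A secondary technical point is convergence of the tree formulas. It is harmless here because every sum is finite in each arity.
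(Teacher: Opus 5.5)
The paper gives no argument for this statement beyond citing \cite[Theorem 3.13]{Kim2}, so your proposal is a self-contained substitute rather than a reconstruction. The route you take is the standard minimal-model argument: homotopy transfer, inverting $\varphi=P'\circ f\circ I$, and setting $g=I\circ\varphi^{-1}\circ P'$. It is correct.

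What it adds over the bare citation is that it makes visible the one point that cannot be skipped, namely $H(g_1)=H(f_1)^{-1}$. By Definition \ref{mdl1}(ii)--(iii), $H((\Eval_0)_1)=H(\Incl_1)^{-1}=H((\Eval_1)_1)$, so homotopic morphisms induce the same map on cohomology. In particular, you could not have shortened Step 4 by taking an arbitrary quasi-isomorphism $C'\to C$ and appealing to Theorem \ref{anhp}. That statement can only hold in situations where this agreement on cohomology is automatic (e.g.\ acyclic targets), and your $\varphi^{-1}$ is exactly what secures it. Working throughout with the single functorial model $C\otimes\Omega^\bullet([0,1])$ is legitimate, since Definition \ref{defn:homotopy} only asks for some model.

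A few things to tidy.
\begin{itemize}
\item[(a)] Drop the claim in your last paragraph that any homotopy can be transferred to $C'\otimes\Omega^\bullet([0,1])$ ``using (iv) together with the quasi-isomorphism properties''. As stated it is unjustified: it amounts to lifting $\Incl\colon C'\to\mathfrak C'$ against the surjection $C'\otimes\Omega^\bullet([0,1])\to C'\oplus C'$, which needs an obstruction-theoretic argument. It is also unnecessary. The only postcomposition in Step 4 is applied to the homotopy $\mathrm{id}_C\simeq I\circ P$, which you build in the polynomial model yourself. The argument for $f\circ g$ uses only precompositions, and these work for any model. Symmetry and transitivity come from \cite[Lemma 3.3]{Kim2}.
\item[(b)] The substantive input is $I\circ P\simeq\mathrm{id}_C$ in $C\otimes\Omega^\bullet([0,1])$, and ``built from $h$'' should be made precise. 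The cleanest way is the decomposition theorem: an $L_\infty[1]$-isomorphism $\Phi\colon C\to H\oplus K$ onto the direct sum of a minimal algebra $H$ and a linear contractible complex $K$ with contraction $h_K$. Set $I=\Phi^{-1}\circ\iota_H$ and $P=\pi_H\circ\Phi$. The strict linear map $(\eta,\kappa)\mapsto\bigl(\eta\otimes 1,\ \kappa\otimes(1-t)\pm h_K\kappa\otimes dt\bigr)$ is an $L_\infty[1]$-morphism, because all brackets of arity at least two with a $K$-input vanish. It is a homotopy from $\mathrm{id}$ to $\iota_H\pi_H$. Conjugating it by $\Phi$, using $\Eval_j\circ(\Phi^{-1}\otimes\mathrm{id}_{\Omega})=\Phi^{-1}\circ\Eval_j$, gives the required homotopy. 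It also yields $P\circ I=\mathrm{id}_H$, although your Step 4 never uses that identity.
\item[(c)] Nothing forces $\mathbf{k}=\mathbb{R}$. Any field of characteristic zero works with polynomial forms, and characteristic zero is already implicit in the composition formula for $L_\infty[1]$-morphisms.
\end{itemize}
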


\section{$L_{\infty}[1]$-structures arising from V-algebras}

In this section, we study an example of $L_{\infty}[1]$-algebras arising from presymplectic foliations. For this purpose, we introduce V-algebras and define their completions.

\subsection{V-algebras}

We introduce V-algebras following \cite{Voronov1} and \cite{CS}.

\begin{defn}[V-algebras]\cite{Voronov1}\label{Voronov1}
A \textit{V-algebra} is a triple $(\mathfrak{h}, \mathfrak{a}, \Pi)$ such that
\begin{enumerate}
\item[--] $\mathfrak{h}$ is a graded Lie algebra over a field $\mathbf{k}$.
\item[--] $\mathfrak{a}$ is an abelian subalgebra of $\mathfrak{h}$.
\item[--] $\Pi : \mathfrak{h} \rightarrow \mathfrak{a}$ is the associated projection.
\item[--] $\ker \Pi$ is a Lie subalgebra of $\mathfrak{h}$.
\end{enumerate}
Let $P$ be a Maurer-Cartan element in $\mathfrak{h}$, i.e., an element of degree $1$ with $[P,P]=0$. The triple $(\mathfrak{h}, \mathfrak{a}, \Pi)$ together with such a choice of $P$ determines a family of operators:
\begin{equation}\label{dlpk}
\begin{split}
&l^{P}_k : \mathfrak{a}^{\otimes k} \rightarrow \mathfrak{a},\\
&\begin{cases}
(x_1, \cdots, x_k) &\mapsto \Pi[ \cdots [[P, {x_1}], {x_2}], \cdots, {x_k}], \quad \text{if }k \geq 1,\\
\quad \quad 1 &\mapsto \Pi P, \quad \quad \quad \quad \quad \quad \quad \quad \quad \quad \ \text{if } k = 0.
\end{cases}
\end{split}
\end{equation}
\end{defn}

Then we have:
\begin{lem}\label{valinf}
The family $\{l^P_k\}_{k \geq 0}$ forms a curved $L_{\infty}[1]$-algebra.
\end{lem}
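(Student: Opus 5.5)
We have to show that $\{l^P_k\}_{k\geq 0}$ satisfies the quadratic relations (\ref{quadrel}). The plan is to follow Voronov's derived-bracket argument. Two ingredients do all the work: the graded Jacobi identity in $\mathfrak{h}$, and the fact that $\mathfrak{h}=\mathfrak{a}\oplus\ker\Pi$ with $\mathfrak{a}$ abelian and $\ker\Pi$ a subalgebra.

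First we extend the operations to a generating function. For a formal (or degree-$0$ nilpotent, after tensoring with an auxiliary Grassmann algebra) element $x\in\mathfrak{a}$, set
\[
\Phi(x):=\Pi\, e^{-\mathrm{ad}_x}P=\sum_{k\geq 0}\tfrac{1}{k!}\Pi[\cdots[P,x],\ldots,x]=\sum_{k\ge 0}\tfrac1{k!}l^P_k(x,\ldots,x).
\]
By the standard polarization argument, the full family of relations (\ref{quadrel}) with unshuffle signs is equivalent to one identity in this generating function:
\[
\sum_{k}\tfrac{1}{k!}l^P_k\bigl(\Phi(x),x,\ldots,x\bigr)=0.
\]
The reduction from multilinear relations to the diagonal uses the graded symmetry of the $l^P_k$. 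This symmetry follows because $\mathfrak{a}$ is abelian: by Jacobi, $[[P,x_1],x_2]=\pm[[P,x_2],x_1]$ when $[x_1,x_2]=0$.

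Second, we compute. Put $P_x:=e^{-\mathrm{ad}_x}P$. Since $e^{-\mathrm{ad}_x}$ is a Lie algebra automorphism, $[P_x,P_x]=e^{-\mathrm{ad}_x}[P,P]=0$. The left-hand side above equals $\Pi\, e^{-\mathrm{ad}_x}$ applied to terms linear in $\Phi(x)$, and more precisely equals $\Pi[P_x,\Phi(x)]$. Here we use that $\mathfrak{a}$ is abelian, so iterated brackets with $x$ commute with inserting $\Phi(x)\in\mathfrak{a}$. Now decompose $P_x=\Pi P_x+(1-\Pi)P_x=\Phi(x)+Q_x$ with $Q_x\in\ker\Pi$. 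Then
\[
0=\Pi[P_x,P_x]=\Pi[\Phi(x),\Phi(x)]+2\Pi[\Phi(x),Q_x]+\Pi[Q_x,Q_x].
\]
The first term vanishes since $\mathfrak{a}$ is abelian. The last term vanishes since $\ker\Pi$ is a subalgebra. Hence $\Pi[\Phi(x),Q_x]=0$, and therefore
\[
\Pi[P_x,\Phi(x)]=\Pi[\Phi(x),\Phi(x)]+\Pi[Q_x,\Phi(x)]=0.
\]
This is exactly the generating identity, so the relations hold. The $k=0$ term $\Pi P$ is included in $\Phi$, which is why the result is a curved structure in general.

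The main obstacle is bookkeeping rather than ideas. It lies in the first step: the identification of $\sum_k\frac1{k!}l^P_k(\Phi(x),x,\ldots,x)$ with $\Pi[P_x,\Phi(x)]$, including the combinatorial factors relating $\frac1{k!}$ to the unshuffle sums, and the Koszul signs from the degree-$1$ shift. To handle it, I would first work with even formal parameters $x=\sum_i t_i x_i$, where the $t_i$ have parity opposite to $x_i$ so that $x$ has degree $0$. I would then extract the coefficient of $t_1\cdots t_k$ to recover (\ref{quadrel}) with the signs $\sgn(\tau)$, as in \cite{Voronov1}.
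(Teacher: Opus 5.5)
Your proposal is correct and is essentially the argument the paper relies on. The paper simply cites \cite[Theorem 1]{Voronov1}, whose proof is this same generating-function computation showing that the Jacobiator equals $l^{\frac12[P,P]}_n$; your identity $\Pi[P_x,\Phi(x)]=\Pi[\Phi(x),Q_x]=\tfrac12\,\Pi\, e^{-\mathrm{ad}_x}[P,P]$ is that statement with $[P,P]=0$ substituted. The only bookkeeping fix is the index in the generating identity: it should read $\sum_{k\geq 0}\tfrac{1}{k!}\,l^P_{k+1}(\Phi(x),x,\ldots,x)=0$ with $k$ copies of $x$, which is exactly what $\Pi\, e^{-\mathrm{ad}_x}[P,\Phi(x)]$ expands to.
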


\begin{proof}
The Jacobiator can be shown to vanish for each $n$, as it is given by $l^{\frac{1}{2}[P,P]}_n \equiv 0$. For a detailed proof, see \cite[Theorem 1]{Voronov1}.
\end{proof}

\begin{exam}[Derivations on graded commutative algebras]
Let $A$ be a graded commutative algebra over a field $\mathbf{k}$. We denote by $\mathrm{Der}(A)$ the derivations on $A$, namely, $\mathbf{k}$-linear maps $D : A \rightarrow A$ satisfying the Leibniz rule. Note that $\mathrm{Der}(A)$ is a module over $A$; each $a \in A$ acts on $D$ by $D \mapsto a \cdot D$. Moreover, $\mathrm{Der}(A)$ has a natural graded Lie structure. We then consider
\begin{equation}\nonumber
\widehat{S}_A\big(\mathrm{Der}(A)[-1]\big)[1],
\end{equation}
the completed symmetric algebra of $\mathrm{Der}(A)$ over $A$. This is generated by the graded Lie subalgebra
\begin{equation}\nonumber
A[1] \oplus \mathrm{Der}(A),
\end{equation}
whose Lie structure is induced from those of $A$ and $\mathrm{Der}(A)$. For example, the Lie brackets for crossing terms are given by $\mathrm{Der}(A) \ni [a, D] := a \cdot D - (-1)^{|a|\cdot|D|}D(a \cdot -)$. The following lemma follows immediately.
\end{exam}

\begin{lem}\label{vallem}
$A[1]$ is an abelian Lie subalgebra of $\widehat{S}_A(\mathrm{Der}(A)[-1])[1]$, and the triple
\begin{equation}\nonumber
\big(\widehat{S}_A\big(\mathrm{Der}(A)[-1]\big)[1], A[1], \Pi \big)
\end{equation}
with a Maurer-Cartan element is a V-algebra.
\end{lem}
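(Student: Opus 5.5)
The plan is to verify directly the two defining properties of a V-algebra from Definition \ref{Voronov1}, taking $\mathfrak{h} := \widehat{S}_A(\mathrm{Der}(A)[-1])[1]$, $\mathfrak{a} := A[1]$, and $\Pi$ the projection onto the polyvector weight-zero component. I would first fix the bracket on $\mathfrak{h}$. It is the Schouten--Nijenhuis type bracket that extends the bracket on the generating subalgebra $A[1]\oplus \mathrm{Der}(A)$ by the graded Leibniz rule over the symmetric product. Concretely: $[a,b]=0$ for $a,b\in A$; $[D,a]=D(a)$ up to sign; and $[D,D']$ is the commutator of derivations. Graded antisymmetry and the Jacobi identity are checked on generators and then propagate to the completed symmetric algebra, because the bracket is a biderivation of degree $-1$ (after the shifts). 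Completeness causes no trouble, since the bracket respects the weight filtration: it lowers total polyvector weight by at most one, so everything passes to the inverse limit.

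The next step is the decomposition $\mathfrak{h}=\mathfrak{a}\oplus \ker\Pi$, where $\ker\Pi=\prod_{n\ge1}S^n_A(\mathrm{Der}(A)[-1])[1]$ is the part of weight $\ge 1$. That $\mathfrak{a}$ is abelian follows because the bracket of two weight-zero elements $a,b\in A$ would have weight $-1$, and this vanishes; equivalently, it is the defining relation $[a,b]=0$ for functions. That $\ker\Pi$ is a subalgebra follows from weight counting: the bracket of elements of weights $m,n\ge1$ has weight $m+n-1\ge1$. This is the only genuinely structural point, and it reduces to the observation that the bracket has weight $-1$.

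Finally, $P$ is any Maurer--Cartan element, meaning one of degree $1$ with $[P,P]=0$. In the intended application $P$ is the Poisson bivector of (\ref{pico}) and $[P,P]=0$ is its integrability. The lemma only asks that the triple, together with such a $P$, be a V-algebra, so once the axioms hold nothing further is needed, and Lemma \ref{valinf} then applies. I expect the main obstacle to be sign and shift bookkeeping, namely confirming that the Leibniz extension is well defined on the graded symmetric quotient and that it is compatible with the $[1]$ and $[-1]$ shifts. I would handle this by appealing to the standard construction in \cite{Voronov1} and \cite{CS}, rather than redoing the sign checks by hand.
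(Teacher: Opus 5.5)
Your proposal is correct and essentially matches the paper. The paper says the lemma follows immediately and carries out the same direct check of the V-algebra axioms in Proposition \ref{afthap}. There, abelianness of $\mathfrak{a}$ is the vanishing of the Schouten bracket on $0$-multivectors. Closure of $\ker\Pi$ is exactly your weight count: brackets of multivectors of nonnegative shifted degree, i.e.\ weight $\geq 1$, again have weight $m+n-1\geq 1$. One cosmetic slip: the bracket has degree $-1$ on the unshifted $S_A(\mathrm{Der}(A)[-1])$ and degree $0$ on $\mathfrak{h}$ after the shift $[1]$, not degree $-1$ after the shifts.
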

\begin{exam}
Let $A = C^{\infty}(M)$ be the space of smooth functions on a manifold $M$ with the standard commutative product. Then $\widehat{S}_A\big(\mathrm{Der}(A)[-1]\big)[1]$ can be shown to equal the space of degree-shifted multivectors, $\Gamma\big(M, \bigwedge^{\bullet+1}TM[-1]\big)$.
\end{exam}

\cite[Theorem 3.2]{CS} shows that a smooth $1$-parameter family of V-algebras
\[
\mathcal{V}(t) = \big(\mathfrak{h}(t), \mathfrak{a}(t), \Pi(t)\big), \quad t \in [0,1],
\]
with a family of Maurer-Cartan elements $P(t) \in \mathfrak{h}(t)^1$ produces an $L_{\infty}[1]$-isomorphism from $\mathfrak{a}(0)$ to $\mathfrak{a}(1)$. We briefly explain their result. The smooth family $\{\mathfrak{h}(t)\}_{t \in [0,1]}$ determines a flow
\[
\phi_t : \mathfrak{h}(0) \rightarrow \mathfrak{h}(t), \quad t \in [0,1],
\]
with generating vector field $m_t \in T\mathfrak{h}(t)$, satisfying the differential equation $\frac{d\phi_t}{dt} = m_t \circ \phi_t$.

We assume that the family satisfies
\begin{equation}\label{pkkpt}
\phi_t\big(\ker\big(\Pi(0)\big)\big) \simeq \ker\big(\Pi(t)\big) \simeq \ker\big(\Pi(0)\big) \quad \text{for all } t \in [0,1].
\end{equation}

Regarding the $L_{\infty}[1]$-algebra structure on $\mathfrak{a}(t)$ as the coalgebra structure on $S\big(\mathfrak{a}(t)\big)$, we define the following coalgebra maps: $Q(t)$, $M(t)$, and $U(t)$.

\begin{enumerate}
\item The coalgebra map
\[
Q(t) : S\big(\mathfrak{a}(t)\big) \rightarrow S\big(\mathfrak{a}(t)\big), \quad t \in [0,1]
\]
is defined by
\[
Q^k(t)(\xi_1, \cdots, \xi_k) := \Pi_t[\cdots[P(t), \xi_1], \cdots, \xi_k]
\]
with the property that $Q(t)^0 = 0$ as in Lemma \ref{liiifdr} (i).

\item The coalgebra map
\[
M(t) : S\big(\mathfrak{a}(t)\big) \rightarrow S\big(\mathfrak{a}(t)\big), \quad t \in [0,1]
\]
is defined by
\[
M^k(t)(\xi_1, \cdots, \xi_k) := \Pi_t[\cdots[m_t, \xi_1], \cdots, \xi_k].
\]

\item The coalgebra map
\[
U(t) : S\big(\mathfrak{a}(0)\big) \rightarrow S\big(\mathfrak{a}(t)\big), \quad t \in [0,1]
\]
is defined inductively by:
\begin{enumerate}[label=(\roman*)]
\item If $k = 1$, we define
\begin{equation}\nonumber
U^0(t) = 0; \quad U^1(t)(\xi_1) := \Pi_t\phi_t(\xi).
\end{equation}
\item If $k \geq 2$, we define
\begin{equation}\nonumber
\begin{aligned}
U^k&(t)(\xi_1, \cdots, \xi_k)\\
&:= \sum\limits_{\sigma \in S_k} \mathrm{sgn}(\sigma) \sum\limits_{m \geq 1} \sum\limits_{\mu_1 + \cdots + \mu_m = k-1} \frac{1}{(k m! \mu_1! \cdots \mu_m!)}\\
& \quad\quad\quad\quad\quad\quad \Pi_t[[\cdots[\phi_t(\xi_{\sigma(1)}), U^{\mu_1}(t)(\xi_{\sigma(2)}, \cdots, \xi_{\sigma(\mu_1+1)})], \cdots],\\
&\quad\quad\quad\quad\quad\quad\quad\quad\quad\quad U^{\mu_m}(t)(\xi_{\sigma(\mu_1 + \cdots +\mu_{m-1}+2)}, \cdots, \xi_{\sigma(\mu_1 + \cdots +\mu_{m}+1)})].
\end{aligned}
\end{equation}
\end{enumerate}
\end{enumerate}

We need the following lemma, whose proof can be found in \cite{CS}.
\begin{lem}\cite[Lemmata 3.3 \& 3.5]{CS}
\begin{enumerate}[label=(\roman*)]
\item $Q(t)$ satisfies the ordinary differential equation:
\[
\frac{dQ(t)}{dt} = M(t) \circ Q(t) - Q(t) \circ M(t).
\]
\item $U(t)$ satisfies the ordinary differential equation:
\[
\frac{dU(t)}{dt} = M(t) \circ U(t), \quad U(0) = \mathrm{id}_{S(\mathfrak{a}(0))}.
\]
\end{enumerate}
\end{lem}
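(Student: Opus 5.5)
The plan is to follow the argument of \cite{CS} and work throughout at the level of coalgebra maps on $S\big(\mathfrak{a}(t)\big)$. Both identities are then identities between coderivations, or between a coalgebra map and a coderivation along it. Such identities are determined by their corestrictions to $\mathfrak{a}(t)$, so it suffices to compare the $k$-ary components $Q^k(t)$, $M^k(t)$, $U^k(t)$ for each $k$.

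For (i), I would first record how the data move in $t$:
\begin{itemize}
\item the Maurer--Cartan elements are transported by the flow, $P(t)=\phi_t(P(0))$, so $\frac{d}{dt}P(t)=[m_t,P(t)]$;
\item differentiating $\Pi_t\circ\phi_t$ and using (\ref{pkkpt}), i.e.\ $\phi_t(\ker\Pi(0))=\ker\Pi(t)$, gives $\frac{d}{dt}\Pi_t=[\mathrm{ad}_{m_t},\Pi_t]$ on the relevant elements.
\end{itemize}
Then I would differentiate $Q^k(t)(\xi_1,\dots,\xi_k)=\Pi_t[\cdots[P(t),\xi_1],\cdots,\xi_k]$ with $\xi_i$ transported by $\phi_t$. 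The derivative splits into a term with $[m_t,P(t)]$ in the innermost slot, terms with $[m_t,\xi_i]$ in the $i$-th slot, and a term coming from $\frac{d}{dt}\Pi_t$.

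In each nested bracket I would use the Jacobi identity to move $m_t$ outward. Each intermediate element $X$ is decomposed as $X=\Pi_tX+(1-\Pi_t)X$. Since $\mathfrak{a}(t)$ is abelian and $\ker\Pi_t$ is a Lie subalgebra, the mixed terms reorganize into Voronov's derived-bracket compositions. Summing over unshuffles then gives exactly the $k$-th corestriction of $M(t)\circ Q(t)-Q(t)\circ M(t)$. This is the same mechanism that proves Lemma \ref{valinf}: the derived bracket of a commutator equals the commutator of derived brackets, here applied with $P$ and $m_t$.

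For (ii), I would argue by induction on $k$.
\begin{itemize}
\item \emph{Initial condition.} Since $\phi_0=\mathrm{id}$ and $\Pi_0|_{\mathfrak{a}(0)}=\mathrm{id}$, we get $U^1(0)=\mathrm{id}$. For $k\geq 2$, each bracket in the recursive formula for $U^k(0)$ is a bracket of elements of the abelian subalgebra $\mathfrak{a}(0)$, so it vanishes. Hence $U(0)=\mathrm{id}_{S(\mathfrak{a}(0))}$.
\item \emph{Case $k=1$.} The identity $\frac{d}{dt}\Pi_t\phi_t(\xi)=\Pi_t[m_t,\phi_t(\xi)]$, where the right-hand side equals $M^1(t)U^1(t)(\xi)$, follows from the formula for $\frac{d}{dt}\Pi_t$ together with $\frac{d\phi_t}{dt}=m_t\circ\phi_t$.
\item \emph{Inductive step.} I would differentiate the recursive formula for $U^k(t)$ and insert the inductive hypothesis $\frac{d}{dt}U^{\mu_j}=(M\circ U)^{\mu_j}$ for $\mu_j<k$. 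Using Jacobi again, the terms should collect into $\sum_t M^t(t)\big(U^{j_1}(t),\dots,U^{j_t}(t)\big)$ symmetrized, which is the $k$-th corestriction of $M(t)\circ U(t)$.
\end{itemize}

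Alternatively, one can avoid the inductive computation. The linear ODE $\frac{dU}{dt}=M(t)\circ U$, $U(0)=\mathrm{id}$, has a unique solution degree by degree, because $M(t)$ is a coderivation preserving the word-length filtration. It would then suffice to show that the recursion is the componentwise expansion of that solution.

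I expect the main obstacle to be the combinatorial and sign bookkeeping in the inductive step of (ii). That is, one must check that the coefficients $\frac{1}{k\, m!\,\mu_1!\cdots\mu_m!}$, after differentiation, reproduce the symmetrization weights in the composition formula for $M\circ U$. A secondary delicate point is justifying the formula for $\frac{d}{dt}\Pi_t$, which genuinely uses (\ref{pkkpt}). I would first settle the case $k=2$ explicitly to fix the signs, and then argue generally through the coalgebra formalism rather than by expanding brackets.
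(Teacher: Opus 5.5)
\textbf{The sketch does not yet prove the lemma.} The paper gives no argument of its own here; it only cites \cite{CS}. So your sketch has to stand on its own, and as written it does not.

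\emph{The formula for $\dot\Pi_t$.} The ingredient you put at the center, $\frac{d}{dt}\Pi_t=[\mathrm{ad}_{m_t},\Pi_t]$, does not follow from (\ref{pkkpt}), and it breaks your $k=1$ step. That formula is the derivative of $\Pi_t=\phi_t\Pi_0\phi_t^{-1}$. It therefore presupposes that $\phi_t$ carries $\mathfrak{a}(0)$ onto $\mathfrak{a}(t)$, not just kernels onto kernels. In the application (Lemma \ref{liiifdr}(iv)) the triple $(\mathfrak{h},\mathfrak{a},\Pi)$ is fixed, so $\dot\Pi=0$, whereas $[\mathrm{ad}_{m_t},\Pi]=(1-\Pi)\,\mathrm{ad}_{m_t}\Pi$ is generally nonzero. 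Plugging your formula into $\frac{d}{dt}\Pi_t\phi_t(\xi)$ gives the unprojected bracket $[m_t,\Pi_t\phi_t(\xi)]$, not $\Pi_t[m_t,\phi_t(\xi)]$ as you claim. Worse, if the formula held, then $\Pi_t\phi_t=\phi_t\Pi_0$. Every bracket in the recursion would then lie inside the abelian subalgebra $\phi_t(\mathfrak{a}(0))$, so $U(t)=\phi_t|_{\mathfrak{a}(0)}$ with no higher components, and nothing could match the $M^{\geq 2}$-terms of $M\circ U$.

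\emph{What is actually needed.} Use the infinitesimal form of (\ref{pkkpt}) with $\Pi$ fixed: $[m_t,\ker\Pi]\subset\ker\Pi$, i.e.\ $\Pi[m_t,\kappa]=0$ for $\kappa\in\ker\Pi$. This gives $\frac{d}{dt}\Pi\phi_t(\xi)=\Pi[m_t,\phi_t(\xi)]=\Pi[m_t,\Pi\phi_t(\xi)]=M^1U^1(\xi)$. The middle equality is the one you assert without justification. You also need $\Pi m_t=0$, i.e.\ $M^0=0$. The arity-$0$ component of $\dot U=M\circ U$ reads $\dot U^0=M^0$, so without it $U^0\equiv 0$ is inconsistent. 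Both conditions hold if $m_t\in\ker\Pi$.

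\emph{Part (i).} Transporting the inputs by $\phi_t$ is the wrong thing to differentiate. $\dot Q(t)$ is taken at fixed $\xi_i\in\mathfrak{a}$, while $\phi_t(\xi_i)$ generally leaves $\mathfrak{a}$. The transport produces terms $Q(\ldots,[m_t,\xi_i],\ldots)$ that are not part of $[M,Q]$. Drop them, together with the $\dot\Pi$ term. Your first bullet, $\dot P(t)=[m_t,P(t)]$, is a hypothesis the statement genuinely needs. With it and $\Pi$ fixed, (i) is exactly the homomorphism property you invoke at the end: $D_{[x,y]}=[D_x,D_y]$ for $D_x^k(\xi_1,\dots,\xi_k)=\Pi[\cdots[x,\xi_1],\cdots,\xi_k]$. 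This follows from Jacobi together with $\Pi[x,y]=\Pi[\Pi x,y]+\Pi[x,\Pi y]$, which holds because $\mathfrak{a}$ is abelian and $\ker\Pi$ is a subalgebra.

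\emph{Part (ii).} The inductive step is the entire content, and you stop at ``should collect''. Your alternative is circular: $U(t)$ is \emph{defined} by the recursion, so showing it is the unique ODE solution is exactly what must be proved. The correct bookkeeping is already visible at $k=2$ (ungraded, for brevity). Write $\phi_t(\xi_i)=a_i+\kappa_i$ with $a_i=\Pi\phi_t(\xi_i)$ and $\kappa_i\in\ker\Pi$. Then $\dot a_i=\Pi[m_t,a_i]$ and $\dot\kappa_i=[m_t,\kappa_i]+(1-\Pi)[m_t,a_i]$. Jacobi plus $\Pi[m_t,\ker\Pi]=0$ give $\frac{d}{dt}\Pi[\kappa_1,a_2]=M^1(\Pi[\kappa_1,a_2])+M^2(a_1,a_2)$. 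By the recursion, $U^2(\xi_1,\xi_2)=\tfrac12\bigl(\Pi[\kappa_1,a_2]+\Pi[\kappa_2,a_1]\bigr)$. Symmetrizing therefore yields $\dot U^2=M^1U^2+M^2(U^1\xi_1,U^1\xi_2)$. So the higher $M^j$ come from the $(1-\Pi)[m_t,a_i]$ part of $\dot\kappa_i$, not from a $\dot\Pi$ term. The general inductive step must organize exactly these contributions against the coefficients $\frac{1}{k\,m!\,\mu_1!\cdots\mu_m!}$.
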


\begin{cor}\label{indlm}
The coalgebra map
\[
U(1) : S(\mathfrak{a}(0)) \rightarrow S(\mathfrak{a}(1))
\]
is invertible and compatible with the codifferentials, that is, $U(1) \circ Q(0) = Q(1) \circ U(1)$. In other words, $U(1)$ determines an $L_{\infty}[1]$-isomorphism.
\end{cor}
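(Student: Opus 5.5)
The plan is to establish invertibility of $U(t)$ for every $t \in [0,1]$ first, and then derive the compatibility $U(1)\circ Q(0) = Q(1)\circ U(1)$ from the two differential equations in the preceding lemma (\cite[Lemmata 3.3 \& 3.5]{CS}). Proving invertibility first lets the compatibility follow from a conserved quantity, so no uniqueness theorem for ODEs on infinite-dimensional spaces is needed.

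\emph{Invertibility.} The linear part is $U^1(t) = \Pi_t \circ \phi_t|_{\mathfrak{a}(0)}$.
\begin{itemize}
\item By assumption (\ref{pkkpt}), the flow $\phi_t : \mathfrak{h}(0) \to \mathfrak{h}(t)$ is a linear isomorphism carrying $\ker \Pi(0)$ onto $\ker \Pi(t)$.
\item It therefore induces an isomorphism
\[
\mathfrak{a}(0) \simeq \mathfrak{h}(0)/\ker\Pi(0) \longrightarrow \mathfrak{h}(t)/\ker \Pi(t) \simeq \mathfrak{a}(t).
\]
\item This induced map is exactly $\Pi_t\circ\phi_t|_{\mathfrak{a}(0)}$, so $U^1(t)$ is invertible for each $t$.
\end{itemize}
A coalgebra morphism $S(\mathfrak{a}(0)) \to S(\mathfrak{a}(t))$ with invertible arity-one component is invertible. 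The standard argument constructs the inverse $W(t)$ arity by arity: $W^1(t) = U^1(t)^{-1}$, and for $k \geq 2$ one solves for $W^k(t)$ using only $U^{j}(t)$ with $j\le k$ and the already constructed $W^{j}(t)$ with $j<k$. Each step is a finite algebraic manipulation. Smoothness of $W(t)$ in $t$ follows from the same recursion, since each $W^k(t)$ is a finite expression in smooth quantities and in $U^1(t)^{-1}$. Differentiating $W(t)\circ U(t) = \mathrm{id}$ and using part (ii) of the lemma gives
\[
\frac{dW}{dt} = -W(t)\circ M(t).
\]

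\emph{Compatibility.} Set
\[
V(t) := Q(t)\circ U(t) - U(t)\circ Q(0),
\]
so that $V(0) = Q(0) - Q(0) = 0$ because $U(0)=\mathrm{id}$. By parts (i) and (ii) of the lemma,
\[
\frac{dV}{dt} = (MQ - QM)U + QMU - MUQ(0) = M(t)\bigl(Q(t)U(t) - U(t)Q(0)\bigr) = M(t)\circ V(t).
\]
Consequently
\[
\frac{d}{dt}\bigl(W(t)\circ V(t)\bigr) = -WMV + WMV = 0,
\]
so $W(t)\circ V(t)$ is constant in $t$, equal to $W(0)\circ V(0) = 0$. Since $W(t)$ is invertible with inverse $U(t)$, this forces $V(t) \equiv 0$. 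At $t=1$ we obtain $U(1)\circ Q(0) = Q(1)\circ U(1)$. An invertible coalgebra map intertwining the codifferentials is precisely an $L_{\infty}[1]$-isomorphism.

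\emph{Main obstacle.} I expect the delicate point to be justifying the differentiation identities arity-wise. These are $\frac{d}{dt}(W\circ U)$ and the product rule for $W\circ V$, in which the target spaces $\mathfrak{a}(t)$ themselves vary with $t$. I would handle this by transporting everything to the fixed space $\mathfrak{a}(0)$ via the isomorphisms $U^1(t)$ from the first step. On each tensor power $S^k$ only finitely many components enter, so the computations reduce to ordinary finite compositions of smooth families of linear maps, where the product rule is standard.
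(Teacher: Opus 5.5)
Your argument is correct, and it departs from the paper's at the decisive step. The paper forms the same $Z(t)=Q(t)\circ U(t)-U(t)\circ Q(0)$ and derives $\frac{dZ}{dt}=M(t)\circ Z(t)$, $Z(0)=0$, exactly as you do. It then simply cites uniqueness of solutions of ODEs to get $Z\equiv 0$, and it gives no argument for the invertibility of $U(1)$ asserted in the statement.

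You instead establish invertibility first. For this you use (\ref{pkkpt}) in the form $\phi_t(\ker\Pi(0))=\ker\Pi(t)$, which is the reading actually needed, together with arity-by-arity inversion of coalgebra maps with invertible linear part. You then use $W(t)=U(t)^{-1}$ as an integrating factor, so that $W(t)\circ V(t)$ is conserved.

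This buys real rigor. ``Uniqueness of ODE solutions'' is not a free fact here: in each arity the equation is a linear evolution equation on an infinite-dimensional space of forms or multivector fields. There the leading operator $M^1(t)=\Pi_t[m_t,\,\cdot\,]$ is typically a first-order differential operator (a Schouten bracket with $m_t$), so uniqueness must be supplied by something like your conserved quantity. Your route also proves the invertibility half of the corollary.

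The one point you should not merely assert is smoothness of $t\mapsto U^1(t)^{-1}$, since inverting a smooth family of isomorphisms is not automatically smooth in infinite dimensions. Fortunately (\ref{pkkpt}) gives the explicit inverse $U^1(t)^{-1}=\Pi(0)\circ\phi_t^{-1}|_{\mathfrak{a}(t)}$. This is smooth whenever the inverse flow is, and your recursion then gives smoothness of every $W^k(t)$.

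Your concern about differentiating maps into the varying spaces $S(\mathfrak{a}(t))$ is one the paper leaves equally implicit, since its lemma already differentiates such families. So it is not a gap relative to the paper's proof.
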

\begin{proof}
Denote $Z(t) := Q(t) \circ U(t) - U(t) \circ Q(0)$ and observe that
\begin{equation}\nonumber
\frac{dZ(t)}{dt} = \cdots = M(t) \circ Z(t),
\end{equation}
and $Z(0) = 0$. Note that $Z^0(t) = 0$, since $U^0(t) = 0$. By the uniqueness of solutions of ordinary differential equations, $Z \equiv 0$ is the unique solution. Thus,
\[
Q(1) \circ U(1) = U(1) \circ Q(0),
\]
which proves that $U(t)$ is an $L_{\infty}[1]$-algebra morphism.
\end{proof}
We can restate the previous corollary as follows.
\begin{cor}\label{indliso}
For a smooth $1$-parameter family of V-algebras with a Maurer-Cartan element satisfying condition (\ref{pkkpt}), there exists an induced $L_{\infty}[1]$-isomorphism.
\end{cor}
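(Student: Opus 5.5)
The plan is to reduce the statement directly to Corollary \ref{indlm}, checking that the hypotheses of the Cattaneo--Sch\"atz construction are exactly the ones given. Starting from the smooth family $\mathcal{V}(t) = \big(\mathfrak{h}(t), \mathfrak{a}(t), \Pi(t)\big)$ with Maurer--Cartan elements $P(t)\in\mathfrak{h}(t)^1$, I would first construct the flow $\phi_t : \mathfrak{h}(0)\to\mathfrak{h}(t)$ generated by $m_t$. Condition (\ref{pkkpt}) guarantees that $\phi_t$ carries $\ker\Pi(0)$ onto $\ker\Pi(t)$. Hence $\Pi_t\circ\phi_t$ descends to a linear isomorphism $\mathfrak{a}(0)\cong\mathfrak{h}(0)/\ker\Pi(0)\to\mathfrak{h}(t)/\ker\Pi(t)\cong\mathfrak{a}(t)$.

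Next I would form the coalgebra maps $Q(t)$, $M(t)$ and $U(t)$ exactly as displayed above. By Lemma \ref{valinf}, each $Q(t)$ is the codifferential encoding the $L_\infty[1]$-structure $\{l^{P(t)}_k\}$ on $\mathfrak{a}(t)$. The two ODEs of \cite[Lemmata 3.3 \& 3.5]{CS} then hold. Setting $Z(t)=Q(t)\circ U(t)-U(t)\circ Q(0)$ and differentiating using both ODEs, I expect
\[
\frac{dZ(t)}{dt} = M(t)\circ Z(t), \qquad Z(0)=0 .
\]
This is a linear ODE, and it splits componentwise on $S^k$, since $M$ and $U$ are coalgebra maps and $U^0=0$. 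Uniqueness of solutions therefore forces $Z\equiv 0$, so $U(1)$ intertwines the codifferentials and is an $L_\infty[1]$-morphism $\mathfrak{a}(0)\to\mathfrak{a}(1)$.

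For invertibility, I would note that the linear component $U^1(1)=\Pi_1\phi_1$ is a linear isomorphism by the first step. For coalgebra maps between cofree cocommutative coalgebras, invertibility of the linear part implies invertibility of the whole map. The inverse is built recursively in $k$: its $k$-th component is solved using $(U^1(1))^{-1}$ and lower components, which is the standard argument. Alternatively, one can run the same ODE backwards in $t$ to produce the inverse directly. Either way $U(1)$ is an $L_\infty[1]$-isomorphism, which is the claim.

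The main obstacle is the derivation of $dZ/dt=M(t)\circ Z(t)$ in the infinite-dimensional setting. This requires that the flow $\phi_t$ exists, that $Q$, $M$ and $U$ are differentiable in $t$, and that the ODE is well-posed componentwise. I would handle this by working degree by degree in $S^k$, where each component is a finite recursion in lower components. This reduces the uniqueness statement to a triangular system of linear ODEs, while the analytic input is taken from \cite{CS}.
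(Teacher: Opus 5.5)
Your proposal is correct and follows the paper's route: Corollary \ref{indliso} is stated in the paper as a restatement of Corollary \ref{indlm}, whose proof is your argument. There one sets $Z(t)=Q(t)\circ U(t)-U(t)\circ Q(0)$, shows $\frac{dZ}{dt}=M(t)\circ Z(t)$ with $Z(0)=0$, and concludes $Z\equiv 0$ by uniqueness of ODE solutions. Your extra step goes beyond the paper: you use condition (\ref{pkkpt}) to see that $U^1(1)=\Pi_1\phi_1|_{\mathfrak{a}(0)}$ is a linear isomorphism, then invert $U(1)$ recursively on $S^k$, which supplies the invertibility that Corollary \ref{indlm} asserts but its proof never argues.
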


\subsection{Curved $L_{\infty}[1]$-algebra structure on $\Gamma(\bigwedge\nolimits^{\bullet}NW)$}
In our geometric context, a V-algebra is realized on the section space of the exterior algebra of the vertical bundle of a presymplectic manifold $(W,\omega_W)$, giving rise to a curved (i.e., $l_0 \neq 0$ in general) $L_{\infty}[1]$-algebra associated to $W$.

Let $\pi\colon F \to W$ be a vector bundle over a presymplectic manifold $(W, \omega_W)$ and $\sigma : W \rightarrow F$ a smooth section of $\pi$. (Here, $F$ should not be confused with the obstruction bundle for a Kuranishi chart introduced in Section 2.1.) Let $VF \subset TF$ be the vertical tangent bundle, that is, for each $x \in W$, the fiber is $V_x F := \ker(d\pi_x)$. We write
\[
NW := VF|_{\sigma(W)}
\]
for the vector bundle
\[
VF|_{\sigma(W)} \rightarrow \sigma(W) \subset F \stackrel{\pi}{\to} W
\]
over $W$, which is canonically isomorphic to $F$.

We are interested in (i) the degree-shifted section space of the exterior algebra bundle $\bigwedge^{\bullet}NW$,
\begin{equation}\nonumber
A[1] := \Gamma\left(W, \bigwedge\nolimits^{\bullet}NW\right)[1] = \Gamma\left(W, \bigwedge\nolimits^{\bullet+1}NW\right),
\end{equation}
(ii) that of $\bigwedge^{\bullet}TNW$,
\begin{equation}\label{tnulim}
\Gamma\left(NW, \bigwedge\nolimits^{\bullet}TNW\right)[1] = \Gamma\left(NW, \bigwedge\nolimits^{\bullet+1}TNW\right),
\end{equation}
and (iii) the completion of (\ref{tnulim}) at the image $\sigma(W)$,
\begin{equation}\nonumber
\lim\limits_{\longleftarrow}\frac{\Gamma(NW, \bigwedge\nolimits^{\bullet+1}TNW)}{\big(I(W)|_{NW}\big)^n \cdot \Gamma(NW, \bigwedge\nolimits^{\bullet+1}TNW)},
\end{equation}
where $I(W) := \{f \in C^{\infty}(NW) \mid f \circ \sigma \equiv 0\}$.

\begin{lem}
We have
\begin{equation}\label{iotainvlim}
\widehat{S}_A(\mathrm{Der}(A)[-1])[1] \simeq \lim\limits_{\longleftarrow}\frac{\Gamma(NW, \bigwedge\nolimits^{\bullet+1}TNW)}{\big(I(W)|_{NW}\big)^n \cdot \Gamma(NW, \bigwedge\nolimits^{\bullet+1}TNW)}.
\end{equation}
\end{lem}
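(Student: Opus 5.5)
The plan is to prove (\ref{iotainvlim}) by identifying both sides with functions on the same graded manifold, namely the shifted cotangent bundle of $NW$. I would establish the identification in local coordinates adapted to $\sigma(W)\subset NW$, check that it is independent of those coordinates, and then pass to inverse limits. I use $NW\simeq F$ throughout and replace $\sigma$ by the zero section via the fiberwise translation $v\mapsto v-\sigma(\pi(v))$. This is a diffeomorphism of $NW$ carrying $I(W)$ to the ideal of the zero section, so it suffices to treat the zero section.

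\emph{Step 1 (local model of the right-hand side).} Choose a chart $(x^1,\dots,x^n)$ on $W$ and a local frame of $NW$, giving fiber coordinates $\nu^1,\dots,\nu^r$. Then $I(W)$ is generated by the $\nu^a$. Moreover $\Gamma(\bigwedge^{\bullet+1}TNW)$ is free over $C^\infty(NW)$ on the monomials in $\partial/\partial x^i$ and $\partial/\partial\nu^a$. By Borel's lemma and Taylor's theorem with remainder in $\nu$ (with $x$ as parameters),
\[
C^\infty(NW)/I(W)^n\simeq C^\infty(W)[\nu^1,\dots,\nu^r]_{<n}.
\]
Hence the inverse limit on the right of (\ref{iotainvlim}) is locally $C^\infty(W)[[\nu]]\otimes\bigwedge[\partial_{x},\partial_{\nu}]$, shifted by one. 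This uses the same flatness argument as in the lemma following Definition \ref{ladef}: free modules of finite rank commute with the inverse limit.

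\emph{Step 2 (local model of the left-hand side).} Here $A=\Gamma(\bigwedge^\bullet NW)$ is locally $C^\infty(W)[\theta^1,\dots,\theta^r]$ with odd $\theta^a$. Its derivations form a free $A$-module on $\partial/\partial x^i$ and $\partial/\partial\theta^a$; this rests on the fact that derivations of $C^\infty(W)$ are vector fields. Therefore $\widehat S_A(\mathrm{Der}(A)[-1])$ is locally the completed algebra $C^\infty(W)[\theta][[\xi_i,\eta_a]]$, where $\xi_i$ is odd and $\eta_a$ is even, each sitting in the shifted degree. I would then match generators: $x^i\leftrightarrow x^i$, $\xi_i\leftrightarrow\partial/\partial x^i$, the even generator $\eta_a\leftrightarrow\nu^a$, and $\theta^a\leftrightarrow\partial/\partial\nu^a$. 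This is the standard odd Legendre transform $T^*[1](E[1])\simeq T^*[1]E^*$. Under it, the formal power series in $\eta$ on the left correspond to the $I(W)$-adic completion in $\nu$ on the right. Completing in $\mathrm{Der}(A)$ corresponds to completing in the $\nu$ variables, which is why the inverse limit is needed. The matching holds up to replacing $NW$ by its dual; I would fix this by using the fiber metric implicit in Choice \ref{3choi} (the orthonormal frame), or by reading $\bigwedge^\bullet NW$ as sections of the dual as in the paper's Koszul conventions.

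\emph{Step 3 (globalization and brackets).} Under a change of chart and frame, $x'=x'(x)$ and $\nu'=g(x)\nu$ are linear in the fiber. Then $\partial/\partial\nu$ transforms by $g^{-T}$, and $\partial/\partial x$ picks up a term linear in $\nu\,\partial_\nu$. The same transformation laws hold on the left, since $\partial_{x}$ acting on $A$ picks up $\theta\,\partial_\theta$ terms with the matching matrix. So the local isomorphisms glue. Both sides are sheaves, and the inverse limits commute with restriction by Mittag-Leffler, because the projections are surjective. Finally I would check that the isomorphism intertwines the graded commutator of derivations with the Nijenhuis–Schouten bracket, and that it sends $A[1]$ and $\Pi$ to fiber-constant multivectors and restriction to $\sigma(W)$. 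It suffices to check this on the generators listed in Step 2, using the Leibniz rule for both brackets.

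I expect the main obstacle to be Step 3. The delicate points are the sign conventions in the parity exchange $\theta\leftrightarrow\partial_\nu$ and the dual/non-dual bookkeeping for $NW$. Getting the bracket compatibility exactly right, rather than only up to sign, is where the work lies. The analytic completion in Step 1 should be routine.
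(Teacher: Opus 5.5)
Your plan is the paper's argument written out in coordinates. The paper regards $A$ as functions on $N^*[1]W$, so that $\widehat S_A(\mathrm{Der}(A)[-1])[1]$ is the completed algebra of multivector fields there. It then invokes Roytenberg's Legendre transform to identify this with multivector fields on the formal neighbourhood of $W$ in $NW$, which is exactly your $T^*[1](E[1])\simeq T^*[1]E^*$ matching together with the completion of Step 1. The overall approach is sound, but Step 3 needs one correction.

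First, no fibre metric or dualization is needed. Under your matching, $\theta^a$ is a frame of $NW$ and goes to $\partial/\partial\nu^a$, which along the zero section is the same frame. Likewise $\eta_a$ pairs with the dual coframe and goes to the linear function $\nu^a$, a section of $N^*W$.

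Second, the ``matching matrix'' in Step 3 holds only up to an overall sign at the level of derivations. In rank one, take $\nu'=g(x)\nu$, so that $\theta=g\,\theta'$. Then the derivation $\partial_x$ of $A$ equals $\partial'_x-(\partial_x g/g)\,\theta'\partial_{\theta'}$, while the vector field $\partial_x$ on $NW$ equals $\partial'_x+(\partial_x g/g)\,\nu'\partial_{\nu'}$. Consequently the local maps glue, and the generator brackets $[\eta,\theta]$ and $[\xi,x]$ go to the right ones, only under one of two fixes:
\begin{itemize}
\item use the Koszul sign of the suspension consistently, i.e.\ $s(\theta'\partial_{\theta'})=-\theta'\cdot s(\partial_{\theta'})$; or
\item working without Koszul signs, twist one generator, e.g.\ $\eta_a\mapsto-\nu^a$.
\end{itemize}
This is the same sign that appears in the classical identification $T^*E\cong T^*E^*$, and it is the sign issue you anticipated as the main obstacle.
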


\begin{proof}[Proof-sketch]
We follow the arguments in \cite[Subsection 4.1]{CS}. First, note that we can regard $A := \Gamma\left(W, \bigwedge^{\bullet}NW\right)$ as the function algebra on $N^*[1]W$, that is, the dual vertical bundle with degree-shifted fibers. Hence $\widehat{S}_A\left(\mathrm{Der}(A)[-1]\right)[1]$ is the completed Gerstenhaber algebra of multivector fields on $N^*[1]W$, which is isomorphic to the completed Gerstenhaber algebra of multivector fields on $NW$ under the Legendre transform in its version studied in \cite{Roytenberg}. Finally, we can identify this with the right-hand side of (\ref{iotainvlim}), namely the Gerstenhaber algebra of multivector fields on the formal neighborhood of $W$ in $NW$.
\end{proof}

Let $P$ be a Poisson structure on $F$, i.e., $P \in \Gamma\left(F, \bigwedge\nolimits^{1}TF\right)[1]$ satisfying $[P,P]=0$ and the Jacobi identity. Fixing an embedding $\iota : NW \hookrightarrow F$ such that $\mathrm{Im}\,\sigma \subset NW$, we can readily see that there exists an isomorphism (still denoted by $\iota$) of graded Lie algebras:
\begin{equation}\nonumber
\iota : \lim\limits_{\longleftarrow}\frac{\Gamma(NW, \bigwedge\nolimits^{\bullet+1}TNW)}{\big(I(W)|_{NW}\big)^n \cdot \Gamma(NW, \bigwedge\nolimits^{\bullet+1}TNW)}
\xrightarrow{\simeq} \lim\limits_{\longleftarrow}\frac{\Gamma(F, \bigwedge\nolimits^{\bullet+1}TF)}{I(W)^n \cdot \Gamma(F, \bigwedge\nolimits^{\bullet+1}TF)},
\end{equation}
by virtue of the isomorphism $NW \simeq F$. Then $P$ determines an element on the right-hand side and hence on the left-hand side (still denoted by $P$), still satisfying $[P, P] = 0$ and the Jacobi identity.

We denote
\begin{enumerate}
\item[--] $\mathfrak{h} := \lim\limits_{\longleftarrow}\frac{\Gamma(NW, \bigwedge\nolimits^{\bullet+1}TNW)}{(I(W)|_{NW})^n \cdot \Gamma(NW, \bigwedge\nolimits^{\bullet+1}TNW)}$,
\item[--] $\mathfrak{a} := \Gamma\left(W, \bigwedge\nolimits^{\bullet+1}NW\right)$,
\item[--] $\Pi : \mathfrak{h} \rightarrow \mathfrak{a}$, the restriction to $W$ followed by the fiberwise projection map.
\end{enumerate}

\begin{prop}\label{afthap}
$(\mathfrak{h},\mathfrak{a},\Pi)$ as defined above is a V-algebra. Thus, with Maurer-Cartan element $P$ and Lemma \ref{valinf}, we obtain a curved $L_{\infty}[1]$-algebra $\{l^{P}_k\}_{k \geq 0}$.
\end{prop}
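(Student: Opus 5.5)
The plan is to check the four defining conditions of a V-algebra (Definition \ref{Voronov1}) for $(\mathfrak{h},\mathfrak{a},\Pi)$ one at a time. Once they hold, the $L_{\infty}[1]$-statement follows from Lemma \ref{valinf} applied to the Maurer-Cartan element $P$.

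First, the graded Lie structure on $\mathfrak{h}$. The Nijenhuis--Schouten bracket on $\Gamma(NW,\bigwedge^{\bullet+1}TNW)$ is a graded Lie bracket. It is a bidifferential operator, so for any vanishing ideal it satisfies
\[
[I^n\cdot \mathfrak{g}, \mathfrak{g}] \subset I^{n-1}\cdot \mathfrak{g}, \qquad I=I(W)|_{NW},\ \mathfrak{g}=\Gamma(NW,\bigwedge\nolimits^{\bullet+1}TNW).
\]
Hence the bracket is continuous for the $I$-adic filtration and descends to the inverse limit. The graded Jacobi identity and graded antisymmetry survive the limit. Equivalently, one can transport the Lie structure through the isomorphism (\ref{iotainvlim}) with $\widehat{S}_A(\mathrm{Der}(A)[-1])[1]$, where it is the standard one.

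Second, $\mathfrak{a}$ as an abelian subalgebra, together with the projection $\Pi$. We embed $\mathfrak{a}=\Gamma(W,\bigwedge^{\bullet+1}NW)$ into $\mathfrak{h}$ as the fiberwise-constant vertical multivector fields, that is, vertical polyvectors on $NW$ pulled back along $\pi$ and invariant under fiberwise translation. In a local coordinate system $(y,w)$ these have the form $\sum f_I(y)\,\partial_{w_I}$. The Schouten bracket of two such fields vanishes, because the coefficients do not depend on $w$ and the vector fields $\partial_{w}$ do not act on $y$. So $\mathfrak{a}$ is abelian; this is exactly Lemma \ref{vallem}, with $A[1]$ abelian. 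The map $\Pi$, defined as restriction to $\sigma(W)$ followed by the fiberwise projection $T NW|_W\to NW$, is well defined on the completion because it factors through the $n=1$ quotient. Restricting a fiberwise-constant vertical field returns the same field, so $\Pi|_{\mathfrak{a}}=\mathrm{id}$, and $\Pi$ is a projection onto $\mathfrak{a}$.

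Third, and this is the main point, $\ker\Pi$ must be a Lie subalgebra. The kernel is spanned, topologically, by two kinds of elements: those with coefficients in $I$, and those involving at least one horizontal factor $\partial_y$. To see closure, I would work locally in adapted coordinates and treat Taylor expansion in $w$ as a grading. A bracket of two kernel elements that lies in $\mathfrak{a}$-directions can only come from a $\partial_w$ hitting a coefficient vanishing to first order along $W$. Such a term keeps a coefficient in $I$, or it retains a horizontal factor $\partial_y$; in either case it stays in the kernel. This is the classical computation of Voronov and Cattaneo--Sch\"atz for a submanifold of a Poisson manifold, and I would cite \cite[Subsection 4.1]{CS} for it. A cleaner route is to pass through (\ref{iotainvlim}) and Lemma \ref{vallem}, where the kernel is the ideal generated by $\mathrm{Der}(A)$-terms and is manifestly closed.

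Finally, $P$ lies in degree $1$ and satisfies $[P,P]=0$, so it is a Maurer-Cartan element. Lemma \ref{valinf} then produces the curved $L_{\infty}[1]$-algebra $\{l^P_k\}_{k\ge0}$. The only step I expect to need real care is the third one, particularly checking that closure of $\ker\Pi$ is compatible with the completion; the filtration estimate from the first step handles that compatibility.
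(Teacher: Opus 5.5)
Your proposal is correct and follows essentially the same route as the paper: you check the four V-algebra axioms in turn, namely that the Nijenhuis--Schouten bracket descends to $\mathfrak{h}$ via $[I^j\Gamma,\Gamma]\subset I^{j-1}\Gamma$, that $\mathfrak{a}$ is abelian as the fiberwise-constant vertical multivectors, that $\ker\Pi$ is closed under the bracket because horizontal factors and vanishing along $W$ are preserved, and that $P$ is a Maurer--Cartan element in the limit, before invoking Lemma \ref{valinf}. Your description of $\ker\Pi$ (coefficients in $I$ \emph{or} a horizontal factor) is slightly more accurate than the paper's, which mentions only the horizontal components, and your check that $\Pi$ factors through the first quotient and restricts to the identity on $\mathfrak{a}$ fills in small points the paper leaves implicit.
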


\begin{proof}
We can apply Lemma \ref{vallem}, or directly verify that the triple $(\mathfrak{h},\mathfrak{a},\Pi)$ satisfies the axioms in Definition \ref{Voronov1} as follows.
\begin{enumerate}[label=(\roman*)]
\item (\textit{$\mathfrak{h}$ is a graded Lie algebra over $\mathbf{k}$.}) We write $\Gamma$ for $\Gamma\left(NW, \bigwedge\nolimits^{\bullet+1}TNW\right)$ and $I$ for $I(W)$ for convenience. First, we show that the Nijenhuis-Schouten bracket $[\,,\,]_{\Gamma}$ on $\Gamma\left(NW, \bigwedge\nolimits^{\bullet+1}TNW\right)$ determines a bracket on $\mathfrak{h}$. For $j \geq 2$ and $\xi + I^j\Gamma,\, \xi' + I^j\Gamma \in \Gamma/I^j\Gamma$, we have
\begin{equation}\nonumber
\begin{aligned}
\Gamma / I^{j} \Gamma \otimes \Gamma / I^{j} \Gamma &\rightarrow \Gamma / I^{j-1} \Gamma,\\
(\xi + I^{j} \Gamma) \otimes (\xi' + I^{j} \Gamma) &\mapsto [\xi, \xi']_{\Gamma} + I^{j-1} \Gamma,
\end{aligned}
\end{equation}
which is well-defined because for other representative choices $\xi + \eta$ and $\xi' + \eta'$ with $\eta, \eta' \in I^j\Gamma$, we have
\begin{equation}\nonumber
[\xi + \eta, \xi' + \eta']_{\Gamma} - [\xi, \xi']_{\Gamma} = [\xi, \eta']_{\Gamma} + [\eta, \xi']_{\Gamma} + [\eta, \eta']_{\Gamma} \in I^{j-1}\Gamma,
\end{equation}
by the definition of the Nijenhuis-Schouten bracket. Moreover, such operations for all distinct $j$'s are compatible in the sense that the following diagram commutes.
\begin{equation}\nonumber
\begin{tikzcd}
\Gamma / I^{j} \Gamma \otimes \Gamma / I^{j} \Gamma  \arrow{r}{[\,,\,]_{\Gamma}} & \Gamma / I^{j-1} \Gamma\\
\Gamma / I^{j+1} \Gamma \otimes \Gamma / I^{j+1} \Gamma  \arrow{r}{[\,,\,]_\Gamma} \arrow{u}{p_{j+1,j} \otimes p_{j+1,j}} & \Gamma / I^{j} \Gamma. \arrow{u}{p_{j,j-1}}
\end{tikzcd}
\end{equation}
Here, $p_{j+1,j}$ is the canonical projection from $\Gamma/I^j\Gamma$ to $\Gamma/I^{j-1}\Gamma$ appearing in the inverse system $\left\{\Gamma/I^j\Gamma\right\}_j$. The axioms (e.g., bilinearity, antisymmetry, Jacobi identity, and compatibility with the grading) for a graded Lie algebra follow immediately from those for the bracket $[\,,\,]_{\Gamma}$.

\item (\textit{$\mathfrak{a}$ is an abelian Lie subalgebra of $\mathfrak{h}$.}) $\mathfrak{a}$ is equipped with the bracket $[\,,\,]_{\mathfrak{a}}$, given by the graded commutator for the natural multiplication of multivector fields $\bigwedge\nolimits^{\bullet+1}TNW$. Since the multiplication is graded commutative, the Schouten-Nijenhuis bracket vanishes for $0$-multivector fields, i.e., elements in $\Gamma\left(W, \bigwedge\nolimits^{\bullet+1}NW\right) \subset \Gamma\left(W, \bigwedge\nolimits^{\bullet+1}TNW\right)$. It is also straightforward to verify that the bracket $[\,,\,]$ from (i) restricts to $[\,,\,]_{\mathfrak{a}}$.

\item (\textit{$\ker\Pi$ is a Lie subalgebra of $\mathfrak{h}$.})
The Nijenhuis-Schouten bracket of multivectors with nonnegative degrees yields a multivector with nonnegative degree, which follows from two facts: (a) $\ker\Pi$ consists of linear combinations of elements having horizontal components of $TNW$; (b) differentiation in the horizontal direction preserves the vanishing of a function at $W$.

\item (\textit{$P$ on $\Gamma$ induces a Maurer-Cartan element (still denoted by $P$) on $\mathfrak{h}$.}) We consider
\begin{equation}\nonumber
P := \{P + I^n \in \Gamma/I^n\Gamma\}_n.
\end{equation}
We then have
\begin{equation}\nonumber
[P + I^n\Gamma, P + I^n\Gamma] = [P, P] + I^{n-1}\Gamma = 0 + I^{n-1}\Gamma.
\end{equation}
\end{enumerate}
\end{proof}

\subsection{Example from Gotay's embedding}
\cite{Gotay} proves that a presymplectic manifold can be embedded as a coisotropic submanifold in a symplectic manifold. The foliation cotangent bundle arising from a presymplectic structure provides an interesting example via this theorem, which was studied in \cite{OP} using physics-inspired methods. Indeed, we can reformulate their results using V-algebras.

Let $(W^n, \omega_W)$ be a presymplectic manifold. We consider the distribution
\[
T\mathcal{F} := \ker \omega_W \subset TW.
\]
It follows readily from the closedness of $\omega_W$ that $T\mathcal{F}$ is involutive, hence integrable by the Frobenius theorem.

Note that we can choose a splitting of $TW$, that is, a vector bundle $G$ satisfying
\begin{equation}\label{wdecomp}
TW = T\mathcal{F} \oplus G.
\end{equation}

Let $(y_1, \cdots, y_k, q_1, \cdots, q_{n-k})$ be local coordinates of $x$ in $W$, where $(q_1, \cdots, q_{n-k})$ are the foliation coordinates, that is, $y_i = c_i$, $i = 1, \cdots, k$, form the plaque for the foliation near $x$. In these coordinates, we have
\begin{equation}\label{txfgxq}
\begin{split}
T_x\mathcal{F} &= \mathrm{span} \Bigg\{ \frac{\partial}{\partial q_1}, \cdots, \frac{\partial}{\partial q_{n-k}} \Bigg\}, \\
G_x &= \mathrm{span} \Bigg\{ \frac{\partial}{\partial y_i} + \sum\limits_{\alpha=1}^{m} R^{\alpha}_i \frac{\partial}{\partial q_{\alpha}} \Bigg\}_{1 \leq i \leq k}
\end{split}
\end{equation}
for some functions $R^{\alpha}_i$ in $y_i$'s and $q_{\alpha}$'s. Here, $R^{\alpha}_i$ can be regarded as the Christoffel symbol for the `connection' determined by the decomposition (\ref{wdecomp}).

\begin{exam}\label{gtyemb}
We present an example arising from \cite{Gotay}: Any presymplectic manifold can be coisotropically embedded into a symplectic manifold. Let $T^*\mathcal{F} \to W$ be the foliation cotangent bundle, that is, the dual bundle to the foliation tangent bundle arising from the involutive distribution $T\mathcal{F} \subset TW$. Gotay's theorem is realized by the vector bundle $F := T^*\mathcal{F}$ equipped with the symplectic form
\begin{equation}\label{otf}
\omega_{T^*\mathcal{F}} := \pi^*\omega_W - d\theta,
\end{equation}
where $\theta$ is the canonical one-form. It is straightforward to show that $\omega_{T^*\mathcal{F}}$ is nondegenerate, hence a symplectic form. Gotay's theorem states that on $T^*\mathcal{F}$ there exists a coisotropic embedding
\begin{equation}\nonumber
\sigma : (W, \omega_{W}) \hookrightarrow (T^*\mathcal{F}, \omega_{T^*\mathcal{F}}),
\end{equation}
such that $\sigma(W)$ coincides with the zero section of $T^*\mathcal{F}$.

With respect to the symplectic structure from $\omega_{T^*\mathcal{F}}$, we obtain a Poisson structure $P \in \Gamma\left(T^*\mathcal{F}, \bigwedge\nolimits^2 TT^*\mathcal{F}\right)$, i.e., a bivector field $P \in \Gamma\left(F, \bigwedge\nolimits^{2}TF\right)$ satisfying $[P,P]=0$ for the Nijenhuis-Schouten bracket $[\,,\,]$. In local coordinates, it is written as
\begin{equation}\label{pico}
P = \frac{1}{2} \sum\limits_{i,j}\omega^{ij} e_i \wedge e_j + \sum\limits_{\alpha}\frac{\partial}{\partial q^{\alpha}} \wedge \frac{\partial}{\partial p_{\alpha}},
\end{equation}
where
\begin{equation}\label{eppy}
e_j := \frac{\partial}{\partial y_j} + \sum\limits_{\alpha} R_j^{\alpha} \frac{\partial}{\partial q^{\alpha}} - \sum\limits_{\beta, \nu} p_{\beta} \frac{\partial R^{\beta}_j}{\partial q^{\nu}} \frac{\partial}{\partial p_{\nu}},
\end{equation}
with $R^{\alpha}_j$ from (\ref{txfgxq}). We refer the reader to Sections 7 through 9 of \cite{OP} for a detailed analysis.

For the zero section $\sigma \equiv 0$ of $T^*\mathcal{F}$, there exists a canonical decomposition
\[
T_{(x,0)}T^*F = T_xW \oplus T^*_xF
\]
at $x \in W$ into the horizontal and vertical components. Then we have
\[
NW = \bigcup\limits_{x \in W} T^*_x\mathcal{F} = T^*\mathcal{F}.
\]
In this case, $\mathfrak{h}$, $\mathfrak{a}$, and $\Pi$ in Proposition \ref{afthap} for the V-algebra are identified as follows:
\begin{equation}\nonumber
\begin{split}
\mathfrak{h} &:=
\lim\limits_{\longleftarrow}\frac{\Gamma(T^*\mathcal{F}, \bigwedge\nolimits^{\bullet+1}TT^*\mathcal{F})}{(I(W)|_{T^*\mathcal{F}})^n \cdot \Gamma(T^*\mathcal{F}, \bigwedge\nolimits^{\bullet+1}TT^*\mathcal{F})},\\
\mathfrak{a} &:= \Gamma\left(W; \bigwedge\nolimits^{\bullet+1}NW\right) = \Gamma\left(W; \bigwedge\nolimits^{\bullet+1}T^*\mathcal{F}\right) = \Omega^{\bullet+1}\left(\mathcal{F}\right),
\end{split}
\end{equation}
and $\Pi$ is the projection onto the subspace of $\mathfrak{h}$ generated by elements of the form $\frac{\partial}{\partial p_{i_1}} \wedge \cdots \wedge \frac{\partial}{\partial p_{i_l}}$ for $i_1, \cdots, i_l$ and $l \geq 1$, followed by evaluation at $p_i = 0$ for all $i$. With a choice of Poisson structure, we obtain an $L_{\infty}[1]$-algebra structure on $\mathfrak{a} = \Omega^{\bullet+1}(\mathcal{F})$, that is, on the degree-shifted foliation de Rham complex, by Proposition \ref{afthap}. We write $\left\{l^{\mathcal{F}}_k\right\}_{k \geq 0}$ for the resulting $L_{\infty}[1]$-algebra.
\end{exam}

\begin{lem}\label{liiifdr} We have:
\begin{enumerate}[label=(\roman*)]\label{strlinf}
\item $\left\{l^{\mathcal{F}}_k\right\}$ is strict, i.e., $l^{\mathcal{F}}_0 = 0$.
\item $l_1^{\mathcal{F}}$ coincides with the foliation de Rham differential $d_{\mathcal{F}}$.
\item For the zero presymplectic form, i.e., the case when $T\mathcal{F} = TU$, $l^{\mathcal{F}}_1$ is the ordinary de Rham differential and all $l^{\mathcal{F}}_k$ with $k \geq 2$ vanish.
\item For different choices of the splitting (\ref{wdecomp}), we obtain \textit{isomorphic} $L_{\infty}[1]$-algebras.
\item For different choices of local coordinate system, we obtain \textit{isomorphic} $L_{\infty}[1]$-algebras.
\end{enumerate}
\end{lem}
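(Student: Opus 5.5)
The plan is to compute directly with the coordinate expression (\ref{pico})--(\ref{eppy}) of the Poisson bivector $P$ on $F=T^*\mathcal{F}$, together with the explicit description of $\Pi$ from Example \ref{gtyemb}. That description says $\Pi$ keeps only the purely vertical components $\frac{\partial}{\partial p_{i_1}}\wedge\cdots\wedge\frac{\partial}{\partial p_{i_l}}$ and then evaluates at $p=0$. An element $\xi\in\mathfrak{a}=\Omega^{\bullet+1}(\mathcal{F})$ is written as $\xi=\sum_I f_I(y,q)\,\partial_{p_{I}}$, with coefficients independent of $p$. Items (i)--(iii) will be proved by local computation. Items (iv)--(v) will be reduced to Corollary \ref{indliso}, or to the intrinsic nature of the construction.

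For (i), $l^{\mathcal{F}}_0=\Pi P$. In (\ref{pico}) the terms $\omega^{ij}e_i\wedge e_j$ contain a horizontal factor, and their only vertical parts carry a factor $p_\beta$, which vanishes at $p=0$. The mixed terms $\partial_{q^\alpha}\wedge\partial_{p_\alpha}$ are not purely vertical. Hence $\Pi P=0$.

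For (ii), I will expand $[P,\xi]$. Bracketing $\frac12\omega^{ij}e_i\wedge e_j$ with $\xi$ produces only terms that retain a factor $e_i$ or $e_j$. Each $e_i$ has a nonzero $\partial_{y_i}$-component, and the vertical part of $e_i$ is $O(p)$. So these terms are killed by $\Pi$ after setting $p=0$. Bracketing $\partial_{q^\alpha}\wedge\partial_{p_\alpha}$ with $f_I\partial_{p_I}$ gives $\sum_\alpha \partial_{q^\alpha}f_I\,\partial_{p_\alpha}\wedge\partial_{p_I}$, because $\partial_{p_\alpha}f_I=0$. This is exactly $d_{\mathcal{F}}\xi$ under the identification $\partial_{p_\alpha}\leftrightarrow dq^\alpha$. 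Signs will be matched to the chosen degree shift.

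For (iii), when $T\mathcal{F}=TU$ there are no $y$-coordinates, so $R^\alpha_i$ and $\omega^{ij}$ are absent and $P=\sum_\alpha\partial_{q^\alpha}\wedge\partial_{p_\alpha}$. By (ii), $l_1$ is the ordinary de Rham differential. Moreover $[P,\xi_1]$ is purely vertical with $p$-independent coefficients, and so is $\xi_2$. The Schouten bracket of two such multivectors vanishes, since it only involves $\partial_p$-derivatives of the coefficients. Hence every iterated bracket of length $\ge 2$ is zero, and $l_k=0$ for $k\ge2$.

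For (iv), complements $G$ of $T\mathcal{F}$ form an affine space. Two splittings $G_0,G_1$ can therefore be joined by $G_t$, with corresponding $R^\alpha_i(t)$ varying smoothly. This gives a smooth family of bivectors $P(t)$ of the form (\ref{pico}), each Poisson because each comes from a symplectic form of Gotay type (\ref{otf}) built with the splitting $G_t$. The ambient $\mathfrak{h}$ and $\mathfrak{a}$ stay fixed. Corollary \ref{indliso} then yields an $L_\infty[1]$-isomorphism, once condition (\ref{pkkpt}) is verified. That condition says $\ker\Pi(t)$ is carried to itself along the flow. I would check it by noting that $\ker\Pi(t)$ is spanned by multivectors with a horizontal factor or with coefficients in $I(W)$, which is independent of $t$ because only the horizontal lifts $e_j$ change. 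I expect this verification, and in particular confirming that the flow $\phi_t$ and the generator $m_t$ really preserve this description, to be the main obstacle.

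For (v), the bivector $P$ is the Poisson structure of the globally defined form $\omega_{T^*\mathcal{F}}$, and $\Pi$ depends only on the splitting, not on the coordinates. A change of foliated coordinates therefore transforms (\ref{pico}) covariantly. The induced map on $\Omega^{\bullet+1}(\mathcal{F})$ is then a strict isomorphism commuting with all $l_k$; alternatively, one can again appeal to Corollary \ref{indliso}, using a path of coordinate systems on the contractible $W$.
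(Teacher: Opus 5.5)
Your proof is correct, and for (i)--(iv) it is the paper's argument. Parts (i)--(iii) are the same direct computation with (\ref{pico})--(\ref{eppy}) and the coordinate description of $\Pi$. Your remark that every term of $[\frac12\omega^{ij}e_i\wedge e_j,\xi]$ keeps a wedge factor $e_i$ or $e_j$, whose vertical part is $O(p)$, is a cleaner justification of the vanishing that the paper simply asserts. Part (iv) is the same appeal to Corollary \ref{indliso} along a path of splittings, with $(\mathfrak{h},\mathfrak{a},\Pi)$ held fixed.

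You genuinely differ only in (v). The paper again runs a one-parameter family through Corollary \ref{indliso}. You instead observe that nothing in the construction depends on coordinates. The bivector $P$ is the Poisson structure of the globally defined form $\omega_{T^*\mathcal{F}}=\pi^*\omega_W-d\theta$, and $\Pi$ is defined through the canonical decomposition $T_{(x,0)}T^*\mathcal{F}=T_xW\oplus T^*_x\mathcal{F}$ along the zero section. Hence the identity of $\Omega^{\bullet+1}(\mathcal{F})$ is already a strict isomorphism. Your route is shorter, gives an isomorphism with no higher components, and avoids connecting two coordinate systems by a path and re-checking the hypotheses of Corollary \ref{indliso}. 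The paper's route only buys uniformity with (iv). One small correction: by the same canonical decomposition, $\Pi$ is independent of the splitting too, not just of coordinates. Only $P$ sees the splitting, through $\theta$.

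As for the obstacle you anticipate in (iv): $\Pi$ does not move with $G_t$, so $\ker\Pi(t)=\ker\Pi(0)$ on the nose. That is why the paper calls (\ref{pkkpt}) trivial. What actually has to be supplied, in your proposal and implicitly in the paper, is a generator $m_t$ with $\dot P_t=[m_t,P_t]$. This is what underlies $\frac{dQ}{dt}=M\circ Q-Q\circ M$.

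Moser's trick gives it. We have $\dot\omega_t=-d\dot\theta_t$, and $\dot\theta_t$ is fiberwise linear, hence zero along the zero section. So $\iota_{X_t}\omega_t=\dot\theta_t$ defines a vector field $X_t$ vanishing along $W$, whose flow $\phi_t$ satisfies $\phi_t^*\omega_t=\omega_0$. Thus $\phi_t$ has the following properties:
\begin{enumerate}
\item it carries $P_0$ to $P_t$;
\item it fixes $W$ pointwise, so it preserves $I(W)$ and acts on the completion $\mathfrak{h}$;
\item it preserves the ideal generated by $TW$ along $W$, i.e.\ $\ker\Pi$.
\end{enumerate}
This is exactly (\ref{pkkpt}), with $m_t=\pm X_t\in\ker\Pi$.
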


\begin{proof}
\begin{enumerate}[label=(\roman*)]
\item We have
\begin{equation}\nonumber
\begin{split}
l_0^{\mathcal{F}}(1) &= \Pi P = \Pi \left(\sum\limits_{i,j}\frac{1}{2} \widetilde{\omega}^{ij} e_i \wedge e_j + \sum\limits_{\alpha}\frac{\partial}{\partial q^{\alpha}} \wedge \frac{\partial}{\partial p_{\alpha}}\right)\\
&= \sum\limits_{i,j,\beta,\gamma,\mu,\nu}\frac{1}{2}\widetilde{\omega}^{ij} p_{\beta} p_{\gamma} \frac{\partial R^{\beta}_i}{\partial q^{\nu}} \frac{\partial R^{\gamma}_j}{\partial q^{\mu}}\frac{\partial}{\partial p_{\nu}} \wedge \frac{\partial}{\partial p_{\mu}} \bigg|_{\vec{p} = 0} = 0.
\end{split}
\end{equation}
\item For $\xi = \sum\limits_{\alpha} a_{\alpha} \frac{\partial}{\partial p_{\alpha}} \in \Gamma(T^*\mathcal{F})$ with $a_{\alpha} = a_{\alpha}(\vec{y},\vec{q}) \in C^{\infty}(T^*\mathcal{F})$, we have
\begin{equation}\nonumber
\begin{split}
\Pi [P, \xi] & = \Pi \left[\sum\limits_{i,j}\frac{1}{2} \widetilde{\omega}^{ij} e_i \wedge e_j + \sum\limits_{\alpha}\frac{\partial}{\partial q^{\alpha}} \wedge \frac{\partial}{\partial p_{\alpha}},\sum\limits_{\alpha'}a_{\alpha'} \frac{\partial}{\partial p_{\alpha'}}\right]\\
& = \Pi \left[\sum\limits_{i,j}\frac{1}{2} \widetilde{\omega}^{ij} e_i \wedge e_j ,\sum\limits_{\alpha'}a_{\alpha'} \frac{\partial}{\partial p_{\alpha'}}\right] + \Pi \left[\sum\limits_{\alpha} \frac{\partial}{\partial q^{\alpha}} \wedge \frac{\partial}{\partial p_{\alpha}},\sum\limits_{\alpha'}a_{\alpha'} \frac{\partial}{\partial p_{\alpha'}}\right]\\
& = \sum\limits_{\alpha, \alpha'} \left[ \frac{\partial}{\partial q_{\alpha}}, a_{\alpha'} \frac{\partial}{\partial p_{\alpha'}} \right] \wedge \frac{\partial}{\partial p_{\alpha}} = \sum\limits_{\alpha, \alpha'} \frac{\partial a_{\alpha'}}{\partial q^{\alpha}} \frac{\partial}{\partial p_{\alpha'}} \wedge \frac{\partial}{\partial p_{\alpha}} = d_{\mathcal{F}}(\xi).
\end{split}
\end{equation}
\item This follows directly from the observation that in (\ref{otf}), only the term $d\theta$ survives. Consequently, the Poisson structure given by (\ref{pico}) reduces to
\[
\sum\limits_{\alpha = 1}^{\dim W} \frac{\partial}{\partial q^{\alpha}} \wedge \frac{\partial}{\partial p_{\alpha}}.
\]
Moreover, for $a_{\alpha'} = a_{\alpha'}(\vec{q})$, we have
\[
\begin{split}
l^{\mathcal{F}}_1\left(\sum\limits_{\alpha'}a_{\alpha'} \frac{\partial}{\partial p_{\alpha'}}\right) &= \Pi\left[\sum\limits_{\alpha=1}^{\dim W} \frac{\partial}{\partial q^{\alpha}} \wedge \frac{\partial}{\partial p_{\alpha}}, \sum\limits_{\alpha'}a_{\alpha'} \frac{\partial}{\partial p_{\alpha'}}\right]\\
&= \sum\limits_{\alpha=1}^{\dim W} \sum\limits_{\alpha'} \frac{\partial a_{\alpha'}}{\partial q^{\alpha}} \frac{\partial}{\partial p_{\alpha}} \wedge \frac{\partial}{\partial p_{\alpha'}} = d\left( \sum\limits_{\alpha'}a_{\alpha'} \frac{\partial}{\partial p_{\alpha'}} \right).
\end{split}
\]
On the other hand, all higher-order repeated brackets vanish, as follows.
\[
\begin{split}
l^{\mathcal{F}}_k &\left(\sum\limits_{\alpha'}a_{\alpha'} \frac{\partial}{\partial p_{\alpha'}}, \sum\limits_{\alpha''}a_{\alpha''} \frac{\partial}{\partial p_{\alpha''}}, \cdots \right)\\
&= \Pi\left[ \cdots \left[ \left[\sum\limits_{\alpha=1}^{\dim W} \frac{\partial}{\partial q^{\alpha}} \wedge \frac{\partial}{\partial p_{\alpha}}, \sum\limits_{\alpha'}a_{\alpha'} \frac{\partial}{\partial p_{\alpha'}}\right], \sum\limits_{\alpha''}a_{\alpha''} \frac{\partial}{\partial p_{\alpha''}}\right] \cdots \right]\\
&= \Pi \left[ \cdots \left[ \sum\limits_{\alpha=1}^{\dim W} \sum\limits_{\alpha'} \frac{\partial a_{\alpha'}}{\partial q^{\alpha}} \frac{\partial}{\partial p_{\alpha}} \wedge \frac{\partial}{\partial p_{\alpha'}}, \sum\limits_{\alpha''}a_{\alpha''} \frac{\partial}{\partial p_{\alpha''}} \right] \cdots \right] = 0,
\end{split}
\]
where the last equality holds since $a_{\alpha''}$ is independent of the $\vec{p}$-variables.
\item The splitting (\ref{wdecomp}) affects only the Poisson structure. If we connect two V-algebras with Poisson structures via a $1$-parameter family
\[
\big( (\mathfrak{h}, \mathfrak{a}, \Pi), P_0 \big) \rightsquigarrow
\big( (\mathfrak{h}, \mathfrak{a}, \Pi), P_1 \big)
\]
that preserves the underlying $(\mathfrak{h}, \mathfrak{a}, \Pi)$, then the induced $L_{\infty}[1]$-isomorphism from Corollary \ref{indliso} yields the desired result. Note that condition (\ref{pkkpt}) is satisfied trivially in this case.
\item Following the same approach as in the proof of (iv), we obtain an isomorphic family of V-algebras. The result then follows by applying Corollary \ref{indliso}.
\end{enumerate}
\end{proof}
\begin{notation}
From now on, we write $\left\{l^{\mathcal{F}}_k\right\}_{k \geq 1}$ for this (strict) $L_{\infty}[1]$-algebra, omitting $\mathcal{F}$ when the context is clear.
\end{notation}

In our subsequent discussion of $L_{\infty}$-Kuranishi spaces, the following lemma plays a crucial role.
\begin{lem}[Poincar\'{e} lemma for foliation de Rham complexes]\cite[Theorem 4.1]{MS}\label{pcrlem}
Let $T\mathcal{F}$ be a regular foliation on a simply connected manifold $W$. Then the Poincar\'{e} lemma holds for the foliation de Rham complex $\Omega^*(\mathcal{F})$: if $\xi \in \Omega^{* \geq 1}(\mathcal{F})$ is closed, i.e., $d_{\mathcal{F}}(\xi) = 0$, then there exists $\gamma \in \Omega^*(\mathcal{F})$ such that $d_{\mathcal{F}}(\gamma) = \xi$.
\end{lem}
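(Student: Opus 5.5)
The plan is to prove the statement by a sheaf-theoretic reduction: first prove a \emph{local} leafwise Poincar\'e lemma in foliated charts, then globalize with a \v{C}ech/Leray argument. The global step is where the hypotheses on $W$ and $\mathcal{F}$ really enter, and I will flag below exactly what must be extracted from them.

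\emph{Step 1 (local, parametrized Poincar\'e lemma).} Near any point choose foliation coordinates $(y_1,\dots,y_{n-k},q_1,\dots,q_k)$ on a box $B^{n-k}\times B^k$, with plaques $\{y=\mathrm{const}\}$. Here $\Omega^\bullet(\mathcal{F})$ is the complex of forms in $dq$ with coefficients in $C^\infty(y,q)$, and $d_{\mathcal{F}}=\sum_\alpha dq_\alpha\wedge\partial_{q_\alpha}$. The radial homotopy operator in the $q$-variables,
\[
(K\xi)(y,q)=\int_0^1 \iota_{q\cdot\partial_q}\xi(y,tq)\,t^{\,\cdot}\,dt ,
\]
treats $y$ as a smooth parameter. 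It satisfies $d_{\mathcal{F}}K+Kd_{\mathcal{F}}=\mathrm{id}$ in degrees $\geq1$, and $\mathrm{id}-(\text{evaluation at }q=0)$ in degree $0$. This is the same construction already used in the proof of Lemma \ref{lemab}(ii). Consequently the sheafified complex $0\to\mathcal{C}^\infty_{\mathrm{bas}}\to\underline{\Omega}^0(\mathcal{F})\to\underline{\Omega}^1(\mathcal{F})\to\cdots$ is exact. Here $\mathcal{C}^\infty_{\mathrm{bas}}$ denotes the sheaf of smooth functions constant along leaves.

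\emph{Step 2 (globalization).} Each $\underline{\Omega}^p(\mathcal{F})$ is a $C^\infty_W$-module, hence fine. Therefore $H^p(\Omega^\bullet(\mathcal{F}))\cong H^p(W;\mathcal{C}^\infty_{\mathrm{bas}})$, and the statement reduces to the vanishing $H^{p}(W;\mathcal{C}^\infty_{\mathrm{bas}})=0$ for $p\geq1$.

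\emph{Step 3 (the main obstacle).} This vanishing is where simple connectivity must be used. I would try to show that the foliation is \emph{simple}: a submersion $\rho:W\to B$ onto a manifold leaf space, with contractible fibres. Granting this, $\mathcal{C}^\infty_{\mathrm{bas}}=\rho^{-1}C^\infty_B$. The Leray spectral sequence for $\rho$ then gives $H^p(W;\rho^{-1}C^\infty_B)\cong H^p(B;C^\infty_B)=0$, because the fibres are contractible and $C^\infty_B$ is fine. Simple connectivity kills the leafwise holonomy, so every leaf has a trivial saturated neighbourhood (Reeb stability). What remains is to rule out non-Hausdorff leaf spaces and non-contractible leaves.

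I expect this to be the hard point, and the place where \cite[Theorem 4.1]{MS} must be invoked. Simple connectivity of $W$ alone does not obviously give it; for example, one must exclude Reeb-type foliations of $\mathbb{R}^2$, whose leaf space is non-Hausdorff. So I would first attempt the reduction in the setting actually used in the paper. There $W=W_x\simeq B^n$ carries the global coordinates $(y,q)$ of Choice \ref{3choi}, with $\partial_{q_\alpha}$ tangent to $\mathcal{F}_x$, and $\mathcal{F}_x$ is pulled back from $\overset{\circ}{W}_x$ by $\pi_i$. In that case, after shrinking $W_x$ as permitted by Lemma \ref{lemdw} and Example \ref{dcotopwx}, $\rho=(y_1,\dots,y_{n-k})$ is a global submersion with convex, hence contractible, fibres. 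Then Step 1 alone already gives the global result, since $K$ is defined on all of $W_x$.

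For a general simply connected $W$, I would take the simplicity of $\mathcal{F}$ with contractible leaves as the input supplied by \cite{MS}, and conclude by Steps 2--3.
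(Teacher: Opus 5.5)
In the only setting where the statement is actually true, your argument is the paper's argument. The paper's proof never uses simple connectivity. It takes global coordinates $(y,q)$ on $W$ with $\partial/\partial q_\alpha$ spanning $T\mathcal{F}$ and contracts by $p(y,q,t)=(y,tq)$. It then notes that $p_0^*\xi=0$ for leafwise forms of degree $\geq 1$ and that $p_1=\mathrm{id}$, and reads off a primitive $\int_0^1(\cdots)\,dt$ for closed $\xi$. This is exactly your radial operator $K$ in the leaf variables with $y$ as a smooth parameter, applied on all of $W$. It silently assumes a global foliated chart whose plaques are star-shaped about $q=0$. That is precisely the hypothesis you isolate for $W_x$ (Choice \ref{3choi}, after shrinking $W_x$). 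So your restricted-case proof coincides with the paper's, and your Steps 2--3 are not needed there.

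The gap is in the general case, which you propose to close by importing ``simplicity of $\mathcal{F}$ with contractible leaves'' from \cite{MS}. No reference can supply that step, because the statement as written is false.

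Take $W=S^2$ with $T\mathcal{F}=TS^2$. This is a regular foliation with one leaf, and it is the case $\beta=0$ that the paper explicitly allows. Then $\Omega^\bullet(\mathcal{F})=\Omega^\bullet(S^2)$ and $d_{\mathcal{F}}=d$, so an area form is closed but not exact. A less degenerate example is $S^2\times\mathbb{R}$ foliated by the spheres $S^2\times\{t\}$.

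Your worry about non-Hausdorff leaf spaces is also justified, even for contractible $W$. Foliate $\mathbb{R}^2$ by the connected components of the level sets of the submersion $(x^2-1)e^{y}$. The lines $x=\pm1$ are then non-separated leaves. Run Mayer--Vietoris over the saturated open sets $\{x>-1\}$ and $\{x<1\}$; each is a product foliation with leaf space $\mathbb{R}$. This gives $H^1(\mathcal{F})\cong C^\infty((-\infty,0))/C^\infty(\mathbb{R})|_{(-\infty,0)}\neq 0$.

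Separately, your sentence `simple connectivity kills the leafwise holonomy' is wrong. Holonomy is a representation of $\pi_1$ of the leaf, not of $W$, and the Reeb foliation of $S^3$ has a torus leaf with nontrivial holonomy.

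The right repair is therefore to change the hypothesis, not to appeal to \cite{MS}. Replace ``simply connected'' by the existence of a global foliated chart $W\cong B^{n-k}\times B^k$ with plaques $\{y\}\times B^k$. If you carry out your Steps 2--3, a simple foliation with contractible fibres would also do.
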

\begin{proof}
Consider the projection maps
\[
W \xleftarrow{\pi_W} W \times [0,1] \xrightarrow{\pi_{[0,1]}} [0,1]
\]
and the foliation tangent bundle
\[
T\overline{\mathcal{F}} := \pi_{W}^* T\mathcal{F} \oplus \pi_I^* T[0,1]
\]
on $W \times [0,1]$, where we regard $[0,1]$ as equipped with the zero differential form.
We define a map
\[
W \times [0,1] \xrightarrow{p} W
\]
by
\[
(y_1,\cdots, y_{n-k}, q_1, \cdots, q_k, t) \mapsto (y_1,\cdots, y_{n-k}, tq_1, \cdots, tq_k),
\]
and the following induced maps between foliation differential forms
\[
\begin{split}
\Omega^*(\mathcal{F}) \rightarrow \Omega^*(\overline{\mathcal{F}}) \rightarrow \Omega^*(\mathcal{F}),\\
\xi \;\mapsto\; p^*\xi \;\mapsto\; \int_0^1 p^*\xi\, dt,
\end{split}
\]
where the second map vanishes, by definition, for $\eta \in \Omega^*(\overline{\mathcal{F}})$ such that $\eta\!\left(\tfrac{\partial}{\partial t}\right) = 0$.

Denote $p_t := p(\cdot, t)$ and
\[
V_t := \frac{d}{ds} \left(p_{t+s} \circ p_t^{-1}\right) \bigg|_{s=0}.
\]
Then it is straightforward to verify that $V_t$ necessarily lies in the foliation directions. Computing $\frac{d}{ds}\big(p_{t+s}\circ p_t^{-1}\big)^*\xi\big(p_t(x)\big)|_{s=0}$ for $x \in \Omega^*(\mathcal{F})$, we can show that
\[
\frac{d}{dt} p_t^* \xi = p_t^* L_{V_t} \xi,
\]
where $V_t$ is the tangent vector field along $p_t$. Applying the Cartan magic formula (for the foliation differentiation) and integrating both sides over $[0,1]$, we obtain the homotopy formula:
\[
p_{t=1}^* \xi - p_{t=0}^* \xi
= \int_0^1 p_t^*\iota_{V_t}(d_{\mathcal{F}}\xi)\, dt
+ d_{\mathcal{F}} \left( \int_0^1 p_t^* \xi\, dt \right).
\]
Note that $p_{t=0}^* \xi = 0$, since
\[
(y_1,\ldots,y_{n-k},0,\ldots,0)
\]
has no foliation coordinates, while $p_{t=1}^* \xi = \xi$ and $p_{t=1} = \mathrm{id}_W$. Thus, for $d_{\mathcal{F}}$-closed $\xi$, we obtain
\[
\xi = d_{\mathcal{F}}\left( \int_0^1 p_t^* \xi\, dt \right).
\]
\end{proof}

\begin{defn}[Augmented foliation de Rham complex]\label{obpoaug}
We further consider the foliation de Rham complex \textit{with augmentation,} defined by
\begin{equation}\nonumber
\Omega^{\bullet+1}_{\mathrm{aug}}(\mathcal{F}) :=
\begin{cases}
\Omega^{\bullet+1}(\mathcal{F}) &\text{if } \bullet \geq -1,\\
C^{\infty}(W)_{\mathcal{F}} := \{h \in C^{\infty}(W) \mid d_{\mathcal{F}}(h) = 0\} &\text{if } \bullet = -2,
\end{cases}
\end{equation}
whose differential is given by $d_{\mathcal{F}}$ for elements of degree $\geq -1$ and by the inclusion $C^{\infty}(W)_{\mathcal{F}} \hookrightarrow C^{\infty}(W)$ for those of degree $-2$.
\end{defn}

\begin{prop}\label{augomega}
In the above situation, there exists an $L_{\infty}[1]$-algebra structure on the chain complex $\Omega^{\bullet+1}_{\mathrm{aug}}(\mathcal{F})$ that extends $\{l^{\mathcal{F}}_k\}$ on $\Omega^{\bullet+1}(\mathcal{F})$.
\end{prop}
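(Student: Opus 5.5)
The plan is to keep all operations $l^{\mathcal{F}}_k$ on inputs from $\Omega^{\bullet+1}(\mathcal{F})$ unchanged. I would then define, by induction on the arity $k$, the operations that take at least one input from the augmentation piece $C^{\infty}(W)_{\mathcal{F}}$ (degree $-2$). The augmented $l_1$ is forced by Definition \ref{obpoaug}: it is $d_{\mathcal{F}}$ in degrees $\geq -1$ and the inclusion $\iota : C^{\infty}(W)_{\mathcal{F}} \hookrightarrow C^{\infty}(W)=\Omega^0(\mathcal{F})$ in degree $-2$. Then $l_1^2=0$ holds precisely because $d_{\mathcal{F}}h=0$ for $h\in C^{\infty}(W)_{\mathcal{F}}$.

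First I would test the naive extension: set $l_k(\dots)=0$ for $k\geq 2$ whenever some input lies in the augmentation. The $L_\infty[1]$-relation (\ref{quadrel}) with one augmentation input $h$ then reduces to the requirement that $l^{\mathcal{F}}_k(\iota h, \xi_2,\dots,\xi_k)$ vanish modulo the remaining terms. I expect this to fail in general. Since $h$ is only leafwise constant, the bracket $[P,\pi^*h]$ picks up the transverse part $\frac{1}{2}\omega^{ij}e_i\wedge e_j$ of (\ref{pico}), and further brackets with vertical multivectors need not project to zero under $\Pi$. The argument therefore has to be genuinely recursive, which is consistent with the phrase ``recursive argument'' used in the introduction.

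The recursive step runs as follows. Suppose the operations $l_j$ on the augmented complex have been constructed for all $j<k$, satisfy all relations of arity $<k$, and restrict to $l^{\mathcal{F}}_j$ on $\Omega^{\bullet+1}(\mathcal{F})$. In the arity-$k$ relation the terms not involving the yet-undefined $l_k$ (restricted to inputs with augmentation entries) assemble into a multilinear expression $\mathrm{Ob}_k(h,x_2,\dots,x_k)$. The relation asks exactly that
\[
l_1\circ l_k \pm l_k\circ(l_1\otimes\mathrm{id}+\cdots) = -\mathrm{Ob}_k.
\]
By the standard computation using the relations of lower arity, $\mathrm{Ob}_k$ is a cocycle in the Hom-complex $\mathrm{Hom}\big(\text{(inputs with an augmentation slot)},\ \Omega^{\bullet+1}_{\mathrm{aug}}(\mathcal{F})\big)$ with the differential induced by $l_1$. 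I would then show that this cocycle is a coboundary and take $l_k$ to be a primitive. I would do this degree by degree, proceeding inductively on the total degree of the inputs (the input degrees are bounded above by $\dim W$). This is where the augmentation earns its keep: the complex $\Omega^{\bullet+1}_{\mathrm{aug}}(\mathcal{F})$ is acyclic. Its cohomology in degrees $\geq 0$ vanishes by the foliation Poincar\'e lemma (Lemma \ref{pcrlem}) on the contractible ball $W$. In degree $-1$ the cocycles $\ker d_{\mathcal{F}}$ are exactly the image of $\iota$. In degree $-2$, $\iota$ is injective. So every target cocycle is exact, and on the target side the obstruction can be killed pointwise in the inputs.

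The main obstacle is making this ``pointwise'' primitive into a genuine multilinear, graded-symmetric map compatible with the source differential. The naive choice of primitive is not linear. To obtain one, I would fix an explicit contracting homotopy $K$ of $\Omega^{\bullet+1}_{\mathrm{aug}}(\mathcal{F})$. In degrees $\geq 0$, $K$ is the homotopy operator $\xi\mapsto\int_0^1 p_t^*\xi\,dt$ built in the proof of Lemma \ref{pcrlem}. In degree $-1$, $K$ is the map $C^{\infty}(W)\to C^{\infty}(W)_{\mathcal{F}}$ given by $g\mapsto g\circ p_0$, i.e.\ restriction to the transversal $q=0$ followed by pulling back. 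With this $K$ one has $l_1K+Kl_1=\mathrm{id}$. I then set $l_k := -K\circ\mathrm{Ob}_k$ on the relevant components and symmetrize. The remaining check is that this choice satisfies the full arity-$k$ relation rather than only its $l_1\circ l_k$ part. I would verify this using $\mathrm{Ob}_k$ being closed together with the side condition $K^2=0$; if that condition fails for this $K$, I would first modify $K$ by the usual $K\mapsto Kl_1K$ trick.
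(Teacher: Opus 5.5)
Your proposal is correct and follows the paper's own route: both construct the missing operations recursively, set each new operation equal to minus a primitive of an $l_1$-closed obstruction, and obtain that primitive from the acyclicity of the augmented complex, which comes from the foliation Poincar\'e lemma. The paper inducts on $(m,d,s)$, while you induct on arity and then on descending total degree; this is enough, since the induced differential always raises total degree. Your explicit linear contraction $K$ (with $g\mapsto g\circ p_0$ in degree $-1$) is a genuine improvement over the paper's pointwise choice of $B$, because it makes multilinearity and graded symmetry automatic; moreover, closedness of $\mathrm{Ob}_k$ alone gives $\delta(-K\circ\mathrm{Ob}_k)=-\mathrm{Ob}_k+K\circ\delta\,\mathrm{Ob}_k=-\mathrm{Ob}_k$ for the full arity-$k$ relation, so neither $K^2=0$ nor a separate symmetrization step is needed.
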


\begin{proof}
Let $k$ be the rank of the foliation tangent bundle $T\mathcal{F}$. Considering the operation $l_k^{\mathcal{F}}$, we denote
\begin{equation}\nonumber
\begin{cases}
m := \text{the number of inputs,}\\
d := \text{the number of degree } {-2} \text{ inputs,}\\
s := \text{the sum of degrees of all inputs.}
\end{cases}
\end{equation}
Observe that $d$ and $s$ must lie in the ranges:
\begin{equation}\nonumber
\begin{cases}
1 \leq d \leq k,\\
s \geq -2d + (k-1)(m-d),
\end{cases}
\end{equation}
respectively. We proceed by induction on the tuple $(m,d,s)$:
\begin{enumerate}[label=(\roman*)]
\item $(m,d,s) = (1,1,-2)$: We have
\[
l_1(g) =: \overline{g} \in C^{\infty}(W_x)
\]
for $g \in C^{\infty}(W)_{\mathcal{F}}$, that is, $|g| = -2$.
\item $(m,d,s) = (2,1,k-3)$: As an induction hypothesis, we assume that $l_2$ has been defined by
\[
l_2(g, \xi) := 0
\]
for $|\xi| = k-1$.
\item $(m,d,s) = (2,1,s')$: Suppose that we have defined $l_2$'s for all $s'+1 \leq s \leq k-3$. For $g$ and $\xi$ with $|g| = -2$, $|\xi| \geq -1$, we denote
\[
A(g, \xi) := l^{\mathcal{F}}_2\big(l_1(g), \xi\big) + (-1)^{|g|}l_2\big(g, l_1(\xi)\big) = l_2(\overline{g}, \xi) + l_2\big(g, d_{\mathcal{F}}(\xi)\big),
\]
which is written in terms of quantities appearing in the initial conditions of the earlier induction steps together with the $L_{\infty}[1]$-relations for them. When $|A(g, \xi)| \geq -1$, we have
\begin{equation}\nonumber
\begin{split}
d_{\mathcal{F}}A(g, \xi) &=: l_1\big(A(g, \xi)\big) = d_{\mathcal{F}}l_2(\overline{g}, \xi) + l_1 \circ l_2(g, d_{\mathcal{F}}\xi)\\
&= -l_2(d_{\mathcal{F}}\overline{g}, \xi) + l_2(\overline{g}, d_{\mathcal{F}}\xi) - l_2(\overline{g}, d_{\mathcal{F}}\xi) - l_2(\overline{g}, d_{\mathcal{F}}^2\xi) = 0.
\end{split}
\end{equation}
By the foliation Poincar\'{e} lemma, there exists $B(g,\xi)$ such that $A(g,\xi) = d_{\mathcal{F}}B(g,\xi)$. We define
\begin{equation}
l_2(g, \xi) := \begin{cases}
-B(g, \xi) &\text{if } |A(g, \xi)| \geq -1,\\
0 &\text{otherwise.}
\end{cases}
\end{equation}
\item Suppose that we have defined the following two cases:
\begin{equation}\nonumber
\begin{cases}
l_{m} \text{ for } m \leq m'-1,\\
l_{m} \text{ for } m = m', d \leq d'-1, s'+1 \leq s
\end{cases}
\end{equation}
with the initial condition $l_m(\cdots) := 0$ for $d = d'+1$ and $s = -2d + (k-1)(m-d)$.
Then it suffices to define $l_m(g_1, \cdots, g_{d'}, \xi_1, \cdots, \xi_{m-d'})$ for $g_1, \cdots, g_{d'} \in \Omega^{-2}(\mathcal{F})[1]$ and $\xi_1, \cdots, \xi_{m-d'} \in \Omega^{\geq -1}(\mathcal{F})[1]$ with $|\xi_1| + \cdots + |\xi_{m-d'}| = s'$. We write $\overline{g_i} \in C^{\infty}(W) = \Omega^0(\mathcal{F})$ for the image of $g_i$ under the inclusion $C^{\infty}_{\mathcal{F}}(W) \hookrightarrow C^{\infty}(W)$. We denote
\begin{equation}\label{ag4}
\begin{split}
A(g_1, &\cdots, g_{d'}, \xi_1, \cdots, \xi_{m-d'}) := \sum\limits_i (-1)^{i-1}l_m(g_1, \cdots, \overline{g_i}, \cdots, g_{d'}, \xi_1, \cdots, \xi_{m-d'})\\
& + \sum\limits_j (-1)^{d'+j-1}l_m(g_1, \cdots, \overline{g_i}, \cdots, g_{d'}, \xi_1, \cdots, d_{\mathcal{F}}\xi_j, \cdots, \xi_{m-d'})\\
& + \sum\limits_{m_1+m_2=m+1} l_{m_1}\big(l_{m_2}(g_1, \cdots, g_*, \xi_*, \cdots, \xi_*), g_*, \cdots, g_*, \xi_*, \cdots, \xi_{m-d'}\big).
\end{split}
\end{equation}
The terms on the right-hand side are all known either from the initial condition or from the earlier induction steps. For the case $|A(g_1, \cdots, \xi_{m-d'})| \geq -1$, it follows directly that $d_{\mathcal{F}}A(g_1, \cdots, g_{d'}, \xi_1, \cdots, \xi_{m-d'}) = 0$ from the fact that $d_{\mathcal{F}}(\overline{g_i}) = 0$, $i = 0, \cdots, k$, and the $L_{\infty}[1]$-relations for $l_{*}(\cdots)$'s from the earlier steps. Then the Poincar\'{e} lemma implies that there exists $B(g_1, \cdots, g_{d'}, \xi_1, \cdots, \xi_{m-d'})$ such that
\[
A(g_1, \cdots, g_{d'}, \xi_1, \cdots, \xi_{m-d'}) = d_{\mathcal{F}}B(g_1, \cdots, g_{d'}, \xi_1, \cdots, \xi_{m-d'}).
\]
We define
\begin{equation}\label{ssttrr}
l_m(g_1, \cdots, g_{d'}, \xi_1, \cdots, \xi_{m-d'}) := \begin{cases}
-B(g_1, \cdots, g_{d'}, \xi_1, \cdots, \xi_{m-d'})&\\
\quad\quad\quad\quad \text{if } |A(g_1, \cdots, \xi_{m-d'})| \geq -1,&\\
0 \quad\quad\quad\ \text{otherwise.}&
\end{cases}
\end{equation}
The above induction process yields the desired $L_{\infty}[1]$-algebra structure on $\Omega_{\mathrm{aug}}^{\bullet+1}(\mathcal{F})$. In particular, the $L_{\infty}[1]$-relation holds by construction.
\end{enumerate}
\end{proof}
We remark that the cohomology of $\Omega_{\mathrm{aug}}^{\bullet+1}(\mathcal{F})$ is trivial, but its $L_{\infty}[1]$-structure is not.

\subsection{Completed V-algebras}
The $L_{\infty}[1]$-algebra in the previous subsection yields another $L_{\infty}[1]$-algebra arising from the V-algebra \textit{completed} at the image of a smooth map to the base $W$.

Let $\phi : V \rightarrow W$ be a smooth map between manifolds. We consider an ideal of $C^{\infty}(W)$,
\begin{equation}\nonumber
I_{\phi} := \{f \in C^{\infty}(W) \mid f|_{\mathrm{Im}\,\phi} \equiv 0\}
\end{equation}
and denote
\[
C^{\infty}(W)^{(j)} := C^{\infty}(W)/I_{\phi}^j \cdot C_{\phi}^{\infty}(W) = C^{\infty}(W)/I_{\phi}^j
\]
with
\begin{equation}\label{invphi32}
\begin{split}
C_{\phi}^{\infty}(W) &:= \lim_{\longleftarrow} C^{\infty}(W)^{(j)}\\
&= \lim\limits_{\longleftarrow} \left\{ \cdots \xrightarrow{p_{3,2}}
C^{\infty}(W)^{(2)} \xrightarrow{p_{2,1}}
C^{\infty}(W)^{(1)} \right\}.
\end{split}
\end{equation}
Here $p_{j+1,j} : C^{\infty}\left(W'_{\phi(x)}\right)^{(j+1)} \rightarrow C^{\infty}\left(W'_{\phi(x)}\right)^{(j)}$ is the canonical projection for each $j \geq 1$.

\begin{defn}[Completed V-algebras]\label{locvalg}
For the V-algebra $\mathcal{V} = (\mathfrak{h},\mathfrak{a},\Pi)$ with Maurer-Cartan element $P$ from Proposition \ref{afthap} and its preceding paragraph, we define its \textit{completed V-algebra at $\phi$} as the tuple
\begin{equation}\nonumber
\mathcal{V}_{\phi} := \left(\mathfrak{h}_{\phi}, \mathfrak{a}_{\phi}, \Pi_{\phi}\right),
\end{equation}
where
\begin{equation}\nonumber
\begin{cases}
\mathfrak{h}_{\phi} := C_{\phi}^{\infty}(W) \otimes_{C^{\infty}(W)} \mathfrak{h},\\
\mathfrak{a}_{\phi} := C_{\phi}^{\infty}(W) \otimes_{C^{\infty}(W)} \mathfrak{a},\\
\Pi_{\phi} := \mathrm{id}_{C_{\phi}^{\infty}(W)} \otimes_{C^{\infty}(W)} \Pi,
\end{cases}
\end{equation}
with the Maurer-Cartan element $P_{\phi} := 1 \otimes_{C^{\infty}(W)} P$. Here $C^{\infty}(W)$ acts on the modules $C^{\infty}_{\phi}(W)$, $\mathfrak{h}$, and $\mathfrak{a}$ in the obvious way.
\end{defn}

\begin{lem}\label{vlocp}
$\mathcal{V}_{\phi} := \left(\mathfrak{h}_{\phi}, \mathfrak{a}_{\phi}, \Pi_{\phi}\right)$ is a V-algebra with Maurer-Cartan element $P_{\phi}$.
\end{lem}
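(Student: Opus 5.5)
The plan is to check the four axioms of Definition \ref{Voronov1} for $(\mathfrak{h}_{\phi}, \mathfrak{a}_{\phi}, \Pi_{\phi})$ and then the Maurer--Cartan equation for $P_{\phi}$. Throughout, the strategy is the one used in the proof of Proposition \ref{afthap}: work level by level in the inverse system and then pass to the limit. By the same flatness argument used for the $C^{\infty}(W_x)$-module isomorphisms preceding Lemma \ref{vecpf}, I would write
\[
\mathfrak{h}_{\phi} \simeq \lim_{\longleftarrow} \big( C^{\infty}(W)^{(j)} \otimes_{C^{\infty}(W)} \mathfrak{h} \big),
\]
and similarly for $\mathfrak{a}_{\phi}$. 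This lets every structure be defined on $C^{\infty}(W)/I_{\phi}^j \otimes \mathfrak{h}$ and then checked to be compatible with the projections $p_{j+1,j}$.

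\textbf{Step 1: the bracket.} For $h,h' \in C^{\infty}(W)^{(j)}$ with representatives $\widetilde{h}, \widetilde{h}'$, define
\[
[h\otimes \xi, h'\otimes \xi'] := [1]_{j-1}\otimes [\widetilde{h}\xi, \widetilde{h}'\xi'],
\]
using the Nijenhuis--Schouten bracket on $\mathfrak{h}$. This parallels the definition of $l_1^{\mathrm{K},\phi}$ in Definition \ref{ladef}.

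\begin{itemize}
\item \emph{Independence of representatives.} Changing $\widetilde{h}$ by $g\in I_{\phi}^j$ changes the bracket by $[g\xi, \widetilde{h}'\xi']$. By the Leibniz rule this equals $g[\xi,\widetilde{h}'\xi']$ plus a term containing a derivative of $g$. Since derivatives of elements of $I_{\phi}^j$ lie in $I_{\phi}^{j-1}$, the change vanishes modulo $I_{\phi}^{j-1}$. This is exactly why the level drops from $j$ to $j-1$.
\item \emph{Compatibility across levels.} The same computation shows the bracket commutes with the projections $p_{j+1,j}$.
\end{itemize}

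Bilinearity, graded antisymmetry and the Jacobi identity then hold levelwise because they hold in $\mathfrak{h}$, so they pass to the limit.

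\textbf{Step 2: the remaining axioms.}

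\begin{itemize}
\item \emph{$\mathfrak{a}_{\phi}$ is abelian.} For $\xi,\xi'\in \mathfrak{a}$, the bracket $[\widetilde{h}\xi,\widetilde{h}'\xi']$ vanishes, since $\mathfrak{a}$ is abelian and $C^{\infty}(W)$-stable. The functions $\widetilde{h},\widetilde{h}'$ are pulled back from $W$ and are constant along the fibres, while elements of $\mathfrak{a}$ are purely vertical multivectors with coefficients depending only on the base. Hence no derivative of $\widetilde{h}$ appears.
\item \emph{$\Pi_{\phi}$ is a projection.} $\Pi$ is $C^{\infty}(W)$-linear (restriction followed by fibrewise projection), so $\Pi_{\phi}=\mathrm{id}\otimes\Pi$ is well defined. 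It is a projection onto $\mathfrak{a}_{\phi}$.
\item \emph{$\ker \Pi_{\phi}$ is a subalgebra.} $\ker \Pi_{\phi}$ is identified with $C^{\infty}_{\phi}(W)\otimes \ker\Pi$. Closedness under the bracket follows from the argument in Proposition \ref{afthap}(iii): horizontal differentiation of the coefficients $\widetilde{h}$ keeps one inside $\ker \Pi$. A possible subtlety is that $C^{\infty}_{\phi}(W)\otimes\ker\Pi\to\mathfrak{h}_{\phi}$ might fail to be injective. I would avoid this by using that the splitting $\mathfrak{h}=\mathfrak{a}\oplus\ker\Pi$ is a splitting of $C^{\infty}(W)$-modules, so tensoring preserves it.
\end{itemize}

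\textbf{Step 3: the Maurer--Cartan element.} Set $P_{\phi} = \{[1]_j\otimes P\}_j$. At level $j$,
\[
[P_{\phi},P_{\phi}] = [1]_{j-1}\otimes[P,P] = 0.
\]
This holds compatibly in $j$, so $P_{\phi}$ is Maurer--Cartan.

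\textbf{Main obstacle.} I expect the hard point to be Step 1: showing the levelwise bracket is well defined and compatible with the inverse system, in particular the claim that a first-order differential operator maps $I_{\phi}^j$ into $I_{\phi}^{j-1}$. I would prove this claim directly by the Leibniz rule applied to products $g_1\cdots g_j$ with $g_i\in I_{\phi}$. Everything else is a formal consequence of the corresponding properties of $(\mathfrak{h},\mathfrak{a},\Pi)$ from Proposition \ref{afthap}.
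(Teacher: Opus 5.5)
Your proposal is correct and follows essentially the same route as the paper. Both use the levelwise bracket $[1]_{j-1}\otimes[\widetilde{h}\xi,\widetilde{h}'\xi']$, check independence of representatives, compatibility with the $p_{j+1,j}$ and the Jacobi identity on representatives, and then give the same short arguments for $\mathfrak{a}_{\phi}$, for $\ker\Pi_{\phi}\simeq C^{\infty}_{\phi}(W)\otimes\ker\Pi$, and for $[P_{\phi},P_{\phi}]=[1]_{j-1}\otimes[P,P]=0$. Your Leibniz-rule argument that first-order operators send $I_{\phi}^{j}$ into $I_{\phi}^{j-1}$, and your use of the $C^{\infty}(W)$-module splitting $\mathfrak{h}=\mathfrak{a}\oplus\ker\Pi$, make explicit what the paper leaves implicit.

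One small caveat concerns the flatness argument you cite to identify $\mathfrak{h}_{\phi}$ with $\lim_{\longleftarrow}(C^{\infty}(W)^{(j)}\otimes\mathfrak{h})$. That argument relied on finite freeness, which $\mathfrak{h}$ (a completion in the fibre directions) does not have in general. The levelwise description is therefore better taken as the working definition, which is exactly what the paper tacitly does.
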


\begin{proof}
(i) (\textit{$\mathfrak{h}_{\phi}$ is a graded Lie algebra.}) We first define the bracket $[\,,\,]$ on $\mathfrak{h}_{\phi}$ by
\begin{equation}\nonumber
\begin{aligned}
\left(C^{\infty}(W)^{(j)} \otimes \mathfrak{h}\right) \otimes_{\mathbf{k}} \left(C^{\infty}(W)^{(j)} \otimes \mathfrak{h}\right) &\rightarrow C^{\infty}(W)^{(j-1)} \otimes \mathfrak{h}, \\
(b \otimes \xi) \otimes_{\mathbf{k}} (b' \otimes \xi') &\mapsto [1]_{j-1} \otimes [\widetilde{b}\xi, \widetilde{b'}\xi']
\end{aligned}
\end{equation}
for each $j \geq 2$, where $\widetilde{b}, \widetilde{b'}$ are representatives of some classes in $C^{\infty}(W)^{(j)}$ and $\xi, \xi' \in \mathfrak{h}$. Its well-definedness can be shown as follows. For different choices of representatives $\widetilde{b} + c$ and $\widetilde{b'} + c'$ with $c, c' \in I_{\phi}^j$, we have
\begin{equation}\nonumber
\begin{split}
[1]_{j-1} \otimes &\left( \left[ \left( \widetilde{b} + c \right) \xi, \left( \widetilde{b'} + c' \right) \xi' \right] - \left[ b\xi, b'\xi' \right] \right)\\
&= [1]_{j-1} \otimes \left[ \widetilde{b}\xi, c'\xi' \right] + [1]_{j-1} \otimes \left[c\xi, \widetilde{b'}\xi' \right] + [1]_{j-1} \otimes \left[ c\xi, c'\xi' \right] = 0
\end{split}
\end{equation}
as a simple consequence of the definition of the Nijenhuis-Schouten bracket and the fact that $c, c' \in I_{\phi}^j \subset I_{\phi}^{j-1}$. Moreover, such operations for all different $j$'s are compatible in the sense that the following diagram commutes for each $j \geq 2$:
\begin{equation}\nonumber
\begin{tikzcd}
(C^{\infty}(W)^{(j)} \otimes \mathfrak{h}) \otimes_{\mathbf{k}} (C^{\infty}(W)^{(j)} \otimes \mathfrak{h}) \arrow{r}{[\,,\,]} & C^{\infty}(W)^{(j-1)} \otimes \mathfrak{h} \\
(C^{\infty}(W)^{(j+1)} \otimes \mathfrak{h}) \otimes_{\mathbf{k}} (C^{\infty}(W)^{(j+1)} \otimes \mathfrak{h}) \arrow{r}{[\,,\,]} \arrow{u}{(p_{j+1,j} \otimes \mathrm{id}_{\mathfrak{h}}) \otimes (p_{j+1,j} \otimes \mathrm{id}_{\mathfrak{h}})} & C^{\infty}(W)^{(j)} \otimes \mathfrak{h}. \arrow{u}{p_{j,j-1} \otimes \mathrm{id}_{\mathfrak{h}}}
\end{tikzcd}
\end{equation}
The axioms for a graded Lie algebra follow immediately from those for $\mathfrak{h}$. We only verify the Jacobi identity, which is less immediate.

We consider the repeated bracket:
\[
[\,[\,,\,],\,] : \left((C^\infty(W)^{(j)} \otimes \mathfrak{h}) \otimes (C^\infty(W)^{(j)} \otimes \mathfrak{h})\right) \otimes (C^\infty(W)^{(j-1)} \otimes \mathfrak{h})
\rightarrow C^\infty(W)^{(j-2)} \otimes \mathfrak{h},
\]
\[
\begin{split}
[[h_1 \otimes \xi_1, h_2 \otimes \xi_2], h_3 \otimes \xi_3]
&= \left[[1]_{j-1} \otimes \left[\widetilde{h}_1\xi_1, \widetilde{h}_2\xi_2\right], h_3 \otimes \xi_3\right]\\
&= [1]_{j-2} \otimes \left[\left[\widetilde{h}_1\xi_1, \widetilde{h}_2\xi_2\right], \widetilde{h}'_3\xi_3\right]
\end{split}
\]
for representatives $\widetilde{h}_1, \widetilde{h}_2$, and $\widetilde{h}_3$ in $C^\infty(W)$ with $h_1 = \left[\widetilde{h}_1\right]_{j}$, $h_2 = \left[\widetilde{h}_2\right]_{j}$, and $h_3 = \left[\widetilde{h}_3\right]_{j-1}$.

Observe that for the pair $h_3 \overset{p_{j,j-1}}{\mapsto} h'_3$, their representatives $\widetilde{h}_3$ and $\widetilde{h}'_3$ are related by $\widetilde{h}'_3 = \widetilde{h}_3 + g$ for some $g \in I_\phi^{j-1}$. Thus we can insert $\widetilde{h}_3 + g \in C^\infty(W)$ in place of $\widetilde{h}'_3$. We then compute:
\[
\begin{split}
[[h_1 \otimes \xi_1, &h_2 \otimes \xi_2], h_3 \otimes \xi_3] = [1]_{j-2} \otimes \left[\left[\widetilde{h}_1\xi_1, \widetilde{h}_2\xi_2\right], \widetilde{h}'_3\xi_3\right]\\
&= [1]_{j-2} \otimes \left[\left[\widetilde{h}_1\xi_1, \widetilde{h}_2\xi_2\right], \widetilde{h}_3\xi_3\right]
+ [1]_{j-2} \otimes \left[\left[\widetilde{h}_1\xi_1, \widetilde{h}_2\xi_2\right], g_3\xi_3\right]\\
&= [1]_{j-2} \otimes \left[\left[\widetilde{h}_1\xi_1, \widetilde{h}_2\xi_2\right], \widetilde{h}_3\xi_3\right]
\end{split}
\]
for $g_3 \in I_\phi^{j-1}$.

Finally, we have
\[
\begin{split}
\sum_{\sigma \in S_3} &\mathrm{sgn}(\sigma)
\left[\left[h_{\sigma(1)} \otimes \xi_{\sigma(1)}, h_{\sigma(2)} \otimes \xi_{\sigma(2)}\right],
h_{\sigma(3)} \otimes \xi_{\sigma(3)}\right]\\
&= \sum_{\sigma \in S_3} [1]_{j-2} \otimes
\mathrm{sgn}(\sigma)
\left[\left[\widetilde{h}_{\sigma(1)}\xi_{\sigma(1)}, \widetilde{h}_{\sigma(2)}\xi_{\sigma(2)}\right],
\widetilde{h}_{\sigma(3)}\xi_{\sigma(3)}\right]\\
&= [1]_{j-2} \otimes
\sum_{\sigma \in S_3} \mathrm{sgn}(\sigma)
\left[\left[\widetilde{h}_{\sigma(1)}\xi_{\sigma(1)}, \widetilde{h}_{\sigma(2)}\xi_{\sigma(2)}\right],
\widetilde{h}_{\sigma(3)}\xi_{\sigma(3)}\right] = 0.
\end{split}
\]

(ii) (\textit{$\mathfrak{a}_{\phi}$ is an abelian Lie subalgebra of $\mathfrak{h}_{\phi}$.}) We have $\mathfrak{a}_{\phi} \subset \mathfrak{h}_{\phi}$, with $\mathfrak{a}_{\phi}$ being abelian as an immediate consequence of the definition of the bracket and the abelian property of $\mathfrak{a}$.

(iii) (\textit{$\ker\Pi_{\phi}$ is a Lie subalgebra of $\mathfrak{h}_{\phi}$.}) Observe that $\ker\Pi_{\phi} \simeq C^{\infty}(W) \otimes \ker\Pi$, and one can use the fact that $\ker\Pi \subset \mathfrak{h}$ is a Lie subalgebra.

(iv) (\textit{$P_{\phi}$ induces a Maurer-Cartan element.}) We have
\begin{equation}\nonumber
[P_{\phi}, P_{\phi}] = \left[[1]_j \otimes P, [1]_j \otimes P\right] = [1]_{j-1} \otimes \left[\widetilde{1} \cdot P, \widetilde{1} \cdot P\right] = [1]_{j-1} \otimes [P, P] = 0
\end{equation}
for each $j \geq 1$.
\end{proof}\begin{defn}[Completed de Rham complexes]\label{defldrc}
Let $\phi : V \rightarrow W$ be a smooth map of manifolds. In the context of Example \ref{gtyemb}, \textit{the completed foliation de Rham complex (at the image of $\phi$)}, denoted $\Omega^{\bullet +1}(\mathcal{F})_{\phi}$, is defined via the $L_{\infty}[1]$-algebra structure on $\mathfrak{a}_{\phi}$ introduced in Definition \ref{locvalg}. The $L_{\infty}[1]$-relations are given explicitly as follows.

For each $j \geq 2$ and $h \in C^{\infty}_{\phi}(W'_{\phi(x)})^{(j)}$, we have
\begin{equation}\label{lkphi}
\begin{split}
l_k^{\phi}(&h_1 \otimes \xi_1, \cdots, h_k \otimes \xi_k) := \Pi \left[ \cdots \left[P_{\phi},h_1 \otimes \xi_1 \right], \cdots, h_k \otimes \xi_k\right]\\
&= [1]_{j-k} \otimes \Pi\left[ \cdots \left[P, \widetilde{h}_1 \xi_1\right], \cdots, \widetilde{h}_k \xi_k\right] = [1]_{j-k} \otimes l_k\left(\widetilde{h}_1\xi_1, \cdots, \widetilde{h}_k\xi_k\right),
\end{split}
\end{equation}
where $\widetilde{h}_i \in C^{\infty}(W'_{\phi(x)})$ denotes any representative satisfying $\widetilde{h}_i + I_{\phi}^j = h_i$, and we set $[1]_{j-k} := 0$ whenever $j \leq k$.

In particular, for $k=1$ we have
\[
l_1^{\phi}(h_1 \otimes \xi_1) = 1 \otimes d_{\mathcal{F}} \left(\widetilde{h}_1 \xi_1\right).
\]

By the preceding lemma, the definition (\ref{lkphi}) is well posed: (i) it does not depend on the choice of representatives $\widetilde{h}_i$, and (ii) it is compatible with the natural projection maps
\[
p_{j+1, j} : C^{\infty}\left(W'_{\phi(x)}\right)^{(j+1)} \rightarrow  C^{\infty}\left(W'_{\phi(x)}\right)^{(j)}, \qquad j \geq 1.
\]
\end{defn}

The next lemma is the morphism-level analogue of Proposition \ref{augomega}. Specializing to the V-algebra of Example \ref{gtyemb} yields an $L_{\infty}[1]$-algebra structure on the foliation de Rham complex.

\begin{lem}\label{auglmo}
Given an $L_{\infty}[1]$-morphism
\begin{equation}\label{htphi}
\widehat{\phi} : \Omega^{\bullet +1}(\mathcal{F}') \rightarrow \Omega^{\bullet +1}(\mathcal{F}),
\end{equation}
there exists an $L_{\infty}[1]$-algebra quasi-isomorphism, again denoted by $\widehat{\phi}$,
\[
\widehat{\phi} : \Omega^{\bullet +1}_{\mathrm{aug}}(\mathcal{F}') \rightarrow \Omega^{\bullet +1}_{\mathrm{aug}}(\mathcal{F}),
\]
extending (\ref{htphi}).
\end{lem}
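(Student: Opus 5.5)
We build the extension $\widehat{\phi}$ by the same obstruction-theoretic induction used for the algebra structure in Proposition \ref{augomega}, relying on the foliation Poincar\'{e} lemma (Lemma \ref{pcrlem}) to solve each equation. Write $g_1,\dots,g_d$ for inputs of degree $-2$ (elements of $C^{\infty}(W')_{\mathcal{F}'}$), $\xi_1,\dots,\xi_{m-d}$ for inputs of degree $\geq -1$, and $\overline{g}$ for the image of $g$ under the augmentation inclusion. On inputs with $d=0$ we keep the given components $\widehat{\phi}_m$. The new components $\widehat{\phi}_m(g_1,\dots,g_d,\xi_1,\dots,\xi_{m-d})$ with $d\geq 1$ are defined by induction on the triple $(m,d,s)$, where $s$ is the total degree of the inputs. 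The ordering is lexicographic as in Proposition \ref{augomega}. We use the initial condition that a component vanishes whenever its output degree falls below the range of $\Omega^{\bullet+1}_{\mathrm{aug}}(\mathcal{F})$.

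First consider the linear part, $m=d=1$. We want $\widehat{\phi}_1(g)\in C^{\infty}(W)_{\mathcal{F}}$ with $\overline{\widehat{\phi}_1(g)}=\widehat{\phi}_1(\overline{g})$. Since $\widehat{\phi}_1$ is a chain map, $d_{\mathcal{F}}\widehat{\phi}_1(\overline{g})=\widehat{\phi}_1(d_{\mathcal{F}'}\overline{g})=0$. So $\widehat{\phi}_1(\overline{g})$ is already $\mathcal{F}$-closed, and we may take $\widehat{\phi}_1(g):=\widehat{\phi}_1(\overline{g})$ viewed in degree $-2$.

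For the general step, write down the $L_{\infty}[1]$-morphism relation (\ref{frel}) at arity $m$ and isolate the unknown term $l_1\bigl(\widehat{\phi}_m(g_1,\dots,\xi_{m-d})\bigr)$. All remaining terms, collected as $A(g_1,\dots,\xi_{m-d})$, involve only components $\widehat{\phi}_{m'}$ with $m'<m$, or with $m'=m$ and smaller $(d,s)$. They also involve the already-constructed operations $l_k$ and $l'_k$ on the augmented algebras. So $A$ is known by the induction hypothesis. Mimicking the computation in step (iv) of Proposition \ref{augomega}, we check $d_{\mathcal{F}}A=0$. This uses the $L_{\infty}[1]$-relations on both sides, the morphism relations already secured, and $d_{\mathcal{F}'}\overline{g_i}=0$.

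Next we solve $A=d_{\mathcal{F}}B$ according to the degree of $A$:
\begin{itemize}
\item If $|A|\geq 1$, Lemma \ref{pcrlem} supplies $B$, and we set $\widehat{\phi}_m(\cdots):=B$ up to sign.
\item If $|A|=0$, the closed function $A$ is itself the $l_1$-image of $A$ regarded as an element of $C^{\infty}(W)_{\mathcal{F}}$, and we set $\widehat{\phi}_m(\cdots)$ to be that element.
\item If $|A|=-1$ or lower, the component is forced to vanish.
\end{itemize}
We must then check that forcing it to vanish is consistent: either $A=0$ automatically, or $A$ is absorbed by injectivity of the augmentation inclusion.

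Finally, the quasi-isomorphism claim comes for free. Both augmented complexes are acyclic: this is Lemma \ref{pcrlem} together with the augmentation in degree $-2$, on the contractible $W_x$. Hence any chain map $\widehat{\phi}_1$ between them is a quasi-isomorphism.

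The main obstacle is the closedness $d_{\mathcal{F}}A=0$ in the inductive step. The sign bookkeeping in (\ref{frel}) is heavy, and one must confirm that the lexicographic order on $(m,d,s)$ really guarantees every term of $A$ and of $d_{\mathcal{F}}A$ has already been defined. I would first verify the case $m=2$, $d=1$ by hand, as in step (iii) of Proposition \ref{augomega}. I would then argue the general case by the standard observation that $d_{\mathcal{F}}A$ equals the sum of the already-satisfied lower relations, which vanishes.
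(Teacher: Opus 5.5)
Your proposal is correct and is essentially the paper's own argument. It uses the same induction on $(m,d,s)$ borrowed from Proposition~\ref{augomega} and an obstruction $A$ collecting all terms of (\ref{frel}) other than $l_1\bigl(\widehat{\phi}_m(\cdots)\bigr)$. As in the paper, $A$ is closed by the relations already secured, a primitive comes from Lemma~\ref{pcrlem}, and the quasi-isomorphism property is free because both augmented complexes are acyclic.

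Your explicit treatment of the low-degree cases, where $A$ is a leafwise-constant function or lies in the augmentation so that Lemma~\ref{pcrlem} does not literally apply, is in fact more careful than the paper's sketch. A few small fixes would tighten it:
\begin{itemize}
\item Keep the grading consistent: in that case analysis your $|A|$ is form degree, whereas your $g_i$ sit in shifted degree $-2$.
\item For fixed $(m,d)$, the induction on $s$ must run downward (well-founded, since $s$ is bounded above), because replacing an input $x_i$ by $l'_1x_i$ raises $s$.
\item Choose $B$ via the linear homotopy operator from the proof of Lemma~\ref{pcrlem}, so that $\widehat{\phi}_m$ comes out multilinear.
\end{itemize}
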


\begin{proof}[Proof-sketch]
The argument follows the same strategy as in the proof of Proposition \ref{augomega}, proceeding by induction on the triple $(m,d,s)$. More precisely, for $g_1, \cdots, g_{d'} \in \Omega^{-2}(W)[1]$ and $\xi_1, \cdots, \xi_{m-d'} \in \Omega^{\geq -1}(W)[1]$, set
\begin{equation}\nonumber
\begin{split}
A(g_1, \cdots, g_{d'}, &\xi_1, \cdots, \xi_{m-d'}) := \sum \pm \widehat{\phi}_{(\cdots)}\left(l_{*}(g_{(\cdots)}, \cdots, \xi_{(\cdots)}), g_{(\cdots)}, \cdots, \xi_{(\cdots)}\right)\\
&+ \sum\limits_{\substack{\sum m_j = m,\\ t \geq 2}} \pm l_{\mathrm{aug}, t}\left(\widehat{\phi}_{(\cdots)}(g_{(\cdots)}, \cdots, \xi_{(\cdots)}), \cdots, \widehat{\phi}_{(\cdots)}(g_{(\cdots)}, \cdots, \xi_{(\cdots)})\right),
\end{split}
\end{equation}
where $l_m$ is assumed to have already been constructed at earlier stages of the induction, together with the corresponding initial conditions. One checks that 
\[
A(g_1, \cdots, g_{d'}, \xi_1, \cdots, \xi_{m-d'})
\]
is $l_1$-closed; indeed, every term on the right-hand side has already been determined at a previous step of the induction. By the Poincar\'{e} lemma, there exists $B := B(g_1, \cdots, g_{d'}, \xi_1, \cdots, \xi_{m-d'})$ such that $A(g_1, \cdots, g_{d'}, \xi_1, \cdots, \xi_{m-d'}) = l_{1}(B).$
We then define $l_{m}(g_1, \cdots, g_{d'}, \xi_1, \cdots, \xi_{m-d'}) := -B$.

Since $\Omega_{\mathrm{aug}}^{\bullet+1}\left(\mathcal{F}'\right)$ and $\Omega_{\mathrm{aug}}^{\bullet+1}\left(\mathcal{F}\right)$ are acyclic $L_{\infty}[1]$-algebras, the resulting $L_{\infty}[1]$-morphism of Lemma \ref{auglmo} is automatically a quasi-isomorphism.
\end{proof}


\begin{thebibliography}{9}

\bibitem[1]{AKSZ}
Mikhail Alexandrov, Maxim Kontsevich, Albert Schwarz, Oleg Zaboronsky, \emph{The geometry of the master equation and topological quantum field theory}, Int. J. Modern Phys. A 12(7):1405–1429, 1997.

\bibitem[2]{AT}
Lino Amorim, Junwu Tu, \emph{The inverse function theorem for curved L-infinity spaces,} J. Noncommut. Geom. 16 (2022), no. 4, pp. 1445–1477

\bibitem[3]{Bandiera}
Ruggero Bandiera, \emph{Cumulants, Koszul brackets, and homological perturbation theory for commutative $BV_{\infty}$ and $IBL_{\infty}$ algebras}, Journal Homotopy and Related Structures, Preprint, 2020.

\bibitem[4]{BLX}
Kai Behrend, Hsuan-Yi Liao, Ping Xu, \emph{Derived Differentiable Manifolds}, arXiv:2006.01376.

\bibitem[5]{Costello}
Kevin Costello, \emph{A geometric construction of Witten genus, II}, arXiv:1112.0816.

\bibitem[6]{CS}
Alberto S. Cattaneo, Florian Sch\"{a}tz, \emph{Equivalences of higher derived brackets}, Journal of Pure and Applied Algebra, 212 (2008) 2450-2460.

\bibitem[7]{DGMS}
B. A. Dubrovin, M.Giordano, D.Marmo, A. Simoni, \emph{Poisson brackets on presymplectic manifolds}, International journal of modern physics A, Vo, 8, No. 21 (1993) 3747-3771.

\bibitem[8]{DHI}
Daniel Dugger, Sharon Hollander, Daniel C. Isaksen, \emph{Hypercovers and simplicial presheaves
}, Math. Proc. Cambridge Philos. Soc. 136, no. 1, 9–51, 2004.

\bibitem[9]{Eisenbud}
David Eisenbud, \emph{Commutative algebra with a view toward algebraic geometry,} Graduate Texts in Mathematics 150, Springer, 2004.

\bibitem[10]{Fukaya}
Kenji Fukaya, \emph{Deformation theory, homological algebra, and mirror symmetry},
Geometry and Physics of Branes, 121-209, CRC Press, 2002.

\bibitem[11]{FO}
Kenji Fukaya, Kaoru Ono, \emph{Arnold conjecture and Gromov-Witten invariants}, Topology, Volume 38, Issue 5, Pages 933-1048, 1999.

\bibitem[12]{FOOO1}
Kenji Fukaya, Yong-Geun Oh, Hiroshi Ohta, Kaoru Ono, \emph{Kuranishi structures and Virtual fundamental chain}, Springer Monographs in Mathematics, Springer, 2020.

\bibitem[13]{FOOO2}
Kenji Fukaya, Yong-Geun Oh, Hiroshi Ohta, Kaoru Ono, \emph{Lagrangian Intersection Floer Theory : Anomaly and Obstruction Part I, II}, 2009.

\bibitem[14]{fooo:shrink}
Kenji Fukaya, Yong-Geun Oh, Hiroshi Ohta, Kaoru Ono, \emph{Shrinking good coordinate systems associated to Kuranishi structures}, Journal of Symplectic Geometry, Vol. 14, No. 4 2016.

\bibitem[15]{Gotay}
Mark Gotay, \emph{On coisotropic imbeddings of presymplectic manifolds,} Proceedings of the American Mathematical Society, 84(1):111–114, 1982.

\bibitem[16]{GKZ}
I. M. Gelfand, M. M. Kapranov, A. V. Zelevinsky, \emph{Discriminants, Resultants and Multidimensional Determinants,} Birkh{a}user, 1994.

\bibitem[17]{GLRR}
Xavier Gràcia, Javier de Lucas, Xavier Rivas, Narciso Román-Roy, \emph{On Darboux theorems for geometric structures induced by closed forms}, Rev. Real Acad. Cienc. Exactas Fis. Nat. Ser. A-Mat. 118, 131, 2024.

\bibitem[18]{Joyce}
Dominic Joyce, \emph{Kuranishi spaces as a 2-category}, Virtual Fundamental Cycles in Symplectic Topology, Mathematical Surveys and Monographs 237, Americal Mathematical Society, 253-298, 2019.

\bibitem[19]{Kim1}
Taesu Kim, \emph{$L_\infty$-Kuranishi spaces and the moduli space of pseudoholomorphic maps,} Preprint, arXiv:2511.05206 [math.SG], 2025.

\bibitem[20]{Kim2}
Taesu Kim, \emph{Homotopy models for $L_{\infty}[1]$-algebras in higher degrees,} Preprint, arXiv:2606.28985 [math.AT], 2026.

\bibitem[21]{Kim3}
Taesu Kim, \emph{Kuranishi chart categories and higher cocycle conditions,} Preprint, arXiv: 2026.

\bibitem[22]{Kim4}
Taesu Kim, \emph{Homotopical properties of the category $\mathbf{Kur}$,} in preparation.

\bibitem[23]{KO}
Taesu Kim, Yong-Geun Oh, \emph{Stratifications associated to generic closed two-forms and stratified $L_{\infty}$ spaces}, Preprint, arXiv:2602.24099 [math.SG], 2026.

\bibitem[24]{Lurie}
Jacob Lurie, \emph{Higher Topos Theory}, Annals of Mathematics Studies 170, Princeton University Press, 2009.

\bibitem[25]{Markl}
Martin Markl, \emph{On the origin of higher braces and higher-order derivations}, Journal Homotopy and Related Structures, 10, 637–667, 2015.

\bibitem[26]{MS}
Eva Miranda, Romero Solha, \emph{On a Poincar\'{e} lemma for foliations}, Foliations 2012, 115-137, World Scientific, 2013.

\bibitem[27]{MW} Dusa McDuff, Katrin Wehrheim, \emph{The topology of Kuranishi atlases}, Proc. London Math. Soc., 115: 221-292, 2017.

\bibitem[28]{OP}
Yong-Geun Oh, Jae-Suk Park, \emph{Deformations of coisotropic submanifolds and strong homotopy Lie algebroids}, Inventiones mathematicae, Volume 161, 287–360 2005.

\bibitem[29]{Pardon1}
John Pardon, \emph{An algebraic approach to virtual fundamental cycles on moduli spaces of J-holomorphic curves}, Geom. Topol. 20, 779-1034, 2016.

\bibitem[30]{Roytenberg} 
Dmitry Roytenberg, \emph{Courant algebroids, derived brackets and even symplectic supermanifolds}, Ph.D. Thesis, UC Berkeley, 1999, math.DG/9910078.

\bibitem[31]{Tu1}
Junwu Tu, \emph{Homotopy L-infinity Spaces}, Preprint, arXiv:1411.5115 [math.AG], 2014.

\bibitem[32]{Tu2}
Junwu Tu, \emph{Homotopy L-infinity spaces and Kuranishi manifolds, I: categorical structures}, Preprint, arXiv:1602.00150 [math.DG], 2016.

\bibitem[33]{Voronov1}
Theodore Voronov, \emph{Higher derived brackets and homotopy algebras}, Journal of Pure and Applied Algebra, Volume 202, Issues 1–3, 1 November, 133-153, 2005.

\bibitem[34]{Voronov2}
Theodore Voronov, \emph{Higher derived brackets for arbitrary derivations}, Travaux Math. XVI 163–186, 2005.

\end{thebibliography}
\end{document}